\tikzset{frontline/.style={preaction={draw=white,-,line width=6pt}},}  
\tikzset{anchorbase/.style={baseline={([yshift=-0.5ex]current bounding box.center)}},
    tinynodes/.style={font=\tiny,text height=0.75ex,text depth=0.15ex},
    smallnodes/.style={font=\scriptsize,text height=0.75ex,text depth=0.15ex},
    >={Latex[length=1mm, width=1.5mm]}
}
\newcommand{\ig}[2]{\vcenter{\xy (0,0)*{\includegraphics[scale=#1]{fig/#2}} \endxy}}
\definecolor{myred}{rgb}{0.75,0,0}
\definecolor{mygreen}{rgb}{0,0.5,0}
\definecolor{myblue}{rgb}{0,0.25,0.65}
\definecolor{references}{rgb}{0,0,1}
\newtheorem{thm}{Theorem}[section]
\newtheorem{lemma}[thm]{Lemma}
\newtheorem{theorem}[thm]{Theorem}
\newtheorem{prop}[thm]{Proposition}
\newtheorem{proposition}[thm]{Proposition}
\newtheorem{cor}[thm]{Corollary}
\newtheorem{corollary}[thm]{Corollary}
\newtheorem*{prop*}{Proposition}
\newtheorem*{lemma*}{Lemma}
\theoremstyle{definition}
\newtheorem{defn}[thm]{Definition}
\newtheorem{definition}[thm]{Definition}
\newtheorem{notation}[thm]{Notation}
\newtheorem{example}[thm]{Example}
\theoremstyle{remark}
\newtheorem{remark}[thm]{Remark}
\newtheorem{question}[thm]{Question}
\numberwithin{equation}{section}
\def\BB{{\mathbf B}}    
\def\CB{{\mathbf C}}
    \def\FC{{\mathcal{F}}}
    \def\HC{{\mathcal{H}}}
    \def\KC{{\mathcal{K}}}
\def\PB{{\mathbf P}}
\def\XB{{\mathbf X}}
\def\AS{{\EuScript A}}
\def\BS{{\EuScript B}}
\def\CS{{\EuScript C}}
\def\DS{{\EuScript D}}
\def\MS{{\EuScript M}}
\def\NS{{\EuScript N}}
\def\QS{{\EuScript Q}}
\def\ZS{{\EuScript Z}}
\def\a{\alpha}
\def\b{\beta}
\def\d{\delta}
\def\e{\varepsilon}
\def\l{\lambda}
\let\phi=\varphi
\let\tilde=\widetilde
\def\R{{\mathbbm R}}
\def\Z{{\mathbbm Z}}
\def\1{\mathbbm{1}}
\renewcommand{\k}{\mathbbm{k}}
\newcommand{\inv}{^{-1}}
\newcommand{\ot}{\otimes}
\newcommand{\pa}{\partial}
\newcommand{\co}{\colon}
\renewcommand{\to}{\rightarrow}
\newcommand{\refequal}[1]{\xy {\ar@{=}^{#1}
(-1,0)*{};(1,0)*{}};
\endxy}
\newcommand{\Bim}{\textbf{Bim}}
\newcommand{\Hom}{{\rm Hom}}
\newcommand{\End}{{\rm End}}
\newcommand{\Res}{{\rm Res}}
\newcommand{\id}{{\rm id}}
\newcommand{\fin}{{\rm fin}}
\newcommand{\res}{\textrm{res}}
\newcommand{\FT}{FT}
\newcommand{\Cone}{{\rm Cone}}
\newcommand{\startdotred}{\ig{.5}{startdotred}}
\newcommand{\finaldotred}{\ig{.5}{enddotred}}
\newcommand{\linered}{\ig{.5}{linered}}
\newcommand{\barbred}{\ig{.5}{barbred}}
\newcommand{\brokenred}{\ig{.5}{brokenred}}
\newcommand{\splitred}{\ig{.5}{splitred}}
\DeclareMathOperator{\Br}{Br}
\renewcommand{\Bar}{\operatorname{Bar}}
\newcommand{\Diag}{\HC}
\DeclareMathOperator{\Ch}{Ch}
\newcommand{\Tot}{\operatorname{Tot}}
\newcommand{\Id}{\id}
\newcommand{\one}{\1}
\newcommand{\uHom}{\underline{\Hom}}
\newcommand{\uEnd}{\underline{\End}}
\renewcommand{\d}{\delta}
\renewcommand{\a}{\alpha}
\newcommand{\tw}{\operatorname{tw}}
\renewcommand{\tw}{\operatorname{tw}}
\renewcommand{\l}{\lambda}
\newcommand{\ttau}{\boldsymbol\tau}
\newcommand{\HT}{\mathrm{HT}}
\newcommand{\Obj}{\operatorname{Obj}}
\newcommand{\sep}{\, |\, }
\renewcommand{\FT}{\operatorname{FT}}
\newcommand{\FZ}{F}
\newcommand{\GZ}{G}
\newcommand{\Fdg}{\mathbf{F}}
\newcommand{\Gdg}{\mathbf{G}}
\newcommand{\etaZ}{\eta}
\newcommand{\etadg}{\boldsymbol{\eta}}
\newcommand{\ip}[1]{\left\langle #1 \right\rangle}
\newcommand{\oone}{\mathbf{I}}
\newcommand{\premod}[2]{#1\text{-premod}(#2)}
\newcommand{\Qmod}[1]{#1^{\operatorname{bim}}}
\newcommand{\dgMod}[2]{#2\text{-dMod}^{#1}}
\newcommand{\bimhom}[2]{B^{#1}_{#2}}
\newcommand{\HH}{\operatorname{HH}}
\newcommand{\fm}{\mathfrak{m}}
\newcommand{\bard}{\alpha}
\newcommand{\ibim}[1]{\mathbf{I}_{#1}}
\newcommand{\mergebim}[1]{\boldsymbol{\mu}_{#1}}
\newcommand{\splitbim}[1]{\boldsymbol{\Delta}_{#1}}
\newcommand{\ubim}[1]{\boldsymbol{\eta}_{#1}}
\newcommand{\coubim}[1]{\boldsymbol{\e}_{#1}}
\definecolor{electricindigo}{rgb}{0.44, 0.0, 1.0}
\definecolor{spirodiscoball}{rgb}{0.06, 0.75, 0.99}
\DeclareMathOperator{\ext}{ext}
\DeclareMathOperator{\Flat}{Flat}
\DeclareMathOperator{\pr}{pr}
\newcommand{\Matt}{\Xi}
\newcommand{\Coalg}{\mathbf{Coalg}}
\begin{document}

\begin{abstract}
The Drinfeld centralizer of a monoidal category $\AS$ in a bimodule category $\MS$ is the category $\ZS(\AS,\MS)$ of objects in $\MS$ for which the left and right actions by objects of $\AS$ coincide, naturally.  In this paper we study the interplay between Drinfeld centralizers of $\AS$ and its homotopy category $\KC^b(\AS)$; culminating with our ``lifting lemma'', which provides a sufficient condition for an object of $\ZS(\AS,\KC^b(M))$ to lift to an object of $\ZS(\KC^b(\AS),\KC^b(\MS))$.

The central application of this lifting lemma is a proof of some folklore facts about conjugation by Rouquier complexes in the Hecke category: the centrality of the full twist, and related properties of half twists and Coxeter braids.


We also prove stronger, homotopy coherent versions of these statements, stated using the notion of the $A_\infty$-Drinfeld centralizer, which we believe is new.
\end{abstract}


\title{Drinfeld centralizers and Rouquier complexes}

\author{Ben Elias} \address{University of Oregon, Eugene}

\author{Matthew Hogancamp} \address{Northeastern University, Boston}

\maketitle

\setcounter{tocdepth}{1}
\tableofcontents


\section{Introduction}
\label{s:intro}

\subsection{Drinfeld centralizers}

Let $A$ be a ring and $M$ an $(A,A)$-bimodule.  Define the \emph{centralizer of $A$ in $M$} to be the subset $Z(A,M)\subset M$ consisting of elements $m\in M$ such that $am=ma$ for all $a\in A$.  Note that this coincides with the zeroth Hochschild cohomology $Z(A,M)=\HH^0(A,M)$.
The center of $A$ is the same as $Z(A,A)$. More generally if $A\subset B$ is an inclusion of algebras then $Z(A,B)$ coincides with the centralizer of $A$ in $B$ in the usual sense.

We are interested in the same notion but one categorical level higher.  For this we will need  a monoidal category $\AS$ (note, all categories in this paper will be assumed linear over a fixed commutative ground ring $\k$). We will denote the monoidal structure on $\AS$ by $\star$ and the monoidal identity by $\one$. Let $\MS$ be a $\star$-bimodule\footnote{In the literature a $\star$-bimodule category is typically just called a bimodule category. See Remark \ref{rmk:bimoduleargh}.} category over $\AS$, i.e.~$\MS$ comes equipped with functors
\[
\AS\otimes_\k \MS\buildrel\star \over \to \MS, \qquad \MS \otimes_\k \AS \buildrel\star \over \to \MS,
\]
satisfying the the usual unit and associativity constraints up to natural isomorphism.  See \cite[Chapter 7]{EGNO15} for details.

If $Z$ is an object of $\MS$, then $Z \star (-)$ is a functor from $\AS$ to $\MS$, as is $(-) \star Z$.

\begin{definition}\label{def:intro centralizer}
The \emph{Drinfeld centralizer of $\AS$ in $\MS$}, denoted $\ZS(\AS,\MS)$, is the category whose objects are pairs $(Z,\tau)$, in which $Z\in \MS$ is an object of $\MS$, and $\tau$ is a natural isomorphism of functors $\tau \colon Z\star (-) \to (-) \star Z$ satisfying the additional constraint that
\begin{equation}
\tau_{X\star Y} = (\id_X\star \tau_Y)\circ (\tau_X\star \id_Y)
\end{equation}
for all $X,Y\in \AS$. Here $\tau_X \colon Z \star X \to X \star Z$ is the morphism induced by $\tau$.
\end{definition}

See \S \ref{ss:relative drinfeld} for more details, as well as a description of morphisms in $\ZS(\AS,\MS)$.  Note that naturality of $\tau$ means that each morphism $f:X\rightarrow Y$ in $\AS$ fits into a commutative square
\begin{equation} \label{centralsquare} 
\begin{tikzpicture}
\node (a) at (0,0) {$Z\star X$};
\node (b) at (3,0) {$Z\star Y$};
\node (c) at (0,2) {$X\star Z$};
\node (d) at (3,2) {$Y\star Z$};
\path[-stealth,thick]
(a) edge node[above] {$\id_Z\star f$} (b)
(c) edge node[above] {$f\star \id_Z$} (d)
(a) edge node[left] {$\tau_X$} (c)
(b) edge node[right] {$\tau_Y$} (d); 
\end{tikzpicture}.
\end{equation}

\begin{remark} \label{remark:twistedcenter} Let $\Phi$ be a monoidal automorphism of $\AS$. It is also interesting to study the $\Phi$-twisted Drinfeld centralizer, whose obtains are pairs $(Z,\tau)$ such that $\tau$ is a natural isomorphism from $Z \star (-)$ to $\Phi(-) \star Z$. Note that the $\Phi$-twisted Drinfeld centralizer is just another example of a Drinfeld centralizer, agreeing with $\ZS(\AS,{}^{\Phi} \MS)$ for the $\star$-bimodule category ${}^{\Phi} \MS$ where the left action of $\AS$ is twisted by $\Phi$. \end{remark}


If $\MS$ is a $\star$-bimodule category over $\AS$ then the category $\Ch^b(\MS)$ of bounded chain complexes\footnote{For the purposes of this part of the introduction, $\Ch^b(\MS)$ is a $\k$-linear category whose morphisms are chain maps. Later we will notate this category as $Z^0(\Ch^b(\MS))$, reserving $\Ch^b(\MS)$ for the dg category whose morphisms are all $\k$-linear maps (not necessarily chain maps). We begin using dg categories in \S\ref{ss:intro infty centralizer} and make all proper definitions in \S\ref{s:dg}.} over $\MS$ is a $\star$-bimodule category over $\Ch^b(\AS)$. Taking the quotient by
nulhomotopic maps, the homotopy category $\KC^b(\MS)$ is a $\star$-bimodule category over $\KC^b(\AS)$. This yields a notion of $\ZS(\KC^b(\AS),\KC^b(\MS))$. Objects of this category are
complexes $Z\in \KC^b(\MS)$ equipped with a family of homotopy equivalences $Z\star X\simeq X\star Z$ for all $X\in \KC^b(\AS)$, which satisfy \eqref{centralsquare} up to homotopy. This is typically a much more fruitful object of study than $\ZS(\AS, \MS)$ itself. There are several known situations where $\ZS(\AS,\MS)$ is trivially small, whereas $\ZS(\KC^b(\AS), \KC^b(\MS))$ is quite interesting, such as when $\AS = \MS = \Diag$ is the Hecke category, see \S\ref{ss:intro rouquier}.

\begin{remark}
A typical central element in an algebra is a linear combination of basis elements, often with signs. To categorify a linear combination with signs one commonly uses complexes of objects and takes the Euler characteristic. Note that $\ZS(\Ch^b(\AS), \Ch^b(\MS))$ is much smaller than $\ZS(\KC^b(\AS), \KC^b(\MS))$ in practice: it is much easier for $Z \star X$ and $X \star Z$ to be homotopy equivalent than to be isomorphic as complexes. \end{remark}

For the practical reader, a very natural question that arises now is: how does one show that a given complex $Z\in \KC^b(\MS)$ admits the structure of an object in the Drinfeld centralizer $\ZS(\KC^b(\AS),\KC^b(\MS))$? To provide a morphism $\tau_X$ for each complex $X \in \KC^b(\AS)$ seems like a prohibitive amount of data.

The problem of constructing objects in $\ZS(\AS,\KC^b(\MS))$ is much more tractable, provided one is given a monoidal presentation of $\AS$ by generators and relations. That is, to show that $Z \in \KC^b(\MS)$ centralizes objects of $\AS$ we need only provide one morphism $\tau_X$ for each generating object $X$ of $\AS$, and check one square \eqref{centralsquare} for each generating morphism. Many related computations have been done in the Hecke category (see e.g. \cite{MMVFlat}), though this becomes more difficult outside of type A.

In this paper our goal is to provide tools which minimize the work needed to construct central objects in general. The following theorem is an abstract method of constructing objects in $\ZS(\AS,\KC^b(\MS))$. To state it, let $Z\in \KC^b(\MS)$ be given, and let $L_Z,R_Z\colon \AS\to \KC^b(\MS)$ denote the functors $L_Z(X)=Z\star X$ and $R_Z(X)=X\star Z$.
Theorem \ref{thm:intro Phi and Z} is more elementary and quite different in flavor to what we accomplish in the rest of the paper.

\begin{theorem}\label{thm:intro Phi and Z}
Retain the setup above.  Assume that $L_Z$ and $R_Z$ are fully faithful and have the same essential image.  Then there is a monoidal autoequivalence $\Phi\colon \AS\to \AS$ such that $Z$ can be equipped with the structure of an object in the Drinfeld centralizer $\ZS(\AS,\KC^b({}^\Phi\MS))$, where ${}^\Phi\MS$ denotes $\MS$ with left action twisted by $\Phi$.
\end{theorem}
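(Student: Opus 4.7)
The plan is to construct $\Phi$ from the abstract principle that two fully faithful functors into the same essential image differ, up to natural isomorphism, by a self-equivalence of the source. Concretely, for each $X\in\AS$, the hypothesis that $L_Z(X)=Z\star X$ lies in the essential image of $R_Z$ lets us \emph{choose} an object $\Phi(X)\in\AS$ together with an isomorphism $\tau_X\colon Z\star X \simto \Phi(X)\star Z$ in $\KC^b(\MS)$. Full faithfulness of $R_Z$ then extends $\Phi$ uniquely to a functor $\AS \to \AS$ making $\tau$ natural: for $f\colon X\to X'$, define $\Phi(f)$ to be the unique morphism in $\AS$ whose image under $R_Z$ equals $\tau_{X'}\circ(\id_Z\star f)\circ\tau_X^{-1}$. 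Since $L_Z$ and $R_Z$ each factor as an equivalence $\AS\to \EC$ onto the common essential image $\EC$ followed by inclusion, the functor $\Phi$ is itself an equivalence.

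The monoidal structure on $\Phi$, together with the twisted hexagon identity of Remark \ref{remark:twistedcenter}, will be extracted simultaneously. For $X,Y\in\AS$, the composite
\[
\nu_{X,Y} := (\id_{\Phi(X)}\star \tau_Y)\circ(\tau_X\star \id_Y)\circ \tau_{X\star Y}^{-1}
\]
is an isomorphism $\Phi(X\star Y)\star Z \simto \Phi(X)\star \Phi(Y)\star Z$ between two objects in the essential image of $R_Z$. Full faithfulness of $R_Z$ thus produces a unique $\bar\nu_{X,Y}\colon \Phi(X\star Y)\simto\Phi(X)\star\Phi(Y)$ with $\bar\nu_{X,Y}\star \id_Z = \nu_{X,Y}$, and I set $\mu_{X,Y} := \bar\nu_{X,Y}^{-1}$. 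By construction, $(\mu_{X,Y}\star \id_Z)\circ(\id_{\Phi(X)}\star \tau_Y)\circ(\tau_X\star \id_Y) = \tau_{X\star Y}$, which is precisely the twisted hexagon appropriate to ${}^\Phi\MS$. The monoidal unit constraint is obtained analogously from $\tau_\one$ and the unitors of $\MS$.

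The remaining task — verifying that $(\Phi,\mu)$ satisfies the pentagon and unit coherence axioms for a monoidal autoequivalence — is, I expect, the main obstacle, though I anticipate it is purely formal. The strategy is to apply $R_Z$ (equivalently, tensor on the right with $Z$) to each coherence diagram in $\AS$; since $R_Z$ is faithful, it suffices to verify commutativity of the resulting diagrams in $\KC^b(\MS)$. After unpacking the definitions of $\mu$ via $\nu_{X,Y}$, the $\tau$'s appearing cancel in pairs, and what remains is a coherence diagram built intrinsically from the associativity and unit constraints of the $\star$-bimodule structure of $\MS$ over $\AS$, which commutes by the axioms of a bimodule category. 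Granted these coherences, Remark \ref{remark:twistedcenter} identifies $(Z,\tau)$ with an object of $\ZS(\AS,\KC^b({}^\Phi\MS))$, completing the proof.
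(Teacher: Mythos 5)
Your proposal is correct and follows essentially the same route as the paper's proof (Theorem \ref{thm:Phi and Z}): defining $\Phi$ by pointwise choices amounts to choosing a quasi-inverse of $R_Z$ and setting $\Phi = R_Z^{-1}\circ L_Z$, the structure map $\mu_{X,Y}$ is exactly the paper's $\psi_{X_1,X_2}$ defined through the diagram \eqref{eq:tau and psi}, and the coherence axioms are verified in both arguments by applying the faithful functor $R_Z$ and observing that the resulting diagram commutes because each face does so by the defining property of the $\psi$'s. The only difference is presentational: the paper spells the cancellation out as a commuting cube, where your sketch says "the $\tau$'s cancel in pairs."
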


This is restated and proven as Theorem \ref{thm:Phi and Z}. One should think that $\Phi$ is $R_Z^{-1} L_Z$. Once one knows that an autoequivalence $\Phi$ exists, the work that
remains is to compute the autoequivalence $\Phi$. The object $Z$ will be in the ordinary Drinfeld centralizer if and only if $\Phi$ is isomorphic to the identity functor. For example, \cite{EBigraded} classifies the relevant autoequivalences of the Hecke category, making it easy to pin down the autoequivalence
$\Phi$ with a minimal number of computations.

A more weighty issue is the problem of lifting from $\ZS(\AS,\KC^b(\MS))$ to $\ZS(\KC^b(\AS),\KC^b(\MS))$, which is an essentially an ``obstruction theory'' problem.
It can be solved under appropriate ext-vanishing conditions, giving a sufficient condition for lifting. The following result is the main tool in our applications.  As is common in homological algebra, for two complexes $X, Y$ in $\Ch^b(\AS)$, we let $\uHom(X,Y)$ denote the hom complex, a chain complex of $\k$-modules whose elements are linear maps (not chain maps) of various degrees, and whose $0$-th homology is the space of morphisms in $\KC^b(\AS)$. See \eqref{eq:uHom} for details. 

%

%


\begin{theorem}\label{thm:intro centralizer}
Suppose $Z\in \KC^b(\MS)$ is such that the complex $\uHom(Z\star X,Y\star Z)$ has zero homology in negative cohomological degrees for all $X,Y\in \AS$. Then any $(Z,\tau)\in \ZS(\AS,\KC^b(\MS))$ lifts to an object $(Z,\ttau)\in \ZS(\KC^b(\AS),\KC^b(\MS))$.
\end{theorem}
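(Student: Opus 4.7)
The strategy is obstruction-theoretic. For each $X \in \KC^b(\AS)$ with components $X^i \in \AS$, the complexes $Z \star X$ and $X \star Z$ are totalizations of bicomplexes whose rows are $Z \star X^i$ and $X^i \star Z$ respectively, with the differential decomposing into a $d_Z$-part acting within each row and a $d_X$-part acting between rows (a Maurer--Cartan-style perturbation). I would build $\ttau_X$ by specifying ``matrix components'' $T^{(k)}_i \in \uHom^{-k}(Z \star X^i, X^{i+k} \star Z)$ for $i \in \Z$ and $k \geq 0$, with $T^{(0)}_i = \tau_{X^i}$, which assemble to a degree-zero element $\ttau_X = \sum_{k,i} T^{(k)}_i$ of $\uHom(Z \star X, X \star Z)$. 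The chain-map condition $d\ttau_X = 0$, split by the integer $k$ measuring how far the $X$-index is raised, becomes the inductive hierarchy
\[
[d_Z, T^{(k)}] \;=\; -[d_X, T^{(k-1)}] \qquad (k \geq 1),
\]
with brackets denoting graded commutators on the hom complex.

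I would proceed by induction on $k$. A formal calculation using $d_Z^2 = d_X^2 = 0$, $[d_Z, d_X] = 0$, graded Jacobi, and the lower equations shows the right-hand side is $d_Z$-closed, so its cohomology class lives in $H^{-(k-1)}(\uHom(Z \star X^i, X^{i+k} \star Z))$ and must vanish for $T^{(k)}$ to exist. For $k = 1$ the class sits in $H^0$ and is exactly the failure of naturality of $\tau$ with respect to the morphism $d_X^{(i)} \co X^i \to X^{i+1}$ in $\AS$, which vanishes because $(Z, \tau) \in \ZS(\AS, \KC^b(\MS))$. For $k \geq 2$ the class lies in $H^{-(k-1)}$ with $-(k-1) < 0$, and vanishes by hypothesis (applied to $X^i, X^{i+k} \in \AS$). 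Boundedness of $X$ ensures only finitely many $T^{(k)}$ are nonzero, so the induction terminates; the resulting $\ttau_X$ is automatically a homotopy equivalence because its associated graded for the bounded $X$-degree filtration is $\bigoplus_i \tau_{X^i}$, a direct sum of isomorphisms in $\KC^b(\MS)$.

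Naturality of $\ttau$ under chain maps $f \co X \to Y$ in $\Ch^b(\AS)$, and the hexagon $\ttau_{X \star Y} \simeq (\id_X \star \ttau_Y) \circ (\ttau_X \star \id_Y)$ in $\KC^b(\MS)$, are established by entirely analogous inductive arguments. For example, to show that $D := (f \star \id_Z) \circ \ttau_X - \ttau_Y \circ (\id_Z \star f)$ is nullhomotopic, one constructs a degree $-1$ homotopy $h = \sum h^{(k)}_i$ with $h^{(k)}_i \in \uHom^{-k-1}(Z \star X^i, Y^{i+k} \star Z)$; the $k=0$ obstruction is killed by naturality of $\tau$ on $\AS$ applied to each $f^i \co X^i \to Y^i$, and each $k \geq 1$ obstruction lies in $H^{-k}(\uHom(Z \star X^i, Y^{i+k} \star Z))$ and so vanishes by hypothesis. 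For the hexagon one runs the same induction, now applied to $(X \star Y)^m \in \AS$ (using that $\AS$ is monoidal, so components of tensor products of complexes in $\AS$ remain in $\AS$). The central difficulty I anticipate is not any single obstruction computation but the global bookkeeping: the lifts $T^{(k)}_i$ are highly non-canonical, and one must coordinate choices across different $X$ so that naturality and the hexagon hold coherently rather than being repaired ad hoc---a tension which ultimately motivates the paper's $A_\infty$-Drinfeld centralizer formulation, in which the higher coherences are recorded as primary data.
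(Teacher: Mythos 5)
Your proposal is correct and its core mechanism is the same as the paper's: filter $\uHom(Z\star X, X\star Z)$ by how far the $\AS$-index is raised, observe that the associated graded pieces are products of $\uHom(Z\star X^i, X^{i+k}\star Z)[-k]$, and kill obstructions inductively, with the base case supplied by the up-to-homotopy naturality (resp.\ multiplicativity) of $\tau$ and all higher obstructions lying in negative-degree cohomology groups that vanish by hypothesis. This is exactly the content of the paper's Lemmas \ref{lemma:filtered cx0}--\ref{lemma:extending maps} and Theorem \ref{thm:lifting lemma}, just written in explicit matrix components rather than via the abstract filtered-complex lemma; your associated-graded argument for invertibility of $\ttau_X$ is also fine for bounded complexes.

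The one genuine difference is how naturality and the hexagon are handled. You propose to rerun the obstruction induction separately for each chain map $f$ and for the multiplicativity square, constructing the nullhomotopies by hand; this works, since for any choices of the components $T^{(k)}$ the difference of the two filtered chain maps has componentwise nullhomotopic lowest-order part and all higher obstructions again sit in vanishing negative cohomology. The paper instead extracts a \emph{uniqueness} statement from the same filtration (the injectivity half of Lemma \ref{lemma:filtered cx}, using $H^0$-vanishing of the positive-degree graded pieces): any two filtered lifts with homotopic diagonal parts are homotopic. Naturality and the hexagon then follow formally, the latter via a multilinear version of the lifting lemma applied to $(X_1,X_2)\mapsto Z\star X_1\star X_2$ and $(X_1,X_2)\mapsto X_1\star X_2\star Z$. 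This also dissolves the worry in your last sentence: no coordination of the non-canonical choices is needed for the statement at hand, precisely because any two choices are homotopic; coherent choices only become an issue for the $A_\infty$ refinement, as you anticipate.
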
 

Let us crudely explain the homological issue at hand. We know how $Z$ commutes past objects, and need to build morphisms which commute $Z$ past complexes. Suppose that $Z$ commutes
with objects $X$ and $Y$ of $\AS$, so that we have isomorphisms $\tau_X$ and $\tau_Y$, and suppose that \eqref{centralsquare} commutes up to homotopy for some $f \colon X \to Y$.
Now consider the two-term complex $\Cone(f)$, whose lone nonzero differential is the map $f$ from $X$ to $Y$. The top row of \eqref{centralsquare} can be loosely viewed as the complex
$\Cone(f) \star Z$; both terms $X \star Z$ and $Y \star Z$ are complexes, so $\Cone(f) \star Z$ is better thought of as the total complex of (the bicomplex given by) the top row.
The same goes for the bottom row and $Z \star \Cone(f)$ (ignoring also the Koszul sign rule for tensor products). Then the vertical arrows $(\tau_X, \tau_Y)$ seem like
they should provide a chain map ``$\tau_X + \tau_Y$'' that could serve as $\ttau_{\Cone(f)}$. The issue is that $\tau_X + \tau_Y$ is not actually a chain map, since
\eqref{centralsquare} does not actually commute; it only commutes up to homotopy. Choosing such a homotopy $h$ (which is an element of $\uHom(Z \star X, Y \star Z)$ in degree
$-1$), one can produce a genuine chain map ``$\tau_X + \tau_Y + h$'' sending $Z \star \Cone(f) \to \Cone(f) \star Z$. This chain map depends on the choice of $h$! When
$\uHom(Z \star X, Y \star Z)$ has zero homology in degree $-1$, the choice of $h$ is irrelevant up to homotopy, so we pick one and call the resulting chain map $\ttau_{\Cone(f)}$.
When considering the functoriality of $\ttau_{\Cone(f)}$ or constructing maps $\ttau$ for iterated cones, we must venture deeper into negative degrees within $\uHom(Z \star X', Y'
\star Z)$ for various $X', Y' \in \AS$.

This theorem is restated and proven as Theorem \ref{thm:lifting drinfeld}.  The proof uses an abstract lifting lemma, stated and proved as Theorem \ref{thm:lifting lemma}, which addresses the following question. Let $\AS, \BS$ be $\k$-linear additive categories. A functor $\AS \to \Ch^b(\BS)$ extends naturally to a functor $\Ch^b(\AS) \to \Ch^b(\BS)$ and descends to a functor $\KC^b(\AS) \to \KC^b(\BS)$. Given two functors $F, G \colon \AS \to \Ch^b(\BS)$ and a natural transformation between the corresponding functors $\AS \to \KC^b(\BS)$, when does this lift to a natural transformation between the corresponding functors $\KC^b(\AS) \to \KC^b(\BS)$? In the case of Theorem \ref{thm:lifting drinfeld}, the two functors in question are $L_Z$ and $R_Z$. Again, we are able to lift any natural transformation in the absence of negative extensions between $F(X)$ and $G(Y)$, for $X, Y \in \AS$.

\subsection{Applications to Rouquier complexes}
\label{ss:intro rouquier}

Let $W$ be a Coxeter group with a finite set of simple reflections $S$.  In \S \ref{s:diag Hecke} we recall the notion of a realization of $(W,S)$, which roughly speaking is the data of a representation $V$ (free over $\k$) on which $S$ acts by reflections.  Associated to $(W,S,V)$ one has the \emph{diagrammatic Hecke category} $\Diag=\Diag(W,S,V)$.  This is a $\Z$-graded $\k$-linear strict monoidal category, 
explicitly\footnote{One relation remains inexplicit for Coxeter groups containing type $H_3$.} described by diagrammatic generators and relations.  In case $W=S_n$ we will denote the diagrammatic Hecke category by $\Diag_n$.

We also have the braid group $\Br:=\Br(W,S)$ associated to the Coxeter system. To each braid $\b\in \Br$ we have an asssociated \emph{Rouquier complex} $F(\b)\in \KC^b(\Diag)$.
Rouquier proved the existence of a canonical homotopy equivalence $F(\b_1) \star F(\b_2) \cong F(\b_1 \cdot \b_2)$ for any two braids $\b_1$ and $\b_2$. The Hecke category plays a ubiquitous role in geometric and categorical representation theory, and in particular, many categorical actions of braid groups arise from actions of the
Hecke category via Rouquier complexes, though often they are described in other languages (e.g. shuffling functors on category $\mathcal{O}$, or zigzag algebras) which obfuscates the appearance of the Hecke category. See e.g. \cite[Section 11]{Rouq06}, \cite{StroppelTL}, \cite{KhovanovSeidel}, as well as \cite{KhoRoz08, Khov07}.

The functor $F(\b) \star (-)$ is invertible, with inverse $F(\b^{-1}) \star (-)$. This invertiblility implies that that $L_{F(\b)}$ and $R_{F(\b)}$ are fully faithful. It also
implies (for all $X,Y\in \AS$) that the complex $\uHom(X \star F(\b),Y\star F(\b))$ is isomorphic to $\uHom(X,Y)$, and thus has zero homology in negative cohomological degrees. If
$F(\b) \star X \cong X \star F(\b)$ for all $X$ (with no assumptions of naturality) then we are primed to use Theorems \ref{thm:intro Phi and Z} and \ref{thm:intro centralizer}.

By this method we establish some folklore results on Rouquier complexes of Coxeter braids, half twists, and full twists. For more details on the theorems below, see \S \ref{s:diag Hecke}. More generally, these tools can be used to prove that conjugation by (the Rouquier complex of) a given braid does what one expects it would do, and does it functorially.

\begin{theorem}
\label{thm:intro half twist}
Let $(W,S)$ be a finite Coxeter group, which does not contain $H_3$ as a parabolic subgroup. Let $\HT$ be the Rouquier complex associated to the positive braid lift of the longest element, often called the \emph{half twist}.  Then $\HT$ lifts to an object of $\ZS(\KC^b(\Diag),{}^\Phi\KC^b(\Diag))$, where $\Phi$ is the composition of a Dynkin diagram automorphism and a sign twist.
\end{theorem}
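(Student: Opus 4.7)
The plan is a two-step application of the introductory theorems: first using Theorem~\ref{thm:intro Phi and Z} to construct a twist $\Phi$ and a centralizing structure at the level of $\Diag$, then using Theorem~\ref{thm:intro centralizer} to lift that structure to $\KC^b(\Diag)$. Since $\HT = F(\beta)$ is the Rouquier complex of a positive braid lift $\beta \in \Br(W,S)$ of $w_0$, it is $\star$-invertible in $\KC^b(\Diag)$, with inverse $F(\beta^{-1})$. Consequently $L_{\HT}$ and $R_{\HT}$ are autoequivalences of $\KC^b(\Diag)$, hence in particular fully faithful with the same essential image, and Theorem~\ref{thm:intro Phi and Z} produces a monoidal autoequivalence $\Phi \colon \Diag \to \Diag$ and a natural isomorphism $\tau$ making $(\HT,\tau)$ an object of $\ZS(\Diag, \KC^b({}^\Phi\Diag))$.

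The main work is identifying $\Phi$. Heuristically $\Phi \approx R_{\HT}^{-1} \circ L_{\HT}$, so $\Phi(X)$ is characterized up to homotopy by the homotopy equivalence $\HT\star X \simeq \Phi(X)\star \HT$; since $\Phi$ is monoidal it suffices to compute this on the generating Bott-Samelson objects $B_s$ for $s \in S$. A direct calculation, carried out inductively using the explicit description of $\HT$ as a product of Rouquier complexes of simple generators and the standard formulas for conjugation of $B_t$ by an invertible Rouquier complex, yields a homotopy equivalence $\HT \star B_s \simeq B_{\sigma(s)} \star \HT$ (up to any necessary grading shift), where $\sigma \colon S \to S$ is the Dynkin diagram automorphism $s \mapsto w_0 s w_0^{-1}$. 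Invoking the classification of monoidal autoequivalences of $\Diag$ from \cite{EBigraded}, the autoequivalences whose action on indecomposables matches $\sigma$ form a torsor under sign twists; a final comparison of the action of $\Phi$ on generating morphisms (dots and trivalent vertices) pins down which sign twist occurs.

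With $(\HT,\tau) \in \ZS(\Diag, \KC^b({}^\Phi\Diag))$ and $\Phi$ identified, the lifting is a formal application of Theorem~\ref{thm:intro centralizer}. The required ext-vanishing is that $\uHom(\HT \star X,\, \Phi(Y) \star \HT)$ has zero cohomology in negative degrees for all $X, Y \in \Diag$ (this is the hom-complex relevant to the bimodule category ${}^\Phi \Diag$). Using $\tau_X \colon \HT \star X \simeq \Phi(X) \star \HT$ and the fact that $\star$-multiplication by the invertible complex $\HT$ is a dg autoequivalence of $\Ch^b(\Diag)$, this hom-complex is quasi-isomorphic to $\uHom(\Phi(X), \Phi(Y)) \cong \uHom(X,Y)$, which is concentrated in cohomological degree zero because $X$ and $Y$ are objects (not complexes). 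Theorem~\ref{thm:intro centralizer} then applies and produces the desired object of $\ZS(\KC^b(\Diag), \KC^b({}^\Phi\Diag))$.

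The principal obstacle is the second step. The homotopy equivalences $\HT \star B_s \simeq B_{\sigma(s)} \star \HT$ must be established by an inductive argument over the length of $w_0$, and the sign twist is detected by a careful check on generating morphisms. This is precisely where the hypothesis excluding $H_3$ enters: the diagrammatic presentation of $\Diag$ is fully explicit exactly when no parabolic subgroup is $H_3$, which is what permits the naturality squares \eqref{centralsquare} to be verified on every generating morphism of $\Diag$.
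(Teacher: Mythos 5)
Your overall skeleton (Theorem \ref{thm:intro Phi and Z} to produce $\Phi$, then Theorem \ref{thm:intro centralizer} with ext-vanishing coming from invertibility of $\HT$) matches the paper's strategy, and your final lifting step is correct. But there is a genuine gap at the first step: invertibility of $\HT$ gives full faithfulness of $L_{\HT},R_{\HT}\colon \Diag\to\KC^b(\Diag)$, but it does \emph{not} give that these two functors have the same essential image. That hypothesis is precisely the statement that $\HT\star B_s\simeq B_{\tau(s)}\star\HT$ for every simple reflection $s$ (where $\tau(s)=w_0sw_0$), which is the real content of the theorem; as written you invoke Theorem \ref{thm:intro Phi and Z} before establishing these equivalences and then present them afterwards as a ``computation of $\Phi$'', so the logic is circular. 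Moreover the equivalences themselves are only asserted via an unspecified inductive ``direct calculation with standard conjugation formulas''. In the paper this is where the work goes: one writes $\b_{w_0}\sigma_s=\sigma_{\tau(s)}\b_{w_0}$ as positive braid words, uses the residue/canonical-map formalism (Lemma \ref{lemma:canonicity2}: the space of chain maps $F(\b\gamma)\to F(\b\sigma_s\gamma)$ of bidegree $(1,-1)$ is one-dimensional up to homotopy) together with the presentation of $B_s$ as $\Cone(-\nu_s)$ to build $\phi\colon \HT\star B_s\to B_{\tau(s)}\star\HT$ and its compatibility with the dot $\e_s$ (Proposition \ref{prop:b commuting}, Lemma \ref{lemma:HT on obj}); no induction over the length of $w_0$ is needed.

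Second, you never actually identify the sign twist; ``a careful check on generating morphisms'' is not an argument, and it is unclear what you would check without redoing the hard part. The paper detects the sign with essentially no computation: conjugation by $F(\b)$ acts on polynomials by $w$ (Lemma \ref{lem:conjpoly}), $w_0(\a_s)=-\a_{\tau(s)}$, and by the classification of monoidal autoequivalences in \cite{EBigraded} the functor $\Phi$ is determined up to isomorphism by its effect on $\e_s$ and on $V$; this pins down $\Phi(\e_s)=\e_{\tau(s)}$ and $\Phi(f)=w_0(f)$, i.e.\ a Dynkin diagram automorphism composed with a sign twist (equivalently, the same $\phi$ that intertwines $\e_s$ has $-\phi$ intertwining $\eta_s$, Proposition \ref{prop:phivspsi}). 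Relatedly, your explanation of the $H_3$ hypothesis does not cohere with your own route: the naturality squares \eqref{centralsquare} come for free from Theorem \ref{thm:intro Phi and Z} and never need to be verified on generators; the exclusion of $H_3$ is needed because the classification of autoequivalences in \cite{EBigraded}, which is what lets one identify $\Phi$ from its action on polynomials alone, requires the fully explicit presentation of $\Diag$.
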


\begin{cor}
\label{cor:intro full twist} 
Let $\FT=\HT^2$ be the Rouquier complex associated to the full twist braid.  Then $\FT_n$ lifts to an object of $\ZS(\KC^b(\Diag))$.
\end{cor}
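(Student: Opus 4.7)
The plan is to deduce the corollary directly from Theorem \ref{thm:intro half twist} by composing the central structure on $\HT$ with itself.

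Given the object $(\HT, \ttau) \in \ZS(\KC^b(\Diag), {}^\Phi\KC^b(\Diag))$ supplied by Theorem \ref{thm:intro half twist}, for each $X \in \KC^b(\Diag)$ I would define $\ttau'_X \colon \FT \star X \to \Phi^2(X) \star \FT$ as the composition
\[
\HT \star \HT \star X \xrightarrow{\id_{\HT} \star \ttau_X} \HT \star \Phi(X) \star \HT \xrightarrow{\ttau_{\Phi(X)} \star \id_{\HT}} \Phi^2(X) \star \HT \star \HT.
\]
Naturality of $\ttau'$ in $X$ follows from two applications of the naturality of $\ttau$ (first to $f \colon X \to Y$, then to $\Phi(f)$). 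The hexagon identity for $\ttau'_{X \star Y}$ is a short diagram chase: substitute the definition, invoke the hexagon identity for $\ttau$ twice (once for $\ttau_{X \star Y}$ and once for $\ttau_{\Phi(X) \star \Phi(Y)}$), and apply the interchange law to reorganize the four remaining tensor factors. This exhibits $(\FT, \ttau')$ as an object of $\ZS(\KC^b(\Diag), {}^{\Phi^2}\KC^b(\Diag))$.

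It then remains to identify $\Phi^2$ with the identity up to monoidal isomorphism. By Theorem \ref{thm:intro half twist}, $\Phi$ factors as a Dynkin diagram automorphism induced by $w_0$-conjugation on the simple reflections, composed with a sign twist. The former is an involution for every finite Coxeter system under consideration (in type $D_{2k}$ it is trivial, in type $D_{2k+1}$ it swaps the two legs, in type $A$ and $E_6$ it is the palindromic flip, and it is trivial in the remaining types), the latter is an involution, and the two commute; hence $\Phi^2 \cong \id$. Transporting $\ttau'$ along any such isomorphism places $(\FT, \ttau')$ in the ordinary Drinfeld centralizer $\ZS(\KC^b(\Diag))$, proving the corollary.

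The only real obstacle is bookkeeping: verifying the hexagon identity for the composed natural transformation and choosing the isomorphism $\Phi^2 \cong \id$ so that the resulting central structure on $\FT$ is the expected one. No further appeal to Theorem \ref{thm:intro centralizer} is needed, since the hard lifting work has already been absorbed into Theorem \ref{thm:intro half twist}.
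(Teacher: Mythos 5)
Your proposal is correct and follows essentially the same route as the paper: the paper deduces the corollary from Theorem \ref{thm:HT action} together with Lemma \ref{lem:composedrinfeld} (tensoring a $\Phi_1$-twisted central object with a $\Phi_2$-twisted one yields a $\Phi_1\circ\Phi_2$-twisted central object, which is exactly your composite $\ttau'$), plus the observation that $\Phi^2$ is the identity. The only differences are cosmetic: since the paper's $\Phi$ is pinned down on generators ($\Phi(B_s)=B_{\tau(s)}$, $\Phi(\e_s)=\e_{\tau(s)}$, $\Phi(f)=w_0(f)$), one gets $\Phi^2=\id$ on the nose and no transport along an isomorphism $\Phi^2\cong\id$ is required, and your type-by-type list of diagram automorphisms (which, incidentally, omits the odd dihedral case) is unnecessary because $\tau(x)=w_0xw_0$ and $w_0$ acting on $V$ are involutions for any finite Coxeter group.
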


We discuss the existing literature on the full twist below, but the literature is relatively silent on the half twist and its associated automorphism $\Phi$. While the Dynkin diagram automorphism is expected behavior, we are unaware of this sign twist appearing in the literature. The literature also has little to say outside of type $A$.

Interestingly, one can prove the results above with almost no direct computation. Let $\Phi$ be whatever automorphism is associated with conjugation by the half twist. Composing $\Phi$ with the Dynkin diagram automorphism associated to the longest element we get a monoidal autoequivalence of $\Diag$ preserving the objects. Such autoequivalences were classified in \cite[Chapter 3]{EBigraded}, and it is easy to classify them further up to isomorphism. Up to isomorphism, such an autoequivalence is determined by how it acts on $V$ (living inside the endomorphism ring of the monoidal identity), which is a simple and well-known computation. However, we do some extra work to show how one can also explicitly construct the structure map $\tau$ using ``canonical morphisms'' between Rouquier complexes.

\begin{remark} The presentation for the Hecke category is not fully explicit in type $H_3$, due to a missing ``Zamolodchikov'' equation. Consequently, the paper \cite{EBigraded}
assumes that $W$ does not contain $H_3$ as a parabolic subgroup, and for some of our results we must make the same assumption. If the missing Zamolodchikov relation is found and
the results of \cite{EBigraded} extended to that setting, then our results can be extended for free. \end{remark}

We have a relative version of Theorem \ref{thm:intro half twist} which is useful.  To state it, let $(W,S)$ be a finite Coxter group, and let $I\subset S$ be given.  Let $W_I\subset W$ be the (parabolic) subgroup.  Let $w_0(S)$ and $w_0(I)$ denote the longest elements of $W$ and $W_I$, respectively, and let $\HT_S$ and $\HT_I$ denote the associated Rouquier complexes.  Let $w_0(S/I):= w_0(S)w_0(I)$ with associated Rouquier complex $\HT_{S/I}$, so that
\begin{equation}
\HT_S \simeq \HT_{S/I}\star \HT_{I}.
\end{equation}

\begin{example}
In type $A$, consider the symmetric group $S_{n+m}$ with its parabolic subgroup $S_n\times S_m$ (so that $S=\{1,\ldots,m+n-1\}$ and $I=S\setminus\{n\}$.  Then
\begin{equation}\label{eq:type A rel HT}
\HT_{S/I} \ \ := \ \ F\left(\,
\begin{tikzpicture}[scale=1.5,baseline=.5cm]
\draw[very thick]
(1,0) -- (0,1)
(1.3,0) -- (.3,1)
(1.6,0) -- (.6,1);
\draw[line width=5pt,white]
(0,0) -- (1,1)
(.2,0) -- (1.2,1)
(.4,0) -- (1.4,1)
(.6,0) -- (1.6,1);
\draw[very thick]
(0,0) -- (1,1)
(.2,0) -- (1.2,1)
(.4,0) -- (1.4,1)
(.6,0) -- (1.6,1);
\node at (0,-.1) {\scriptsize$1$};
\node at (.3,-.1) {\scriptsize$\cdots$};
\node at (.6,-.1) {\scriptsize$n$};
\node at (1,-.1) {\scriptsize$1$};
\node at (1.3,-.1) {\scriptsize$\cdots$};
\node at (1.6,-.1) {\scriptsize$m$};
\end{tikzpicture}\,\right)
\end{equation}
(where $F(\b)$ denotes the Rouquier complex of a braid $\b$, as usual).
\end{example}

\begin{corollary}\label{cor:intro relative half twist}
Retain the setup above.  Let $\Diag_S:=\Diag(W,S,V)$ and $\Diag_I:=\Diag(W_I,I,V)$.  Then $\HT_{S/I}$ lifts to an object of $\ZS(\KC^b(\Diag_I),{}^{\Phi}\KC^b(\Diag_S))$. Here $\Phi$ is a composition of the Dynkin diagram automorphisms for $I$ and $S$, see Theorem \ref{thm:relative HT} for details.
\end{corollary}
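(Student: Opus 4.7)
The plan is to apply Theorems \ref{thm:intro Phi and Z} and \ref{thm:intro centralizer} to the Rouquier complex $Z := \HT_{S/I}$, with $\AS = \Diag_I$ acting on the bimodule category $\MS = \Diag_S$ via the standard parabolic embedding $\Diag_I \hookrightarrow \Diag_S$. Both hypotheses --- the fully faithful and common essential image condition of Theorem \ref{thm:intro Phi and Z}, and the ext-vanishing condition of Theorem \ref{thm:intro centralizer} --- will follow almost for free from the fact that $\HT_{S/I}$ is the Rouquier complex of a braid and hence invertible in $\KC^b(\Diag_S)$.

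First, invertibility of $Z$ immediately makes $L_Z$ and $R_Z$ fully faithful. For the ext-vanishing, invertibility yields
\[
\uHom(Z \star X, Y \star Z) \;\simeq\; \uHom(X,\; Z^{-1} \star Y \star Z)
\]
for any $X, Y \in \Diag_I$. Once we show (as the main step below) that $Z^{-1} \star Y \star Z$ is homotopy equivalent to an object concentrated in cohomological degree zero, the displayed hom complex has vanishing homology in negative degrees, so the hypothesis of Theorem \ref{thm:intro centralizer} is verified.

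The core of the proof is therefore to match the essential images of $L_Z$ and $R_Z$, which simultaneously identifies the autoequivalence $\Phi$. For this I would use the factorization $\HT_S \simeq \HT_{S/I} \star \HT_I$ together with Theorem \ref{thm:intro half twist} applied in both $\Diag_S$ and $\Diag_I$. That theorem produces Dynkin/sign twists $\Phi_S$ and $\Phi_I$ realizing conjugation by $\HT_S$ and $\HT_I$ up to homotopy. For $X \in \Diag_I$, combining the equivalences
\[
\HT_S \star X \simeq \Phi_S(X) \star \HT_S, \qquad \HT_I \star X \simeq \Phi_I(X) \star \HT_I,
\]
and cancelling the invertible complex $\HT_I$ on the right of both sides yields
\[
\HT_{S/I} \star \Phi_I(X) \;\simeq\; \Phi_S(X) \star \HT_{S/I}.
\]
This exhibits the required match of essential images and pins down the twist as $\Phi := \Phi_S \circ \Phi_I^{-1}$, manifestly a composition of Dynkin (and sign) automorphisms as claimed.

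With both hypotheses in hand, Theorem \ref{thm:intro Phi and Z} equips $\HT_{S/I}$ with the structure of an object of $\ZS(\Diag_I, \KC^b({}^\Phi \Diag_S))$, and Theorem \ref{thm:intro centralizer} then lifts this to the desired object of $\ZS(\KC^b(\Diag_I), {}^\Phi \KC^b(\Diag_S))$. The main obstacle, and where the Coxeter-theoretic work lies, is in carefully setting up $\Phi$ so that the formula $\Phi_S \circ \Phi_I^{-1}$ is well-defined as an autoequivalence of the appropriate (sub)category --- that is, in tracking how $w_0(S/I)$ conjugates simple reflections of $W_I$ into (possibly another) parabolic and checking that this bookkeeping is absorbed by the Dynkin and sign twists coming from Theorem \ref{thm:intro half twist}. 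Once this is settled, the rest is a direct application of the two abstract theorems via the invertibility of Rouquier complexes.
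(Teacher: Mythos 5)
Your proposal is correct in substance, and its key step is the same idea as the paper's: the paper proves Theorem \ref{thm:relative HT} in one line by writing $\HT_{S/I}\simeq \HT_S\star \HT_I^{-1}$ and composing the two already-lifted twisted centralizer structures from Theorem \ref{thm:HT action} via Lemma \ref{lem:composedrinfeld}; your cancellation of $\HT_I$ from $\HT_S\star X\simeq \Phi_S(X)\star\HT_S$ and $\HT_I\star X\simeq\Phi_I(X)\star\HT_I$ is exactly that composition, carried out by hand, and it correctly identifies the twist as $\Phi_S\circ\iota\circ\Phi_I^{-1}\cong\Phi_S\circ\iota\circ\Phi_I$. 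The difference is that you then descend to the object level and re-run the lifting machinery, which is redundant: Theorem \ref{thm:intro half twist} already supplies structures in $\ZS(\KC^b(\Diag_S),{}^{\Phi_S}\KC^b(\Diag_S))$ and $\ZS(\KC^b(\Diag_I),{}^{\Phi_I}\KC^b(\Diag_I))$, with naturality and multiplicativity at the homotopy-category level, so your cancellation already produces the desired object of $\ZS(\KC^b(\Diag_I),{}^{\Phi}\KC^b(\Diag_S))$ and no appeal to Theorem \ref{thm:intro centralizer} (nor the ext-vanishing check) is needed.

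One caveat on your framing: Theorem \ref{thm:intro Phi and Z} does not literally apply with $\AS=\Diag_I$ acting on the untwisted $\MS=\Diag_S$, because $L_Z$ and $R_Z$ need not have the same essential image. Conjugation by $w_0(S/I)$ carries $W_I$ to the parabolic subgroup on $I'=\tau_S\tau_I(I)\subset S$, which can differ from $I$ (already for $S_n\times S_m\subset S_{n+m}$ with $n\neq m$), so the resulting $\Phi$ is a functor $\Diag_I\to\Diag_S$ landing in $\Diag_{I'}$ rather than an autoequivalence of $\Diag_I$; this is why the statement twists the left action by $\Phi$ rather than invoking an autoequivalence. You flag this bookkeeping at the end, and your cancellation argument constructs $\Phi$ and the structure maps directly without needing Theorem \ref{thm:intro Phi and Z}, so the proof survives --- but the explicit appeal to that theorem should be dropped or replaced by the observation that the essential images match only after twisting by $\Phi$.
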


\begin{remark}\label{rmk:LMGRSW}
In the case $W=S_{n+m}$ with $W_I=S_n\times S_m$ an $\infty$-categorical version of Corollary \ref{cor:intro relative half twist} was proven in \cite{LMGRSW}. They work with Soergel bimodules (rather than the diagrammatic incarnation of the Hecke category) and show that the Rouquier complexes appearing in \eqref{eq:type A rel HT} are responsible for the (fully homotopy coherent) braiding in a braided monoidal $(2,2)$-category structure on the Hecke categories $\{\Diag(S_n)\}_{n\geq 0}$.

In this paper we prove an $A_\infty$ lift of Corollary \ref{cor:intro relative half twist}. We do not have a direct comparison with the work of \emph{loc.~cit.} since we adopt the setting of dg categories rather than $\infty$-categories. We stress that $A_\infty$ algebras and modules are expressed in dg categorical language (complexes, chain maps, and so on) and not in $\infty$-categorical language.
\end{remark}

\begin{remark}
During the preparation of this paper, we learned that Stroppel and Wedrich \cite{StropWed} have been independently developing a notion of $A_\infty$ twisted Drinfeld centralizers (on the level of objects), and have shown that complexes of the form \eqref{eq:type A rel HT} give examples of such.  We have coordinated the posting of our preprints in order to emphasize the independence of our approaches.
\end{remark}

The following special case of Corollary \ref{cor:intro relative half twist} is particularly important in topological applications.  Let $C_{n+1}$ be the Rouquier complex associated to the so-called \emph{Coxeter braid} $\sigma_1\cdots \sigma_n$.  Observe that $C_{n+1}=\HT_{S/I}$ where $S=\{1,\ldots,n\}$ is the set of simple reflections of $S_{n+1}$ and $I=S\setminus \{n\}$. 
\begin{cor}
\label{cor:intro coxeter}
The complex $C_{n+1}\in \KC^b(\Diag_{n+1})$ lifts to an object of $\ZS(\KC^b(\Diag_n),\KC^b(\Diag_{n+1}))$, where $\Diag_{n+1}$ has the $(\Diag_n,\Diag_n)$ $\star$-bimodule structure given by
\begin{equation}
B \cdot M\cdot B' := (\one_1\boxtimes B)\star M\star (B'\boxtimes \one_1).
\end{equation}
\end{cor}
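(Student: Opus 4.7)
The strategy is to deduce this statement directly from Corollary \ref{cor:intro relative half twist}, applied with $W = S_{n+1}$, $S = \{s_1,\ldots,s_n\}$, and $I = S \setminus \{s_n\}$, so that $W_I \cong S_n$ is the parabolic subgroup fixing the last strand and $\Diag_I \cong \Diag_n$ via the standard parabolic embedding. The paragraph preceding the statement already observes that $C_{n+1} = \HT_{S/I}$ in this setup. Corollary \ref{cor:intro relative half twist} therefore produces a lift of $C_{n+1}$ to an object of $\ZS(\KC^b(\Diag_n), {}^\Phi\KC^b(\Diag_{n+1}))$ for some $\Phi$ given as a composition of the Dynkin diagram automorphisms for $I$ and $S$.

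The remaining step is to match the twisted bimodule structure ${}^\Phi\KC^b(\Diag_{n+1})$ with the one prescribed in the statement. The right action in both descriptions is via the standard parabolic embedding $B' \mapsto B' \boxtimes \one_1$, so no twist is needed on the right. For the left action, the key computation is that in type $A$ the longest-element Dynkin automorphism on $S$ sends $s_i \mapsto s_{n+1-i}$ while that on $I$ sends $s_i \mapsto s_{n-i}$; composing these on the common domain $I$ yields the shift $s_i \mapsto s_{i+1}$. Under this shift, the standard embedding $B \mapsto B \boxtimes \one_1$ of $\Diag_n \hookrightarrow \Diag_{n+1}$ is transported to the embedding $B \mapsto \one_1 \boxtimes B$, which is precisely the left action appearing in the statement. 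Hence ${}^\Phi\KC^b(\Diag_{n+1})$ coincides with the $(\Diag_n,\Diag_n)$-bimodule category described.

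Since the heavy lifting is done by Corollary \ref{cor:intro relative half twist}, the main (mild) obstacle is simply to verify that the composition of Dynkin automorphisms comes out to the strand-shift on the nose, with no extraneous sign twist surviving from Theorem \ref{thm:intro half twist}. The cancellation of signs is automatic because sign twists are involutions that commute with Dynkin automorphisms, and they appear identically in the conjugation automorphisms of both $\HT_S$ and $\HT_I$, canceling in the relative combination corresponding to $C_{n+1} \simeq \HT_S \star \HT_I^{-1}$. Granting this bookkeeping (which is already embedded in Corollary \ref{cor:intro relative half twist}), the identification of bimodule structures is immediate and the corollary follows.
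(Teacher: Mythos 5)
Your argument is correct and is essentially the paper's own: Corollary \ref{cor:intro coxeter} is stated as the special case of Corollary \ref{cor:intro relative half twist} (Theorem \ref{thm:relative HT}) with $W=S_{n+1}$, $I=S\setminus\{s_n\}$, where $C_{n+1}=\HT_{S/I}$, and the identification of the twisted left action with $B\mapsto \one_1\boxtimes B$ via the shift $s_i\mapsto s_{i+1}$, with the two root-sign twists cancelling, is exactly what the remark following Theorem \ref{thm:relative HT} records. Your sign-cancellation phrasing is a little loose, but the substance (each of $\Phi_S$ and $\Phi_I$ contributes one sign on simple roots and the composite $\Phi_S\circ\iota\circ\Phi_I$ has none) matches the paper.
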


\begin{remark}
We announced the results in this section many years ago, though this paper accrued dust on the shelf as we hoped to be able to construct the flattening functor, see the next remark. Though the long delay is regrettable, one silver lining is that in the intervening years we were able to formulate and prove $A_\infty$ versions of our main results (see \S \ref{ss:intro infty centralizer} for a summary).

In the meantime, an explicit study of conjugation by $C_{n+1}$ was performed by Mackaay, Miemietz, and Vaz \cite[\S 4.2]{MMVFlat}. They compute the action of conjugation on
morphisms, and provide a handy diagrammatic calculus which mixes ordinary Soergel calculus with a new color representing the object $C_{n+1}$. It is a very clean presentation of
the computations, so while we originally planned on including many of these explicit computations as examples, we now refer instead to \cite{MMVFlat}. In this paper we focus more
on general methods to simplify such calculations. Using our techniques, only a fraction of the computations in \cite[\S 4]{MMVFlat} are actually required. Note also that \cite{MMVFlat} primarily treats the tensoring with $C_{n+1}$ as a functor from $\Diag$ to $\KC^b(\Diag)$, and does not address the issue of lifting. \end{remark}

\begin{remark} One of the original motivations for this paper remains still out of reach: the flattening functor conjectured in \cite{EGaitsgory}. Let $W_{\ext}$ denote the
extended affine Weyl group in type $\tilde{A}_{n}$, whose braid group $\Br_{\ext}$ can be visualized as braids with $n+1$ strands on a cylinder. Let
$W_{\fin}$ denote the finite Weyl group of type $A_n$. There is a group homomorphism $\Br_{\ext} \to \Br_{\fin}$ which squashes the cylinder flat. The extended affine braid group contains a crossingless braid which rotates the strands around the cylinder, and this flattens to the Coxeter braid $C_{n+1}$.

There is also an extended affine Hecke category $\Diag_{\ext}$, with Rouquier complexes associated to each cylindrical braid (up to canonical homotopy equivalence), see \cite[\S 3,
\S 4.4]{EGaitsgory}. The technology above and in \cite{EBigraded} can be adapted to provide a painless proof that the flattening homomorphism lifts to a functor $\Flat \colon
\Diag_{\ext} \to \KC^b(\Diag_{\fin})$; this was also proven in \cite[\S 5]{MMVFlat}. However, it is far from obvious that $\Flat$ extends to a functor $\KC^b(\Diag_{\ext}) \to
\KC^b(\Diag_{\fin})$, which is conjectured in \cite[p9]{EGaitsgory}. This is a question similar to that addressed by our lifting lemma above (it is about lifting functors, not
lifting natural transformations, but the homological obstructions overlap). Unlike the functor of tensoring with a Rouquier complex, the functor $\Flat$ is not invertible and the
vanishing of negative exts is still conjectural.
 \end{remark}

\subsection{The $A_\infty$-Drinfeld centralizer}
\label{ss:intro infty centralizer}
The paper \cite{GHW} introduced a notion of derived (or dg) horizontal trace of a $\k$-linear (or dg) monoidal category, as well as a dg version of the Drinfeld center.  The paper \cite{GHW} goes on to show that the dg Drinfeld center acts on the dg horizontal trace, and suggests that this action ought to be responsible for various cabling and satellite operations in link homology.

However, the dg Drinfeld center introduced in \cite{GHW} is still too strict for many purposes. For instance we do not know of a direct proof that our favorite Rouquier complexes (for instance, full twists) admit structures in the dg Drinfeld center as it appears in \cite{GHW}.  For this reason, we are motivated to find a more flexible homotopy theoretic notion of Drinfeld center (or centralizer).  We discuss this next (see Definition \ref{def:intro infty drinfeld} and Remark \ref{rmk:GHW vs EH} for a comparison).

Recall that to lift a complex $Z$ in $\KC^b(\MS)$ to an object of $\ZS(\AS,\KC^b(\MS))$, we need to provide the data of chain maps $\tau_X$ for all $X \in \AS$ for
which \eqref{centralsquare} commutes up to homotopy for each morphism $f$ in $\AS$. However, it would be better for many purposes to include these homotopies (and appropriate higher homotopies) as part of the structure of $Z$.

To expand on this slightly, from the homotopy theory perspective, one ought to choose, for each morphism $f$, a homotopy $h_f$ which witnesses the homotopy commutativity of \eqref{centralsquare}, which we may write (abbreviating somewhat) as $d(h_f) = f\cdot \tau - \tau \cdot f$.  The chosen homotopies $h_f$ for various $f$'s should not be independent.  At the very least, the $h_f$'s should linear in $f$, so that $h_{f_1+f_2}=h_{f_1}+h_{f_2}$.   Moreover, if $f,g$ are composable morphisms then we can construct a priori two different homotopies which witness $(f\circ g)\cdot \tau \simeq \tau\cdot (f\circ g)$, namely, $h_{f\circ g}$ and $h_f\cdot g + f\cdot h_g$.  The philosophy of ``homotopy coherence'' suggests that these two different homotopies should themselves be homotopic.  In other words, for each pair of composable morphisms $f,g$ there ought to be a degree $-2$ ``higher homotopy'' $h_{f,g}$ satisfying
\[
d(h_{f,g}) = h_f\cdot g - h_{f\circ g}+ f\cdot h_g.
\]
One can continue in this fashion, obtaining (in the most desirable circumstances) a collection of higher homotopies $h_{f_1,\ldots,f_r}$, parametrized by sequences of composable morphisms. This sequence of higher homotopies can be encoded using the bar complex of $\AS$, which is an (unbounded) complex whose chain objects parametrize sequences of composable maps.
%

The second main goal of this paper is to formulate a homotopy coherent version of Drinfeld centralizer, which we call the $A_{\infty}$-Drinfeld centralizer, and to extend our lifting results to this setting. For this we need a great deal of additional abstraction and categorical technology.  Readers who are content with statements which are true ``up to homotopy'' (without including all the homotopies and higher homotopies around as part of the structure) may wish to skip ahead to \S \ref{s:dg} and \S \ref{s:diag Hecke} in this paper.  For readers who embrace the homotopy coherent philosophy above, we now shift to this perspective and summarize the salient features.

\begin{remark} 
Let us mention a concrete benefit to homotopy coherence.   In the usual Drinfeld centralizer, each hom space $\Hom_{\ZS(\AS,\MS)}(Z_1,Z_2)$ is naturally a module over $\HH_0(\AS)$.  In the $A_\infty$-Drinfeld centralizer, each hom complex
\[
\Hom_{\ZS_\infty(\Ch^b(\AS),\Ch^b(\MS))}(Z_1,Z_2)
\]
is an $A_\infty$ module over the higher Hochschild homology $\HH_\bullet(\AS)$. However, each Hom space in the naive Drinfeld centralizer $\ZS(\KC^b(\AS),\KC^b(\MS))$
%
is instead a module over $\HH_0(\KC^b(\AS))$. The ring $\HH_0(\KC^b(\AS))$ is extremely unwieldy, and in general is very different from $\HH_\bullet(\AS)$.
\end{remark}

%
%

Our strategy is to rephrase the usual Drinfeld center in way that strongly suggests its homotopy coherent generalization.

We begin by giving $\MS$ the structure of a category enriched in $(\AS,\AS)$-bimodules.  Given objects $Z',Z\in \MS$, an \emph{enriched morphism} $Z'\to Z$ is a triple $(X,X',f)$ where $X,X'\in \AS$ and $f$ is a morphism $Z'\star X'\to X\star Z$ in $\MS$, visualized diagrammatically as
\[
\begin{tikzpicture}
\draw[-stealth,ultra thick, electricindigo]
(0,0) ..controls ++(0,.5) and ++(0,-.5).. (1,1.5);
\draw[-stealth,ultra thick]
(1,0) ..controls ++(0,.5) and ++(0,-.5).. (0,1.5);
\filldraw[fill=white,rounded corners] (0,.5) rectangle (1,1);
\node at (.5,.75) {\scriptsize $f$};
\node at (1,-.3) {\scriptsize $X'$};
\node at (0,1.8) {\scriptsize $X$};
\node at (0,-.3) {\scriptsize $Z'$};
\node at (1,1.8) {\scriptsize $Z$};
\end{tikzpicture}
\]
The collection of enriched morphisms $Z'\to Z$ is denoted 
\begin{equation} \Hom_{\Qmod{\MS}}(Z',Z) := \bigoplus_{X,X' \in \AS} \Hom_{\MS}(Z' \star X', X \star Z), \end{equation}
and gives rise to the enriched category $\Qmod{\MS}$.

The composition of enriched morphisms, denoted $f_2\bullet f_1$, is defined as follows.  Given objects $Z_0,Z_1,Z_2\in \MS$ and $X_1,X_2,X_1',X_2'\in \AS$, and morphisms $f_i\colon Z_{i-1}\star X_i'\to Z_i$ (for $i=1,2$) we set
\begin{equation}
f_2\bullet f_1 = (\id_{X_1}\star f_2)\circ (f_1\star \id_{X_2'})
\end{equation}
or, diagrammatically,
\begin{equation}
\begin{tikzpicture}[baseline=.75cm]
\draw[-stealth,ultra thick, electricindigo]
(0,0) ..controls ++(0,.5) and ++(0,-.5).. (1,1.5);
\draw[-stealth,ultra thick]
(1,0) ..controls ++(0,.5) and ++(0,-.5).. (0,1.5);
\filldraw[fill=white,rounded corners] (0,.5) rectangle (1,1);
\node at (.5,.75) {\scriptsize $f_2\bullet f_1$};
\node at (1,-.3) {\scriptsize $X_1'\star X_2'$};
\node at (0,1.8) {\scriptsize $X_1\star X_2$};
\node at (0,-.3) {\scriptsize $Z_0$};
\node at (1,1.8) {\scriptsize $Z_2$};
\end{tikzpicture}
\quad := \quad
\begin{tikzpicture}[baseline=1.5cm]
\draw[-stealth,ultra thick, electricindigo]
(0,0) ..controls ++(0,.5) and ++(0,0).. (.75,1.5) ..controls ++(0,0) and ++(0,-.5).. (1.5,3);
\draw[-stealth,ultra thick]
(1,0) ..controls ++(0,.5) and ++(0,-.5).. (0,2)--(0,3);
\draw[-stealth,ultra thick]
(1.5,0) -- (1.5,1) ..controls ++(0,.5) and ++(0,-.5).. (.5,3);
\filldraw[fill=white,rounded corners] (0,.75) rectangle (1,1.25);
\filldraw[fill=white,rounded corners] (.5,1.75) rectangle (1.5,2.25);
\node at (.5,1) {\scriptsize $f_1$};
\node at (1,2) {\scriptsize $f_2$};
\node at (0,-.3) {\scriptsize $Z_0$};
\node at (1,-.3) {\scriptsize $X_1'$};
\node at (1.5,-.3) {\scriptsize $X_2'$};
\node at (0,3.3) {\scriptsize $X_1$};
\node at (.5,3.3) {\scriptsize $X_2$};
\node at (1.5,3.3) {\scriptsize $Z_2$};
\end{tikzpicture}
\end{equation}
As usual, if enriched morphisms are not composable then we set the composition to be zero.

There is also an left action of $\AS$ on $\Hom_{\Qmod{\MS}}(Z',Z)$ by post-composition. For example, if $f \colon Z' \star X' \to X \star Z$ and $g \colon X \to Y$ then
\begin{equation} (g \star \id_Z) \circ f \colon Z' \star X' \to Y \star Z \end{equation} is also an enriched morphism from $Z'$ to $Z$. In this sense $\Hom_{\Qmod{\MS}}(Z',Z)$ is a
left $\AS$-module. Similarly, there is a right action of $\AS$ on $\Hom_{\Qmod{\MS}}(Z',Z)$ by pre-composition. This
structure makes $\Hom_{\Qmod{\MS}}(Z',Z)$ into an $(\AS, \AS)$-bimodule.

There is a fair bit of confusion possible here because of the standard terminology in the field. This $\AS$-module structure has nothing to
do with the monoidal structure on $\AS$, only the composition structure; $\Qmod{\MS}$ is a module over $\AS$ in the usual sense (like a module over an algebra) rather than being a
$\star$-module category over a monoidal category (which is the categorification of a module over an algebra). That $\Hom_{\Qmod{\MS}}(Z',Z)$ is an $\AS$-module means that for each
$X \in \AS$ we have a direct summand $N_X$ of $\Hom_{\Qmod{\MS}}(Z',Z)$, namely $\bigoplus_{X'} \Hom_{\MS}(Z' \star X', X \star Z)$, and for each morphism $g \colon X \to Y$ we
have a corresponding map $N_X \to N_Y$. When we want to emphasize this bimodule structure, we write $\Hom_{\Qmod{\MS}}(Z',Z)$ as $\bimhom{Z}{Z'}$. We write $\bimhom{Z}{Z'}(X,X')$ to denote the summand $\Hom_{\MS}(Z' \star X', X \star Z)$ associated to $X, X' \in \AS$.

\begin{remark} \label{rmk:bimoduleargh} We make heavy use of both $(\AS,\AS)$-bimodules and of $(\AS,\AS)$ $\star$-bimodule categories in this paper, and to help the reader disambiguate, we have broken with the typical conventions of the literature and consistently called the latter $\star$-bimodule categories. \end{remark}

Just as bimodules over any ring form a monoidal category, $\Bim_{\AS,\AS}$ has a monoidal structure $\otimes_{\AS}$. The monoidal identity is the trivial bimodule $\AS$. However,
when $\AS$ is indeed a monoidal category, then the category $\Bim_{\AS,\AS}$ of $(\AS,\AS)$-bimodules inherits a second distinct monoidal structure $\diamond$ (see \S
\ref{ss:diamond}), making it a \emph{duoidal category}. See \cite{BookStreet} for more on duoidal categories.

The tensor product $\bimhom{Z_1}{Z_2}\otimes_{\AS}\bimhom{Z_3}{Z_4}$ is easy to visualize; it can be expressed as the $\k$-module formally spanned by diagrams of the form
\begin{equation} \label{eq:normaltensor}
\begin{tikzpicture}[scale=.8,xscale=-1,baseline=1.2cm]
\node at (2,-.5) {\scriptsize$Z_2$};
\node at (1,-.5) {\scriptsize$Z_4$};
\node at (0,-.5) {\scriptsize$X'$};
\node at (2,3.5) {\scriptsize$X$};
\node at (1,3.5) {\scriptsize$Z_1$};
\node at (0,3.5) {\scriptsize$Z_3$};
\node at (.7,2.1) {\scriptsize$Y_2$};
\node at (1.3,.9) {\scriptsize$Y_1$};
\begin{scope}[yshift=-.25cm]
\draw[ultra thick]
(0,0) ..controls ++(0,.5) and ++(0,-.5).. (1,1.5);
\draw[ultra thick,electricindigo]
(1,0) ..controls ++(0,.5) and ++(0,-.5).. (0,1.5);
\filldraw[fill=white,rounded corners] (0,.5) rectangle (1,1);
\node at (.5,.75) {\scriptsize $f_2$};
\end{scope}
\begin{scope}[shift={(1,1.75)}]
\draw[ultra thick,-stealth]
(0,0) ..controls ++(0,.5) and ++(0,-.5).. (1,1.5);
\draw[ultra thick,-stealth,electricindigo]
(1,0) ..controls ++(0,.5) and ++(0,-.5).. (0,1.5);
\filldraw[fill=white,rounded corners] (0,.5) rectangle (1,1);
\node at (.5,.75) {\scriptsize $f_1$};
\end{scope}
\draw[ultra thick,electricindigo]
(2,-.25) -- (2,1.75);
\draw[ultra thick,electricindigo,-stealth]
(0,1.25) -- (0,3.25);
\filldraw[fill=white,rounded corners]
(.7,1.25) rectangle (1.3,1.75);
\node at (1,1.5) {$g$};
\end{tikzpicture}
\end{equation}
in which $Y_1,Y_2\in \AS$ and $f\in \Hom_\AS(Y_1,Y_2)$, modulo relations
\begin{equation}\label{hohum}
(f_1 \circ (\id_{Z_2} \star g),\id_{Y_1},f_2) \sim (f_1, g, f_2) \sim (f_1, \id_{Y_2}, (g \star \id_{Z_3}) \circ f_2).
\end{equation}
That is to say, to construct $\bimhom{Z_1}{Z_2}\otimes_{\AS}\bimhom{Z_3}{Z_4}$ we take the direct sum over $Y_1, Y_2$ of the tensor products 
\[ \Hom_{\MS}(Z_2 \star Y_2,X \star Z_1) \otimes_{\k} \Hom_{\AS}(Y_1,Y_2) \otimes_{\k} \Hom_{\MS}(Z_4 \star X',Y_1 \star Z_3),\]
and quotient by the relation \eqref{hohum}. Note that this construction makes sense even though $\MS$ need not have a monoidal structure (we need not interpret this diagram as a morphism $Z_2 \star Z_4 \star X' \to X \star Z_1 \star Z_3$).

The diamond product $\bimhom{Z_1}{Z_2}\diamond \bimhom{Z_3}{Z_4}$ is obtained by taking the tensor product $\bimhom{Z_1}{Z_2}\otimes_\k \bimhom{Z_3}{Z_4}$ and using the monoidal structure on $\AS$ to restrict the $(\AS\otimes\AS,\AS\otimes\AS)$-bimodule structure to an $(\AS,\AS)$-bimodule structure.  This is harder to appreciate, and we have written \S\ref{ss:diamond} to elucidate it. 
There is a natural \emph{composition map}
$\bimhom{Z_2}{Z_1}\diamond \bimhom{Z_1}{Z_0}\to \bimhom{Z_2}{Z_0}$ whose image is spanned by of diagrams of the form
\begin{equation}
\begin{tikzpicture}[scale=.8,baseline=1.5cm]
\begin{scope}
\node at (-.4,-.25) {\scriptsize$Z_0$};
\node at (.7,1.55) {\scriptsize$Z_1$};
\node at (2.4,3.3) {\scriptsize$Z_2$};
\node at (-.4,2) {\scriptsize$X_1$};
\node at (.7,2.7) {\scriptsize$X_2$};
\node at (1.3,.3) {\scriptsize$Y_1$};
\node at (2.4,1) {\scriptsize$Y_2$};
\draw[ultra thick, electricindigo,-stealth]
(0,0) ..controls ++(0,.5) and ++(0,-.5).. (1,1.5);
\draw[ultra thick]
(1,0) ..controls ++(0,.5) and ++(0,-.5).. (0,1.5);
\filldraw[fill=white,rounded corners] (0,.5) rectangle (1,1);
\node at (.5,.75) {\scriptsize $f_1$};
\end{scope}
\begin{scope}[shift={(1,1.5)}]
\draw[ultra thick, electricindigo]
(0,0) ..controls ++(0,.5) and ++(0,-.5).. (1,1.5);
\draw[ultra thick,-stealth]
(1,0) ..controls ++(0,.5) and ++(0,-.5).. (0,1.5);
\filldraw[fill=white,rounded corners] (0,.5) rectangle (1,1);
\node at (.5,.75) {\scriptsize $f_2$};
\end{scope}
\begin{scope}[shift={(1.5,-.5)}]
\draw[ultra thick,-stealth]
(0,-.5) --(0,0);
\draw[ultra thick,-stealth]
(0,-.5)-- (0,0)
(0,0) ..controls ++(-.2,.2) and ++(0,-.2).. (-.5,.5)
(0,0) ..controls ++(.2,.2) and ++(0,-.2).. (.5,.5);
\filldraw[fill=white,rounded corners] (-.7,0) rectangle (.7,.5);
\node at (0,.25) {\scriptsize $b$};
\node at (.4,-.3){\scriptsize $Y$};
\end{scope}
\begin{scope}[shift={(.5,3.5)},yscale=-1]
\draw[ultra thick,stealth-]
(0,-.5) -- (0,0);
\draw[ultra thick,stealth-]
(0,0) ..controls ++(-.2,.2) and ++(0,-.2).. (-.5,.5)
(0,0) ..controls ++(.2,.2) and ++(0,-.2).. (.5,.5);
\filldraw[fill=white,rounded corners] (-.7,0) rectangle (.7,.5);
\node at (0,.25) {\scriptsize $a$};
\node at (.4,-.3){\scriptsize $X$};
\end{scope}
\draw[ultra thick,-stealth]
(2,0) -- (2,1.5);
\draw[ultra thick,-stealth]
(0,1.5) -- (0,3);
\draw[ultra thick, electricindigo,-stealth]
(0,-1) -- (0,0)
(2,3) -- (2,4);
\end{tikzpicture}
%
%
\end{equation}
Here $b$ is a morphism $Y \to Y_1 \star Y_2$ in $\AS$, etcetera. This composition map turns $\bigoplus_{Z,Z'} \bimhom{Z}{Z'}$ into a (locally unital) algebra object in $\Bim_{\AS,\AS}$ with respect to the diamond product, and similarly, makes $E_Z := \bimhom{Z}{Z}$ into an algebra object for any given $Z \in \MS$.

The abstractions developed above pay off when we rephrase the definition of the Drinfeld centralizer. Suppose that $(Z,\tau)$ is an object of the Drinfeld centralizer $\ZS(\AS,\MS)$. For each $X\in \AS$, the element $\tau_X\colon Z\star X\to X\star Z$ may be regarded as an enriched endomorphism of $Z$, i.e.~ an element of $\End_{\Qmod{\MS}}(Z)$. Let $\ibim{\AS}$ denote the category $\AS$, viewed as an $(\AS,\AS)$-bimodule. The centrality condition \eqref{centralsquare} can be summarized by saying that we have a map of $(\AS,\AS)$-bimodules $\nu\colon \ibim{\AS}\to \End_{\Qmod{\MS}}(Z) = E_Z$ for which $\nu(\id_X)=\tau_X$. 
The multiplicativity condition is equivalent to $\tau_{X_1}\bullet \tau_{X_2} = \tau_{X_1\star X_2}$.  In other words, the map $\nu\colon \ibim{\AS}\to E_Z$ is not just a map in $\Bim_{\AS,\AS}$, but is a map of algebra objects in $\Bim_{\AS,\AS}$.

%
%
%
%
%
%
%
%

Rephrasing, this says that an object of the Drinfeld centralizer $\ZS(\AS,\MS)$ is equivalent to an $\ibim{\AS}$-module inside the bimodule enrichment $\Qmod{\MS}$. One can also verify that morphisms in the Drinfeld center are nothing more than morphisms of $\ibim{\AS}$-modules in $\Qmod{\MS}$.   This leads us the following principle:

\noindent
\begin{center}
\begin{minipage}{5in}
(*) The Drinfeld centralizer $\ZS(\AS,\MS)$ is equivalent to the category of $\ibim{\AS}$-modules inside the bimodule enrichment $\Qmod{\MS}$.
\end{minipage}
\end{center}

To define the homotopy coherent version of the Drinfeld centralizer, we make the following generalizations:
\begin{enumerate}
\item Allow $\AS$ to be an arbitrary dg category.
\item Replace the trivial bimodule $\ibim{\AS}$ by its projective resolution $\Bar_\AS$, which is an algebra object in $\Bim_{\AS,\AS}$ using the Eilenberg-Zilber shuffle product.
\end{enumerate}
\begin{definition}\label{def:intro infty drinfeld}
If $(\AS,\star,\one_\AS)$ is a dg monoidal category and $\MS$ is a dg category with the structure of an $(\AS,\AS$) $\star$-bimodule category, then we define the \emph{$A_\infty$ Drinfeld centralizer of $\AS$ in $\MS$}, denoted $\ZS_\infty(\AS,\MS)$, to be the category of $A_\infty$ modules over $\Bar_\AS$ in $\Qmod{\MS}$.
\end{definition}

For the precise definition, see \S \ref{ss:infty drinfeld}. 

\begin{remark}\label{rmk:GHW vs EH}
By contrast, the dg Drinfeld center introduced in \cite{GHW} can be identified as the category of honest (not $A_\infty$) modules over $\Bar_{\AS}$ in $\Qmod{\MS}$.
\end{remark}

In this context, our improvement of Theorem \ref{thm:intro centralizer} is the following.

\begin{theorem}\label{thm:intro infty drinfeld}[Theorem \ref{thm:infty drinfeld}]
Let $\AS$ be a $\k$-linear monoidal category and let $\MS$ an $(\AS,\AS)$ $\star$-bimodule category.  Let $Z\in \Ch^b(\MS)$ be a complex such that the hom complex $\uHom_{\MS}(Z\star X',X\star Z)$ has zero homology in negative cohomological degrees for all $X,X'\in \AS$.  Then any object $(Z,\tau)\in\ZS(\AS,\KC^b(\MS))$ lifts to a unique object of $\ZS_\infty(\AS,\Ch^b(\MS))$, which lifts to a unique object of $\ZS_\infty(\Ch^b(\AS),\Ch^b(\MS))$.  This in turn descends to an object $(Z,\hat{\tau}) \in \ZS(\KC^b(\AS), \KC^b(\MS))$.
\end{theorem}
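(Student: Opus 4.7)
The plan is to recast the statement as a lifting problem for algebra maps along the quasi-isomorphism $\Bar_\AS \onto \ibim{\AS}$, then solve it by obstruction theory fueled by the ext-vanishing hypothesis. Extending principle (*) to the homotopy-coherent setting of Definition \ref{def:intro infty drinfeld}, equipping $Z \in \Ch^b(\MS)$ with the structure of an object of $\ZS_\infty(\AS, \Ch^b(\MS))$ is equivalent to giving an $A_\infty$ algebra map $\Bar_\AS \to E_Z$ in the duoidal category $\bigl(\Bim_{\AS,\AS}, \diamond\bigr)$, where $E_Z := \End_{\Qmod{\Ch^b(\MS)}}(Z)$ is the dg algebra object of enriched endomorphisms. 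The starting datum $(Z,\tau) \in \ZS(\AS, \KC^b(\MS))$ corresponds to a strict algebra map $\ibim{\AS} \to H^0(E_Z)$. Because $\Bar_\AS$ is a free resolution of $\ibim{\AS}$ whose component in bar degree $-n$ is generated by composable $n$-tuples of morphisms in $\AS$, standard dg/$A_\infty$ lifting theory becomes applicable.

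First I would build the $A_\infty$ algebra map $\Bar_\AS \to E_Z$ by induction on the bar degree $n$. At level $n = 0$ set the component to be $\tau$, which already satisfies the algebra relation (square \eqref{centralsquare}) on the nose. Given the map specified through level $n-1$ in a way compatible with the $A_\infty$ relations, its restriction to level $n$ must, for each composable sequence $(f_1, \ldots, f_n)$ of morphisms in $\AS$, be a degree $-n+1$ element $h_{f_1,\ldots,f_n}$ of a summand of $E_Z$ of the form $\bimhom{Z}{Z}(X,X') = \uHom_{\MS}(Z \star X', X \star Z)$. The $A_\infty$ relations on $\Bar_\AS$ (combining the bar differential, which composes adjacent $f_i$'s, with the Eilenberg--Zilber shuffle product) force $h_{f_1,\ldots,f_n}$ to be a primitive for an explicit cocycle built from lower-level data, living in the same hom complex in cohomological degree $-n+2 \leq 0$, so the hypothesis produces the required primitive; the vanishing one degree lower in turn produces the ambiguity-killing data, giving uniqueness of the $A_\infty$ map up to $A_\infty$ homotopy. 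Verifying carefully that the obstruction cocycle at each stage genuinely lies in this hom complex, and tracking the signs and shuffle combinatorics coming from the $A_\infty$ relations, is the step I expect to be the main obstacle.

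Next I would extend from $\AS$ to $\Ch^b(\AS)$. Since an $A_\infty$ module structure is determined by finitely many multilinear structure maps, and $\Ch^b(\AS)$ is obtained from $\AS$ by taking finite direct sums with prescribed differentials, the structure maps for sequences of morphisms in $\Ch^b(\AS)$ are forced by multilinear extension and the Koszul sign rule; the same uniqueness-by-ext-vanishing argument (the hypothesis lifts from $\AS$ to $\Ch^b(\AS)$ via a bounded spectral sequence) shows the extension is unique. Finally, to descend to $\ZS(\KC^b(\AS), \KC^b(\MS))$, I would take $H^0$ of the resulting $A_\infty$ algebra map $\Bar_{\Ch^b(\AS)} \to E_Z^{\Ch^b(\AS)}$ and use the quasi-isomorphism $\Bar_{\Ch^b(\AS)} \onto \ibim{\Ch^b(\AS)}$; by principle (*) applied at the homotopy-category level this is exactly the data of an object $(Z, \hat\tau) \in \ZS(\KC^b(\AS), \KC^b(\MS))$, completing the proof. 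Uniqueness at each stage is inherited from the uniqueness of the initial $A_\infty$ lift.
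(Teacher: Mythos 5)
Your overall strategy---obstruction theory against the bar resolution $\Bar_\AS \to \ibim{\AS}$ inside $(\Bim_{\AS,\AS},\diamond)$, with the ext-vanishing hypothesis killing the obstruction cocycles---is indeed the paper's strategy, but your handling of the degree-zero obstructions has a genuine gap. You assert that at the bottom level $\tau$ satisfies \eqref{centralsquare} ``on the nose''; it does not. Since $(Z,\tau)$ lives in $\ZS(\AS,\KC^b(\MS))$, naturality and multiplicativity of $\tau$ hold only up to homotopy, and producing the homotopies $h_f$ witnessing \eqref{centralsquare} is already part of the structure to be built. Correspondingly, your inductive step kills an obstruction cocycle ``in degree $-n+2\leq 0$'' using the hypothesis, but the hypothesis only gives vanishing in \emph{strictly negative} degrees: at the boundary degree $0$ (the naturality stage of $\nu_1$, and the first multiplicativity stage $\nu_2$) ext-vanishing gives nothing, and the vanishing of the obstruction class must come from the given homotopy-naturality and homotopy-multiplicativity of $\tau$. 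This is exactly what the paper's Lemma \ref{lemma:infty trans 2} (a filtration argument whose degree-zero layer is supplied by the given natural transformation) and Lemma \ref{lem:diamondsquareh0} (identifying $H^0(\Hom(\Bar_\AS^{\diamond 2},E_Z))$ with $\Hom(\ibim{\AS}^{\diamond 2},H^0(E_Z))$, so the obstruction becomes $\Theta(\tilde\nu)$ and is killed by multiplicativity of $\tau$) are designed to do; as written, your induction has no mechanism for these degree-zero classes. A smaller issue: your bookkeeping conflates the bar-degree components of $\nu_1$ (which on an $r$-tuple of composable morphisms have degree $-r$) with the $\diamond$-power components $\nu_n$ of degree $1-n$.

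The passage from $\AS$ to $\Ch^b(\AS)$ is also not as you describe. A bimodule map out of $\Bar_{\Ch^b(\AS)}$ is not forced by ``multilinear extension and the Koszul sign rule'' from its values on $\AS$: the bar complex of $\Ch^b(\AS)$ is indexed by tuples of arbitrary complexes and arbitrary morphisms between them, and the ext-vanishing hypothesis does \emph{not} lift to pairs of complexes (shifted objects $X[k]$ immediately create negative exts), so ``the same uniqueness-by-ext-vanishing argument'' run directly over $\Ch^b(\AS)$, and your claimed bounded spectral sequence, are not available. The paper instead transfers the $A_\infty$ structure along the explicit homotopy equivalence $\Matt\colon \Bar_{\Ch^b(\AS)}\to \Bar_{\Ch^b(\AS),\AS}$ with the relative bar complex (the one imported ingredient, from \cite{GHW} and \cite{MattsWorkInProgress}); your argument needs some substitute for this comparison. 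The final descent to $\ZS(\KC^b(\AS),\KC^b(\MS))$ by taking $H^0$ of $\nu_1$ and $\nu_2$ is fine in outline.
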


Just like our earlier lifting theorem, this follows from a more general result, though this time we prove it in two stages. Suppose we have two functors $\FZ, \GZ \colon \AS \to
\Ch^b(\BS)$ and a natural transformation between the corresponding functors $H^0(\FZ), G^0(\FZ) \colon \AS \to \KC^b(\BS)$. The first stage is to lift this to an
\emph{$A_{\infty}$-natural transformation} from $\FZ$ to $\GZ$, a concept defined in \S\ref{ss:inftrans} using the Bar complex. The second stage is to lift this
$A_{\infty}$-natural transformation to an $A_{\infty}$-natural transformation from $\Fdg$ to $\Gdg$, where these are the induced functors $\Ch^b(\AS) \to \Ch^b(\BS)$.

The homological algebra in this paper is largely self-contained, with one exception in the second stage of this proof. The Bar complex of $\AS$ is an enormous infinite complex whose chain object in degree $-r$ is a direct sum over all $(r+1)$-tuples of objects in $\AS$. The Bar complex of $\Ch^b(\AS)$ is more enormous still, being a direct sum over tuples of objects in $\Ch^b(\AS)$. In \cite[\S 5.1]{GHW} an intermediate complex $\Bar_{\Ch^b(\AS),\AS}$ was introduced which has the size of $\Bar_{\AS}$ but is homotopy equivalent to $\Bar_{\Ch^b(\AS)}$.  It is relatively straightforward to lift constructions involving $\Bar_{\AS}$ to constructions involving $\Bar_{\Ch^b(\AS),\AS}$, which we transfer to  $\Bar_{\Ch^b(\AS)}$ using explicit formulas proved in \cite{MattsWorkInProgress} (which were outlined in \cite[\S 5.3]{GHW}).

This paper also makes a conscious effort to make explicit many of the obnoxious signs which plague this field.

{\bf Acknowledgments:} B.E.~ is supported in part by NSF grant DMS-2201387, and his research group is supported by DMS-2039316. Both authors would like to thank Catharina Stroppel and Paul Wedrich for bringing their joint work to our attention.  M.H.~would like to thank Eugene Gorsky and Paul Wedrich, as the $A_\infty$ parts of the present paper would not have been possible without the foundational work in the collaboration \cite{GHW}. 

\section{The lifting lemma}
\label{s:dg}

\subsection{DG categories}
\label{ss:dg cats}

In this section we introduce the language of complexes, dg categories, and dg functors that will be used to express our main results.

Let $\k$ be a commutative ring, fixed throughout.  A complex of $\k$-modules will also be called a \emph{dg} (short for differential graded) \emph{$\k$-module}.  Let $\dgMod{\Z}{\k}$ denote the category of dg $\k$-modules and degree zero chain maps between them.    A \emph{dg category} (short for differential graded category) is a category enriched in $\dgMod{\Z}{\k}$.  In other words $\CS$ is a dg category if the homs in $\CS$ are complexes
\[
\Hom_\CS(X,Y) \in \dgMod{\Z}{\k}
\]
and composition defines a chain map
\[
\Hom_\CS(Y,Z)\otimes_\k \Hom_\CS(X,Y)\to \Hom_\CS(X,Z)
\]
The chain map condition is equivalent to the graded Leibniz rule $d(f\circ g) = d(f)\circ g +(-1)^{|f|}f\circ d(g)$ where as usual $|f|\in \Z$ denotes the degree of $f$.  

The classic examples of dg categories are categories of complexes $\Ch^b(\AS)$ over a $\k$-linear category $\AS$.  Objects of this category are complexes $(X,\d_X)$ with the cohomological convention for differentials, as in
\[
\cdots \buildrel \d\over\rightarrow X^k \buildrel \d\over \rightarrow X^{k+1} \buildrel \d\over\rightarrow  \cdots .
\]
We frequently abuse notation and write $X=(X,\d_X)$.  For any two complexes $X$ and $Y$ and $k \in \Z$ let
\begin{equation}\label{eq:uHom}
\uHom_\AS^k(X,Y):= \prod_{i\in \Z} \Hom_{\AS}(X^i, Y^{i+k})
\end{equation}
be the space of linear maps of degree $k$, which we also call \emph{morphisms} of degree $k$ (they are not chain maps). Then
\begin{equation}
\Hom_{\Ch^b(\AS)}(X,Y):=\uHom_\AS(X,Y) := \bigoplus_{k \in \Z} \uHom_\AS^k(X,Y)
\end{equation}
is the Hom complex between $X$ and $Y$, equipped with the standard differential $d(f):=\d_Y\circ f - (-1)^{|f|}f\circ \d_X$. We adopt the convention that the symbol `$\d$' will be used for the differential on objects of $\Ch^b(\AS)$, and `$d$' will be reserved for the differential on hom complexes.  
 

\begin{remark}\label{rmk:additive is dg}
Any $\k$-linear category $\AS$ can be regarded as a dg category with trivial grading and zero differential. We use this tacitly whenever we refer to $\AS$ as a dg category.
\end{remark}
 
\begin{remark}\label{rmk:other gradings}
In the definitions above we can also consider dg categories with gradings living in a more general abelian group $\Gamma$.  To formulate the definition one needs to choose a symmetric bilinear pairing $\ip{-,-}\colon \Gamma\to \Z/2\Z$, which is reponsible for the Koszul sign rule, and an element $\iota\in \Gamma$ with $\ip{\iota,\iota}=1$, which is the degree of differentials.

When we start discussing complexes over diagrammatic Hecke categories, we will take $\Gamma=\Z\times \Z$ (with gradings given by the cohomological degree and Soergel degree, respectively) and $\ip{(i,j),(i',j')} =ii'$ (mod 2), with degree of differentials given by $\iota = (1,0)$. 
\end{remark}

Finally, recall the terminology that a morphism $f\in \Hom_\CS(X,Y)$ in a dg category $\CS$ is called \emph{closed} if $d(f)=0$ and \emph{exact} if $f=d(h)$ for some $h\in \Hom_\CS(X,Y)$.  Closed morphisms $f,g$ are called \emph{homotopic}, written $f\simeq g$, if $f-g$ is exact. In the context of $\Ch^b(\AS)$, degree zero closed morphisms are the same thing as chain maps, and exact morphisms are the same thing as null-homotopic chain maps.

For a dg category $\CS$, let $Z^0(\CS)$ (resp.~$H^0(\CS)$) be the category with the same objects as $\CS$, and morphisms given by the degree zero closed morphisms (resp.~ degree zero closed morphisms modulo homotopy). For a $\k$-linear category $\AS$ we write $\KC^b(\AS):=H^0(\Ch^b(\AS))$ and refer to this as the  \emph{homotopy category of bounded complexes over $\AS$}.  Isomorphism in $\KC^b(\AS)$ is called \emph{homotopy equivalence}, writen $\simeq$.  The homotopy category is a triangulated category, not a dg category.

\begin{remark} To avoid an undue abundance of underlines, we use the following conventions. When working with objects $X, Y$ in a dg category $\CS$, we write $\Hom(X,Y)$ or $\Hom_{\CS}(X,Y)$ for the morphism space in this dg category, which is a complex. If we wish to examine the homology groups of this complex, we will write $H^k(\Hom(X,Y))$. However, when this dg category is the category of complexes over an additive category $\AS$, we write $\uHom(X,Y)$ to help disambiguate this hom complex from other potential hom spaces (e.g. $\Hom_{\AS}(X^i, Y^j)$) which might not be complexes. Again, we do not need underlines for general dg categories because all morphism spaces are complexes, and there is nothing to disambiguate. \end{remark}

\subsection{Shifts and twists}
\label{ss:setup}

%
%

Let $\AS$ be a $\k$-linear category.  For $k\in \Z$ and $X\in \Ch^b(\AS)$, we let $X[k]$ denote the complex obtained by shifting $X$ to the \emph{left} by $k$-units, i.e.~ $X[k]^i = X^{k+i}$.  By convention the differential on $X[k]$ comes with a sign $\d_{X[k]} = (-1)^k \d_X$.  Note that chain maps $X\to Y[k]$ may be identified with closed degree $k$ elements of $\uHom(X,Y)$.


\begin{remark}\label{remark:shift}
It is the generally accepted convention that $[1]$ shifts complexes one unit ``against the grain'' of the differential.  Since we use the cohomological convention, $[1]$ shifts complexes one unit left. In particular if we view an object $A\in \AS$ as also being a complex supported in degree zero, then $A[-k]$ is supported in degree $+k$.
\end{remark}

If $(X,\d_X)$ and $(X,\d_X')$ are complexes with \emph{the same} underlying chain groups, then let $\a = \d_X' - \d_X$.
We may write the latter as $(X,\d_X+\a)$ and call it a \emph{twist} of $(X,\d_X)$.  We will use the following notation for twists:
\begin{equation}
(X,\d_X+\a) =: \tw_\a(X).
\end{equation}

\begin{remark}
With this notion in place, we can now write every complex as a twist of a complex with zero differential, namely 
\begin{equation} X=\tw_{\d_X}(\bigoplus_{k\in \Z} X^k[-k]). \end{equation}
\end{remark}

\begin{remark}\label{remark:bicomplexes}
Suppose $\{X^{i,j},\d_1,\d_2\}$ is a bicomplex over $\AS$.  I.e.~each $X^{i,j}$ is an object of $\AS$, and $\d_1\colon X^{i,j}\to X^{i+1,j}$, $\d_2\colon X^{i,j}\to X^{i,j+1}$ are maps such that $\d_1^2=0=\d_2^2=\d_1\d_2+\d_2\d_1$.  Then the \emph{columns} of this bicomplex are the complexes $X^{\Z,j}$ with differential given by $\d_1$, and the total complex can be expressed as
\begin{equation}
\Tot(\{X^{i,j}\},\d_1,\d_2) = \tw_{\d_2}\left(\bigoplus_{j\in \Z} X^{\Z,j}[-j]\right)
\end{equation}
Of course we can also express this total complex in terms of the \emph{rows} $X^{i,\Z}$, as
\begin{equation}
\Tot'(\{X^{i,j}\},\d_1,\d_2) = \tw_{\d_1}\left(\bigoplus_{i\in \Z} X^{i,\Z}[-i]\right).
\end{equation}
The two kinds of total complexes are isomorphic, via an isomorphism sending $X^{i,j}\to X^{i,j}$ by $(-1)^{ij}\id$.
\end{remark}

\subsection{Lifting DG functors}
\label{ss:lifting}

If $\CS$ and $\DS$ are dg categories then a \emph{dg functor} is a mapping on objects $F\colon \Obj(\CS)\to \Obj(\DS)$, and a collection of degree zero chain maps $F\colon \Hom_\CS(X,Y)\to \Hom_\DS(F(X),F(Y))$ between hom complexes, which are functorial in that $F(f \circ g) = F(f) \circ F(g)$ and $F(\id_X) = \id_{F(X)}$.  The action of a dg functor on morphisms satisfies $\deg(F(f))=\deg(f)$ and $d_\DS(F(g)) = F(d_\CS(f))$.  A \emph{degree $l$ natural transformation} between dg functors $F,G\colon \CS\to \DS$ is a family of morphisms $\phi_X\in \Hom^l_{\DS}(F(X),G(X))$ such that $G(f)\circ \phi_{X'} = (-1)^{l|f|} \phi_X\circ F(f)$ for all $f\in \Hom_\CS(X',X)$.   Natural transformations of all degrees form a complex $\uHom(F,G)$, where the differential sends $\{\phi_X\}_{X\in \CS}$ to $\{d(\phi_X)\}_{X\in \CS}$.


Now, let $\AS,\BS$ be $\k$-linear categories.  Consider dg functors from $\AS$ (regarded as a dg category with trivial grading and differential) to $\Ch^b(\BS)$. Any such dg functor must send every morphism in $\AS$ to a closed degree zero morphism in $\Ch^b(\BS)$.  Said differently, a dg functor $\FZ:\AS\rightarrow \Ch^b(\BS)$ is the same thing as an ordinary $\k$-linear functor $\FZ\colon \AS\to Z^0(\Ch^b(\AS))$.

The purpose of this section is to review the constructions involved in the following standard result.

\begin{prop} \label{prop:liftfunctor}
There is an equivalence of categories between the category of dg functors $\AS \to \Ch^b(\BS)$, and the category of dg functors $\Ch^b(\AS) \to \Ch^b(\BS)$.
\end{prop}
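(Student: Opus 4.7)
The plan is to construct explicit functors in both directions and verify they are mutually quasi-inverse equivalences.

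In one direction, restriction along the canonical inclusion $\iota \colon \AS \hookrightarrow \Ch^b(\AS)$ (sending an object to itself viewed as a complex concentrated in cohomological degree $0$) sends a dg functor $\Fdg \colon \Ch^b(\AS) \to \Ch^b(\BS)$ to $\Fdg \circ \iota \colon \AS \to \Ch^b(\BS)$, and likewise for natural transformations. I would check that $\iota$ is itself a dg functor (trivially, since $\AS$ has trivial grading and differential), making this restriction well-defined.

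In the other direction, given a dg functor $\FZ \colon \AS \to \Ch^b(\BS)$, I extend it to $\Fdg \colon \Ch^b(\AS) \to \Ch^b(\BS)$ as follows. For an object $X = (X^\bullet, \d_X) \in \Ch^b(\AS)$, the collection $\{\FZ(X^i)^j\}_{i,j \in \Z}$ is naturally a bicomplex over $\BS$: the ``internal'' vertical differential on the column $\FZ(X^i)$ is the differential $\d_{\FZ(X^i)}$, while the ``external'' horizontal differential $\FZ(X^i) \to \FZ(X^{i+1})$ is $\FZ(\d_X^i)$. These commute (strictly) because $\FZ$ is a $\k$-linear functor and $\d_X^2 = 0$, so after the usual sign twist one obtains a genuine bicomplex, and I define $\Fdg(X) := \Tot(\FZ(X^\bullet))$ using Remark \ref{remark:bicomplexes}. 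On morphisms, a degree $k$ morphism $f \in \uHom(X,Y)$ is a family $f^i \colon X^i \to Y^{i+k}$, and I define $\Fdg(f)$ to be the sum of the induced maps $\FZ(f^i) \colon \FZ(X^i) \to \FZ(Y^{i+k})$, suitably shifted. The construction on natural transformations is analogous.

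The main routine verifications are: (a) $\Fdg$ is a dg functor, i.e. $\Fdg$ is linear, respects composition, and satisfies $d(\Fdg(f)) = \Fdg(d(f))$; and (b) the assignment $\FZ \mapsto \Fdg$ sends a natural transformation $\phi \colon \FZ \to \GZ$ to a natural transformation $\Fdg \to \Gdg$. Both follow from bookkeeping of the Koszul signs inherent in the shift and twist conventions of \S\ref{ss:setup}, and here the main obstacle lies: one must verify that the signs appearing in the total complex differential (and the sign $(-1)^{k|f|}$ in the graded Leibniz rule) conspire correctly so that componentwise application of $\FZ$ respects the differential on $\uHom$. Because $\FZ$ preserves degree and is $\k$-linear, these checks reduce to formal manipulations on each bihomogeneous component.

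Finally, I verify the two constructions are mutually inverse. Since $X \in \AS$ viewed as a complex concentrated in degree $0$ yields $\Fdg(X) = \FZ(X)$ on the nose, the composite ``extend then restrict'' is literally the identity. Conversely, given a dg functor $\Gdg \colon \Ch^b(\AS) \to \Ch^b(\BS)$, I must show $\Gdg$ is naturally isomorphic to the extension of $\Gdg \circ \iota$. The natural isomorphism $\widetilde{\Gdg \circ \iota}(X) \cong \Gdg(X)$ is constructed by noting that any complex $X$ can be written as $\tw_{\d_X}(\bigoplus_k X^k[-k])$; since $\Gdg$ is a dg functor, it sends this twist and the shift/direct-sum decomposition to the corresponding twist and decomposition on the $\BS$-side, which is precisely $\widetilde{\Gdg \circ \iota}(X)$. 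This isomorphism is natural in $X$, and the analogous statement for natural transformations completes the equivalence.
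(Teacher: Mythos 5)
Your proposal is correct and follows essentially the same route as the paper: extend a dg functor $\FZ$ to complexes via the total complex $\tw_{\FZ(\d_X)}\bigl(\bigoplus_i \FZ(X^i)[-i]\bigr)$, restrict along $\AS \hookrightarrow \Ch^b(\AS)$ in the other direction, and use the fact that every bounded complex is a twist of a finite direct sum of shifts of objects of $\AS$ to see that a dg functor on $\Ch^b(\AS)$ is determined (up to canonical isomorphism) by its restriction. The only step you assert rather than justify — that a dg functor automatically preserves shifts, twists, and finite direct sums — is precisely what the paper establishes via its "equational" characterizations (a shift as a closed degree $-1$ invertible map, a twist as a degree zero invertible $\phi$ with $d(\phi)=\phi\circ\a$), and would be worth including.
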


Any dg functor $\FZ:\AS\rightarrow \Ch^b(\BS)$ has a dg functor extension $\Fdg:\Ch^b(\AS)\rightarrow\Ch^b(\BS)$. On the level of objects $\Fdg$ is defined as follows. Given an object $X$ of $\Ch^b(\AS)$, written out as
\[
\cdots \buildrel \d\over\rightarrow X^k \buildrel \d\over \rightarrow X^{k+1} \buildrel \d\over\rightarrow  \cdots ,
\]
we define $\Fdg(X)$ to be the total complex of the bicomplex
\[
\cdots \buildrel \FZ(\d)\over\rightarrow \FZ(X^k) \buildrel \FZ(\d)\over \rightarrow \FZ(X^{k+1})\buildrel \FZ(\d)\over\rightarrow \cdots.
\]
In other words, the $k$-th chain group of $\Fdg(X)$ is $\bigoplus_{i+j=k} \FZ(X^i)^j$, and the differential is defined componentwise to be the sum of maps of the form
\[
\FZ(\d_X^i)^j : \FZ(X^i)^j \rightarrow \FZ(X^{i+1})^j \qquad \text{ and } \qquad 
(-1)^i \d_{\FZ(X^i)}^j :  \FZ(X^i)^j\rightarrow \FZ(X^i)^{j+1}.
\]
The action of $\Fdg$ on morphisms between complexes in $\Ch^b(\AS)$ is defined componentwise: given $f\in \uHom(X,Y)$ we define $\Fdg(f)\in\uHom(\Fdg(X),\Fdg(Y))$ by
\[
\Fdg(f)|_{\FZ(X^i)^j} = \FZ(f|_{X^i})^j.
\]
We leave it to the reader to check that $f \mapsto \Fdg(f)$ defines a degree zero chain map $\uHom(X,Y)\to \uHom(\Fdg(X),\Fdg(Y))$.  Clearly $\Fdg(f\circ g) = \Fdg(f)\circ \Fdg(g)$, so that $\Fdg$ is a dg functor $\Ch^b(\AS)\to \Ch^b(\BS)$.

More compactly (see Remark \ref{remark:bicomplexes}), we can say that
\begin{equation} \Fdg(X) = \tw_{\FZ(\d)} \left( \bigoplus_i \FZ(X^i)[-i] \right). \end{equation}

Conversely, if $\Fdg:\Ch^b(\AS)\rightarrow \Ch^b(\BS)$ is any dg functor, then $\Fdg$ is completely determined by its restriction to the
additive category $\AS$ inside $\Ch^b(\AS)$.  The reason for this is that each of the relations $Y=X[1]$, $Y=X_1\oplus X_2$, and $Y=\tw_\a(X)$ is ``equational'' and hence preserved by any dg functor.  Let us explain.

If we have complex $X$, then how can we characterize $X[1]$?  Note that the identity map of $X$ may be regarded as a degree $-1$ map $\phi\colon X\to X[1]$, which is closed because
\begin{equation}
\d_{X[1]}\circ \id_{X} + \id_{X[1]}\circ \d_{X} = 0
\end{equation}
thanks to the sign rule for shifts.
An inverse to $\phi$ is constructed in the same way.  We can characterize $X[1]$ (up to isomorphism) abstractly as a pair $(Y,\phi)$ in which $Y$ is a complex and $\phi\colon X\to Y$ is a closed, degree $-1$, invertible map.  Given any such pair $(Y,\phi)$ we have $Y\cong X[1]$ canonically (composing the canonical map $X[1]\to X$ with the given map $X\to Y$).  Now, if $\Fdg\colon \Ch^b(\AS)\to \Ch^b(\BS)$ is any dg functor, and $(Y,\phi)$ satisfies the unique characterization of $X[1]$, then $(\Fdg(Y),\Fdg(\phi))$ satisfies the unique characterization of $\Fdg(X)[1]$. In other words,
\begin{equation}
\Fdg(X[1])\cong \Fdg(X)[1],
\end{equation}
canonically.

In a similar fashion, we can give a unique characterization of twists.  To give this unique characterization, let $\tw_\a(X)$ be a twist of $X$.  Let $\phi\colon X\to \tw_\a(X)$ be the degree zero, invertible, but \emph{not closed} map given by $\id_X$.  The differential of this map is
\begin{equation}
(\d_X+\a)\circ \id - \id\circ \d_X = \a.
\end{equation}
The suggests the following unique characterization of $\tw_\a(X)$, as a pair $(Y,\phi)$ where $Y$ is a complex and $\phi\colon X\to Y$ is a degree zero invertible map with $d(\phi)=\phi \circ \a$.   Indeed (we leave the verification as an exercise), this does characterize $\tw_\a(X)$ uniquely in the sense that if $(Y,\phi)$ and $(Y',\phi')$ are two such pairs (for the same $\a$), then $\phi'\circ \phi\inv \colon Y\to Y'$ is a closed degree zero isomorphism.  Any dg functor preserves twists, since if $(Y,\phi)$ satisfies the unique characterization of $\tw_\a(X)$ then $(\Fdg(Y),\Fdg(\phi))$ satisfies the unique characterization of $\tw_{\Fdg(\a)}(\Fdg(X))$.  In other words
\begin{equation}
\tw_{\Fdg(\a)}(\Fdg(X))\cong \Fdg(\tw_\a(X))
\end{equation}
canonically.

Finally, finite direct sums are also characterized equationally, using the projections and inclusions onto the summands.  Hence any $\k$-linear functor (and in particular any dg functor) will preserve finite direct sums.

Since any complex in $\Ch^b(\AS)$ is a twist of a finite direct sum of shifts of objects of $\AS$, it follows that any dg functor $\Fdg\colon \Ch^b(\AS)\to \Ch^b(\BS)$ is determined by its restriction $\Fdg|_\AS\colon \AS\to \Ch^b(\BS)$. That is to say,
\begin{equation}
\Fdg\left(\tw_{\d_X}\left(\bigoplus_{k\in \Z} X^k[-k]\right)\right) \ \cong \ \tw_{\Fdg(\d_X)} \left(\bigoplus_{k\in \Z} \Fdg(X^k)[-k]\right),
\end{equation}
canonically.

Now we discuss the lifting of natural transformations.  Supose $\FZ,\GZ:\AS\rightarrow \Ch^b(\BS)$ are two dg functors and $\etaZ :\Fdg\rightarrow \Gdg$ is a degree $l$ natural transformation.  In other words, for each object $X\in \AS$ we have a degree $l$ morphism $\etaZ_X:\FZ(X)\rightarrow \GZ(X)$ in $\Ch^b(\BS)$ (not necessarily commuting with the differentials) such that $\etaZ_Y\circ \FZ(f) = \GZ(f)\circ \etaZ_X$ for all morphisms $f\in \Hom_\AS(X,Y)$.

 We may lift $\etaZ$ to a degree $l$ natural transformation $\etadg:\Fdg\rightarrow \Gdg$ defined component-wise.  In other words, given a complex
\[
\cdots \buildrel \d\over\rightarrow X^k \buildrel \d\over \rightarrow X^{k+1} \buildrel \d\over\rightarrow \cdots
\]
in $\Ch^b(\AS)$ we define $\eta_X:\Fdg(X)\rightarrow \Gdg(X)$ to be the morphism whose component $\Fdg(X^i)^j \rightarrow \Gdg(X^i)^{j+l}$ is the $j$-th component of $\etaZ_{X^i}:\FZ(X^i)\rightarrow \GZ(X^i)$.

The dg lift $\FZ \mapsto \Fdg$ respects composition of functors up to canonical isomorphism, and likewise at the level of natural transformations, the dg lift $\etaZ\mapsto \etadg$ respects
horizontal and vertical composition of natural transformations.

\subsection{Monoidal structures}
\label{ss:tensoring with objects}

This section is a brief interlude which we discuss the operations of shift $[k]$ and twist $\tw_\a$, and how they will interact with a monoidal structure on $\AS$.

Let $\AS$ be a $\k$-linear additive monoidal category with tensor product denoted $\otimes$ and monoidal identity $\one\in \AS$. Then $\Ch^b(\AS)$ inherits the structure of a (dg)
monoidal category from $\AS$.  This monoidal structure is completely determined by the following rules:
\begin{equation}
\left(\bigoplus_{i\in I} X_i\right)\otimes \left(\bigoplus_{j\in J} Y_j\right) = \bigoplus_{(i,j)\in I\times J} X_i\otimes Y_j \, , \qquad X[k]\otimes Y[l] = (X\otimes Y)[k+l] \,
\end{equation}
and
\begin{equation}
\tw_{\a}(X)\otimes \tw_{\b}(Y)=\tw_{\a\otimes\id_Y+\id_X\otimes \b}(X\otimes Y),
\end{equation}
together with the Koszul sign rule for tensoring morphisms: given $f\in \uHom(X,X')$ and $g\in \uHom(Y,Y')$ we have
\begin{equation} \label{koszulsignrule}
(f\otimes g)|_{X^i\otimes Y^j} := (-1)^{i|g|}f|_{X^i}\otimes g|_{Y^j}.
\end{equation}
This implies the explicit rule for the differential on a tensor product of complexes given in \eqref{diffonYotX}.

We have the usual sign when composing tensor products:
\begin{equation} \label{signedinterchange}
(f\otimes g)\circ (f'\otimes g') = (-1)^{|g||f'|} (f\circ f')\otimes (g\circ g').
\end{equation}

\begin{remark} The formula \eqref{signedinterchange} implies that $\Ch^b(\AS)$ satisfies the signed interchange law, rather than the ordinary interchange law, whence $\Ch^b(\AS)$ is a dg monoidal category, not a monoidal category. We will typically omit the word dg in ``dg monoidal,'' leaving it as understood. \end{remark}

\begin{remark} \label{remark:signsfortensoringwithobjects}
Let $Y\in \Ch^b(\AS)$ be given.  We can tensor $Y$ with objects of $\AS$, obtaining a dg functor $L_Y\colon \AS\to \Ch^b(\AS)$ given by $L_Y=(Y\otimes -)_0$ (the subscript reminds us that we are tensoring $Y$ with objects, not complexes).  We can extend $L_Y$ to a functor $\Ch^b(\AS)\to \Ch^b(\AS)$ in two ways.  First, we can simply take the functor $\mathbf{L}_Y' = Y\otimes -$, which on objects sends $X\mapsto Y\otimes X$ and on morphisms sends $f\mapsto \id_Y\otimes f$.   We also have the functor $\mathbf{L}_Y$, which is the extension to complexes as constructed in \S \ref{ss:lifting}.  We would like to compare $\mathbf{L}_Y$ and $\mathbf{L}'_Y$.  Some signs are involved in the commutation of $Y \ot (-)$ with suspensions, making this application of Proposition \ref{prop:liftfunctor} somewhat subtle.

The differential on $\mathbf{L}_Y'(X) = Y\otimes X$ is a sum of its components
\begin{equation} \label{diffonYotX}
Y^j\otimes X^i \buildrel \d_Y\otimes \id\over \longrightarrow Y^{j+1}\otimes X^i ,\qquad\qquad Y^j\otimes X^i \buildrel (-1)^j\Id\otimes \d_X\over \longrightarrow Y^{j}\otimes X^{i+1}.
\end{equation}
Compare this with the extension of $\mathbf{L}|_\AS$ to complexes, applied to $X$, which yields
\begin{equation}\tw_{\id_Y\otimes \d_X}\left(\bigoplus_{k\in \Z} Y\otimes X^k[-k]\right).\end{equation}
The differential on this complex is the sum of its components
\begin{equation}
Y^j\otimes X^i \buildrel (-1)^{\textcolor{red}{i}}\d_Y\otimes \id\over \longrightarrow Y^{j+1}\otimes X^i ,\qquad\qquad Y^j\otimes X^i \buildrel \id\otimes \d_X\over \longrightarrow Y^{j}\otimes X^{i+1}.
\end{equation}
Thus, $Y\otimes X$ and $\tw_{\id_Y\otimes \d_X}\left(\bigoplus_{k\in \Z} Y\otimes X^k[-k]\right)$ are different complexes.  Nonetheless, they are isomorphic by the chain map $\psi:\mathbf{L}_Y'(X) \rightarrow \mathbf{L}_Y(X)$ defined componentwise by $\psi|_{Y^j\otimes X^i} = (-1)^{ij}\id_{Y^j\otimes X^i}$.  This isomorphism is natural in $Y$ and $X$ (naturality with respect to degree zero morphisms is clear; naturality with respect to arbitary morphisms is also true, but involves a tedious check of signs), so the functor $(Y\otimes -)$ is naturally isomorphic (but not \emph{equal}) to the functor obtained by lifting $(Y \ot -)_0$ from objects to complexes.
\end{remark}

\begin{remark}
In contrast to the situation for tensoring with $Y$ on the left, there are in fact no signs in the natural isomorphism relating $(-\otimes Y)$ and the lift of $(-\otimes Y)_0$.
\end{remark}

%




\subsection{Lifting natural transformations}
\label{ss:lifting2}

\begin{definition}\label{def:unobstructed}
Let $\AS$, $\BS$ be $\k$-linear categories, and let $F,G\colon \AS\to \Ch^b(\BS)$ be dg functors.  We say that the pair $(F,G)$ is \emph{unobstructed} if
\begin{equation}
H^k(\uHom_{\BS}(F(X'),G(X)))=0
\end{equation}
for all $k<0$ and all objects $X,X'\in \AS$.
\end{definition}

\begin{thm}\label{thm:lifting lemma}
Let $F,G\colon \AS\to \Ch^b(\BS)$ be dg functors, with lifts $\Fdg,\Gdg\colon \Ch^b(\AS)\to \Ch^b(\BS)$.  If $F,G\colon \AS\to \Ch^b(\BS)$ are unobstructed in the sense of Definition \ref{def:unobstructed}, then any natural transformation $\etaZ:H^0(F)\rightarrow H^0(G)$ lifts to a unique natural transformation $\etadg: H^0(\Fdg)\to H^0(\Gdg)$.
\end{thm}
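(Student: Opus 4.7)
The plan is to prove the theorem by an obstruction-theoretic induction on ``shift degree''. First, fix chain-level data: for each $A \in \AS$, a closed degree-$0$ morphism $\eta_A \colon F(A) \to G(A)$ representing $\etaZ_A$, and for each $f \in \Hom_\AS(A,B)$ a degree-$(-1)$ morphism $h_f \in \uHom(F(A), G(B))$ with $d(h_f) = G(f) \circ \eta_A - \eta_B \circ F(f)$; such $h_f$ exist because $\etaZ$ is a natural transformation in $H^0$.

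For $X \in \Ch^b(\AS)$, write $X = \tw_{\d_X}(\bigoplus_i X^i[-i])$, so that $\Fdg(X) = \tw_{F(\d_X)}(\bigoplus_i F(X^i)[-i])$ and similarly for $\Gdg(X)$. I construct $\etadg_X = \sum_{r \geq 0} \phi^{(r)}$, where each $\phi^{(r)}$ is a collection of ``shift-$r$'' morphisms $\phi^{(r)}_i \in \uHom^{-r}(F(X^i), G(X^{i+r}))$; the degrees and shifts are arranged (via $\uHom(C[a],D[b]) = \uHom(C,D)[a-b]$) so that the whole sum is a degree-$0$ morphism $\Fdg(X) \to \Gdg(X)$. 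Set $\phi^{(0)}_i := \eta_{X^i}$, and inductively suppose $\phi^{(0)}, \dots, \phi^{(r-1)}$ are chosen so that the differential of $\Phi_r := \sum_{s<r}\phi^{(s)}$ in $\uHom(\Fdg(X), \Gdg(X))$ is supported in shifts $\geq r$. Decompose the hom-complex differential as $d_{\mathrm{hom}} = d_{\mathrm{int}} + d_{\mathrm{ext}}$, the intrinsic part (preserving shift) and the part coming from the external twists $F(\d_X),G(\d_X)$ (raising shift by $1$). The shift-$r$ component $\Theta^{(r)}$ of $d_{\mathrm{hom}}(\Phi_r)$ equals $d_{\mathrm{ext}}(\phi^{(r-1)})$, and is $d_{\mathrm{int}}$-closed since $d_{\mathrm{int}} d_{\mathrm{ext}}(\phi^{(r-1)}) = -d_{\mathrm{ext}} d_{\mathrm{int}}(\phi^{(r-1)}) = d_{\mathrm{ext}}^2(\phi^{(r-2)}) = 0$ (by the inductive equation and $d_{\mathrm{ext}}^2=0$). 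For $r = 1$, $\Theta^{(1)}_i$ equals, up to sign, $G(\d^i)\eta_{X^i} - \eta_{X^{i+1}} F(\d^i) = d(h_{\d^i})$, so set $\phi^{(1)}_i := \pm h_{\d^i}$. For $r \geq 2$, the obstruction class $[\Theta^{(r)}_i] \in H^{1-r}(\uHom(F(X^i), G(X^{i+r})))$ vanishes by unobstructedness since $1-r < 0$; pick any primitive as $\phi^{(r)}_i$. Boundedness of $X$ terminates the sum.

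Uniqueness up to homotopy proceeds by the same method: the difference of two lifts is a degree-$0$ cycle whose shift-$0$ part has trivial class in $H^0(\uHom(F(X^i), G(X^i)))$ (both $\phi^{(0)}$'s represent $\etaZ_{X^i}$) and whose shift-$r$ parts for $r \geq 1$ lie in $H^{-r} = 0$; inductively dismantling these obstructions yields a global null-homotopy. Naturality, i.e.~$\etadg_Y \circ \Fdg(g) \simeq \Gdg(g) \circ \etadg_X$ for any chain map $g \colon X \to Y$ in $\Ch^b(\AS)$, is verified by a third layer of the obstruction argument on $\uHom(\Fdg(X), \Gdg(Y))$, with the shift-$0$ discrepancy absorbed by the chosen homotopies $h_{g^i}$.

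The main obstacle will be careful sign bookkeeping: the Koszul conventions for shifts and twists from \S\ref{ss:setup} introduce signs in the induced differentials on $\uHom(\Fdg(X), \Gdg(X))$ which must be propagated consistently for the identities $d_{\mathrm{int}}^2 = 0$, $d_{\mathrm{int}} d_{\mathrm{ext}} + d_{\mathrm{ext}} d_{\mathrm{int}} = 0$, and $d_{\mathrm{ext}}^2 = 0$ to hold as stated, and for $\Theta^{(1)}_i$ to match $\pm d(h_{\d^i})$. The homological content is otherwise transparent: the unobstructedness hypothesis precisely kills the obstructions at shifts $r \geq 2$, while the chain-level naturality data $\{h_f\}$ handles the shift-$1$ obstruction.
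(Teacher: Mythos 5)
Your proposal is correct and is essentially the paper's own argument: the paper filters $\uHom(\Fdg(X),\Gdg(Y))$ by the shift $j-i$ between internal homological degrees (Lemma \ref{lemma:filtered hom cx}), and its abstract filtered-complex lemmas (Lemmas \ref{lemma:filtered cx0}--\ref{lemma:filtered cx}, applied in Lemma \ref{lemma:extending maps}) carry out exactly your shift-by-shift obstruction induction, with the shift-$1$ obstruction killed by the homotopies witnessing naturality of $\etaZ$ and the shifts $\geq 2$ killed by unobstructedness ($H^{1-r}=0$), and with uniqueness and naturality handled by the same injectivity/uniqueness-of-lifts argument you sketch. The only difference is packaging: the paper isolates the induction as a reusable statement about filtered complexes rather than running it inline.
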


We prove this theorem in the next section. The main tool used in the proof is the following abstract homological algebra lemma.  To state it, let $V$ be a complex of $\k$-modules equipped with a filtration
\[
V = \FC^0 \supset \FC^1\supset\cdots \ , 
\]
such that
\begin{equation} \label{filtconditions}
\bigcap_{k\geq 0} \FC^k = 0 \ ,\qquad \quad V = \lim_{k\to \infty} V / \FC^k.
\end{equation}
The second of these equations means that if $v_k \in \FC^k$ are arbitrary elements with $\deg v_k = l$ for all $k$ (they have the same degree), then the infinite sum
\[
v_0+v_1+\cdots
\]
is a well-defined element of $V$, also of degree $l$.  For instance, if $\FC^k=0$ for all $k$ sufficiently large, then these conditions are automatically satisfied.

\begin{lemma}\label{lemma:filtered cx0}
Retain notation above.  Assume that the following homology vanishing condition is met:
\begin{equation}
H^i(\FC^k/\FC^{k+1})\cong 0 \qquad \text{for}\qquad k\geq 1, \qquad i=0,1.
\end{equation}
Then the projection $V\rightarrow V/\FC^1$ induces an isomorphism $H^0(V)\rightarrow H^0(V/\FC^1)$.
\end{lemma}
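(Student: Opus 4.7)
The plan is to prove surjectivity and injectivity of the induced map $H^0(V) \to H^0(V/\FC^1)$ separately, in each case by building corrections layer-by-layer through the filtration. The hypotheses $H^1(\FC^k/\FC^{k+1}) = 0$ and $H^0(\FC^k/\FC^{k+1}) = 0$ (for $k \geq 1$) each supply the inductive step that pushes the obstruction one level deeper into the filtration, while the two parts of \eqref{filtconditions} assemble the resulting infinite sequence of corrections into a genuine element of $V$.

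For surjectivity, I start with a cocycle $\bar{v} \in (V/\FC^1)^0$, choose any preimage $v_0 \in V^0$, and note that $d(v_0) \in \FC^1$ is automatically closed. I then inductively construct $w_k \in (\FC^k)^0$ for $k \geq 1$ so that $d(v_0 - w_1 - \cdots - w_k) \in \FC^{k+1}$: at each stage, the current obstruction is a closed element of $\FC^k$ whose image in $(\FC^k/\FC^{k+1})^1$ is a cocycle, hence exact by the vanishing hypothesis, so a primitive lifts to $w_k \in (\FC^k)^0$. The completeness property $V = \lim_{k\to\infty} V/\FC^k$ lets me form $v := v_0 - \sum_k w_k$ as a bona fide element of $V$, and the separation $\bigcap_k \FC^k = 0$ then forces $d(v) = 0$, yielding the desired lift. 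For injectivity, I start with a closed $v \in V^0$ mapping to an exact class in $V/\FC^1$, use a single correction in $V^{-1}$ to reduce to the case $v \in \FC^1$, and then inductively produce $h_k \in (\FC^k)^{-1}$ with $v - d(h_1 + \cdots + h_k) \in \FC^{k+1}$ using the vanishing of $H^0(\FC^k/\FC^{k+1})$; the sum $h := \sum h_k$ converges in $V$ and exhibits $v$ as a coboundary.

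The only genuinely non-formal ingredient is the convergence of these iterative corrections, and the two halves of \eqref{filtconditions} are precisely engineered for exactly this: the completeness guarantees that the telescoping sum lands in $V$ and that its differential can be computed termwise, while the separation condition forces the limiting identity ($d(v)=0$ in the first part, $v = d(h)$ in the second) to hold in $V$ rather than merely modulo each $\FC^k$. I expect this convergence bookkeeping to be the main (though still routine) obstacle; everything else is a textbook obstruction-theoretic induction, essentially equivalent to collapsing the relevant piece of the filtration spectral sequence by hand.
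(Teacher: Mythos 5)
Your proposal is correct and follows essentially the same argument as the paper: both prove surjectivity and injectivity of $H^0(V)\to H^0(V/\FC^1)$ by the same layer-by-layer correction through the filtration, using $H^1(\FC^k/\FC^{k+1})=0$ for the surjectivity step and $H^0(\FC^k/\FC^{k+1})=0$ for the injectivity step, with completeness assembling the infinite sum of corrections and separation ($\bigcap_k \FC^k=0$) forcing the limiting identity. The only differences are cosmetic (signs on the correction terms and notation).
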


\begin{proof}
We first show that any element of $Z^0(V/\FC^1)$ is in the image of $Z^0(V)$. Let $c_0\in V$ be a degree zero element such that $d(c_0)\in \FC^1$.  Then $-d(c_0)$ is a cycle in $\FC^1$ in degree one, descending to a cycle in $\FC^1/\FC^2$. Since $H^1(\FC^1/\FC^2)\cong 0$ there exists an element $c_1\in \FC^1$ in degree zero, such that $-d(c_0) = d(c_1)$ modulo $\FC^2$, or equivalently, $d(c_0 + c_1) \in \FC^2$.  But then $-d(c_0+c_1)$ is a cycle in $\FC^2$.  Since $H^1(\FC^2/\FC^3)\cong 0$ there exists $c_2\in \FC^2$ in degree zero, such that $d(c_0+c_1+c_2)\in \FC^3$.  Continuing in this way, we construct a sequence of elements $c_k\in \FC^k$ such that
\begin{equation}
d(c_0+c_1+c_2+\cdots + c_k)\in \FC^{k+1}.
\end{equation}
The infinite sum $c:=c_0+c_1+c_2+\cdots$ exists in $V$, and satisfies $d(c)\in \FC^k$ for all $k\geq 0$.  Thus $d(c)=0$ since $\cap_{k\geq 0}\FC^k$ is assumed to be zero. Since $c$ and $c_0$ agree modulo $\FC^1$, we have constructed the desired lift.

Since $Z^0(V) \to Z^0(V/\FC^1)$ is surjective, we immediately deduce that $H^0(V)\rightarrow H^0(V/\FC^1)$ is surjective.  

Now suppose that $c \in V$ is a degree zero element with $d(c) = 0$, whose image in $V / \FC^1$ is a boundary. Then there exists $h_0 \in V$ of degree minus one, such that $c - d(h_0) \in \FC^1$. Observe that $c - d(h_0)$ is a cycle in $\FC^1$. Since $H^0(\FC^1/\FC^2) \cong 0$, there exists $h_1 \in \FC^1$ of degree minus one, such that $c - d(h_0) - d(h_1) \in \FC^2$. Continuing in this fashion, we construct $h_k \in \FC^k$ for $k \ge 1$, such that
\begin{equation} c - d(h_0 + h_1 + \cdots + h_k) \in \FC^{k+1}.\end{equation}
The infinite sum $h:= h_0+ h_1 + \cdots$ exists in $V$ and satisfies $c - d(h) \in \FC^{k+1}$ for all $k \ge 0$. This forces
\[ c - d(h) = 0.\]
Thus $c$ is a boundary in $V$, and the map in homology $H^0(V)\rightarrow H^0(V/\FC^1)$ is injective.
\end{proof}


The proof above actually establishes the following.

\begin{lemma}\label{lemma:filtered cx}
Retain notation above.  Fix an integer $l$.  If
\begin{equation}
H^1(\FC^k/\FC^{k+1})\cong 0 \qquad \text{for}\qquad k\geq l
\end{equation}
then the map $H^0(V)\rightarrow H^0(V/\FC^l)$ is surjective.  If
\begin{equation}
H^0(\FC^k/\FC^{k+1})\cong 0 \qquad \text{for}\qquad k\geq l
\end{equation}
then the map $H^0(V)\rightarrow H^0(V/\FC^l)$ is injective.\qed
\end{lemma}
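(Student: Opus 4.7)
The plan is to observe that the proof of Lemma \ref{lemma:filtered cx0} already decomposes into two essentially independent arguments—one for surjectivity and one for injectivity—each of which uses the vanishing hypothesis only for a single cohomological degree. One simply repeats these two constructions with $l$ in place of $1$, starting the iterative descent at the $l$-th step of the filtration rather than the first.

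For the surjectivity statement, I would begin with a degree zero element $\bar c \in V/\FC^l$ with $d(\bar c)=0$, lift it arbitrarily to $c_0\in V$, and observe that $d(c_0)\in \FC^l$. Then $-d(c_0)$ is a degree one cycle in $\FC^l$, and since $H^1(\FC^l/\FC^{l+1})=0$ there exists $c_1\in \FC^l$ of degree zero with $d(c_0+c_1)\in \FC^{l+1}$. Iterating with the hypothesis $H^1(\FC^k/\FC^{k+1})=0$ for each $k\geq l$ produces a sequence $c_k\in \FC^k$ with $d(c_0+c_1+\cdots+c_k)\in \FC^{k+1}$. The filtration conditions \eqref{filtconditions} guarantee convergence of $c:=\sum_{k\geq 0}c_k$ in $V$ and force $d(c)\in \bigcap_k \FC^k=0$; since $c$ reduces to $c_0\equiv \bar c$ in $V/\FC^l$, this gives the desired lift.

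For the injectivity statement, I would suppose $c\in V$ is a degree zero cycle whose image in $V/\FC^l$ is exact, so there exists $h_0\in V$ of degree $-1$ with $c-d(h_0)\in \FC^l$. Then $c-d(h_0)$ is a degree zero cycle in $\FC^l$, and $H^0(\FC^l/\FC^{l+1})=0$ provides $h_1\in \FC^l$ of degree $-1$ with $c-d(h_0+h_1)\in \FC^{l+1}$. Iterating under $H^0(\FC^k/\FC^{k+1})=0$ for $k\geq l$ yields $h_k\in \FC^k$ of degree $-1$ with $c-d(h_0+\cdots+h_k)\in \FC^{k+1}$, and the completeness and separation of the filtration give a well-defined $h:=\sum_{k\geq 0}h_k$ with $c-d(h)\in \bigcap_k \FC^k=0$, so $c$ itself is a boundary in $V$.

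There is no real obstacle here: the argument is an exact copy of the one already written for Lemma \ref{lemma:filtered cx0}, and the only point to verify is that nothing in either half of that proof ever required hypotheses at filtration level below $l$ or in cohomological degree other than the one being used. The main ``work'' is just to notice that surjectivity uses only the $H^1$-vanishing and injectivity uses only the $H^0$-vanishing, so the two conclusions can be extracted independently.
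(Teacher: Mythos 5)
Your proposal is correct and is exactly the argument the paper intends: the paper gives no separate proof, simply remarking that the proof of Lemma \ref{lemma:filtered cx0} already establishes this refinement, since its surjectivity half uses only the $H^1$-vanishing and its injectivity half only the $H^0$-vanishing, with the descent started at filtration level $l$. (Only a cosmetic point: your indexing "$c_k\in\FC^k$" should really be $c_k\in\FC^{l+k-1}$, which does not affect convergence or the conclusion.)
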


\subsection{Proof of Theorem \ref{thm:lifting lemma}}
\label{subsec:proof1}

We begin with a general construction, a filtration on Hom spaces to which we can apply the lemmas of the previous section.

\begin{lemma} \label{lemma:filtered hom cx} Let $F,G:\AS\rightarrow \Ch^b(\BS)$ be dg functors, and let $\Fdg, \Gdg \colon \Ch^b(\AS)\to \Ch^b(\BS)$ be their lifts. Then for any bounded complexes $X, Y \in \Ch^b(\AS)$, the complex $\uHom(\Fdg(X),\Gdg(Y))$ has subcomplexes $\FC^k\Big(\uHom(\Fdg(X),\Gdg(Y)\Big)$ for all $k \in \Z$ (constructed in the proof), satisfying $\FC^k \supset \FC^{k+1}$ and
\begin{equation} \label{howassgrworkslemma}
\FC^k/\FC^{k+1} \cong \prod_{i\in \Z}\uHom(F(X^i), G(Y^{i+k}))[-k]
\end{equation}
as complexes. Moreover, $\FC^k = 0$ for $k$ sufficiently large.
\end{lemma}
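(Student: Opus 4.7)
The plan is to use the explicit presentation of $\Fdg(X)$ and $\Gdg(Y)$ derived in \S\ref{ss:lifting}, namely
\[
\Fdg(X) = \tw_{\FZ(\d_X)}\Bigl(\bigoplus_i \FZ(X^i)[-i]\Bigr), \qquad \Gdg(Y) = \tw_{\GZ(\d_Y)}\Bigl(\bigoplus_j \GZ(Y^j)[-j]\Bigr),
\]
and to read off a decomposition of the hom complex from this. Before imposing the twists, a degree $n$ morphism from a summand $\FZ(X^i)[-i]$ to a summand $\GZ(Y^j)[-j]$ is the same data as a degree $n+i-j$ morphism $\FZ(X^i)\to \GZ(Y^j)$. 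Setting $k := j-i$ and grouping the components accordingly, we get a splitting of the underlying graded $\k$-module
\[
\uHom(\Fdg(X),\Gdg(Y)) \;=\; \prod_{k\in \Z}\,\prod_i \uHom(\FZ(X^i),\GZ(Y^{i+k}))[-k].
\]

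With this decomposition in hand, I would define the filtration by
\[
\FC^k \;:=\; \prod_{k'\geq k}\,\prod_i \uHom(\FZ(X^i),\GZ(Y^{i+k'}))[-k'],
\]
i.e.\ the submodule of morphisms supported on components of skew-degree at least $k$. The inclusions $\FC^k\supset \FC^{k+1}$ are obvious from the definition, and because $X$ and $Y$ are bounded, only finitely many $k'$ yield nonzero summands; hence $\FC^k=0$ once $k$ exceeds this finite range.

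The content of the lemma is that each $\FC^k$ is closed under the differential and that the induced differential on $\FC^k/\FC^{k+1}$ is the expected one. The total differentials on $\Fdg(X)$ and $\Gdg(Y)$ each split into (a) internal pieces coming from the dg structures on the individual $\FZ(X^i)$ and $\GZ(Y^j)$, which preserve $i$ and $j$ and therefore preserve $k$, and (b) the twist pieces $\FZ(\d_X)$ and $\GZ(\d_Y)$, which raise $i$ or $j$ by one. Precomposing a morphism $\FZ(X^i)\to \GZ(Y^{i+k})$ with $\FZ(\d_X)$ turns it into a morphism $\FZ(X^{i-1})\to \GZ(Y^{i+k})$ of skew-degree $k+1$, and postcomposing with $\GZ(\d_Y)$ likewise yields skew-degree $k+1$. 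Thus both twist contributions \emph{strictly} raise $k$, so $\FC^k$ is a subcomplex, and on the quotient $\FC^k/\FC^{k+1}$ only the internal pieces survive. This recovers \eqref{howassgrworkslemma} as an isomorphism of complexes, not just of graded modules.

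The main obstacle is bookkeeping with signs: the Koszul rule \eqref{koszulsignrule}, the shift sign $(-1)^k$ on $\d_{X[k]}$, and the signs appearing in the explicit formula for $\d_{\Fdg(X)}$ (see Remark \ref{remark:bicomplexes} and Remark \ref{remark:signsfortensoringwithobjects}) all have to be tracked in order to identify the induced differential on each subquotient with the ordinary hom differential on $\uHom(\FZ(X^i),\GZ(Y^{i+k}))[-k]$. None of this is genuinely subtle---the argument is forced once the formulas for $\Fdg, \Gdg$ are unwound---but this is where the care is required.
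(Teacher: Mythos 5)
Your proposal is correct and follows essentially the same route as the paper: the paper defines $\FC^k$ as the morphisms whose components $f_{ji}\colon F(X^i)\to G(Y^j)$ vanish for $j<i+k$ (equivalently, your skew-degree description), checks that the pieces of the differential coming from $F(\d_X)$ and $G(\d_Y)$ strictly raise $k$ while the internal piece preserves it, and identifies the subquotient with $\prod_i\uHom(F(X^i),G(Y^{i+k}))[-k]$ exactly as you do. The only difference is cosmetic (the paper phrases the filtration via the stupid truncations $X^{\geq i}$ before passing to components, and carries out the sign bookkeeping you defer), so no further comment is needed.
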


It is not the case that $\uHom(\Fdg(X),\Gdg(Y)) = \FC^0\Big(\uHom(\Fdg(X),\Gdg(Y)\Big)$.

\begin{proof}
Given a complex $X\in \Ch^b(\AS)$, we regard $\Fdg(X)$ as a filtered complex, filtered by the homological degree internal to $X$.  That is to say, $X$ is filtered by its subcomplexes $X^{\geq i}:= \tw_{\d_X}(\bigoplus_{j\geq i} X^j[-j])$, hence $\Fdg(X)$ is filtered by subcomplexes $\Fdg(X^{\geq i})$.

Now fix complexes $X$ and $Y$ in $\Ch^b(\AS)$. The filtrations on $\Fdg(X)$ and $\Gdg(Y)$ induce a filtration on hom complexes $\uHom(\Fdg(X),\Gdg(Y))$ via
\begin{equation}
\FC^k\Big(\uHom(\Fdg(X),\Gdg(Y)\Big):=\Big\{f \in \uHom(\Fdg(X),\Gdg(X))\:\Big|\: \text{ for all } i \in \Z, f \text{ restricts to } F(X^{\geq i})\to G(Y^{\geq i+k})\Big\}.
\end{equation}
Note that $\FC^0(\uHom(\Fdg(X),\Gdg(Y))$ is the subcomplex of filtered morphisms.

For bookkeeping purposes, we will denote the component of $f\in \uHom(\Fdg(X),\Gdg(Y))$ from $F(X^i)$ to $G(Y^j)$ by $f_{ji}$ (note the reversal of order). Then $\FC^k(\uHom(\Fdg(X),\Gdg(Y)))$ consists of those morphisms $f$ such that $f_{ji}=0$ for $j<i+k$. We think of $f_{ji}$ as living in the complex $\uHom(F(X^i),G(Y^j))$.


Let $f \in \uHom^l(\Fdg(X),\Gdg(Y))$ have degree $l$. We write $f$ in terms of its components $f_{ji}\in \uHom^{l-j+i}(F(X^i),G(Y^j))$.  Informally speaking, we may write $f=\sum f_{ji}$.  Then $d(f) = \sum \tilde{d}(f_{ji})$, where $\tilde{d}(f_{ji})$ is the sum of three morphisms:
\begin{equation} \label{firstterm}
(-1)^j d(f_{ji}) \in \uHom^{l+1-j+i}(F(X^i),G(Y^j)),
\end{equation}
\begin{equation}
G(\d_Y^j)\circ f_{ji}  \in \uHom^{l-j+i}(F(X^{i}),G(Y^{j+1})),
\end{equation}
\begin{equation}
-(-1)^l f_{ji}\circ F(\d_X^{i-1}) \in \uHom^{l-j+i}(F(X^{i-1}),G(Y^{j})).
\end{equation}
Here, $d(f_{ji})$ means the differential of $f_{ji}$ as calculated in $\uHom(F(X^i),G(Y^j))$, i.e.~
\begin{equation}
d(f_{ji}) =\d_{G(Y^j)}\circ f_{ji} - (-1)^{l-j+i} f_{ji} \circ \d_{F(X^i)}. 
\end{equation}
In particular, $(d(f))_{ji}$ has contributions from $\tilde{d}(f_{ji})$ and $\tilde{d}(f_{(j-1)i})$ and $\tilde{d}(f_{j(i+1)})$. As a consequence, if $f \in \FC^k(\uHom(F(X),G(Y)))$ then $d(f) \in \FC^k(\uHom(F(X),G(Y)))$ as well; more precisely, $d(f)$ can be written as a sum of three terms with one in $\FC^k(\uHom(F(X),G(Y)))$ and two in $\FC^{k+1}(\uHom(F(X),G(Y)))$.
Regardless, this shows that $\FC^k(\uHom(F(X),G(Y)))$ is indeed a subcomplex of $\uHom(F(X),G(Y))$.

Abbreviate by writing $\FC^k = \FC^k(\uHom(F(X),G(Y))$. Then $\FC^k/\FC^{k+1}$ is spanned by the images of morphisms $f$ with $f_{ji} = 0$ unless $j = i+k$. That is,
\begin{equation} \label{howassgrworks}
\FC^k/\FC^{k+1} \cong \prod_{i\in \Z} \uHom\Big(F(X^i)[-i], G(Y^{i+k})[-i-k]\Big) \cong \prod_{i\in \Z}\uHom(F(X^i), G(Y^{i+k}))[-k].
\end{equation}
Moreover, the differential on $\FC^k/\FC^{k+1}$ comes only from the terms \eqref{firstterm}, and therefore agrees with the differential on the right-hand side of \eqref{howassgrworks}.

Because $X$ and $Y$ are bounded complexes, there exist $i \in \Z$ such that $X^{\geq i} = X$, and there exists $l \ge 0$ such that $Y^{\geq i+l} = 0$. It follows that $\FC^k = 0$ for all $k$ sufficiently large (namely $k \ge l$).
\end{proof}

\begin{lemma}\label{lemma:extending maps}
Let $F,G:\AS\rightarrow \Ch^b(\BS)$ denote an unobstructed pair of dg functors (see Definition \ref{def:unobstructed}) and let $X,Y\in \Ch^b(\AS)$ be complexes. Suppose we have a family of chain maps $g_{i,i}\colon F(X^i)\to G(Y^i)$ which fit into a diagram
\[
\begin{tikzpicture}
\node (a) at (0,0) {$\cdots$};
\node (b) at (3,0) {$F(X^i)$};
\node (c) at (6,0) {$F(X^{i+1})$};
\node (d) at (9,0) {$\cdots$};
\node (a2) at (0,-2) {$\cdots$};
\node (b2) at (3,-2) {$G(Y^j)$};
\node (c2) at (6,-2) {$G(Y^{j+1})$};
\node (d2) at (9,-2) {$\cdots$};
\path[-stealth,thick]
(a) edge node[above]{$F(\d_X^{i-1})$} (b)
(b) edge node[above]{$F(\d_X^{i})$} (c)
(c) edge node[above]{$F(\d_X^{i+1})$} (d)
(a2) edge node[above]{$G(\d_Y^{i-1})$} (b2)
(b2) edge node[above]{$G(\d_Y^{i})$} (c2)
(c2) edge node[above]{$G(\d_Y^{j+1})$} (d2)
(b) edge node[left]{$g_{i,i}$} (b2)
(c) edge node[left]{$g_{i+1,i+1}$} (c2);
\end{tikzpicture}
\]
in which the squares commute up to homotopy. Then this diagram extends to a filtered chain map $g:\Fdg(X)\rightarrow \Gdg(Y)$.  Furthermore this extension is unique up to homotopy in the sense that if $g,g':\Fdg(X)\rightarrow \Gdg(Y)$ are filtered chain maps with $g_{i,i}\simeq (g')_{i,i}$ for all $i\in \Z$, then $g\simeq g'$.
\end{lemma}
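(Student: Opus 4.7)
The plan is to apply the general homological machinery from Lemma \ref{lemma:filtered cx0} (and its sharpening Lemma \ref{lemma:filtered cx}) to the filtration on $\uHom(\Fdg(X),\Gdg(Y))$ constructed in Lemma \ref{lemma:filtered hom cx}. Recall that the associated graded satisfies
\begin{equation}
\FC^k/\FC^{k+1} \;\cong\; \prod_{i\in\Z}\uHom\bigl(F(X^i),G(Y^{i+k})\bigr)[-k],
\end{equation}
and that unobstructedness of $(F,G)$ translates to $H^{n}(\FC^k/\FC^{k+1})=0$ whenever $n-k<0$.

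For existence, the first step is to package the data $\{g_{i,i}\}$ into a degree zero element $c_0\in\FC^0$, by setting $(c_0)_{ii}=g_{i,i}$ and letting all other components vanish. Because each $g_{i,i}$ is a chain map, a direct computation using the differential formula from the proof of Lemma \ref{lemma:filtered hom cx} shows that $d(c_0)\in\FC^1$, with class in $\FC^1/\FC^2$ given componentwise by $G(\d_Y^i)\circ g_{i,i}-g_{i+1,i+1}\circ F(\d_X^i)$. This is precisely the ``failure-of-commutativity'' morphism, which by hypothesis is a coboundary in each $\uHom(F(X^i),G(Y^{i+1}))$. Thus there exists $c_1\in\FC^1$ of degree zero with $d(c_0+c_1)\in\FC^2$. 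For $k\geq 2$ the obstruction at stage $k$ lies in $H^1(\FC^k/\FC^{k+1})$, which by the identification above equals $\prod_i H^{1-k}(\uHom(F(X^i),G(Y^{i+k})))$; since $1-k<0$, this vanishes by unobstructedness. Following the inductive construction in the proof of Lemma \ref{lemma:filtered cx0}, we obtain elements $c_k\in\FC^k$ with $d(c_0+\cdots+c_k)\in\FC^{k+1}$. Since $\FC^k=0$ for $k$ sufficiently large (by the last statement of Lemma \ref{lemma:filtered hom cx}), the sum $g:=\sum_k c_k$ is a finite sum and defines a closed element of $\FC^0$, i.e.\ a filtered chain map extending $\{g_{i,i}\}$.

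For uniqueness up to homotopy, suppose $g,g'\colon\Fdg(X)\to\Gdg(Y)$ are filtered chain maps with $g_{i,i}\simeq(g')_{i,i}$ for every $i$; choose degree $-1$ morphisms $h_i$ with $d(h_i)=g_{i,i}-(g')_{i,i}$ and assemble them into $h_0\in\FC^0$ of degree $-1$. Then $(g-g')-d(h_0)\in\FC^1$ is closed, so represents a class in $H^0(\FC^1/\FC^2)=\prod_i H^{-1}(\uHom(F(X^i),G(Y^{i+1})))$, which vanishes by unobstructedness. One therefore finds $h_1\in\FC^1$ of degree $-1$ with $(g-g')-d(h_0+h_1)\in\FC^2$, and iterating with the vanishing of $H^0(\FC^k/\FC^{k+1})=\prod_i H^{-k}(\uHom(F(X^i),G(Y^{i+k})))$ for $k\geq 1$, we produce $h_k\in\FC^k$ whose (finite) sum $h$ satisfies $d(h)=g-g'$. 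This is the injectivity half of Lemma \ref{lemma:filtered cx0}, applied to the same filtration.

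The main technical nuisance, rather than any genuine obstacle, is bookkeeping the signs and indices in the componentwise expansion of the differential on $\uHom(\Fdg(X),\Gdg(Y))$ to verify that $d(c_0)$ really does reduce modulo $\FC^2$ to the advertised obstruction, and to confirm that the associated graded differential agrees with the differential on each factor $\uHom(F(X^i),G(Y^{i+k}))[-k]$. Once this bookkeeping is in hand, both existence and uniqueness are immediate applications of the filtered-complex lemmas.
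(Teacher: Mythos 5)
Your proposal is correct and follows essentially the same route as the paper: the paper also applies the filtration of Lemma \ref{lemma:filtered hom cx} with $V=\FC^0$ and deduces existence and uniqueness from the surjectivity of $H^0(V)\to H^0(V/\FC^2)$ and injectivity of $H^0(V)\to H^0(V/\FC^1)$ (Lemma \ref{lemma:filtered cx}), which is exactly the inductive argument you unwind by hand. The only caveat is the sign bookkeeping you already flag (e.g.\ the $(-1)^i$ appearing when assembling the diagonal homotopies $h_i$ into an element of the total hom complex), which is harmless.
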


\begin{proof}
As in the previous lemma, write $\FC^k = \FC^k(\uHom(\Fdg(X),\Gdg(Y))$, and let $V=\FC^0\subset \uHom(F(X),G(Y))$ denote the subcomplex of filtered morphisms. We view $\FC^{\ge 0}$ as a filtration on $V$, and it satisfies \eqref{filtconditions} since $\FC^k = 0$ for $k$ sufficiently large. By \eqref{howassgrworkslemma} and the assumption that $F$ and $G$ are unobstructed, we see that $\FC^k/\FC^{k+1}$ has zero homology in degrees $<k$.


In particular
\[
H^0(\FC^k/\FC^{k+1})\cong 0\quad \text{for} \quad k\geq 1,\qquad H^1(\FC^k/\FC^{k+1})\cong 0\quad \text{for} \quad k\geq 2.
\] 
Consider the sequence of maps
\begin{equation} \label{remarksonme}
H^0(V)\rightarrow H^0(V/\FC^2) \rightarrow H^0(V/\FC^1).
\end{equation}
It follows from Lemma \ref{lemma:filtered cx} that the first of these arrows is surjective, and the composition is injective.  

Let $c_0\in V$ denote the morphism whose only nonzero components are the $g_{i,i}$. Since each $g_{i,i}$ is a chain map we have
\begin{equation}
d(c_0) = G(\d_X)\circ c_0 - c_0\circ F(\d_X) \in \prod_{i\in \Z} \uHom(F(X^i)[-i], G(Y^{i+1})[-i-1]),
\end{equation}
so $d(c_0)$ lives in $\FC^1$. Recall that the standard differential on $\uHom(F(X^i)[-i], G(Y^{i+1})[-i-1])$ agrees with the differential on $\FC^1/\FC^2$. Our hypotheses on $g_{i,i}$ state that, for each $i$, $G(\d_X)\circ g_{i,i} - g_{i,i} \circ F(\d_X) \in \uHom(F(X^i)[-i], G(Y^{i+1})[-i-1])$ is nulhomotopic. In other words, $d(c_0)$ descends to a boundary in $\FC^1/\FC^2$. Thus, there exists $c_1\in \FC^1$ such that
\[
d(c_0+c_1) \in \FC^2.
\]
Therefore $c_0+c_1$ represents a class in $H^0(V/\FC^2)$.

From the surjectivity in \eqref{remarksonme} above, $c_0 + c_1$ can be lifted to a class $c$ in $H^0(V)$, which lifts to the desired filtered chain map $g \in Z^0(V)$.

Now suppose $g'$ is constructed in the same way, for $g'_{i,i}$ which are homotopic to $g_{i,i}$ for all $i$. This assumption implies that $c_0$ and $c_0'$ are homotopic in
$V/\FC^1$. Now the injectivity in \eqref{remarksonme} implies that $c_0$ and $c_0'$ lift to the same class $c \in H^0(V)$, whence $g$ and $g'$ are homotopic in $V$. This proves the
lemma.
\end{proof}

\begin{proof}[Proof of Theorem \ref{thm:lifting lemma}]
Recall the setup from the statement: we have an unobstructed pair of dg functors $F,G\colon \AS\to \Ch^b(\BS)$, with lifts $\Fdg,\Gdg\colon \Ch^b(\AS)\to \Ch^b(\BS)$.

Let $\etaZ:H^0(F)\rightarrow H^0(G)$ be a natural transformation. Our goal is to construct a lift of $\etaZ$ to a natural transformation $\etadg\colon H^0(\Fdg)\to H^0(\Gdg)$ and show that $\etadg$ is unique.

Let $X\in \Ch^b(\AS)$ be given.  Then $\etaZ$ determines a homotopy commutative diagram of chain maps:
\[
\begin{tikzpicture}
\node (a) at (0,0) {$\cdots$};
\node (b) at (3,0) {$F(X^i)$};
\node (c) at (6,0) {$F(X^{i+1})$};
\node (d) at (9,0) {$\cdots$};
\node (a2) at (0,-2) {$\cdots$};
\node (b2) at (3,-2) {$G(X^j)$};
\node (c2) at (6,-2) {$G(X^{j+1})$};
\node (d2) at (9,-2) {$\cdots$};
\path[-stealth,thick]
(a) edge node[above]{$F(\d_X^{i-1})$} (b)
(b) edge node[above]{$F(\d_X^{i})$} (c)
(c) edge node[above]{$F(\d_X^{i+1})$} (d)
(a2) edge node[above]{$G(\d_X^{j-1})$} (b2)
(b2) edge node[above]{$G(\d_X^{j})$} (c2)
(c2) edge node[above]{$G(\d_X^{j+1})$} (d2)
(b) edge node[left]{$\etaZ_{X^i}$} (b2)
(c) edge node[left]{$\etaZ_{X^{i+1}}$} (c2);
\end{tikzpicture}
\]
This extends, uniquely up to homotopy, to a chain map $\etadg_X:\Fdg(X)\rightarrow \Gdg(X)$ by Lemma \ref{lemma:extending maps}.  We have to check naturality of $\etadg_X$, up to homotopy.  So let $f:X\rightarrow Y$ be a chain map in $\Ch^b(\AS)$.  We must show that
\begin{equation}
\Gdg(f)\circ \etadg_X \simeq \etadg_Y\circ \Fdg(f).
\end{equation}
Note that $\Fdg(f)$ and $\Gdg(f)$ are filtered chain maps, as are $\etadg_X$ and $\etadg_Y$. Thus $\Gdg(f)\circ \etadg_X - \etadg_Y\circ \Fdg(f)$ is a filtered map.  The minimal $\FC$-degree component is a sum over $i\in \Z$ of chain maps
\begin{equation}
(\Gdg(f)\circ \etadg_X - \etadg_Y\circ \Fdg(f))_{ii}= G(f^i)\circ \etaZ_{X^i} - \etaZ_{Y^i}\circ F(f^i)
\end{equation}
which is null-homotopic by naturality of $\etaZ$.  Thus, $\Gdg(f)\circ \etadg_X - \etadg_Y\circ \Fdg(f)$ is null-homotopic by uniqueness of lifts.  This completes the proof.
\end{proof}

\subsection{The multilinear lifting lemma}
\label{ss:multilinear lifting}
Let us state and prove a multilinear version of Theorem \ref{thm:lifting lemma}, that will be useful in the proof of our Drinfeld center lifting lemma (Theorem \ref{thm:lifting drinfeld}).

The notion of multilinear functor is of course equivalent to the notion of an ordinary functor from a tensor product, so we begin by recalling the the notion of tensor product of dg categories. Let $\AS,\BS$ be dg categories. Their tensor product, denoted $\AS\otimes \BS$ has objects given by pairs $(X,Y)$ with $X\in \AS$, $Y\in \BS$, and morphism complexes given by
\[
\Hom_{\AS\otimes \BS}\Big((X',Y'), (X,Y)\Big) := \Hom_\AS(X',X)\otimes \Hom_\BS(Y',Y),
\]
with composition defined by
\begin{equation}
(f\otimes g)\circ (f'\otimes g') = (-1)^{|g||f'|} (f\circ f')\otimes (g\otimes g')
\end{equation}

Let $\AS_1,\ldots,\AS_r$ and $\BS$ be $\k$-linear categories. Assume also that $\BS$ is additive.  Let $F\colon \AS_1\otimes\cdots\otimes \AS_r\to \Ch^b(\BS)$ be a dg functor.  Let $\Fdg\colon \Ch^b(\AS_1)\otimes\cdots\otimes \Ch^b(\AS_r)\to \Ch^b(\BS)$ denote the dg functor defined on objects by
\begin{equation}
\Fdg\left(\tw_{\d_1}\Big(\bigoplus_{i_1} X_1^{i_1}[-i_1]\Big),\ldots, \tw_{\d_1}\Big(\bigoplus_{i_r} X_r^{i_r}[-i_r]\Big)\right):= \tw_{\boldsymbol{\d}}\left(\bigoplus_{i_1,\ldots, i_r} F(X_1^{i_1},\ldots,X_r^{i_r})[-\sum_{m=1}^r i_r]\right)
\end{equation}
where $\boldsymbol{\d} = \sum_{j=1}^r \Fdg(\id,\ldots,\d_j,\ldots,\id)$. The dg functor $\Fdg$ is defined on morphisms by
\begin{equation}
\Fdg(f_1,\ldots,f_r)|_{F(X_1^{i_1},\ldots,X_r^{i_r})} = (-1)^s F\Big(f_1|_{X_1^{i_1}}\, ,\ldots\, ,f_r|_{X_r^{i_r}}\Big).
\end{equation}
where $s=\sum_{1\leq j< j'\leq r} i_j |f_{j'}|$.

\begin{theorem}\label{thm:multilinear lifing lemma}
Let $F,G\colon \AS_1\otimes\cdots\otimes \AS_r\to \Ch^b(\BS)$ be dg functors, and consider their lifts to dg functors $\Fdg,\Gdg\colon \Ch^b(\AS_1)\otimes\cdots\otimes \Ch^b(\AS_r)\to \Ch^b(\BS)$.  Assume that
\[
H^k\left(\uHom\left(F(X_1,\ldots,X_r),G(Y_1,\ldots,Y_r)\right)\right) =0
\]
for all $k<0$ and all sequences of objects $X_i,Y_i\in \AS_i$. Then any closed degree zero natural transformation $\etaZ\colon H^0(F)\to H^0(G)$ lifts to a unique natural transformation $\etadg\colon H^0(\Fdg)\to H^0(\Gdg)$.
\end{theorem}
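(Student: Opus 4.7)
The plan is to mimic the proof of Theorem \ref{thm:lifting lemma} almost verbatim, replacing the single cohomological index $i$ with a multi-index $(i_1,\ldots,i_r)$ and using the total cohomological degree $\sum_m i_m$ as the scalar on which the filtration is built. Given bounded complexes $X_m, Y_m \in \Ch^b(\AS_m)$, the lifts $\Fdg(X_1,\ldots,X_r)$ and $\Gdg(Y_1,\ldots,Y_r)$ are naturally filtered by $\Fdg(X_1^{\geq i_1},\ldots,X_r^{\geq i_r})$ and similarly for $\Gdg$; the sum $\sum_m i_m$ gives a single $\Z$-valued filtration, and this induces a filtration $\FC^k$ on the hom complex $\uHom(\Fdg(X_1,\ldots,X_r),\Gdg(Y_1,\ldots,Y_r))$ consisting of maps $f$ whose component $f_{(\mathbf{j}),(\mathbf{i})}$ from $F(X_1^{i_1},\ldots,X_r^{i_r})$ to $G(Y_1^{j_1},\ldots,Y_r^{j_r})$ vanishes whenever $\sum_m j_m < \sum_m i_m + k$. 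Since the complexes are bounded, $\FC^k=0$ for $k$ sufficiently large.

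Next I would compute the associated graded. A direct signed bookkeeping exercise, analogous to \eqref{howassgrworks}, shows that
\begin{equation}
\FC^k/\FC^{k+1} \;\cong\; \prod_{\sum_m(j_m-i_m)=k} \uHom\Big(F(X_1^{i_1},\ldots,X_r^{i_r}),\,G(Y_1^{j_1},\ldots,Y_r^{j_r})\Big)[-k],
\end{equation}
with differential inherited from the target hom complexes; the inter-piece pieces of the differential (coming from the $\d_{X_m}$ and $\d_{Y_m}$) strictly increase the filtration degree and so vanish on the quotient. By the multilinear unobstructedness hypothesis, each factor on the right has homology concentrated in degrees $\geq 0$, so $H^i(\FC^k/\FC^{k+1})=0$ for $i<k$. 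In particular $H^0(\FC^k/\FC^{k+1}) = 0$ for $k\geq 1$ and $H^1(\FC^k/\FC^{k+1})=0$ for $k\geq 2$, which are exactly the hypotheses needed to feed into Lemma \ref{lemma:filtered cx} applied to $V=\FC^0$.

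To construct $\etadg$, I would generalize Lemma \ref{lemma:extending maps} to the multilinear setting. The natural transformation $\etaZ$ provides, for each multi-index $(i_1,\ldots,i_r)$, a chain map $\etaZ_{(X_1^{i_1},\ldots,X_r^{i_r})} \colon F(X_1^{i_1},\ldots,X_r^{i_r}) \to G(X_1^{i_1},\ldots,X_r^{i_r})$, and these ``diagonal'' maps commute with each $F$ and $G$ applied to a $\d_{X_m}$ up to homotopy (by naturality of $\etaZ$ with respect to each morphism in $\AS_m$). Assembling these into $c_0 \in \FC^0$ and applying the same successive-correction procedure as in the proof of Lemma \ref{lemma:extending maps} (now cleanly licensed by Lemma \ref{lemma:filtered cx}), I obtain a filtered chain map $\etadg_{(X_1,\ldots,X_r)}\colon \Fdg(X_1,\ldots,X_r) \to \Gdg(X_1,\ldots,X_r)$, unique up to homotopy relative to its diagonal components.

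Finally, naturality: given chain maps $f_m\colon X_m\to Y_m$, the difference
\begin{equation}
\Gdg(f_1,\ldots,f_r)\circ \etadg_{(X_1,\ldots,X_r)} - \etadg_{(Y_1,\ldots,Y_r)}\circ \Fdg(f_1,\ldots,f_r)
\end{equation}
is a filtered chain map whose minimal-filtration (i.e.~diagonal) component is, on each summand, exactly $G(f_1^{i_1},\ldots,f_r^{i_r})\circ \etaZ_{(X_1^{i_1},\ldots,X_r^{i_r})} - \etaZ_{(Y_1^{i_1},\ldots,Y_r^{i_r})} \circ F(f_1^{i_1},\ldots,f_r^{i_r})$, which is null-homotopic by naturality of $\etaZ$; the multilinear uniqueness of lifts then forces the whole difference to be null-homotopic. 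I do not expect any serious obstacle here; the only non-routine step is pinning down the signs so that the minimal-filtration piece of the differential of $c_0$ really is the naturality obstruction computed in $\uHom(F(X_1^{i_1},\ldots,X_r^{i_r}),G(Y_1^{j_1},\ldots,Y_r^{j_r}))$ with its standard differential. This is essentially the same sign bookkeeping that governs the definition of $\Fdg$ on morphisms (via the sign $(-1)^s$ with $s=\sum_{j<j'} i_j|f_{j'}|$), and once tracked carefully it reduces the entire proof to a direct multilinear analogue of the arguments already given in \S\ref{subsec:proof1}.
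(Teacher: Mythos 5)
Your argument is correct in outline, but it is a genuinely different route from the paper's. The paper does not redo the filtration argument multilinearly: it passes to the additive closure $\AS'$ of $\AS_1\otimes\cdots\otimes\AS_r$, extends $F,G$ additively to $\AS'\to\Ch^b(\BS)$ (the unobstructedness hypothesis survives additive extension), applies the already-proven unary lifting lemma (Theorem \ref{thm:lifting lemma}) to get a unique lift over $\Ch^b(\AS')$, and then restricts along the canonical inclusion $\Ch^b(\AS_1)\otimes\cdots\otimes\Ch^b(\AS_r)\hookrightarrow\Ch^b(\AS')$; uniqueness is handled by repeating the filtration argument, with details omitted. The trade-off: the paper's reduction avoids all of the multilinear sign bookkeeping and gives naturality with respect to \emph{every} morphism of the tensor product dg category for free (it is inherited by restriction), at the cost of the implicit identification of the multilinear lift $\Fdg$ with the restriction of the lift of the additively extended functor; your approach is self-contained and in effect supplies the filtration details the paper omits, with the total degree $\sum_m i_m$ playing the role of the single homological index, and the associated graded and vanishing ranges you state are right.

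One point to tighten: in the naturality step you only test against tuples of chain maps $f_m\colon X_m\to Y_m$, but a closed degree zero morphism of $\Ch^b(\AS_1)\otimes\cdots\otimes\Ch^b(\AS_r)$ is a sum of tensors $f_1\otimes\cdots\otimes f_r$ with $\sum_m|f_m|=0$ in which the individual $f_m$ need not be closed, and such morphisms are not sums of tuples of chain maps. This is repairable inside your framework: for any such closed degree zero $u$, the difference $\Gdg(u)\circ\etadg-\etadg\circ\Fdg(u)$ is still a closed filtered map, and its component in $\FC^0/\FC^1$ is a (signed) sum, over the total-degree-zero components of $u$, of naturality defects of $\etaZ$ at morphisms of the ordinary category $\AS_1\otimes\cdots\otimes\AS_r$ (where every morphism is closed of degree zero), hence null-homotopic term by term; injectivity of $H^0(V)\to H^0(V/\FC^1)$ from Lemma \ref{lemma:filtered cx} then kills the whole difference, exactly as in your argument for tuples. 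With that adjustment, and the sign check you already flag, your proof goes through.
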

\begin{proof}
Let $\AS'$ be the additive closure of $\AS_1\otimes \cdots\otimes \AS_r$. We can extend functors $F,G$ additively, obtaining $F,G\colon \AS'\to \Ch^b(\BS)$.  These can be lifted to dg functors $\Fdg',\Gdg'\colon \Ch^b(\AS')\to \Ch^b(\BS)$.  There is an induced natural transformation $\etaZ'\colon H^0(F')\to H^0(G')$ which has a lift to a (unique) natural transformation $\etadg'\colon H^0(\Fdg')\to H^0(\Gdg')$ by the usual lifting lemma (Theorem \ref{thm:lifting lemma}).  Then we restrict the functors $\Fdg'$, $\Gdg'$, and the natural transformation $\etadg'$ along the canonical inclusion $\Ch^b(\AS_1)\otimes\cdots\Ch^b(\AS_r)\hookrightarrow \Ch^b(\AS')$, obtaining the existence of $\etadg$ as in the statement.

For uniqueness one can essentially repeat the argument from the proof of Theorem \ref{thm:lifting lemma}.  We omit the details as they are straightforward.
\end{proof}

\subsection{The Drinfeld centralizer}
\label{ss:relative drinfeld}

Let $\AS$ be a monoidal category and $\MS$ a $\star$-bimodule category. For any $Z\in \MS$, let $L_Z,R_Z\colon \AS\to \MS$ be the functors given by $Z\star -$ and $-\star Z$, respectively.  Let us recall and expand on Definition \ref{def:intro centralizer}.

\begin{definition}\label{def:relative drinfeld}
Retain notation as above.  The \emph{Drinfeld centralizer of $\AS$ in $\MS$} is the $\k$-linear category $\ZS(\AS,\MS)$ whose objects are pairs $(Z,\tau)$ where $Z\in \MS$ and $\tau$ is an isomorphism of functors $L_Z\to R_Z$ such that
\begin{equation}\label{eq:multiplicativity}
\tau_{X_1\star X_2}  = (\id_{X_1}\star \tau_{X_2})\circ (\tau_{X_1}\star \id_{X_2})
\end{equation}
for all $X_1,X_2\in \AS$.   A morphism in $\ZS(\AS,\MS)$ from $(Z',\tau')$ to $(Z,\tau)$ is a morphism $f\colon Z'\to Z$ in $\MS$ such that 
\begin{equation} \label{tobeamorphism} (\id_X \star f)\circ \tau_{X} = \tau'_X \circ (f\star \id_X)\end{equation}
for all $X\in \AS$.
\end{definition}

\begin{remark}
The usual Drinfeld center arises as $\ZS(\AS):=\ZS(\AS,\AS)$.  This category is (braided) monoidal and $\ZS(\AS,\MS)$ is a $(\ZS(\AS),\ZS(\AS))$ $\star$-bimodule category for all $\MS$.  We will not recall this structure as it is standard and does not play a large role in the present paper.
\end{remark}

Now we prove our main results on lifting objects of $\ZS(\AS,\KC^b(\MS))$ to $\ZS(\KC^b(\AS),\KC^b(\MS))$.

\begin{theorem}\label{thm:lifting drinfeld}
Let $\AS$ be a $\k$-linear monoidal category and $\MS$ an $(\AS,\AS)$ $\star$-bimodule category, and let $Z\in \KC^b(\MS)$ be given. Assume that $\uHom(Z\star X_1, X_2\star Z)$ has zero homology in negative degrees.  Then any $(Z,\tau)\in \ZS(\AS,\KC^b(\MS))$ has a unique lift to $(Z,\ttau)\in \ZS(\KC^b(\AS),\KC^b(\MS))$.
\end{theorem}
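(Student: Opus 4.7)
My strategy is to apply Theorem \ref{thm:lifting lemma} followed by Theorem \ref{thm:multilinear lifing lemma}, after choosing a representative $Z\in \Ch^b(\MS)$ of the given homotopy class. Define dg functors $L_Z,R_Z\colon \AS\to \Ch^b(\MS)$ by $L_Z(X)=Z\star X$ and $R_Z(X)=X\star Z$, with dg lifts $\mathbf{L}_Z,\mathbf{R}_Z\colon \Ch^b(\AS)\to \Ch^b(\MS)$ provided by Proposition \ref{prop:liftfunctor}. The collection $\{\tau_X\}_{X\in \AS}$ is precisely a natural transformation $H^0(L_Z)\to H^0(R_Z)$, and the assumption that $\uHom(Z\star X_1,X_2\star Z)$ has vanishing negative homology is exactly the unobstructedness condition of Definition \ref{def:unobstructed} for the pair $(L_Z,R_Z)$. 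Thus Theorem \ref{thm:lifting lemma} produces a natural transformation $\ttau\colon H^0(\mathbf{L}_Z)\to H^0(\mathbf{R}_Z)$ lifting $\tau$, unique up to homotopy.

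Next I would verify that each $\ttau_X$ is a homotopy equivalence, so that $\ttau$ is a natural isomorphism in $\KC^b(\MS)$. By the construction in Lemma \ref{lemma:extending maps}, $\ttau_X$ is a filtered chain map with respect to the canonical filtrations on $\mathbf{L}_Z(X)$ and $\mathbf{R}_Z(X)$ induced by the subcomplexes $X^{\geq i}\subseteq X$. Its associated graded on the $i$-th level is the shifted map $\tau_{X^i}[-i]$, which is a homotopy equivalence by hypothesis on $\tau$. Consequently $\Cone(\ttau_X)$ carries a finite filtration whose associated graded pieces are all contractible; a short induction on the length of the filtration (using that extensions of contractible complexes by contractible complexes are contractible) shows that $\Cone(\ttau_X)$ is itself contractible, whence $\ttau_X$ is a homotopy equivalence.

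To verify the multiplicativity relation \eqref{eq:multiplicativity} on all of $\Ch^b(\AS)$, I would compare $\ttau_{X_1\star X_2}$ and $(\id_{X_1}\star \ttau_{X_2})\circ (\ttau_{X_1}\star \id_{X_2})$ as natural transformations between the two bifunctors $\Ch^b(\AS)\otimes \Ch^b(\AS)\to \Ch^b(\MS)$ sending $(X_1,X_2)$ to $Z\star X_1\star X_2$ and to $X_1\star X_2\star Z$ respectively. Both bifunctors are dg lifts of their restrictions to $\AS\otimes \AS$, and on $\AS\otimes \AS$ the two candidate natural transformations agree by the multiplicativity of $\tau$. The unobstructedness hypothesis for Theorem \ref{thm:multilinear lifing lemma} is $H^k(\uHom(Z\star X_1\star X_2,Y_1\star Y_2\star Z))=0$ for $k<0$ and $X_i,Y_i\in\AS$, which follows from our hypothesis applied to the objects $X_1\star X_2$ and $Y_1\star Y_2$ of $\AS$. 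The uniqueness clause then forces the two lifts to coincide in $\KC^b(\MS)$.

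Uniqueness of $(Z,\ttau)$ itself follows from the uniqueness clauses of Theorems \ref{thm:lifting lemma} and \ref{thm:multilinear lifing lemma}. The subtlest step, in my view, is the multiplicativity verification: one must carefully phrase both candidate structure maps as natural transformations of the same pair of bifunctors on $\Ch^b(\AS)\otimes \Ch^b(\AS)$, handling the signs arising from the tensor product of dg categories (see Remark \ref{remark:signsfortensoringwithobjects}), before invoking the multilinear lifting lemma.
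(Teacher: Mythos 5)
Your proposal is correct and follows essentially the same route as the paper: lift $\tau$ via Theorem \ref{thm:lifting lemma} applied to $L_Z,R_Z$, then verify multiplicativity by comparing the two candidate transformations between the bifunctors $(X_1,X_2)\mapsto Z\star X_1\star X_2$ and $(X_1,X_2)\mapsto X_1\star X_2\star Z$ and invoking the uniqueness clause of Theorem \ref{thm:multilinear lifing lemma}. Your explicit filtration/cone argument that each $\ttau_X$ is a homotopy equivalence is a small but welcome addition, as the paper's proof takes the isomorphism property of the lift for granted.
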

\begin{proof}
Assume we are given $(Z,\tau)\in \ZS(\AS,\KC^b(\MS))$ as in the statement.  Throughout the proof we will let $L_Z,R_Z\colon \AS\to \Ch^b(\MS)$ be the dg functors definend by $L_Z(X)=Z\star X$ and $R_Z(X)=X\star Z$.  We let $\mathbf{L}_Z,\mathbf{R}_Z\colon \Ch^b(\AS)\to \Ch^b(\MS)$ denote the lifts to complexes.  Note that $R_Z(X) = X\star Z$ and $L_Z(X)\cong Z\star X$ for all complexes $X\in \Ch^b(\AS)$, though the latter isomorphism involves some signs as in Remark \ref{remark:signsfortensoringwithobjects}.

By hypothesis, $\tau$ is a natural isomorphism $H^0(L_Z)\to H^0(R_Z)$.  By Theorem \ref{thm:lifting lemma} there is a unique lift to a natural isomorphism $\ttau\colon H^0(\mathbf{L}_Z)\to H^0(\mathbf{R}_Z)$.  

It remains to prove the multiplicativity condition
\begin{equation}
\ttau_{X_1\star X_2}\simeq (\id_{X_1}\star \ttau_{X_2})\circ (\ttau_{X_1}\star \id_{X_2}).
\end{equation}
For this, define $LL_Z,RR_Z\colon \AS\otimes \AS\to \Ch^b(\MS)$ to be the dg functors defined by
\begin{equation}
LL_Z(X_1,X_2) := Z\star X_1\star X_2 \ , \qquad RR_Z(X_1,X_2):=X_1\star X_2\star Z.
\end{equation}

These functors have lifts $\mathbf{LL}_Z,\mathbf{RR}_Z\colon \Ch^b(\AS)\otimes \Ch^b(\AS)\to \Ch^b(\MS)$. We have two given natural transformations $H^0(\mathbf{LL}_Z)\to H^0(\mathbf{RR}_Z)$, given by 
\begin{equation}
\etadg_{X_1,X_2}:=\ttau_{X_1\star X_2} \ , \qquad \etadg'_{X_1,X_2}:=(\id_{X_1}\star \ttau_{X_2})\circ (\ttau_{X_1}\star \id_{X_2}).
\end{equation}
These natural transformations are lifts of $\tau_{X_1\star X_2}$ and $(\id_{X_1}\star \tau_{X_2})\circ (\tau_{X_1}\star \id_{X_2})$ respectively, which are homotopic by definition of $\ZS(\AS,\KC^b(\MS))$.  Thus, uniqueness of lifts (from Theorem \ref{thm:multilinear lifing lemma}) implies that $\etadg\simeq \etadg'$, which establishes the multiplicativity condition for $\ttau$ and shows that $(Z,\ttau)$ is an object of $\ZS(\KC^b(\AS),\KC^b(\MS))$.
\end{proof}

\subsection{Fully faithful centralizing functors}

In order to use the technology developed above, we must have some way of constructing objects of $\ZS(\AS,\KC^b(\MS))$.  Many examples (see \S \ref{s:diag Hecke}) can be provided with the following relatively elementary result.

\begin{theorem}\label{thm:Phi and Z}
Let $\AS$ be a $\k$-linear monoidal category and $\MS$ an $(\AS,\AS)$ $\star$-bimodule category, and let $Z\in \KC^b(\MS)$ be given. Assume that $L_Z,R_Z\colon \AS\to \KC^b(\MS)$ are fully faithful and have the same essential image.  Then there is a monoidal autoequivalence $\Phi\colon \AS\to \AS$ such that $Z\star X\simeq \Phi(X)\star Z$, naturally in $X\in \AS$. In other words, $Z$ has the structure of an object in the Drinfeld centralizer $\ZS(\AS,{}^\Phi\KC^b(\MS))$, where ${}^\Phi\KC^b(\MS)$ denotes $\KC^b(\MS)$ with left action twisted by $\Phi$.
\end{theorem}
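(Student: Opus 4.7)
The plan is to construct $\Phi$ as a quasi-inverse composition, read off $\tau$ from the associated counit, and then transport a monoidal structure on $\Phi$ from $\KC^b(\MS)$ back to $\AS$ by exploiting the full faithfulness of $R_Z$. Let $\mathcal{I}\subset \KC^b(\MS)$ denote the common essential image of $L_Z$ and $R_Z$. Since $R_Z$ is fully faithful, it restricts to an equivalence $R_Z\colon \AS\to \mathcal{I}$; choose a quasi-inverse $R_Z^{-1}\colon \mathcal{I}\to \AS$ together with a natural isomorphism $\epsilon\colon R_Z\circ R_Z^{-1}\simto \id_{\mathcal{I}}$. Because $L_Z$ also lands in $\mathcal{I}$, set $\Phi:=R_Z^{-1}\circ L_Z\colon \AS\to \AS$, which is an autoequivalence (both $L_Z$ and $R_Z$ are equivalences onto $\mathcal{I}$), and define $\tau_X:=\epsilon^{-1}_{L_Z(X)}\colon Z\star X\to \Phi(X)\star Z$. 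This is a natural homotopy equivalence by naturality of $\epsilon$.

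Next, I would manufacture the monoidal coherence data for $\Phi$. For each pair $X_1, X_2\in \AS$, the composition
\[
\Phi(X_1\star X_2)\star Z \xrightarrow{\tau_{X_1\star X_2}^{-1}} Z\star X_1\star X_2 \xrightarrow{\tau_{X_1}\star \id} \Phi(X_1)\star Z\star X_2 \xrightarrow{\id\star \tau_{X_2}} \Phi(X_1)\star\Phi(X_2)\star Z
\]
is a homotopy equivalence in $\mathcal{I}=R_Z(\AS)$. By full faithfulness of $R_Z$, it corresponds to a unique natural isomorphism $\phi_{X_1,X_2}\colon \Phi(X_1\star X_2)\to \Phi(X_1)\star\Phi(X_2)$ in $\AS$. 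Similarly, $\Phi(\one)\star Z \simeq Z = \one\star Z$ yields a unique unit constraint $\phi_0\colon \Phi(\one)\to \one$.

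It then remains to check the pentagon and unit axioms for $(\Phi,\phi,\phi_0)$ and to verify the centralizer multiplicativity condition. For each coherence axiom, applying $R_Z$ yields a diagram in $\KC^b(\MS)$ built entirely from the $\tau_X$'s, the associators/unitors of $\star$, and the identities $\Phi(X)\star Z=R_Z(\Phi(X))$; such a diagram commutes by coherence of $\star$ in $\KC^b(\MS)$ and naturality of $\tau$, and full faithfulness of $R_Z$ lifts the commutation back to $\AS$. Moreover, by construction $\phi_{X_1,X_2}$ is precisely the unique map satisfying
\[
(\phi_{X_1,X_2}\star \id_Z)\circ \tau_{X_1\star X_2} = (\id_{\Phi(X_1)}\star \tau_{X_2})\circ (\tau_{X_1}\star \id_{X_2}),
\]
which is exactly the multiplicativity axiom \eqref{eq:multiplicativity} in the twisted bimodule ${}^\Phi\KC^b(\MS)$, whose left-action associator is built from $\phi$. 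Thus $(Z,\tau)$ is an object of $\ZS(\AS, {}^\Phi\KC^b(\MS))$.

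The only real obstacle is the bookkeeping for the coherence axioms on $\Phi$, and I expect it to be routine: every equation in $\AS$ that must be verified is pushed through $R_Z$ into an equation in $\KC^b(\MS)$ among $\tau$'s and canonical structural isomorphisms of $\star$, where it holds for formal reasons, and then pulled back to $\AS$ by full faithfulness.
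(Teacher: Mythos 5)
Your proposal matches the paper's proof essentially step for step: the paper also sets $\Phi := R_Z^{-1}\circ L_Z$ on the common essential image, defines the monoidal constraint $\psi_{X_1,X_2}$ by exactly your defining equation (which is then tautologically the multiplicativity axiom), and verifies the associativity coherence for $\Phi$ by applying $R_Z$, checking the resulting diagram in $\KC^b(\MS)$ using the defining squares of the various $\psi$'s, and lifting back via full faithfulness. So the proposal is correct and takes essentially the same route as the paper.
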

\begin{proof}
Without loss of generality, let us assume that $\AS$ is strict monoidal.  Let $\NS\subset \KC^b(\MS)$ be the essential image of $L_Z$ and $R_Z$.  Then $L_Z,R_Z$ restrict to equivalences of categories $\AS\to \NS$, and we may choose an inverse functor $R_Z\inv$ and define $\Phi:=R_Z\inv \circ L_Z$.  The fact that $Z\star X\simeq \Phi(X)\star Z$, naturally in $X$ is true by construction.  There is an isomorphism $\psi_{X_1,X_2}\colon \Phi(X_1\star X_2)\buildrel\cong\over\rightarrow \Phi(X_1)\star \Phi(X_2)$, defined by commutativity of the diagram
\begin{equation}\label{eq:tau and psi}
\begin{tikzpicture}
\node (a) at (0,0) {$\Phi(X_1\star X_2)\star Z$};
\node (b) at (5,0) {$Z\star X_1\star X_2$};
\node (c) at (5,2) {$\Phi(X_1)\star Z\star X_2$};
\node (d) at (0,2) {$\Phi(X_1)\star \Phi(X_2)\star Z$};
\path[-stealth,very thick]
(a) edge node[above] {$(\tau_{X_1\star X_2})\inv$} (b)
(b) edge node[right] {$\tau_{X_1}\star \id_{X_2}$} (c)
(c) edge node[above] {$\id_{\Phi(X_1)}\star \tau_{X_2}$} (d);
\path[-stealth,dashed,very thick]
(a) edge node[left] {$\psi_{X_1,X_2}\star \id_Z$} (d);
\end{tikzpicture}
\end{equation}
Where the solid arrows alternate between homotopy equivalences of the form $R_Z(\Phi(X))\simeq L_Z(X)$, and associator isomorphisms.  Observe that, a priori the above diagram defines a homotopy equivalence $\Phi(X_1\star X_2)\simeq \Phi(X_1)\star \Phi(X_2)$.  But since $X_i$ here are objects of $\AS$, all homotopies are zero for degree reasons.  Moreover, naturality of the isomorphism $R_Z\circ \Phi\simeq L_Z$ and of the associator implies that the isomorphism $\Phi(X_1\star X_2)\cong \Phi(X_1)\star \Phi(X_2)$ is natural in $X_1,X_2\in \AS$.

To complete the proof that $\Phi$ is a monoidal functor, one would need to prove commutativity of the diagram
\begin{equation}
\begin{tikzpicture}
\node (a) at (5,1) {$\Phi(X_1)\star \Phi(X_2)\star \Phi(X_3)$};
\node (b1) at (0,0) {$\Phi(X_1\star X_2)\star \Phi(X_3)$};
\node (b2) at (0,2) {$\Phi(X_1)\star \Phi(X_2\star X_3)$};
\node (c) at (-5,1) {$\Phi(X_1\star X_2\star X_3)$};
\path[-stealth,very thick]
(c) edge node[above] {} (b1)
(c) edge node[above] {} (b2)
(b1) edge node[above] {} (a)
(b2) edge node[above] {} (a);
\end{tikzpicture}
\end{equation}
To prove commutativity of this diagram, we prove that it remains commutative after application of $R_Z$.  Consider the diagram:
\begin{equation}
\begin{tikzpicture}
\node (3z) at (-6,2) {$\Phi(X_1 \cdot X_2 \cdot X_3) \cdot Z$};
\node (12z) at (-1.5,3) {$\Phi(X_1) \cdot \Phi(X_2 \cdot X_3) \cdot Z$};
\node (21z) at (1.5,1) {$\Phi(X_1 \cdot X_2)\cdot \Phi(X_3)\cdot Z$};
\node (111z) at (6,2) {$\Phi(X_1)\cdot \Phi(X_2)\cdot \Phi(X_3)\cdot Z$};
\node (z3) at (-6,-2) {$Z\cdot X_1\cdot X_2\cdot X_3$};
\node (1z2) at (-1.5,-1) {$\Phi(X_1)\cdot Z\cdot X_2\cdot X_3$};
\node (2z1) at (1.5,-3) {$\Phi(X_1\cdot X_2)\cdot Z\cdot X_3$};
\node (11z1) at (6,-2) {$\Phi(X_1)\cdot \Phi(X_2)\cdot Z\cdot X_3$};
\path[-stealth,very thick]
(3z) edge node {} (12z)
(3z) edge node {} (21z)
(z3) edge node[above] {} (1z2)
(z3) edge node[above] {} (2z1)
(11z1) edge node {} (111z)
(2z1) edge node[above] {} (21z)
(z3) edge node[above] {} (3z)
(1z2) edge node[above] {} (11z1)
(1z2) edge node[above] {} (12z)
(21z) edge node {} (111z)
(12z) edge node {} (111z)
(2z1) edge node {} (11z1);
\end{tikzpicture}
\end{equation}
(abbreviating by writing $\cdot = \star$).  The front right square commutes obviously.  The bottom square commutes by definition of $\psi_{X_1,X_2}$.  The back right square commutes by definition of $\psi_{X_2,X_3}$.  The back left square commutes by definition of $\psi_{X_1, X_2\cdot X_3}$, and the front left square commutes by definition of $\psi_{X_1\cdot X_2,X_3}$.  Thus, the top square commutes, which proves that $\Phi$ is monoidal.

The multiplicativity condition for $Z$ is equivalent to the commutativity of the diagram \eqref{eq:tau and psi}.  This proves the theorem. 
\end{proof}

\begin{remark} \label{rmk:Phiuptoisomorphism}
There is a subtle issue at play above. The construction of $\Phi$ depended on a choice of inverse functor $R_Z\inv$. Inverse functors are well-defined up to isomorphism of functors. Let $\Phi'$ be another functor isomorphic to $\Phi$. It is false that ${}^\Phi\KC^b(\MS)$ and ${}^{\Phi'}\KC^b(\MS)$ are equivalent as $\star$-bimodule categories (though one would expect them to be Morita equivalent in the appropriate sense)! Nonetheless, $\ZS(\AS,{}^\Phi\KC^b(\MS))$ and $\ZS(\AS,{}^{\Phi'}\KC^b(\MS))$ are equivalent. Given an object $(Z,\tau) \in \ZS(\AS,{}^\Phi\KC^b(\MS))$, we can get an object $(Z,\tau') \in \ZS(\AS,{}^{\Phi'}\KC^b(\MS))$ by postcomposing $\tau$ with the isomorphism from $\Phi$ to $\Phi'$ (tensored with $\id_Z$). 
\end{remark}

The conclusion of the above remark is that one should really only consider $\Phi$ up to isomorphism of functors.

\section{Conjugation by Rouquier complexes}
\label{s:diag Hecke}

In this chapter we focus on applications to Hecke categories. Throughout, let $W$ be a Coxeter group with set of simple reflections $S\subset W$, and fix a commutative ring $\k$. 

\subsection{The Hecke category}

Most of the technical details which go into the construction of the Hecke category will not be relevant for this paper. We refer the reader to \cite{EWGr4sb} or \cite{EMTW} for more details.

A \emph{realization} is a free $\k$-module $V$ together with a collection of \emph{simple roots} $\{\alpha_s\}_{s \in S} \subset V$ and \emph{simple coroots} $\{\alpha_s^\vee\}_{s \in S} \subset V^*$, satisfying some axioms. Here $V^* := \Hom_{\k}(V,\k)$. The main axiom is that $W$ acts on $V$, where the simple reflections act by the following formula:
\begin{equation} s(v) = v - \alpha_s^\vee(v) \cdot \alpha_s. \end{equation}
We write $(W,S,V)$ to indicate a Coxeter system with a choice of realization, omitting the choice of roots and coroots from the notation. 

\begin{remark} \label{remark:assumptions} For the experts, we quickly discuss the assumptions we use, referring to \cite[\S 5, \S 6]{EWLocalized} for definitions. For the Hecke category to be well-defined we
need certain Jones-Wenzl projectors to exist and be rotatable. Precise conditions for this were given in \cite{HaziRotatable}. For the category to be cyclic we also need to assume
the realization is even-balanced. We also assume Demazure surjectivity. These assumptions are sufficient for the Hecke category to categorify the Hecke algebra with the expected
size of Hom spaces, see \cite[\S 5]{EWLocalized}. We also assume in the body of the text that the realization is odd-balanced, though we treat the odd-unbalanced case in remarks.
\end{remark}

When discussing $\Z$-graded algebras and modules, we use the symbol $(k)$ to denote the grading shift which decreases all degrees by $k$, so that if $M$ is a graded $\k$-module then a homogeneous element $m\in M$ of degree $i$ will have degree $i-k$ when regarded as an element of $M(k)$.  In other places in the literature (see e.g.~our main reference \cite{MattsWorkInProgress} for dg categorical constructions) $M(k)$ would be denoted by $q^{-k}M$.

Let $R$ be the polynomial ring whose linear terms are $V$, graded by $\deg(v)=2$ for all $v\in V$. In other words, $R=\operatorname{Sym}^\bullet(V(-2))$. The operators $\alpha_s^\vee \colon V \to \k$ extend by a twisted Leibniz rule to operators $\partial_s \colon R \to R$, satisfying
\begin{equation} \label{demazure} \alpha_s \cdot \partial_s(f) = f - sf \end{equation}
for all $f \in R$. Also, $\partial_s(f) \in R^s$, where $R^s$ denotes the subring of $s$-invariants.

There is a graded $(R,R)$-bimodule $B_s$, defined as
\[ B_s := R \otimes_{R^s} R(1). \]
Bimodules obtained as tensor products $B_{s_1} \otimes B_{s_2} \otimes \cdots \otimes B_{s_r}$ are called \emph{Bott-Samelson bimodules}, and they form a monoidal subcategory of the category of all graded $(R,R)$-bimodules. Bimodules obtained from Bott-Samelson bimodules by taking direct sums, grading shifts, and direct summands are called \emph{Soergel bimodules}. Under fairly reasonable restrictions on $\k$ and $V$, the category of Soergel bimodules categorifies the Iwahori-Hecke algebra of $(W,S)$, where $B_s$ categorifies the so-called Kazhdan-Lusztig generator $b_s$. However, Soergel bimodules can fail to categorify the Hecke algebra (e.g. when $W$ does not act faithfully on $V$). Instead, a diagrammatic category $\Diag = \Diag(W,S,V)$ was developed in \cite{EKho, ECathedral, EWGr4sb}, presented by generators and relations, which models the category of Bott-Samelson bimodules when they behave well. It was proven in \cite{EWGr4sb} and \cite[\S 5]{EWLocalized} that $\Diag$ always categorifies the Hecke algebra (using the assumptions of Remark \ref{remark:assumptions}), whence it is now called the \emph{Hecke category}.

By definition, the category $\Diag$ is the strict monoidal category freely generated by formal objects which we also call $B_s$, for $s \in S$, with $\Z$-graded morphism spaces presented by some explicit diagrammatic generators and relations.  We will denote the monoidal operation by $\star$ or simply by horizontal juxtaposition, so that a general object may be denoted either by $B_{s_1}\star\cdots \star B_{s_r}$ or $B_{s_1}\cdots B_{s_r}$.   The monoidal identity, denoted $\one$, corresponds to the empty sequence.

The presentation for general $(W,S,V)$ can be found in \cite[\S 5]{EWGr4sb} or \cite[\S 10]{EMTW}, but we will recall some of the most revelevant features.  First, $\End_\Diag(\one)=R$.  Thus, given $X\in \Diag$ and $f\in R$ the monoidal structure provides endomorphisms $\id_X f$ and $f\id_X$, which we refer to as the right and left multiplication by $f$ on $X$, respectively. In this way, all hom spaces in $\Diag$ may be viewed as graded $(R,R)$-bimodules. 

The generating morphisms consist of:
\begin{align}
\text{dots: } \ \ 
\begin{tikzpicture}[anchorbase]
\node (a) at (0,0) {$\one$};
\node (b) at (2,0) {$B_s$};
\path[-stealth,very thick]
([yshift=2pt]a.east) edge node[above]  {$\eta_s$} ([yshift=2pt]b.west)
([yshift=-2pt]b.west) edge node[below]  {$\e_s$} ([yshift=-2pt]a.east);
\end{tikzpicture}
\ , \quad \text{ trivalent vertices: } \ \ 
\begin{tikzpicture}[anchorbase]
\node (a) at (0,0) {$B_sB_s$};
\node (b) at (2,0) {$B_s$};
\path[-stealth,very thick]
([yshift=2pt]a.east) edge node[above]  {$\mu_s$} ([yshift=2pt]b.west)
([yshift=-2pt]b.west) edge node[below]  {$\Delta_s$} ([yshift=-2pt]a.east);
\end{tikzpicture}
\\
\text{$2m_{st}$-valent vertices: } \ \ 
\begin{tikzpicture}[baseline=-.1cm]
\node (a) at (0,0) {$B_sB_t\cdots B_{s\text{ or }t}$};
\node (b) at (5,0) {$B_tB_s\cdots B_{t \text{ or }s}$};
\path[-stealth,very thick]
(a) edge node[above]  {$\pi_{s,t}$} (b);
\end{tikzpicture}
\end{align}
for all $s,t\in S$ (if $m_{s,t}=\infty$ then the generator $\pi_{s,t}$ is omitted), with degrees $\deg(\e_s)=\deg(\eta_s)=1$, $\deg(\mu_s)=\deg(\Delta_s)=-1$, $\deg(\pi_{s,t})=0$. We call this the \emph{Soergel degree} (we are in a graded category, not yet a dg category).  Some of the relations include
\begin{align}
\e_s \circ \eta_s &= \alpha_s . \label{barbell} \\ 
\id_{B_s} f - s(f) \id_{B_s} &= \partial_s(f) (\eta_s \circ \e_s). \label{polyforce} 
\end{align}
The first of these is a relation in $\End_\Diag(\one) = R$, and the second is a relation in $\End_\Diag(B_s)$.

Finally, we remark that the Hecke category $\Diag$ has a contravariant duality functor which flips diagrams upside-down. It swaps $\eta_s$ and $\e_s$, swaps $\mu_s$ and $\Delta_s$, and swaps $\pi_{s,t}$ and $\pi_{t,s}$.

\subsection{Rouquier complexes}

Let $\Diag^\oplus$ denote the category obtained from $\Diag$ by adjoining all shifts $(k)$ and finite direct sums of objects (see e.g. \cite[\S 11.2]{EMTW}). A typical object of $\Diag^\oplus$ is a formal expression $\bigoplus_{i} X_i(k_i)$ where the indexing set is finite.


\begin{notation}
We will abuse notation and write $\Ch^b(\Diag)$ instead of $\Ch^b(\Diag^\oplus)$ (and similarly $\KC^b(\Diag)$ instead of $\KC^b(\Diag^\oplus)$).
\end{notation}

When working with complexes $X, Y \in \Ch^b(\Diag)$, we write $\uHom^{i,j}(X,Y)$ for the maps of homological degree $i$ and Soergel degree $j$.  That is to say, $\Ch^b(\Diag)$ is a dg category, with gradings living in $\Z\times \Z$, as in Remark \ref{rmk:other gradings}. Just as for $\Diag$, Hom complexes are still $(R,R)$-bimodules via the monoidal structure (and no signs appear in the tensor product $\id_X \star f$ for $f \in R$, since $f \colon \one \to \one$ has homological degree zero).

Let $F_s \in \Ch^b(\Diag)$ be the Rouquier complex 
\begin{equation} F_s := (\underline{B_s}\to \one(1))\end{equation}
with differential $\e_s$. The underline indicates that $B_s$ lives in homological degree zero. The dual complex
\begin{equation} F_s\inv = (\one(-1)\to \underline{B_s}) \end{equation}
has differential $\eta_s$. The products $F_s \star F_s\inv$ and $F_s\inv \star F_s$ are homotopy equivalent to the monoidal identity $\one$ (viewed as a complex in bidegree zero).

Let $\nu_s\colon \one \to F_s$ be the inclusion of the rightmost chain object, and let $\nu_s^\vee\colon F_s\inv\to \one$ be its dual, the projection to the leftmost chain
object. So $\nu_s \in \uHom^{1,-1}(\one,F_s)$ and $\nu_s^\vee \in \uHom^{1,-1}(F_s\inv,\one)$. 

Let $\Br=\Br(W)$ be the braid group of $W$.  This is the group generated by symbols $\sigma_s$ with $s\in S$, modulo relations of the form $(\sigma_s\sigma_t)^{m_{st}}=1$ for all $s,t$. A \emph{braid word} is a tuple $(\sigma_{s_1}^{\pm},\ldots,\sigma_{s_r}^{\pm})$ of braid group generators and their inverses.  Multiplying together all the elements in a braid word $\underline{\b}$ yields a braid $\b$, and we say that $\underline{\b}$ \emph{represents} $\b$.  For each braid word $\underline{\b}\in \Br$, let $F(\underline{\b})$ denote the corresponding product of Rouquier complexes $F_s$ and $F_s\inv$.

It is a theorem of Rouquier \cite{RouqBraid-pp} that two braid words representing the same braid give rise to homotopy equivalent Rouquier complexes. In fact this homotopy equivalence is canonical, as we discuss in the next section.

\begin{lemma} \label{lem:conjpoly}
Let $\underline{\b}$ be a braid word expressing $\b \in \Br$, and let $w\in W$ be the image of $\b$ under the natural homomorphism $\Br\rightarrow W$.  Then $\id_{F(\underline{\b})} f \simeq w(f) \id_{F(\underline{\b})}$ for all $f\in R$.
\end{lemma}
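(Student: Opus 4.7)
The approach is induction on the length of the braid word $\underline{\b}$.

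\emph{Base case.} For $\underline{\b} = (\sigma_s)$ we have $F(\underline{\b}) = F_s = (\underline{B_s} \xrightarrow{\e_s} \one(1))$ and $w = s$, and we must exhibit a null-homotopy of $\id_{F_s}f - s(f)\id_{F_s}$. The natural guess is the morphism $h \in \uHom^{-1,\deg f}(F_s, F_s)$ whose only nonzero component is $\partial_s(f)\,\eta_s \colon \one(1) \to B_s$. Since $|h|=-1$, the hom-complex differential is $d(h) = \e_s \circ h + h \circ \e_s$. On the $\one(1)$ summand this equals $\e_s \circ (\partial_s(f)\,\eta_s) = \partial_s(f)\,\alpha_s = f - s(f)$ via the barbell relation \eqref{barbell} and Demazure's formula \eqref{demazure}; on the $B_s$ summand it equals $\partial_s(f)(\eta_s \circ \e_s) = \id_{B_s}f - s(f)\id_{B_s}$ by the polynomial forcing relation \eqref{polyforce}. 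Summing gives $d(h) = \id_{F_s}f - s(f)\id_{F_s}$, as desired. The case $\underline{\b} = (\sigma_s^{-1})$ is entirely analogous, using the homotopy $\partial_s(f)\,\e_s \colon B_s \to \one(-1)$; alternatively it follows via the duality of $\Diag$ that swaps $\eta_s \leftrightarrow \e_s$ and $F_s \leftrightarrow F_s^{-1}$.

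\emph{Inductive step.} Write $\underline{\b} = \underline{\b}_1 \cdot \underline{\b}_2$ with associated Weyl group images $w_1, w_2$, so that $w = w_1 w_2$ and $F(\underline{\b}) = F(\underline{\b}_1)\star F(\underline{\b}_2)$. Right multiplication by $f$ on the product is $\id_{F(\underline{\b}_1)} \star \id_{F(\underline{\b}_2)} \star f$. The inductive hypothesis applied to $\underline{\b}_2$ yields $\id_{F(\underline{\b}_2)} \star f \simeq w_2(f) \star \id_{F(\underline{\b}_2)}$, and this homotopy is preserved by tensoring with $\id_{F(\underline{\b}_1)}$ on the left (since $\star$ is a dg bifunctor). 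Using the interchange law we rewrite the result as $(\id_{F(\underline{\b}_1)} \star w_2(f)) \star \id_{F(\underline{\b}_2)}$, where now $w_2(f) \in R = \End(\one)$. Applying the inductive hypothesis to $\underline{\b}_1$ then gives $\id_{F(\underline{\b}_1)} \star w_2(f) \simeq w_1(w_2(f)) \star \id_{F(\underline{\b}_1)} = w(f)\,\id_{F(\underline{\b}_1)}$, and tensoring with $\id_{F(\underline{\b}_2)}$ on the right assembles to $\id_{F(\underline{\b})}f \simeq w(f)\,\id_{F(\underline{\b})}$.

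I do not anticipate any serious obstacle: once the correct explicit homotopy in the base case is identified, the verification is a direct application of the three displayed relations in the presentation of $\Diag$, and the induction is formal. The only minor bookkeeping required is with the Koszul sign conventions of \S\ref{ss:tensoring with objects}, but these are harmless here since the polynomials $f$ and $w(f)$ live in homological degree $0$, so the signs arising from commuting them past $\star$-factors are all trivial.
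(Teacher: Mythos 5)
Your proof is correct and follows essentially the same route as the paper: the paper also reduces to the case $\underline{\b}=\sigma_s^{\pm}$ (your inductive step just makes that reduction explicit) and uses the same degree $-1$ homotopy $\partial_s(f)\,\eta_s\colon \one(1)\to B_s$ verified via \eqref{demazure}, \eqref{barbell}, and \eqref{polyforce}, with the inverse case handled by duality. The only cosmetic difference is an overall sign (the paper writes the homotopy for $s(f)\id_{F_s}-\id_{F_s}f$), which is immaterial.
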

\begin{proof}
The relevant homotopy is already in the literature, but it is easy enough that we have reprinted it.  

It suffices to assume that $\underline{\b}=\sigma_s^{\pm}$. Consider the following endomorphism of $F_s$ of homological degree $-1$.
\begin{equation} \label{eq:fFsFssf}
\begin{tikzpicture}
\node (a) at (0,0) {$B_s$};
\node (b) at (3,0) {$\one(1)$};
\node (c) at (0,2) {$B_s$};
\node (d) at (3,2) {$\one(1)$};
\path[-stealth,very thick]
(a) edge node[above] {$\e_s$} (b)
(c) edge node[above] {$\e_s$} (d)
(b) edge node[right=10pt] {$\eta_s \pa_s(f)$} (c);
\end{tikzpicture}
\end{equation}
This is a homotopy for $s(f) \id_{F_s} - \id_{F_s} f$, using the relations \eqref{demazure} and \eqref{barbell} and \eqref{polyforce}.
Diagrammatically, where $\e_s = \finaldotred$ and $\eta_s = \startdotred$, these relations are depicted as
\begin{equation}
\linered f - s(f) \linered  = \brokenred \pa_s(f) \ , \qquad f - s(f) = \barbred \pa_s(f).
\end{equation}

The homotopy for $F_s^{-1}$ is completely analogous, and dual to this one. 
\end{proof}

\subsection{Rouquier canonicity, canonical maps, residues}
\label{ss:canonical maps}

\begin{definition}\label{def:res}
Let $\fm\subset \Diag$ denote the ideal spanned by morphisms which factor through an object of the form $B_{s_1}\cdots B_{s_r}$ for $r\geq 1$.  Let $\Res\colon \Diag\to \Diag/\fm$ denote the quotient functor.

For an object $X \in \Diag$, let $X/\fm$ denote $\Res(X)$.
%
\end{definition}

Note that $\Diag/\fm$ is a monoidal category, and $\Res$ is a monoidal functor, since $\fm$ is closed under tensoring with arbitrary morphisms in $\Diag$ on the left and right.
This ideal $\fm$ is an example of a \emph{cellular ideal}, see \cite[\S 22.1]{EMTW}. Clearly $\fm$ contains the identity maps of all objects in $\Diag$ other than $\one$, though it is not obvious a priori that the identity of $\one$ is not included in $\fm$. Indeed, it is not, which can be proven with cell theory. The
following lemma also proves this fact, and is well-known, though we do not know where to find the proof in the literature.

\begin{lemma} \label{lem:endoonemodm} The endomorphism ring of $\one/\fm$, viewed as a quotient of the endomorphism ring of $\one$, is $R/(\a_s)_{s\in S}$. In particular, the degree zero endomorphism ring is $\k$. \end{lemma}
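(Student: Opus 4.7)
My plan is to prove that $\fm \cap \End_\Diag(\one) = (\a_s : s \in S)$ inside $R = \End_\Diag(\one)$; the claim about the degree zero part of the lemma is an immediate corollary. The easy inclusion comes from the barbell relation \eqref{barbell}: each $\a_s = \e_s \circ \eta_s$ factors through $B_s$, so $\a_s \in \fm \cap R$. Since $\fm$ is a two-sided tensor ideal in the monoidal category $\Diag$, its intersection with $R$ is closed under multiplication by $R$, and we obtain $(\a_s) \subseteq \fm \cap R$.

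For the harder inclusion $\fm \cap R \subseteq (\a_s)$, I would construct an additive $\k$-linear functor $\Psi \co \Diag \to \CC$, where $\CC$ is the category of graded $R/(\a_s)$-modules, specified as follows. On objects: $\Psi(\one) = R/(\a_s)$ and $\Psi(B_s) = 0$ for every $s \in S$, extended to formal direct sums and shifts in the obvious way. On morphisms: $\Psi$ restricts on $R = \End(\one)$ to the quotient $r \mapsto r + (\a_s)$, and sends each diagrammatic generator $\eta_s, \e_s, \mu_s, \Delta_s, \pi_{s,t}$ to zero. Granting that $\Psi$ is well-defined, the inclusion follows: for any $p \in \fm \cap R$ written as $\sum_i g_i \circ h_i$ with each composition factoring through some $B_{s_{i,1}} \cdots B_{s_{i,r_i}}$ with $r_i \geq 1$, we have $\Psi(p) = \sum_i \Psi(g_i) \circ \Psi(h_i) = 0$ because each factor has source or target at the zero object of $\CC$; but by definition of $\Psi|_R$ we also have $\Psi(p) = p + (\a_s)$, forcing $p \in (\a_s)$.

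The main obstacle is verifying that $\Psi$ is a well-defined functor, which amounts to checking that every relation in the Elias--Williamson presentation of $\Diag$ (see \cite{EWGr4sb, EMTW}) is preserved. I would partition the relations into three types: (a) relations internal to $R = \End(\one)$, namely polynomial commutativity, which holds in any quotient of $R$; (b) relations in which every term is a morphism whose source or target involves at least one $B_s$, where under $\Psi$ both sides are tautologically zero; and (c) genuinely ``mixed'' relations crossing between $R$ and the $B_s$-world. The only such mixed relation in the presentation is the barbell \eqref{barbell}, which under $\Psi$ reads $0 = \Psi(\a_s) = \a_s + (\a_s) = 0$ in $R/(\a_s)$. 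The polynomial forcing relation \eqref{polyforce}, the Frobenius relations, the two-color relations involving $\pi_{s,t}$ (including their interaction with polynomials), and the Zamolodchikov relations all live in $\End$ of objects involving some $B_s$ and therefore fall into case (b). Finally, the ``in particular'' statement is immediate: the ideal $(\a_s)$ is generated in Soergel degree $2$, so its degree zero part is zero and $(R/(\a_s))_0 = R_0 = \k$.
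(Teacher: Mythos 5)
Your proof is correct, but it takes a genuinely different route from the paper's. The paper's (sketched) argument is diagrammatic and leans on the Soergel hom formula: given an endomorphism $\phi$ of $\one$ factoring through a nonempty word, pull a dot off an extremal $s$-colored strand using the unit relation to write $\phi=\psi\circ\eta_s$, then use the fact that $\Hom_\Diag(B_s,\one)$ is a free $R$-module of rank one generated by $\e_s$ to get $\psi=f\e_s$, hence $\phi=f\alpha_s$. You instead build an evaluation functor killing every $B_s$ and reducing polynomials modulo $(\alpha_s)_{s\in S}$, and verify well-definedness relation by relation; your trichotomy is the right check, since the barbell \eqref{barbell} is the only defining relation whose source and target are both $\one$, and even the inexplicit Zamolodchikov relation in type $H_3$ is an equality of morphisms between nonempty words, so it is covered by your case (b) without needing its precise form. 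What each approach buys: yours is independent of the Soergel hom formula (double leaves, categorification theorem) but depends on the explicit presentation; the paper's is shorter under its standing assumptions and insensitive to the exact list of relations. One point to tighten in your write-up: $\Psi$ should be declared a monoidal functor (or defined directly on decorated diagrams), since specifying images of generating morphisms does not determine a merely additive functor out of a monoidally presented category; with your assignments the monoidal extension is forced and harmless, because every nonempty word is sent to the zero object, and the resulting identification of the left and right $R$-actions on $\Psi(\one)$ causes no trouble since the only relations distinguishing them, such as \eqref{polyforce}, fall into your case (b).
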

	
\begin{proof} (Sketch) Consider any endomorphism $\phi$ of $\one$ in $\Diag$ which factors through a non-identity object. Take an extremal strand in this diagram (colored $s$), use the unit relation \cite[(5.4)]{EWGr4sb} to pull out a dot, and isotope this dot to the bottom of the picture. Now $\phi = \psi \circ \eta_s$. By the Soergel hom formula we already know that $\Hom(B_s,\one)$ is a free $R$-module (a bimodule where the left and right actions agree) generated by $\e_s$. Thus $\psi = f \e_s$ for some $f \in R$. Then $\phi = f (\e_s \circ \eta_s) = f \alpha_s$ is in the ideal generated by $\alpha_s$. \end{proof}

Thus $\Diag/\fm$ has one nonzero object $\one/\fm$, whose endomorphism ring is $R/(\a_s)_{s\in S}$.


Let $e\colon \Br\to \Z$ is the group homomorphism defined by $e(\sigma_s)=1$ for all $s\in S$.
Since each Rouquier complex $F(\underline{\b})$ has a single (shifted) copy of $\one$ it follows that (extending $\Res$ to complexes in the standard way)
\begin{equation}\label{eq:res of b}
F(\b)/\fm\cong  (\one/\fm)[-e](e)
\end{equation}
canonically, where $e=e(\b)$.  The isomorphism \eqref{eq:res of b} is canonical because $\one[-e](e)$ \emph{is} a summand of $F(\b)$ (after forgetting the differential), hence the inclusion of this summand (which is not a closed morphism in $\Ch^b(\Diag)$ but becomes closed upon applying $\Res$) provides the isomorphism \eqref{eq:res of b}.  In view of these canonical isomorphisms, we can now formulate the following.

\begin{definition}\label{def:res coeff}
Let $X,Y\in \Ch^b(\Diag)$ be complexes such that $\Res(X)\cong (\one/\fm)[a](a')$ and $\Res(Y)\cong (\one/\fm)[b](b')$ (with a preferred isomorphism in each case).  If $f\in \uHom(X,Y)$ is a morphism of bidegree $(a-b,a'-b')$ then we define $\res(f)\in \k$ to be component of $f$ between the copies of $\one$ in $X$ and $Y$.  More precisely, $\res(f)$ is defined to be the composition
\begin{equation}
\one/\fm\to (\one/\fm)[a](a')\buildrel\cong\over\longrightarrow  X/\fm \buildrel f/\fm\over\longrightarrow Y/\fm \buildrel\cong\over\longrightarrow  (\one/\fm)[b](b')\to \one/\fm
\end{equation}
where the first and last arrows are the ``identity maps'' relating $\one/\fm$ and its shifts. The composition is a degree zero endomorphism of $\one/\fm$, whence an element of $\k$ by Lemma \ref{lem:endoonemodm} (a scalar multiple of the identity map).
\end{definition}

\begin{proposition}\label{prop:res}
We have:
\begin{enumerate}
\item Given braid words $\underline{\b_0},\underline{\b_1},\underline{\b_2}$ and morphisms $f_i\in \uHom(F(\underline{\b_{i-1}}),F(\underline{\b_i}))$ of the appropriate bidegrees, then
\begin{equation}
\res(f_2\circ f_1) = \res(f_2)\res(f_1).
\end{equation}
\item Given braids $\underline{\b},\underline{\b}',\underline{\gamma},\underline{\gamma}'$ and morphisms $f\in \uHom(F(\underline{\b}'),F(\underline{\b}))$, $g\in \uHom(F(\underline{\gamma}'),F(\underline{\gamma}))$ of the appropriate bidegrees, we have
\begin{equation}
\res(f\star g) = (-1)^{e(\b')(e(\gamma)-e(\gamma'))}\res(f)\res(g).
\end{equation}
\end{enumerate}
\end{proposition}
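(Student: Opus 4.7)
The plan is to deduce both parts from three ingredients: the fact that $\Res$ is a monoidal functor (extended levelwise to $\Ch^b(-)$), the canonical identifications $F(\underline{\b})/\fm \simeq (\one/\fm)[-e(\b)](e(\b))$ from \eqref{eq:res of b}, and (for part (2)) the Koszul sign rule \eqref{koszulsignrule} for tensoring morphisms of nonzero homological degree. Lemma \ref{lem:endoonemodm} will then let me read off scalar coefficients in $\k$.

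For part (1), I would use functoriality of $\Res$ to write $(f_2 \circ f_1)/\fm = (f_2/\fm) \circ (f_1/\fm)$. Then, writing out $\res(f_1)$ and $\res(f_2)$ as the zig-zag compositions of Definition \ref{def:res coeff} and concatenating them, the intermediate subsequence $(\one/\fm)[-e(\b_1)](e(\b_1)) \to \one/\fm \to (\one/\fm)[-e(\b_1)](e(\b_1))$ is the identity, because the two shift identifications are mutually inverse by construction. Cancelling this pair collapses the concatenated zig-zag into the defining diagram for $\res(f_2 \circ f_1)$, and since the bidegree-$(0,0)$ part of $\End(\one/\fm)$ is $\k$, scalar composition yields $\res(f_2 \circ f_1) = \res(f_2)\res(f_1)$.

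For part (2), I would again use the monoidality of $\Res$ to write $(f \star g)/\fm = (f/\fm) \star (g/\fm)$. Since $\fm$ is a monoidal ideal, the $\one$ summand of $F(\underline{\b'}) \star F(\underline{\gamma'})$ is precisely the tensor product of the $\one$ summand of $F(\underline{\b'})$ (at cohomological position $e(\b')$) with the $\one$ summand of $F(\underline{\gamma'})$ (at position $e(\gamma')$). Restricting $f \star g$ to this corner and invoking \eqref{koszulsignrule} with $i = e(\b')$ and $|g| = e(\gamma) - e(\gamma')$ produces the Koszul sign $(-1)^{e(\b')(e(\gamma)-e(\gamma'))}$ in front of $f|_{F(\underline{\b'})^{e(\b')}} \star g|_{F(\underline{\gamma'})^{e(\gamma')}}$; extracting scalar coefficients on both sides yields the claimed identity. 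The main obstacle I anticipate is notational rather than conceptual: I must confirm that the canonical shift identifications of \eqref{eq:res of b} are compatible with $\star$ (so that no stray sign from shift-reordering contaminates the Koszul sign), and that all bidegrees line up so that every map in the zig-zag defining $\res$ factors through the one-dimensional bidegree-$(0,0)$ space inside $\End(\one/\fm)$.
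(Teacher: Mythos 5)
Your proposal is correct and follows essentially the same route as the paper: part (1) is the functoriality of $\Res$, and part (2) is exactly the restriction of $f\star g$ to the identity summands $I(\underline{\b}')\star I(\underline{\gamma}')$ together with the Koszul sign rule \eqref{koszulsignrule} with $i=e(\b')$ and $|g|=e(\gamma)-e(\gamma')$. The compatibility of shifts with $\star$ that you flag is built into the paper's convention $X[k]\otimes Y[l]=(X\otimes Y)[k+l]$, so no extra sign arises there.
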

\begin{proof}
The first statement is clear as $\Res$ is a functor. The second statement is a consequence of the Koszul sign rule, as we now explain.  Abbreviate by letting $I(\underline{\b}')=\one[-e(\underline{\b}')](e(\underline{\b}'))$ regarded as a summand of $F(\underline{\b}')$, and similarly for $I(\underline{\gamma}')$.  Then the Koszul sign rule \eqref{koszulsignrule} states that the restriction of $f\star g$ to the identity summands $I(\underline{\b}')\star I(\underline{\gamma}')$ is given by
\begin{equation}
f\star g\Big|_{I(\underline{\b}')\star I(\underline{\gamma}')}
=
(-1)^{|g|e(\b')} f|_{I(\underline{\b}')}\star g|_{I(\underline{\gamma}')}.
\end{equation}
\end{proof}

Rouquier canonicity is the statement that Rouquier complexes for different braid words are canonically homotopy equivalent (see \cite{RouqBraid-pp}). One can find the following proof of Rouquier canonicity sketched in \cite[Section 4.2]{EGaitsgory}.

\begin{lemma}[Rouquier canonicity]\label{lemma:canonicity1}
If $\underline{\b}'$ and $\underline{\b}$ are two braid words representing the same braid $\b$. We call a degree zero chain map $\nu_{\underline{\b},\underline{\b}'}\colon F(\underline{\b})\to F(\underline{\b'})$ a \emph{canonical map} if $\res(\nu_{\underline{\b},\underline{\b}'})=1$. Then there exists a canonical map, and it is unique up to homotopy. Any canonical map is a homotopy equivalence. The composition or tensor product of canonical maps is a canonical map.
\end{lemma}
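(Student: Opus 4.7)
The plan is to bootstrap from Rouquier's pre-existing homotopy equivalence theorem by using the $\star$-invertibility of Rouquier complexes in $\KC^b(\Diag)$ to pin down the relevant bidegree $(0,0)$ Hom space, and to exploit Proposition \ref{prop:res} throughout. As a preliminary, I would verify that $\res$ descends to a well-defined $\k$-linear functional on homotopy classes of morphisms between Rouquier complexes: if $f - f' = d(h)$ in $\Ch^b(\Diag)$ then $\Res(f - f') = d(\Res(h))$ in $\Ch^b(\Diag/\fm)$, and extracting the scalar between the surviving copies of $\one/\fm$ yields the same answer. I would also note that $\res(\id_{F(\underline{\b})}) = 1$ directly from the identification $F(\underline{\b})/\fm \cong (\one/\fm)[-e](e)$ as a summand inclusion. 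For existence, I would start from a homotopy equivalence $h \colon F(\underline{\b}) \to F(\underline{\b}')$ supplied by Rouquier's theorem, with homotopy inverse $h'$. Proposition \ref{prop:res}(1) applied to $h \circ h' \simeq \id$ then gives $\res(h)\res(h') = 1$, so $\lambda := \res(h) \in \k^\times$, and $\nu := \lambda^{-1} h$ is simultaneously canonical and a homotopy equivalence.

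The heart of the argument is uniqueness, which I would reduce to a Hom computation. Each Rouquier complex $F(\underline{\b})$ is $\star$-invertible in $\KC^b(\Diag)$: inductively from the excerpt's identity $F_s \star F_s^{-1} \simeq \one$, a two-sided inverse is $F(\underline{\b}^{-1})$, where $\underline{\b}^{-1}$ denotes the braid word obtained by reversing $\underline{\b}$ and swapping each generator with its inverse. Since $\underline{\b}^{-1} \cdot \underline{\b}'$ represents the trivial braid, Rouquier's theorem gives $F(\underline{\b}^{-1}) \star F(\underline{\b}') \simeq \one$, whence the $\k$-module of bidegree $(0,0)$ chain maps $F(\underline{\b}) \to F(\underline{\b}')$ modulo homotopy is naturally isomorphic to the corresponding module for $\one \to \one$, which is just $\k$ (the scalar endomorphisms inside $R = \End_{\Diag}(\one)$, with no nontrivial homotopies since $\one$ is concentrated in a single bidegree). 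The functional $\res$ on this line is nonzero because it sends the canonical map from the previous paragraph to $1$; it is therefore an isomorphism, and any two canonical maps, both sent to $1$, must be homotopic. This simultaneously upgrades every canonical map to a homotopy equivalence, since up to homotopy it coincides with the one produced by existence.

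Finally, composition of canonical maps is canonical directly from Proposition \ref{prop:res}(1): $\res(\nu_2 \circ \nu_1) = 1 \cdot 1 = 1$. For tensor products, the two braid words on each side of a canonical map represent the same braid and thus have equal exponent sums ($e(\underline{\b}) = e(\underline{\b}')$ and $e(\underline{\gamma}) = e(\underline{\gamma}')$), so the Koszul sign $(-1)^{e(\underline{\b}')(e(\underline{\gamma}) - e(\underline{\gamma}'))}$ in Proposition \ref{prop:res}(2) equals $+1$, and $\res(\nu \star \nu') = 1$. The main point deserving real care in this outline is the Hom computation: one must invoke invertibility of Rouquier complexes, use that tensoring by an invertible complex is an autoequivalence of $\KC^b(\Diag)$, and identify the relevant bidegree-$(0,0)$ endomorphism space of $\one$ with $\k$. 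Without these three ingredients, $\res$ would not be guaranteed to separate homotopy classes, and the whole argument would collapse.
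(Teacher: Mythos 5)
Your proposal is correct and follows essentially the same route as the paper: invertibility of Rouquier complexes reduces $\uHom(F(\underline{\b}),F(\underline{\b}'))$ to $\uHom(\one,\one)\simeq R$, so the bidegree-$(0,0)$ homology is $\k$ and $\res$ is an isomorphism on it, with composition and tensor product handled by Proposition \ref{prop:res}. The extra details you supply (homotopy invariance of $\res$, existence by rescaling Rouquier's equivalence, the vanishing Koszul sign since writhes agree) are exactly the points the paper leaves implicit.
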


\begin{proof} Because $F(\underline{\b})$ and $F(\underline{\b'})$ are homotopy equivalent, $F(\underline{\b'}) F(\underline{\b})\inv$ is homotopy equivalent to the monoidal identity.
Tensoring with $F(\underline{\b})^{-1}$ we get an homotopy equivalence of hom spaces
\begin{equation}
\uHom_{\KC^b(\Diag)}(F(\underline{\b}),F(\underline{\b}')) \simeq \uHom_{\KC^b(\Diag)}(\one,\one) \simeq R.
\end{equation}
Hence $\res$ defines an isomorphism of $\k$-algebras $H^{0,0}(\uHom(F(\underline{\b}),F(\underline{\gamma}))\to H^{0,0}(R) = \k$. The previous proposition implies the claims about composition and tensor product.
%
%
\end{proof}

From this point on, we sometimes denote $F(\underline{\b})$ simply by $F(\b)$.  This is still a slight abuse, since the homotopy equivalence relating two different models for $F(\b)$ is canonical only up to homotopy.  Consequently, whenever we write $\uHom(F(\b),F(\b'))$, it is understood that this hom complex requires a choice of braid words representing $\b$ and $\b'$, and two choices yield homotopy equivalent hom complexes via the canonical map (and the homotopy equivalence is itself canonical up to homotopy).  In particular the homology class of a closed morphism $f\in \uHom(F(\b),F(\b'))$ is well-defined.  

One can also develop more generally the theory of maps between inequivalent Rouquier complexes which induce the identity map upon applying $\Res$.  Let us temporarily write $\b'\leq \b$ if there exists a chain map $F(\b')\to F(\b)$ which is invertible upon applying $\Res$.  Lemma \ref{lemma:canonicity1} implies that $\leq$ is a partial order.  Clearly the relation $\b'\leq \b$ is preserved under multiplying on the left or right by any braid.  The following establishes that if $\b$ is a positive braid (i.e.~it is expressed by a word without any inverse generators $\sigma_s\inv$) then $1\leq \b$.

\begin{definition}\label{def:nu maps}
If $\b$ is a positive braid then we let $\nu_\b\in \uHom(\one,F(\b))$ denote inclusion of the right-most chain object (which is the unique copy of $\one$ inside $F(\b)$).  We denote $\nu_{\sigma_s}$ also by $\nu_s$.
\end{definition}

Note that $\nu_\b$ is closed (i.e. it is a chain map of nonzero degree). It has degree $(e(\b),-e(\b))$, and induces the isomorphism \eqref{eq:res of b} upon applying $\Res$.  Furthermore, $\res(\nu_\b)=1$ by construction.

\begin{definition}\label{def:can}
For each pair of braids $(\b'\leq \b)$, let $\mathrm{can}(\b',\b)$ be the set of homotopy classes closed morphisms $f\colon F(\b')\to F(\b)$ with $\res(f)=1$.
\end{definition}

We think of $\mathrm{can}(\b',\b)$ as the set of ``canonical maps'' from $F(\b')\to F(\b)$.  Of course, this is sensible only when this set $\mathrm{can}(\b',\b)$ is a singleton.  An interesting question that we do not address in the paper is:

\begin{question}
If $\b'\leq \b$, is $\mathrm{can}(\b',\b)$ always a singleton?
\end{question}

For instance, $\mathrm{can}(\b,\b)=\{[\id_{F(\b)}]\}$ is a singleton for any braid $\b$ by Lemma \ref{lemma:canonicity1}, and it is not hard to show that $\mathrm{can}(1,\b)=\{[\nu_\b]\}$ is a singleton for all positive braids $\b$.  For our purposes we will only need the following, which implies that $\mathrm{can}(\b\gamma,\b\sigma_s\gamma)$ a singleton for all $\b,\gamma$. Indeed, the second statement implies that the space of chain maps in this bidegree is one-dimensional up to homotopy.


\begin{lemma}\label{lemma:canonicity2}
For any braids $\b,\gamma\in \Br$ and any $s\in S$ we have
\begin{equation}
\res(\id_{F(\b)}\star \nu_s\star \id_{F(\gamma)}) = (-1)^{e(\b)}.
\end{equation}
Moreover, $f\in \uHom(F(\b\gamma),F(\b s \gamma))$ is any closed map with bidegree $(1,-1)$, then
\begin{equation}
f\simeq (-1)^{e(\b)}\res(f)\cdot \id_{F(\b)}\star \nu_s\star \id_{F(\gamma)}.
\end{equation}
\end{lemma}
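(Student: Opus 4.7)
The plan is to handle the two statements separately, first computing the residue in part (1) directly from Proposition \ref{prop:res}, and then using invertibility of Rouquier complexes to reduce part (2) to a one-dimensional Hom computation.

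For part (1), I would parenthesize as $\id_{F(\b)}\star(\nu_s\star \id_{F(\gamma)})$ and apply Proposition \ref{prop:res}(2) twice. The inner factor $\nu_s\star \id_{F(\gamma)}\in \uHom(F(\gamma),F(s\gamma))$ has $\res=1$ by the proposition, since $\res(\nu_s)=1$ (by Definition \ref{def:nu maps}), $\res(\id_{F(\gamma)})=1$, and the sign exponent $e(1)\cdot(e(\gamma)-e(\gamma))$ vanishes. Applying the proposition once more to $\id_{F(\b)}\star(\nu_s\star \id_{F(\gamma)})$, with outer braid words $\b,\b$ on the left and $\gamma,s\gamma$ on the right, produces the sign $(-1)^{e(\b)\cdot(e(s\gamma)-e(\gamma))}=(-1)^{e(\b)}$, while both individual residues are $1$. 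This gives the claimed formula.

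For part (2), the key observation is that left or right tensoring with an invertible Rouquier complex is an autoequivalence, and hence induces a homotopy equivalence on Hom complexes. Specifically, the functors $F(\b)^{-1}\star(-)\star F(\gamma)^{-1}$ descend to a bijection
\[
H^{1,-1}\bigl(\uHom(F(\b\gamma),F(\b s\gamma))\bigr)\;\xrightarrow{\;\sim\;}\;H^{1,-1}\bigl(\uHom(\one,F(s))\bigr).
\]
(Here we use Rouquier canonicity, Lemma \ref{lemma:canonicity1}, to identify $F(\b)^{-1}F(\b\gamma)F(\gamma)^{-1}\simeq\one$ and $F(\b)^{-1}F(\b s\gamma)F(\gamma)^{-1}\simeq F(s)$ canonically.) Then a direct inspection of $F(s)=(\underline{B_s}\to\one(1))$ shows that $H^{1,-1}(\uHom(\one,F(s)))$ is one-dimensional over $\k$, spanned by the class of $\nu_s$: the only nonzero component of a $(1,-1)$-degree map $\one\to F(s)$ is a scalar map $\one\to\one(1)$, and no homotopies enter because the source is concentrated in a single degree.

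It follows that the target Hom group is also one-dimensional over $\k$, and by part (1) the element $\id_{F(\b)}\star\nu_s\star\id_{F(\gamma)}$ is nonzero in it (its residue is $\pm 1$). Therefore any closed $(1,-1)$-map $f\colon F(\b\gamma)\to F(\b s\gamma)$ satisfies $f\simeq c\cdot \id_{F(\b)}\star\nu_s\star\id_{F(\gamma)}$ for a unique $c\in\k$. Applying $\res$ and invoking part (1) forces $\res(f)=(-1)^{e(\b)}c$, i.e.\ $c=(-1)^{e(\b)}\res(f)$, which completes the argument. The main (mild) obstacle is bookkeeping the sign in part (1) and ensuring that the canonical identifications used to reduce to $\uHom(\one,F(s))$ really preserve the normalization of $\res$; both are handled cleanly by Proposition \ref{prop:res} and the definition of canonical maps.
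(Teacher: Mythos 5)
Your proposal is correct and follows essentially the same route as the paper: part (1) is the sign bookkeeping from Proposition \ref{prop:res}, and part (2) uses invertibility of $F(\b)$ and $F(\gamma)$ to reduce to $\uHom(\one,F_s)$, whose bidegree $(1,-1)$ homology is free of rank one and detected by $\res$. The only caveat is your justification that ``no homotopies enter because the source is concentrated in a single degree'': the potential homotopies live in $\uHom^{0,-1}(\one,F_s)$, i.e.\ in $\Hom^{-1}(\one,B_s)$, and what kills them is the Soergel hom formula $\Hom(\one,B_s)\cong R(-1)$ (morphisms of degree $\geq 1$ only) --- exactly the computation the paper packages as the two-term complex $R(-1)\xrightarrow{\alpha_s}R(1)$ --- not the fact that $\one$ sits in a single homological degree.
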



\begin{proof} The first statement is an immediate consequence of Proposition \ref{prop:res}.  For the second statement, observe that since $F(\b)$ and $F(\gamma)$ are invertible up to homotopy we have
\begin{equation}
\uHom(F(\b\gamma),F(\b s \gamma)) \simeq \uHom(\one,F(s)).
\end{equation}
By direct computation, $\uHom(\one,F(s))$ is the two-term complex of $(R,R)$-bimodules given by
\begin{equation} \underline{R}(-1) \buildrel \alpha_s \over \to R(1), \end{equation}
whose homology is $R/(\alpha_s)(1)$ in homological degree $1$. Hence $\res$ defines an isomorphism $H^{1,-1}(\uHom(F(\b\gamma),F(\b s \gamma)))\to \k$. 
\end{proof}

%

\subsection{Conjugating braids and conjugating bimodules}
Suppose that $\b \sigma_s = \sigma_t \b$, so that Rouquier canonicity gives us a canonical homotopy equivalence $F(\b) F_s \simeq F_t F(\b)$. Our goal is to bootstrap from this a homotopy equivalence $F(\b) B_s \simeq B_t F(\b)$, and study how this homotopy equivalence behaves with respect to the morphisms in $\Diag$.

For the rest of this chapter we will frequently use cones of morphisms between chain complexes, and wish to abbreviate our notation. Given a degree zero chain map $\alpha \co X \to Y$ we have the mapping cone 
\begin{equation}
\Cone(\a):=\left( X[1] \buildrel \alpha \over \to Y \right) := \tw_{\left( \begin{array}{cc} 0 & 0 \\ \alpha & 0 \end{array} \right)} \left( X[1] \oplus Y \right).
\end{equation}
where in the twist above, we are regarding $\a$ as a degree 1 closed morphism $X[1] \to Y$.
%
%

\begin{prop}\label{prop:b commuting}
Let $s, t \in S$ (possibly equal) and suppose $\b$ is a braid such that $\b \sigma_s = \sigma_t \b$.  Then there is a homotopy equivalence $\phi \colon F(\b) B_s \to B_t F(\b)$ (constructed in the proof) such that the following square commutes up to homotopy:
\begin{equation} \label{phisquare}
\begin{tikzpicture}[anchorbase]
\node (a) at (0,0) {$F(\b) \star B_s$};
\node (b) at (0,2) {$B_t \star F(\b)$};
\node (c) at (3,0) {$F(\b)$};
\node (d) at (3,2) {$F(\b)$};
\path[-stealth,very thick]
(a) edge node[left] {$\phi$} (b)
(a) edge node[above] {$\id \star \e_s$} (c)
(b) edge node[above] {$\e_t\star  \id$} (d)
(c) edge node[right] {$\id$} (d);
\end{tikzpicture}
\end{equation}
\end{prop}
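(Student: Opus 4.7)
The approach is to bootstrap from the canonical Rouquier equivalence between $F(\b)F_s$ and $F_tF(\b)$ and extract $\phi$ as a component. First, I would observe that the definition $F_s = (B_s \to \one(1))$ together with the dg functoriality of $\star$ gives presentations
\begin{equation*}
F(\b)F_s = \tw_{\id_{F(\b)} \star \e_s}\bigl(F(\b)B_s \oplus F(\b)(1)[-1]\bigr), \qquad F_tF(\b) = \tw_{\e_t \star \id_{F(\b)}}\bigl(B_tF(\b) \oplus F(\b)(1)[-1]\bigr).
\end{equation*}
Since $\b\sigma_s = \sigma_t\b$ in $\Br$, Lemma \ref{lemma:canonicity1} provides a canonical homotopy equivalence $\psi\colon F(\b)F_s \to F_tF(\b)$ with $\res(\psi)=1$, unique up to homotopy.

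Next, I would decompose $\psi$ according to the direct-sum structure as a matrix
\begin{equation*}
\psi = \begin{pmatrix} \phi & h \\ g & \l \end{pmatrix}
\end{equation*}
with $\phi\colon F(\b)B_s \to B_tF(\b)$ and $\l\colon F(\b)(1)\to F(\b)(1)$ on the diagonal, and cohomological-degree-shifted off-diagonal entries $g,h$. After tracking signs from the Koszul rule and the shift conventions of \S\ref{ss:tensoring with objects}, the chain-map equation $d\psi = \psi d$ specializes on the $(2,1)$-entry to an equation of the form
\begin{equation*}
(\e_t \star \id_{F(\b)}) \circ \phi - \l \circ (\id_{F(\b)} \star \e_s) = \pm d(g) \pm d(h)
\end{equation*}
in $\uHom(F(\b)B_s,F(\b)(1))$; in particular $(\e_t \star \id)\circ \phi \simeq \l \circ (\id \star \e_s)$.

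Then, since the unique copy of $\one$ inside each of $F(\b)F_s$ and $F_tF(\b)$ lies in the respective $F(\b)(1)$ summand, the residue of $\psi$ coincides with $\res(\l)$, so $\res(\l)=1$. Hence $\l$ is a canonical self-map of the invertible complex $F(\b)(1)$, and Lemma \ref{lemma:canonicity1} forces $\l \simeq \id$. Substituting into the previous identity produces the homotopy commutativity of \eqref{phisquare}. Finally, that $\phi$ is a homotopy equivalence follows from a morphism-of-distinguished-triangles argument in $\KC^b(\Diag)$: the short-exact/cone descriptions above yield distinguished triangles for both source and target, $\psi$ furnishes a morphism of triangles whose component on the kernel is $\l\simeq \id$ and on the quotient is $\phi$, and the two-out-of-three principle for triangles gives the conclusion.

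The main obstacle is the sign bookkeeping in the matrix decomposition of $\psi$: the Koszul rule and the $[-1]$-shift conventions introduce signs in the differential of the twisted complexes, and one must verify that these line up so that the $(2,1)$-component of $d\psi - \psi d = 0$ yields exactly the boundary relation stated above. Everything else in the argument is structural and essentially forced by Rouquier canonicity.
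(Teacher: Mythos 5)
Your strategy (decompose the canonical equivalence $\psi\colon F(\b)\star F_s\to F_t\star F(\b)$ with respect to the two-step filtrations coming from $F_s=(B_s\to\one(1))$ and $F_t=(B_t\to\one(1))$, and read off $\phi$ as a diagonal entry) has a genuine gap at its load-bearing step: a chain map between these twisted complexes need not respect the filtrations, so the diagonal entries $\phi$ and $\l$ need not be chain maps. Concretely, with the differentials written in lower-triangular form with connecting entries $\a=\pm\,\id_{F(\b)}\star\e_s$ (with Koszul signs) and $\b$-side entry $\e_t\star\id_{F(\b)}$, the components of $d\psi=0$ read $d(\phi)=\pm\, h\circ\a$, $d(\l)=\pm\,(\e_t\star\id)\circ h$, $d(h)=0$, and $(\e_t\star\id)\circ\phi-\l\circ\a=\pm\, d(g)$. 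In particular your displayed $(2,1)$-relation should involve only $d(g)$, not $d(h)$; the role of $h$ is instead to obstruct closedness of $\phi$ and $\l$. Everything downstream — the assertion $(\e_t\star\id)\circ\phi\simeq\l\circ(\id\star\e_s)$, the use of Lemma \ref{lemma:canonicity1} to get $\l\simeq\id$ from $\res(\l)=1$, and the morphism-of-triangles/two-out-of-three argument — presupposes that $\phi$ and $\l$ are chain maps and that $\psi$ induces a morphism of the two triangles, i.e.\ that $h=0$, which you have not arranged. The gap is reparable: since $F(\b)$ is invertible, $\uHom(F(\b)(1),B_t\star F(\b))\simeq\uHom(\one(1),B_t)$ is concentrated in homological degree $0$, so the closed degree-$1$ element $h$ is exact and one can replace $\psi$ by a homotopic chain map with $h=0$; after that (and after the sign bookkeeping you flag, including the sign discrepancy between the differentials on the two copies of $F(\b)(1)[-1]$, which may force a rescaling of $\phi$ by a sign), your argument goes through. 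But this repair, or an equivalent one, is a missing idea rather than routine bookkeeping.

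For comparison, the paper avoids this issue by running the construction in the opposite direction: it replaces $B_s$ by the homotopy equivalent complex $\Cone(-\nu_s)=(\one(1)\to F_s)$ (and $B_t$ by $\Cone(-\nu_t)$), and then \emph{builds} a chain map $\phi'\colon F(\b)\star\Cone(-\nu_s)\to\Cone(-\nu_t)\star F(\b)$ from the square with vertical maps $\id_{F(\b)}$ and $(-1)^{e(\b)}\nu_{t\b,\b s}$, choosing the off-diagonal homotopy as part of the data; the needed homotopy exists because both composites are closed maps $F(\b)\to F_t\star F(\b)$ of bidegree $(1,-1)$ with equal residues, and Lemma \ref{lemma:canonicity2} says such maps are determined up to homotopy by their residue. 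There the closedness problem never arises, because the homotopy is chosen when constructing the map on cones rather than extracted from an unknown chain map; the square \eqref{phisquare} is then verified using the fact that $\e_s$ corresponds to the projection $\Cone(-\nu_s)\to\one(1)$. If you incorporate the $H^1$-vanishing correction above (or switch to constructing the cone map directly), your residue-based identification of $\l$ with $\id$ is a perfectly good substitute for the paper's explicit residue computations.
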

\begin{proof}
Throughout the proof we will employ a very common abuse of notation, viewing $\nu_s\in \uHom^{1,-1}(\one,F_s)$ as a degree $(1,0)$ map $\one(1)\to F_s$, or (when we discuss $\Cone(\nu_s)$) as a degree $(0,0)$ map $\one(1)[-1]\to F_s$.  

Observe that, since $F_s$ is a two term complex constructed from $B_s$ and $\one(1)$, it follows that $B_s$ is homotopy equivalent to a complex constructed from $F_s$ and $\one(1)$.  In slightly more details, consider the diagram
\begin{equation}
\begin{tikzpicture}
\node (B1) at (-3,0) {$B_s$};
\node (I1) at (-.5,.5) {$\one(1)$};
\node (B2) at (.5,-.5) {$B_s$};
\node (I2) at (0,2) {$\one(1)$};
\node (B3) at (3,0) {$B_s$};
\path[-stealth,very thick]
(B1) edge node[below] {$\id$} (B2)
(B1) edge node[above] {$\finaldotred$} (I1)
(I1) edge node[left] {$-\id$} (I2)
(B2) edge node[right] {$\finaldotred$} (I2)
(B2) edge node[below] {$\id$} (B3);
\end{tikzpicture}.
\end{equation}
The three middle objects represent $\Cone(-\nu_s)$, i.e.~ $(\one(1)\buildrel-\nu_s\over \longrightarrow F_s)$. The horizontal maps on the left give the homotopy equivalence $\iota_s \colon B_s \to \Cone(-\nu_s)$, and the horizontal maps on the right give an inverse homotopy equivalence $p_s \colon Cone(-\nu_s) \to B_s$. Under this homotopy equivalence, the morphism $\e_s=\finaldotred$ from $B_s$ to $\one(1)$ corresponds to the projection $\pr_{\one(1)}$ of $\Cone(-\nu_s) = \tw(\one(1) \oplus F_s)$ onto the $\one(1)$ summand (in homological degree zero). That is,
\begin{equation} \e_s = \pr_{\one(1)} \circ \iota_s, \qquad \e_s \circ p_s \simeq \pr_{\one(1)}. \end{equation}

To construct $\phi$, we use the diagram
\begin{equation}\label{eq:phidef}
\begin{tikzpicture}[anchorbase]
\node (Bs) at (-4,0) {$F(\b)\star B_s$};
\node (Bt) at (-4,2) {$B_t\star F(\b)$};
\node (C1) at (0,0) {$F(\b) \star \Cone(-\nu_s)$};
\node (C2) at (0,2) {$\Cone(-\nu_t)\star F(\b)$};
\node (I1) at (3,0) {$\Big(F(\b)$};
\node (F1) at (6.5,0) {$F(\b)\star F_s\Big)$};
\node (I2) at (3,2) {$\Big(F(\b)$};
\node (F2) at (6.5,2) {$F_t\star F(\b)\Big)$};
\path[-stealth,very thick]
(I1) edge node[above] {$- \id_{F(\b)}\star  \nu_{s}$} (F1)
(I1) edge node[left] {$\id$} (I2)
(I2) edge node[above] {$-\nu_{t} \star \id_{F(\b)}$} (F2)
(F1) edge node[right] {$(-1)^{e(\b)} \nu_{t\b, \b s}$} (F2)
(Bs) edge node[left] {$\phi$} (Bt)
(Bs) edge node[above] {$\id_{F(\b)}\star \iota_s$} (C1)
(C2) edge node[above] {$p_t\star \id_{F(\b)}$} (Bt)
(C1) edge node[above] {$=$} (I1)
(I2) edge node[above] {$=$} (C2)
(C1) edge node[right] {$\phi'$} (C2);
\path[-stealth, dashed]
(I1) edge (F2);
\end{tikzpicture}
\end{equation}
The parenthesized entries in columns three and four are cones, being an elaboration of the complexes in the second column. 

By Proposition \ref{prop:res}, the map $-\id_{F(\b)} \star \nu_s$ within the differential on $F(\b) \otimes \Cone(-\nu_s)$ has residue $-(-1)^{e(\b)}$, while $-\nu_t \star \id_{F(\b)}$ has residue $(-1)$. Thus the residues of the two paths from $F(\b)$ to $F_t \star F(\b)$ agree. By Lemma \ref{lemma:canonicity2}, these two paths must give homotopic maps, i.e.
\[ (-1)^{e(\b)} \nu_{t\b, \b s}\circ (-\id_{F(\b)}\star \nu_s)\simeq -\nu_t\star \id_{F(\b)}. \]
Choose a homotopy $h$ for their difference (the dashed arrow). The sum of the two vertical maps and the dashed arrow defines a chain map $\phi' \colon F(\b) \star \Cone(-\nu_s) \to \Cone(-\nu_t) \star F(\b)$.  This chain map $\phi'$ is a homotopy equivalence since the vertical arrows are homotopy equivalences (given a morphism of distinguished triangles where the outer maps are isomorphisms, the inner map is also an isomorphism).

Now we compute that
\begin{align} \nonumber (\e_t \star \id_{F(\b)}) \circ \phi  & = ((\e_t \circ p_t)\star \id_{F(\b)}) \circ \phi' \circ (\id_{F(\b)} \star \iota_s) \simeq (\pr_{\one(1)} \star \id_{F(\b)}) \circ (\id_{F(\b)} \star \iota_s) \\ & = \pr_{F(\b)} \circ (\id_{F(\b)} \star \iota_s) \\ \nonumber & = (\id_{F(\b)} \star \pr_{\one(1)}) \circ (\id_{F(\b)} \star \iota_s) = \id_{F(\b)} \star \e_s. \end{align}
In the second line, $\pr_{F(\b)}$ denotes the projection from $\Cone(F(\b) \star -\nu_s)$ to $F(\b)$, and since $\pr_{\one(1)}$ has degree zero the Koszul sign rule does not produce any signs when passing from the second row to the third. Consequently, \eqref{phisquare} commutes up to homotopy.
\end{proof}

\begin{prop}\label{prop:b commuting alt}
Let $s, t \in S$ (possibly equal) and suppose $\b$ is a braid such that $\b \sigma_s = \sigma_t \b$.  Then there is a homotopy equivalence $\psi \colon F(\b) B_s \to B_t F(\b)$ such that the following square commutes up to homotopy:
\begin{equation} \label{psisquare}
\begin{tikzpicture}
\node (a) at (0,0) {$F(\b) \star B_s$};
\node (b) at (0,2) {$B_t \star F(\b)$};
\node (c) at (3,0) {$F(\b)$};
\node (d) at (3,2) {$F(\b)$};
\path[-stealth,very thick]
(a) edge node[left] {$\psi$} (b)
(c) edge node[above] {$\id \star \eta_s$} (a)
(d) edge node[above] {$\eta_t\star  \id$} (b)
(c) edge node[right] {$\id$} (d);
\end{tikzpicture}
\end{equation}
\end{prop}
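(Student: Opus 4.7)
The plan is to mimic the proof of Proposition~\ref{prop:b commuting}, replacing $F_s$ by $F_s^{-1}$, the map $\nu_s$ by its dual $\nu_s^\vee$, and the end-dot $\e_s$ by the start-dot $\eta_s$. The key ingredient is a dual cone presentation of $B_s$: just as the original proof used $B_s \simeq \Cone(-\nu_s)$, here we have $B_s \simeq \Cone(-\nu_s^\vee)$, where $\nu_s^\vee$ is regarded as a degree zero chain map $F_s^{-1}[-1] \to \one(-1)$. This follows by rotating the standard distinguished triangle $\one(-1) \xrightarrow{\eta_s} B_s \to F_s^{-1} \to \one(-1)[1]$ arising from the presentation $F_s^{-1} = \Cone(\eta_s)$. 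Under the resulting equivalence, the morphism $\eta_s$ corresponds to inclusion of the $\one(-1)$ summand in homological degree zero of $\Cone(-\nu_s^\vee)$.

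Next, we construct a chain map $\psi' \co F(\b) \star \Cone(-\nu_s^\vee) \to \Cone(-\nu_t^\vee) \star F(\b)$ of the same shape as the diagram~\eqref{eq:phidef}. Since $\b\sigma_s = \sigma_t\b$ also implies $\b\sigma_s^{-1} = \sigma_t^{-1}\b$, Lemma~\ref{lemma:canonicity1} provides a canonical homotopy equivalence $F(\b) \star F_s^{-1} \simeq F_t^{-1} \star F(\b)$, which plays the role of $\nu_{t\b,\b s}$ in the original proof. A residue computation using Proposition~\ref{prop:res}, entirely parallel to the one there, shows that the correct sign in front of this canonical map is again $(-1)^{e(\b)}$, so that the two routes around the corresponding square have matching residues. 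An analog of Lemma~\ref{lemma:canonicity2} for closed morphisms of bidegree $(1,-1)$ going ``the other way'' (whose proof is identical, using the two-term complex $\uHom(\one, F_s^{-1})$ in place of $\uHom(\one, F_s)$ and extracting one-dimensional homology in the relevant bidegree) then supplies the chain-level homotopy needed to fill the dashed arrow, completing the construction of $\psi'$.

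Defining $\psi$ by composing $\psi'$ with the homotopy equivalences $F(\b)\star B_s \simeq F(\b)\star \Cone(-\nu_s^\vee)$ and $\Cone(-\nu_t^\vee)\star F(\b) \simeq B_t\star F(\b)$ inherited from the first paragraph, the commutativity of~\eqref{psisquare} is then automatic: both $\id_{F(\b)} \star \eta_s$ and $\eta_t \star \id_{F(\b)}$ correspond to inclusion of the $F(\b)(-1)$ summand of the respective cones, on which $\psi'$ restricts to the identity by construction. The main obstacle is the residue and sign bookkeeping in the construction of $\psi'$, but this follows routinely from Proposition~\ref{prop:res} and is structurally identical to the original proof.
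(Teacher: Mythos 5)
Your argument is correct and is essentially the paper's proof made explicit: the paper disposes of this proposition in one line by applying the contravariant duality functor (which swaps $\e_s\leftrightarrow\eta_s$, $F_s\leftrightarrow F_s^{-1}$, $\nu_s\leftrightarrow\nu_s^\vee$) to the arguments proving Proposition \ref{prop:b commuting}, and your dual cone presentation of $B_s$, residue bookkeeping, and dual canonicity lemma are precisely what that dualization produces. One small correction: the hom complex controlling the dashed arrow is $\uHom(F_s^{-1},\one)\cong\uHom(\one,F_s)$ (again the two-term complex $\underline{R(-1)}\xrightarrow{\;\alpha_s\;}R(1)$, exactly as in Lemma \ref{lemma:canonicity2}), not $\uHom(\one,F_s^{-1})$, which is concentrated in homological degrees $-1$ and $0$ and therefore has no homology in the relevant bidegree $(1,-1)$.
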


\begin{proof} (Sketch) This follows by symmetry, applying the duality functor to all the arguments in this section and the last. \end{proof}
	
What is essential to understand is that there is no reason to expect that $\phi$ and $\psi$ are equal, and indeed they frequently are not!

\begin{example}
The homotopy equivalence $\phi \colon F_s\star B_s\to B_s\star F_s$ constructed in Proposition \ref{prop:b commuting} is
\begin{equation}
\begin{tikzpicture}[anchorbase]
\node (a) at (0,0) {$\Big(\underline{B_sB_s}$};
\node (b) at (3,0) {$B_s(1)\Big)$};
\node (c) at (0,2) {$\Big(\underline{B_sB_s}$};
\node (d) at (3,2) {$B_s(1)\Big)$};
\path[-stealth,very thick]
(a) edge node[above] {$\finaldotred\linered$}(b)
(c) edge node[above] {$\linered\finaldotred$} (d)
(a) edge node[left] {$-\linered\linered+\splitred\finaldotred$} (c)
(b) edge node[right] {$0$} (d);
\end{tikzpicture}
\end{equation}
It is a good exercise to verify that $\phi$ commutes with $\e_s$ as in \eqref{phisquare}. However, $\phi$ anticommutes with $\eta_s$, so $\psi := - \phi$ will satisfy \eqref{psisquare}.
\end{example}

\begin{example} Let $m_{st} = 3$. After tensoring the homotopy equivalence $F_s F_t B_s \to B_t F_s F_t$ with $F_s^{-1}$ on the left and $F_t^{-1}$ on the right, one obtains the homotopy equivalences in \cite[Proof of Lemma 4.8]{MMVFlat}. Moreover, \cite[Lemma 4.9]{MMVFlat} computes directly that both \eqref{phisquare} and \eqref{psisquare} commute for this same morphism.  \end{example}

\begin{prop} \label{prop:phivspsi} Let $s, t \in S$ (possibly equal) and suppose $\b$ is a braid such that $\b \sigma_s = \sigma_t \b$. Let $w\in W$ be the image of $\b$ under the
group homomorphism $\Br \to W$. Then there is a sign $\lambda = \pm 1 \in \k^\times$ such that $w(\alpha_s) = \lambda^{-1} \alpha_t$. Let $\phi$ be the homotopy equivalence in
Proposition \ref{prop:b commuting}. Then $\psi := \lambda \phi$ satisfies \eqref{psisquare}. \end{prop}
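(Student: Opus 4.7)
The plan is to pin down $\psi$ by using that the space of chain maps $F(\b)\to B_t\star F(\b)$ in bidegree $(0,1)$ is one-dimensional up to homotopy, so $\phi\circ(\id_{F(\b)}\star\eta_s)$ must be homotopic to a unique scalar multiple of $\eta_t\star\id_{F(\b)}$. We then determine that scalar by post-composing with $\e_t\star\id_{F(\b)}$, which brings us back to the content of \eqref{phisquare}.

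First I would produce the sign. Since $\b\sigma_s=\sigma_t\b$ in $\Br$, the image in $W$ satisfies $wsw^{-1}=t$, so $w(\alpha_s)$ is a $(-1)$-eigenvector of $t$. A direct calculation from $t(v)=v-\alpha_t^\vee(v)\alpha_t$ (together with Demazure surjectivity, which implies $2$ acts appropriately) shows that this eigenspace is spanned by $\alpha_t$, so $w(\alpha_s)=\lambda^{-1}\alpha_t$ for a unique $\lambda^{-1}\in\k^\times$. That $\lambda=\pm 1$ follows from standard facts about realizations of Coxeter systems under the standing hypotheses of Remark \ref{remark:assumptions} (e.g.\ from the existence of a $W$-invariant bilinear form compatible with the roots); this is the step I expect to require the most care.

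Second, I would compute the relevant Hom space. The homotopy equivalence $\phi$ together with invertibility of $F(\b)$ identifies $\uHom(F(\b),B_t\star F(\b))$ first with $\uHom(F(\b),F(\b)\star B_s)$ and then with $\uHom(\one,B_s)$. By the Soergel Hom formula, $\uHom(\one,B_s)$ is free of rank one as a right $R$-module, generated by $\eta_s$ in bidegree $(0,1)$. Hence $H^{0,1}(\uHom(F(\b),B_t\star F(\b)))$ is one-dimensional over $\k$, and there is a unique scalar $c\in\k$ with
\[ \phi\circ(\id_{F(\b)}\star\eta_s)\simeq c\cdot(\eta_t\star\id_{F(\b)}). \]

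Finally, I would identify $c$ by post-composing both sides with $\e_t\star\id_{F(\b)}$. Using \eqref{phisquare}, the left side becomes $(\id_{F(\b)}\star\e_s)\circ(\id_{F(\b)}\star\eta_s)=\id_{F(\b)}\star\alpha_s$, which by Lemma \ref{lem:conjpoly} is homotopic to $w(\alpha_s)\star\id_{F(\b)}=\lambda^{-1}\alpha_t\star\id_{F(\b)}$. The right side becomes $c\cdot\alpha_t\star\id_{F(\b)}$. Invertibility of $F(\b)$ gives $\uEnd(F(\b))\simeq R$ as dg algebras, under which $\alpha_t\star\id_{F(\b)}$ corresponds to a nonzero degree-two element of $R$, and in particular is not nulhomotopic. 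Hence $c=\lambda^{-1}$, so $\psi:=\lambda\phi$ satisfies $\psi\circ(\id_{F(\b)}\star\eta_s)\simeq\eta_t\star\id_{F(\b)}$, which is exactly \eqref{psisquare}.
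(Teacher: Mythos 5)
Your second and third steps are essentially the paper's own computation, but your first step has a genuine problem over the general ground ring $\k$ used throughout the paper. You derive $w(\alpha_s)\in\k^{\times}\alpha_t$ from the claim that the $(-1)$-eigenspace of $t$ on $V$ is spanned by $\alpha_t$. That eigenspace is $\{v\in V: 2v=\alpha_t^{\vee}(v)\,\alpha_t\}$, and unless $2$ is invertible in $\k$ it can be much larger than $\k\alpha_t$ (in characteristic $2$ it is the whole kernel of $\alpha_t^{\vee}$). Demazure surjectivity only says that $\alpha_t^{\vee}\colon V\to\k$ and $\xi\mapsto\xi(\alpha_t)$ are surjective; it does not make $2$ act invertibly. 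Similarly, a $W$-invariant bilinear form compatible with the roots is not part of the standing assumptions (Remark \ref{remark:assumptions}), so your proposed route to $\lambda=\pm1$ is not available in general. The paper avoids both issues: the colinearity $w(\alpha_s)=\lambda^{-1}\alpha_t$ with $\lambda\in\k^{\times}$ is obtained \emph{categorically} --- Proposition \ref{prop:b commuting alt} supplies some $\psi$ satisfying \eqref{psisquare}, one-dimensionality of the chain-map space forces $\psi=\lambda\phi$, and then the square built from $\alpha_s=\e_s\circ\eta_s$ together with Lemma \ref{lem:conjpoly} and freeness of the $R$-action on $\uEnd(F(\b))\cong R$ yields $w(\alpha_s)=\lambda^{-1}\alpha_t$ --- and only the statement $\lambda=\pm1$ uses classical Coxeter theory, via the balanced hypothesis: a balanced realization is a specialization of one over the subring $\k'\subset\R$ generated by the $2\cos(\pi/m_{st})$, where $w(\alpha_s)$ is a root colinear with $\alpha_t$, hence equal to $\pm\alpha_t$. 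You could repair your step 1 by invoking exactly this specialization argument (which gives colinearity and the sign at once), but as written your justification would fail, for instance, for realizations in characteristic $2$.

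The remainder of your argument is sound and is the paper's computation run in the opposite direction: the paper starts from the existence of $\psi$ (via duality) and solves for $w(\alpha_s)$ in terms of the unknown ratio $\lambda$, while you take $w(\alpha_s)=\lambda^{-1}\alpha_t$ as known and solve for the coefficient $c$ by post-composing with $\e_t\star\id$, using \eqref{phisquare}, $\e_s\circ\eta_s=\alpha_s$, Lemma \ref{lem:conjpoly}, and freeness of the $R$-action; this also lets you bypass Proposition \ref{prop:b commuting alt} entirely, which is a mild simplification. One small point to add in your step 2: to write $\phi\circ(\id\star\eta_s)\simeq c\,(\eta_t\star\id_{F(\b)})$ you should note that $\eta_t\star\id_{F(\b)}$ actually generates the rank-one space $H^{0,1}$, e.g.\ by cancelling $F(\b)$ on the right to identify it with the generator $\eta_t\in\uHom(\one,B_t)$.
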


\begin{proof} Using the homotopy equivalence $\phi$, the hom complex $\uHom(F(\b)\star B_s,B_t\star F(\b))$ is homotopy equivalent to the hom complex $\uEnd(F(\b)B_s)$. Since
$F(\b)$ is invertible, this hom complex is homotopy equivalent to $\uEnd(B_s)$, which in degree zero is isomorphic to $\k$. Thus the homotopy equivalence $\psi$ from Proposition
\ref{prop:b commuting alt} must be $\lambda \phi$ for some $\lambda \in \k^{\times}$.

Now consider the following diagram (in which all morphisms are chain maps of homological degree zero):
\begin{equation}
\begin{tikzpicture}[anchorbase]
\node (F1) at (0,0) {$F(\b)$};
\node (F2) at (7,0) {$F(\b)$};
\node (F3) at (0,2) {$F(\b)$};
\node (F4) at (7,2) {$F(\b)$};
\node (B1) at (3.5,0) {$F(\b)\star B_s$};
\node (B2) at (3.5,2) {$B_t\star F(\b)$};
\path[-stealth,very thick]
(F3) edge node[above] {$\eta_t\star\id_{F(\b)}$} (B2)
(B2) edge node[above] {$\e_t\star \id_{F(\b)}$} (F4)
(F1) edge node[above] {$\id_{F(\b)}\star \eta_s$} (B1)
(B1) edge node[above] {$\id_{F(\b)}\star \e_s$} (F2)
(F1) edge node[left] {$\l\inv\id_{F(\b)}$} (F3)
(B1) edge node[left] {$\phi$} (B2)
(F2) edge node[left] {$\id_{F(\b)}$} (F4);
\end{tikzpicture}.
\end{equation}
Since $\a_s=\e_s\circ \eta_s$, we deduce that $F(\b) \alpha_s
\simeq \lambda^{-1} \alpha_t F(\b)$. By Lemma \ref{lem:conjpoly} we already know that $F(\b) \alpha_s \simeq w(\alpha_s) F(\b)$. Since $\uEnd(F(\b)) \cong \uEnd(\one) = R$, the left
action of $R$ on $F(\b)$ is free (even up to homotopy), and we deduce that $\lambda^{-1} \alpha_t = w(\alpha_s)$.

The rest follows from classical Coxeter theory. For a finite Coxeter group, let $\k'$ be the subring of $\R$ containing $2\cos(\frac{\pi}{m_{st}})$ for all $s, t \in S$. Any balanced realization is a specialization of a realization over $\k'$, so we can do the computation assuming $\k = \k'$. By restriction from $\R$ we have the usual notion of a root system living inside $V$ (but not necessarily spanning $V$). We know that $wsw^{-1}$ is the reflection corresponding to the root $w(\alpha_s)$, and hence $w(\alpha_s)$ is a root colinear with $\alpha_t$. The only roots colinear with $\alpha_t$ are $\pm \alpha_t$. Hence $\lambda = \pm 1$.
\end{proof}

\begin{remark} \label{rmk:partoneofremark} Let us reiterate a point made in the introduction. All we really need to know is that, inside the one-dimensional (up to homotopy) space
of chain maps $F(\b) \star B_s \to B_t \star F(\b)$, some nonzero chain map is a homotopy equivalence and satisfies \eqref{phisquare} up to scalar. (We just did a little more work
in Proposition \ref{prop:b commuting} to describe this chain map precisely.) Using duality, some nonzero chain map in the same hom space satisfies \eqref{psisquare} up to scalar.
By rescaling, we may choose $\phi$ which satisfies \eqref{phisquare} exactly; which scalar multiple of $\phi$ satisfies \eqref{psisquare} is determined by Lemma \ref{lem:conjpoly}
as in Proposition \ref{prop:phivspsi}. This remark continues in Remark \ref{rmk:continuation}. \end{remark}

\begin{remark} We have assumed above that the realization is balanced. When the realization is not balanced, there is no consistent theory of positive roots, and the scalar $\lambda$ need not be $\pm 1$. For example, for any $\lambda \in \k^{\times}$ there is an unbalanced realization of type $A_2$ with $st(\alpha_s) = \lambda^{-1} \alpha_t$. The proposition holds true verbatim (and the proof works ignoring the last paragraph) when one removes the statement that $\lambda = \pm 1$. \end{remark}

\subsection{Conjugation by the longest element}
\label{ss:HT}

Throughout this section, let $W$ be a finite Coxeter group with simple reflections $S\subset W$, length function $\ell$, and longest element $w_0\in W$.  We will take certain basic facts for granted, for instance $w_0\inv=w_0$ and $\ell(w_0 x)= \ell(x w_0) = \ell(w_0)-\ell(x)$.

Let $\tau:W\rightarrow W$ be the automorphism $\tau(x) = w_0x w_0$.  By standard arguments, $\tau$ is length preserving, hence $\tau(s)$ is a simple reflection, and $\tau$ itself corresponds to an automorphism of the Coxeter data; in particular $\tau$ is completely determined by its restriction to $\tau:S\rightarrow S$. Many finite Coxeter groups admit no non-trivial diagram automorphisms, hence $\tau(s)=s$ for all $s\in S$. Even when $\tau$ is the identity map, $w_0$ will act nontrivially on $V$.

\begin{example}
In type $A_{n-1}$ we have $W=S_n$ with simple reflections $s_1,\ldots,s_{n-1}$.  The longest element is $w_0 = s_1(s_2s_1)\cdots(s_{n-1}\cdots s_1)$, and we have $\tau(s_i) = s_{n-i+1}$ and $w_0(\alpha_i) = - \alpha_{\tau(i)}$.
\end{example}

\begin{example}
For dihedral groups (type $I_2(m)$) with simple reflections $s,t$ we have $w_0 = (st)^m = (ts)^m$, and
\[
\tau(s) = \begin{cases} t & \text{ if $m$ is odd}\\ s & \text{ if $m$ is even}\end{cases}, \qquad w_0(\alpha_s) = - \alpha_{\tau(s)}.
\]
\end{example}

\begin{lemma}\label{lemma:w_0 on simple roots}
For any (balanced) realization of a finite Coxeter system $(W,S)$, we have $w_0(\a_s) = -\a_{\tau(s)}$ for all $s\in S$.
\end{lemma}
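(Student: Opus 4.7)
The plan is to mimic the argument in the last paragraph of the proof of Proposition \ref{prop:phivspsi}, where essentially the same calculation was carried out (with the small difference that $w$ there is an arbitrary braid image, whereas here $w = w_0$ and we get to use the specific structure of the longest element).

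First I would reduce to the case of a ``geometric'' realization over a subring of $\R$. For a finite Coxeter group, let $\k' \subset \R$ be the subring generated by $2\cos(\pi/m_{st})$ for $s,t \in S$. Any balanced realization is obtained from the standard geometric realization over $\k'$ by base change (tensoring with $\k$), and the identity $w_0(\alpha_s) = -\alpha_{\tau(s)}$ is preserved under base change. So without loss of generality I may assume $\k = \k'$ and work inside the usual geometric setup, where we have a genuine root system inside $V$.

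Next, since $s$ is the reflection in $\alpha_s$, the conjugate $w_0 s w_0^{-1}$ is the reflection in $w_0(\alpha_s)$. On the other hand, by definition of $\tau$ we have $w_0 s w_0^{-1} = \tau(s)$, which is the reflection in $\alpha_{\tau(s)}$. Hence $w_0(\alpha_s)$ is a root that defines the same reflection as $\alpha_{\tau(s)}$, so it must be colinear with $\alpha_{\tau(s)}$. In a crystallographic / balanced real realization, the only roots colinear with $\alpha_{\tau(s)}$ are $\pm \alpha_{\tau(s)}$, so
\begin{equation}
w_0(\alpha_s) = \pm \alpha_{\tau(s)}.
\end{equation}

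To pin down the sign, I would invoke the standard fact about longest elements of finite Coxeter groups: $w_0$ is the unique element that sends every positive root to a negative root (equivalently, $\ell(w_0 s) = \ell(w_0) - 1$ for all $s \in S$, so $w_0$ inverts the set of positive roots). Since $\alpha_s$ is a (simple, hence) positive root, $w_0(\alpha_s)$ must be negative, forcing the minus sign. I expect no real obstacle here: the only subtlety is the reduction to a real realization so that the phrase ``positive root'' makes sense and ``colinear roots are $\pm$ each other'' is available; but this is handled exactly as in Proposition \ref{prop:phivspsi}.
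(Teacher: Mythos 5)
Your proposal is correct and is essentially the paper's own argument: the paper's proof simply cites the last paragraph of the proof of Proposition \ref{prop:phivspsi} (reduction to the geometric realization over $\k'$, colinearity of $w_0(\alpha_s)$ with $\alpha_{\tau(s)}$ since $w_0 s w_0^{-1}=\tau(s)$) together with the fact that $w_0$ sends positive roots to negative roots, exactly as you spell out.
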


\begin{proof}
This is completely standard. One can also use the last paragraph of the proof of Proposition \ref{prop:phivspsi}, together with the observation that $w_0$ sends positive roots to negative roots.
\end{proof}

\begin{definition}\label{def:HT}
For each $w\in W$ let $\b_w$ be a positive braid lift of a reduced expression for $w$.  Set $\Delta_w=F(\b_w)$ and $\nabla_w=F(\b_{w\inv}\inv)$.  For the longest element we will often write $\HT=\Delta_{w_0}$.
\end{definition}

\begin{lemma}\label{lemma:HT on obj}
For each $s\in S$ there is a homotopy equivalence $\phi \colon \HT B_s \simeq B_{\tau(s)}\HT$  such that the following squares commute up to homotopy:
\begin{equation}
\begin{tikzpicture}
\node (a) at (0,0) {$\HT \star B_s$};
\node (b) at (0,2) {$B_t \star \HT$};
\node (c) at (3,0) {$\HT$};
\node (d) at (3,2) {$\HT$};
\path[-stealth,very thick]
(a) edge node[left] {$\phi$} (b)
(a) edge node[above] {$\id \star \e_s$} (c)
(b) edge node[above] {$\e_t\star  \id$} (d)
(c) edge node[right] {$\id$} (d);
\end{tikzpicture}, \qquad 
\begin{tikzpicture}
\node (a) at (0,0) {$\HT \star B_s$};
\node (b) at (0,2) {$B_t \star \HT$};
\node (c) at (3,0) {$\HT$};
\node (d) at (3,2) {$\HT$};
\path[-stealth,very thick]
(a) edge node[left] {$-\phi$} (b)
(c) edge node[above] {$\id \star \eta_s$} (a)
(d) edge node[above] {$\eta_t\star  \id$} (b)
(c) edge node[right] {$\id$} (d);
\end{tikzpicture}
\end{equation}
\end{lemma}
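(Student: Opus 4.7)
The plan is to deduce this lemma directly from the results already established for arbitrary braids $\b$ satisfying $\b\sigma_s = \sigma_t\b$, specializing to $\b = \b_{w_0}$ and $t = \tau(s)$. First, I would verify the hypothesis: since $w_0 s = \tau(s) w_0$ holds in $W$ by definition of $\tau$, and since $\b_{w_0}$ is a positive braid lift of $w_0$, there is an equality of braids $\b_{w_0}\sigma_s = \sigma_{\tau(s)} \b_{w_0}$ (both positive braid words represent the same element of $\Br$ because they both project to $w_0 s \in W$ and have the same length, so Matsumoto-style braid moves connect them). Thus Propositions \ref{prop:b commuting} and \ref{prop:b commuting alt} apply and give a single homotopy equivalence $\phi \colon \HT \star B_s \to B_{\tau(s)} \star \HT$ which satisfies the first square (commutation with $\e$), and another homotopy equivalence $\psi$ satisfying the second square (commutation with $\eta$).

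The key computation is then to compare $\phi$ and $\psi$. By Proposition \ref{prop:phivspsi}, we have $\psi \simeq \lambda \phi$ where $\lambda \in \{\pm 1\}$ is determined by the equation $w_0(\alpha_s) = \lambda^{-1}\alpha_{\tau(s)}$. By Lemma \ref{lemma:w_0 on simple roots}, $w_0(\alpha_s) = -\alpha_{\tau(s)}$, so $\lambda = -1$ and therefore $\psi \simeq -\phi$. Substituting into the square of Proposition \ref{prop:b commuting alt} shows that the second diagram of the lemma commutes up to homotopy, with the left vertical arrow given by $-\phi$, as claimed.

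The only mild subtlety, which I would treat with a sentence or two, is checking that $\b_{w_0}\sigma_s$ and $\sigma_{\tau(s)}\b_{w_0}$ are genuinely equal as braids and not merely equal after projecting to $W$; once one knows the two positive lifts of $w_0 s$ have the same length, Matsumoto's theorem combined with the fact that the positive braid monoid injects into $\Br$ for finite Coxeter groups resolves this. Everything else is a direct application of the general statements from the previous subsection, so I do not anticipate any genuine obstacle in the proof.
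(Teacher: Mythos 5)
Your overall route is exactly the paper's: verify the braid identity $\b_{w_0}\sigma_s=\sigma_{\tau(s)}\b_{w_0}$, then invoke Proposition \ref{prop:b commuting} for the first square and Proposition \ref{prop:phivspsi} together with Lemma \ref{lemma:w_0 on simple roots} (giving $\lambda=-1$) for the second. That part is fine and matches the paper word for word in substance.

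The one step whose justification would fail as written is your verification of the braid identity itself. The words $\b_{w_0}\sigma_s$ and $\sigma_{\tau(s)}\b_{w_0}$ are \emph{not} reduced: they have length $\ell(w_0)+1$, while their common image $w_0s$ has length $\ell(w_0)-1$. Matsumoto's theorem only asserts that \emph{reduced} expressions of the same element are connected by braid moves; two positive words of the same length with the same image in $W$ need not be equal in the braid monoid (e.g. $\sigma_s^2$ and $\sigma_t^2$ both map to the identity of $W$ but are distinct positive braids), so "same length and same image, hence Matsumoto applies" is not a valid argument, and injectivity of the positive braid monoid does not rescue it. The correct (and the paper's) argument uses Matsumoto only through length-additive factorizations of positive lifts: since $\ell(w_0)=\ell(w_0s)+1$ one has $\b_{w_0}=\b_{w_0s}\b_s$, and since $w_0=\tau(s)\cdot(\tau(s)w_0)$ with $\tau(s)w_0=w_0s$ one also has $\b_{w_0}=\b_{\tau(s)}\b_{w_0s}$; combining, $\sigma_{\tau(s)}\b_{w_0}=\b_{\tau(s)}\b_{w_0s}\b_s=\b_{w_0}\sigma_s$. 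With that one-line repair your proof coincides with the paper's.
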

\begin{proof}
Fix a simple reflection $s\in S$ and write $t:=\tau(s)$, so that $w_0s = tw_0$.  then we may write $\b_{w_0} = \b_{w_0s}\b_s$ and, similarly, $\b_{w_0}=\b_t\b_{tw_0}=\b_t\b_{w_0s}$.  Thus
\[
\b_t \b_{w_0} = \b_t \b_{w_0s} \b_s  = \b_{w_0} \b_s.
\]
Now the Lemma follows from Propositions \ref{prop:b commuting} and \ref{prop:phivspsi}.
\end{proof}

\begin{remark} Again, in the unbalanced case, one should replace the sign $-1$ in the second square above with the appropriate scalar $\lambda$. \end{remark}

\begin{theorem}\label{thm:HT action}
There exists a unique (up to unique isomorphism) monoidal autoequivalence $\Phi\colon \Diag\to \Diag$ such that $\Phi(B_s)=B_{\tau(s)}$, $\Phi(\e_s)=\e_{\tau(s)}$, and $\Phi(f) = w_0(f)$ for all $f\in R$.  Moreover, $\HT$ admits the structure of an object in $\ZS(\KC^b(\Diag),{}^\Phi \KC^b(\Diag))$.
\end{theorem}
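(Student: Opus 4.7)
The plan is to deduce the theorem from two of the paper's main abstract tools: Theorem \ref{thm:Phi and Z} produces $\Phi$ and an object in the ``on objects'' Drinfeld centralizer, and Theorem \ref{thm:lifting drinfeld} upgrades this to the centralizer on complexes. The two key ingredients that make these theorems applicable are the invertibility of $\HT$ in $\KC^b(\Diag)$ and the concrete commutation data supplied by Lemmas \ref{lemma:HT on obj} and \ref{lem:conjpoly}.

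First I would verify the hypotheses of Theorem \ref{thm:Phi and Z} for $Z = \HT$. Since $\HT = F(\b_{w_0})$ is a Rouquier complex, it is invertible up to homotopy, so both $L_\HT$ and $R_\HT$ are fully faithful functors $\Diag \to \KC^b(\Diag)$. Lemma \ref{lemma:HT on obj} provides a homotopy equivalence $\HT \star B_s \simeq B_{\tau(s)} \star \HT$ for every $s$; iterating, $\HT \star B_{s_1} \cdots B_{s_r} \simeq B_{\tau(s_1)} \cdots B_{\tau(s_r)} \star \HT$, so the essential images of $L_\HT$ and $R_\HT$ coincide. Theorem \ref{thm:Phi and Z} then delivers a monoidal autoequivalence $\Phi$ of $\Diag$, defined on objects by $\Phi = R_\HT^{-1} \circ L_\HT$, together with a natural isomorphism $\tau_X \colon \HT \star X \to \Phi(X) \star \HT$ making $(\HT,\tau)$ an object of $\ZS(\Diag, {}^\Phi \KC^b(\Diag))$.

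Next I would identify $\Phi$ explicitly so as to match the formulas in the statement. By construction $\Phi(B_s) = B_{\tau(s)}$. The defining naturality relation $\tau_\one \circ (\id_\HT \star \e_s) = (\Phi(\e_s) \star \id_\HT) \circ \tau_{B_s}$, combined with the first commutative square of Lemma \ref{lemma:HT on obj} (which says the same relation holds with $\e_{\tau(s)}$ in place of $\Phi(\e_s)$), forces $\Phi(\e_s) = \e_{\tau(s)}$. On $f \in R = \End_\Diag(\one)$ the centralizer datum specializes to a homotopy $\id_\HT \cdot f \simeq \Phi(f) \cdot \id_\HT$, and Lemma \ref{lem:conjpoly} identifies this with $w_0(f)$, giving $\Phi(f) = w_0(f)$. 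For uniqueness up to unique isomorphism, I would appeal to the classification of monoidal autoequivalences in \cite[Chapter 3]{EBigraded}: composing with the Dynkin diagram automorphism induced by $\tau$ reduces to autoequivalences preserving each $B_s$, which by that classification are determined up to unique natural isomorphism by their action on $R$ together with the action on $\e_s$ (which rigidifies any residual scaling ambiguity).

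Finally I would invoke Theorem \ref{thm:lifting drinfeld} applied with $\MS = {}^\Phi \KC^b(\Diag)$ to lift the structure map from $X \in \Diag$ to all $X \in \KC^b(\Diag)$, obtaining $(\HT, \ttau) \in \ZS(\KC^b(\Diag), {}^\Phi \KC^b(\Diag))$. The hypothesis demands that $\uHom(\HT \star X_1, \Phi(X_2) \star \HT)$ have vanishing cohomology in negative degrees for all $X_1, X_2 \in \Diag$. Invertibility of $\HT$ gives homotopy equivalences $\uHom(\HT \star X_1, \Phi(X_2) \star \HT) \simeq \uHom(X_1, \HT^{-1} \star \Phi(X_2) \star \HT) \simeq \uHom(X_1, X_2)$, the second using the already-constructed centralizer structure, and the right-hand side is concentrated in cohomological degree zero since $X_1, X_2$ are objects of $\Diag$. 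The main obstacle is the second step: pinning down the isomorphism class of $\Phi$, since Theorem \ref{thm:Phi and Z} only determines $\Phi$ up to isomorphism, and extracting the precise formula $\Phi(\e_s) = \e_{\tau(s)}$ requires both the careful choice of homotopy equivalence in Lemma \ref{lemma:HT on obj} and the classification of autoequivalences of $\Diag$.
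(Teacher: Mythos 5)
Your proposal is correct and follows essentially the same route as the paper: existence of $\Phi$ via Theorem \ref{thm:Phi and Z} with hypotheses supplied by invertibility of $\HT$ and Lemma \ref{lemma:HT on obj}, identification of $\Phi$ on $\e_s$ and on $R$ via a compatible choice of $R_{\HT}^{-1}$ together with Lemma \ref{lem:conjpoly}, uniqueness from the classification in \cite{EBigraded}, and the lift via Theorem \ref{thm:lifting drinfeld}. Your explicit verification of the negative-ext vanishing from invertibility of $\HT$ is the same argument the paper records in the introduction before invoking the lifting theorem.
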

\begin{proof} Clearly tensoring by $\HT$ on the left or the right is a fully faithful functor, as it is invertible. By Lemma \ref{lemma:HT on obj}, $\HT \otimes B_{s_1} \cdots B_{s_r} \cong B_{\tau(s_1)} \cdots B_{\tau(s_r)} \otimes \HT$, so the essential images of $L_{\HT}$ and $R_{\HT}$ agree. Now the existence of $\Phi$ was proven in Theorem \ref{thm:Phi and Z}.  By choosing an inverse to $R_Z^{-1}$ compatibly with the map $\phi$ from Lemma \ref{lemma:HT on obj}, we can assert that $\Phi$ satisfies $\Phi(B_s)=B_{\tau(s)}$ and $\Phi(\e_s)=\e_{\tau(s)}$. The action of $\Phi$ polynomials is computed by Lemma \ref{lem:conjpoly}.
	
By composing $\Phi$ with the automorphism of $\Diag$ induced by $\tau$, we get an automorphism of $\Diag$ which fixes $B_s$ and fixes $\e_s$ and sends $f \mapsto \tau(w_0(f))$ for each $f \in V \subset R$. By the results of \cite[Theorem 3.2 and Corollary 3.3]{EBigraded}, an automorphism of $\Diag$ fixing $B_s$ is uniquely determined by where it sends $\e_s$ (which will be a scalar multiple of $\e_s$) and where it sends each $f \in V$. Uniqueness of $\Phi$ is an immediate consequence.

Theorem \ref{thm:lifting drinfeld} completes the proof.
\end{proof}

\begin{remark} It was already noted in Remark \ref{rmk:Phiuptoisomorphism} that it really only matters what $\Phi$ is up to isomorphism of (monoidal graded) functors, because this
isomorphism can instead be baked into the isomorphism between $L_{\HT}$ and $R_{\HT}$. Given two automorphisms of $\Diag$ which fix all objects, a natural isomorphism between them
is necessarily a dilation: a multiple of the identity on each object. Moreover, if the automorphisms preserve the monoidal and graded structures, then a natural transformation is a
monoidal, graded dilation; it acts by $\lambda_s \in \k^{\times}$ on each generator $B_s$, and acts as $\prod \lambda_{s_i}$ on $B_{s_1}\cdots B_{s_r}(k)$ for any $k \in \Z$. Such an automorphism fixes $\one$, but it can rescale $\e_s$ by $\lambda_s$.

In particular, it is immediate from the classification of monoidal (graded) autoequivalences in \cite[Theorem 3.2 and Corollary 3.3]{EBigraded} that, up to isomorphism of such autoequivalences, an autoequivalence is uniquely determined by what it does to polynomials! For conjugation by braids, this is deterined already by Lemma \ref{lem:conjpoly}. \end{remark}

\begin{remark} \label{rmk:continuation}  This continues Remark \ref{rmk:partoneofremark}. By the previous remark, we need not compute the automorphism $\Phi$ associated
to conjugation by the full twist explicitly, but can deduce its properties (up to isomorphism) from first principles. Thus, knowing only that there is some homotopy equivalence
$\phi$ that satisfies \eqref{phisquare} up to scalar for $\b = \HT$, we deduce that there is some identification of $L_{\HT}$ and $R_{\HT}$ such that Theorem \ref{thm:HT action}
holds. Proposition \ref{prop:b commuting} does go further and allow one to pin down this isomorphism concretely using canonical maps. \end{remark}

For the rest of this section, we let $\Phi$ be the automorphism from Theorem \ref{thm:HT action}. Since $\Phi^2$ is the identity functor, the following is immediate.
\begin{corollary}
Let $\FT:=\HT^{\otimes 2}$ denote the ``full twist'' Rouquier complex.  Then $\FT$ admits the structure of an object in $\ZS(\KC^b(\Diag))$.\qed
\end{corollary}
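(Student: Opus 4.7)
The plan is to deduce the corollary directly from Theorem \ref{thm:HT action}, exploiting that tensor products of objects in twisted Drinfeld centralizers give objects whose twists compose. Since $\FT = \HT \star \HT$, the natural centralizer structure produced on $\FT$ is twisted by $\Phi \circ \Phi = \Phi^2$, so the corollary follows once we know $\Phi^2 \simeq \id_{\Diag}$.

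First I would verify that $\Phi^2$ is isomorphic to the identity autoequivalence. Using the explicit description in Theorem \ref{thm:HT action}, namely $\Phi(B_s) = B_{\tau(s)}$, $\Phi(\e_s) = \e_{\tau(s)}$, and $\Phi(f) = w_0(f)$ for $f \in R$, together with $w_0^2 = 1 \in W$ (so that $\tau^2 = \id_S$ and $w_0$ acts as an involution on $V$), one gets $\Phi^2(B_s) = B_s$, $\Phi^2(\e_s) = \e_s$, and $\Phi^2(f) = f$. By the uniqueness clause in Theorem \ref{thm:HT action}, which invokes \cite[Theorem 3.2 and Corollary 3.3]{EBigraded}, any monoidal graded autoequivalence of $\Diag$ that agrees with the identity on these data is isomorphic to $\id_{\Diag}$.

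Second, I would assemble the structure map on $\FT$. Writing $\tau$ for the structure map on $\HT$ furnished by Theorem \ref{thm:HT action}, define, for each $X \in \KC^b(\Diag)$, the homotopy equivalence $\tilde{\tau}_X \colon \FT \star X \to X \star \FT$ as the composition
\[
\HT \star \HT \star X \xrightarrow{\id_{\HT} \star \tau_X} \HT \star \Phi(X) \star \HT \xrightarrow{\tau_{\Phi(X)} \star \id_{\HT}} \Phi^2(X) \star \HT \star \HT \xrightarrow{\sim} X \star \FT,
\]
where the last arrow uses the chosen monoidal natural isomorphism $\Phi^2 \simeq \id_{\Diag}$. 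The formal mechanism for converting a $\Phi^2$-twisted centralizer structure into an honest (untwisted) one using such an isomorphism is exactly the content of Remark \ref{rmk:Phiuptoisomorphism}.

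Finally, the axioms of Definition \ref{def:relative drinfeld} for $(\FT, \tilde{\tau})$ follow from those of $(\HT, \tau)$. Naturality in $X$ is inherited from naturality of $\tau$, $\Phi$, and the chosen isomorphism $\Phi^2 \simeq \id$. The multiplicativity identity $\tilde{\tau}_{X \star Y} = (\id_X \star \tilde{\tau}_Y) \circ (\tilde{\tau}_X \star \id_Y)$ reduces, after expanding the composition above and using the monoidality of $\Phi$, to two applications of the multiplicativity of $\tau$ in each path. I do not expect any substantive obstacle; the only real work is the bookkeeping involved in unpacking the twists by $\Phi$ and $\Phi^2$ and confirming that the chosen isomorphism $\Phi^2 \simeq \id$ is compatible with the monoidal structure, which is automatic for isomorphisms of monoidal functors.
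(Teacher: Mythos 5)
Your proposal is correct and follows essentially the same route as the paper: the paper deduces the corollary from Theorem \ref{thm:HT action} together with Lemma \ref{lem:composedrinfeld} (tensoring objects in twisted Drinfeld centralizers composes the twists) and the fact that $\Phi^2$ is the identity. You simply fill in the details that the paper leaves as ``obvious''---the explicit composite structure map on $\HT\star\HT$, and the careful use of $\Phi^2\simeq\id$ via the uniqueness clause and Remark \ref{rmk:Phiuptoisomorphism}---so no further changes are needed.
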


\begin{proof} Let $\Phi$ be the automorphism from Theorem \ref{thm:HT action}. The result follows from the next lemma, since $\Phi^2$ is the identity functor.\end{proof}

\begin{lemma} \label{lem:composedrinfeld} Let $\AS$ be a $\k$-linear monoidal category.  Suppose $Z_1,Z_2\in \AS$ are objects and $\Phi_1,\Phi_2\colon \AS\to \AS$ are monoidal autoequivalences such that $Z_i$ lifts to an object of the $\Phi_i$-twisted Drinfeld centralizer. Then $Z_1\star Z_2$ lifts to an object of the $\Phi_1\circ \Phi_2$-twisted Drinfeld centralizer. \end{lemma}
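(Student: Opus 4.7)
The natural guess is that the centralizing structure on $Z_1 \star Z_2$ should be built by ``pushing $X$ past $Z_2$ first, then past $Z_1$''. Concretely, write $\tau^i_X \colon Z_i \star X \to \Phi_i(X) \star Z_i$ for the given structure maps, and define
\[
\tau_X \ := \ \big(\tau^1_{\Phi_2(X)} \star \id_{Z_2}\big) \circ \big(\id_{Z_1} \star \tau^2_X\big)
\ \colon \ Z_1 \star Z_2 \star X \ \longrightarrow \ \Phi_1\Phi_2(X) \star Z_1 \star Z_2.
\]
Each $\tau^i$ is a natural isomorphism and $\Phi_2$ is a functor, so $\tau$ is visibly a natural isomorphism of functors $L_{Z_1 \star Z_2} \to R_{\Phi_1 \Phi_2(-)(Z_1 \star Z_2)}$, i.e.\ a candidate object of $\ZS(\AS, {}^{\Phi_1 \circ \Phi_2}\AS)$.

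The only real content is the multiplicativity constraint. The plan is a direct expansion: apply the definition of $\tau$ to $X \star Y$, then use the monoidal structure on $\Phi_2$ to identify $\Phi_2(X \star Y)$ with $\Phi_2(X) \star \Phi_2(Y)$, then use the multiplicativity of $\tau^2$ at $(X, Y)$ and of $\tau^1$ at $(\Phi_2(X), \Phi_2(Y))$ to break $\tau_{X \star Y}$ into four ``one-strand'' pieces. One then reassembles these four pieces, using the interchange law in $\AS$, into the composite
\[
\big(\id_{\Phi_1\Phi_2(X)} \star \tau_Y\big) \circ \big(\tau_X \star \id_Y\big),
\]
which is the required right hand side. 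Schematically, if one draws $\tau^i$ as a crossing of a $Z_i$ strand over an $X$ strand, then $\tau_{X \star Y}$ is a $4$-crossing braid of two $Z$-strands over two $X$-strands, and both sides of the multiplicativity identity describe the same braid, merely parsed differently; interchange is what permits repositioning.

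Naturality of $\tau$ in $X$ follows from naturality of each $\tau^i$ together with the functoriality of $\Phi_2$. The expected obstacle is purely bookkeeping: when $\AS$ is not strict monoidal, one must insert the associators of $\AS$ and the monoidal coherence isomorphism $\Phi_2(X \star Y) \simto \Phi_2(X) \star \Phi_2(Y)$ at the appropriate places, and verify that all these coherences cancel. This is routine given the monoidality of $\Phi_1, \Phi_2$, and up to choosing a strictification of $\AS$ there is nothing to check. As a cleaner packaging, one can observe that the construction above identifies $\tau$ as the natural transformation induced by viewing $Z_1 \star Z_2$ as the image of $(Z_1, Z_2)$ under a monoidal functor from $\ZS(\AS, {}^{\Phi_1}\AS) \times \ZS(\AS, {}^{\Phi_2}\AS)$ into $\ZS(\AS, {}^{\Phi_1 \Phi_2}\AS)$, and the axioms to check are then manifestly satisfied.
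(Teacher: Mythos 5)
Your proposal is correct: the composite $\tau_X = (\tau^1_{\Phi_2(X)}\star\id_{Z_2})\circ(\id_{Z_1}\star\tau^2_X)$ with naturality and multiplicativity verified via the interchange law (and, in the non-strict case, the monoidal coherence data of $\Phi_1,\Phi_2$) is exactly the standard argument, and the paper itself gives no more detail, recording the proof as ``Obvious.'' So you have simply supplied the routine verification the authors chose to omit; the only loose phrase is the closing remark about a ``monoidal functor'' out of the product of two differently twisted centralizers, which is better described as a multiplication functor $\ZS(\AS,{}^{\Phi_1}\AS)\times\ZS(\AS,{}^{\Phi_2}\AS)\to\ZS(\AS,{}^{\Phi_1\Phi_2}\AS)$, but nothing in your argument depends on it.
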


\begin{proof} Obvious. \end{proof}

\subsection{Relative half-twists}

 Given $I\subset S$, we let $W_I\subset W$ be the parabolic subgroup generated by $s\in I$, with longest element denoted by $w_0(I)$.  In particular the longest element of $W$ may be denoted $w_0(S)$ for extra clarity.  Let
\[
w_0(S/I):= w_0(S)w_0(I)
\]
(so that $w_0(S/I)$ is a shortest length representative of the coset $w_0(S) W_I$ in $W/W_I$).  Let $\HT_I$ and $\HT_{S/I}$ denote the Rouquier complexes associated to the positive braid lift of $w_0(I)$ and $w_0(S/I)$, respectively.  Note that
\[
\HT_S \simeq \HT_{S/I}\HT_I.
\]
\begin{example}
In case $W=S_{n+1}$ with simple reflections $S=\{s_1,\ldots,s_{n}\}$, we may consider the parabolic subgroup $S_{n}\times S_1$, generated by $I=\{1,\ldots,n-1\}$.  In this case
\[
\HT_{S/I} = F(\sigma_1\cdots \sigma_{n}).
\]
\end{example}

\begin{example}\label{ex:thick half twist}
More generally, if $W=S_{n+m}$ and and $I=\{s_1,\ldots,\widehat{s}_n,\ldots, s_{n+m-1}\}$, then $W_I=S_n\times S_m$ and
\[
\HT_{S/I} = F\Big((\sigma_m\cdots \sigma_{n+m-1})\cdots (\sigma_2\cdots \sigma_{n+1}) (\sigma_1\cdots \sigma_n)\Big)
\ \ = \ \ F\left(\,
\begin{tikzpicture}[scale=1.5,baseline=.5cm]
\draw[very thick]
(1,0) -- (0,1)
(1.3,0) -- (.3,1)
(1.6,0) -- (.6,1);
\draw[line width=5pt,white]
(0,0) -- (1,1)
(.2,0) -- (1.2,1)
(.4,0) -- (1.4,1)
(.6,0) -- (1.6,1);
\draw[very thick]
(0,0) -- (1,1)
(.2,0) -- (1.2,1)
(.4,0) -- (1.4,1)
(.6,0) -- (1.6,1);
\node at (0,-.1) {\scriptsize$1$};
\node at (.3,-.1) {\scriptsize$\cdots$};
\node at (.6,-.1) {\scriptsize$n$};
\node at (1,-.1) {\scriptsize$1$};
\node at (1.3,-.1) {\scriptsize$\cdots$};
\node at (1.6,-.1) {\scriptsize$m$};
\end{tikzpicture}\,\right)
\]
as in \eqref{eq:type A rel HT}.
\end{example}

Given $I \subset S$, we consider $\Diag(W_I,I,V)$ as a full subcategory of $\Diag(W,S,V)$.   We have the inclusion $\iota\colon  \Diag(W_I,I,V)\rightarrow \Diag(W,S,V)$. 

\begin{theorem}\label{thm:relative HT}
For each subset $I\subset S$ the relative half twist $\HT_{S/I}$ has the structure of an object in the Drinfeld centralizer $\ZS(\KC^b(\AS), \KC^b(\MS))$ where $\AS:=\Diag(W_I,I,V)$ and $\MS=\Diag(W,S,V)$ with its $(\AS,\AS)$ $\star$-bimodule structure given by
\begin{equation}
X\cdot Z\cdot X' := (\Phi_S\circ \iota\circ \Phi_I(X))\star Z\star \iota(X'),
\end{equation}
where $\Phi_S$ is the automorphism $\Phi$ from Theorem \ref{thm:HT action}, and $\Phi_I$ is defined similarly but for $\HT_I$ acting on $\Diag(W_I,I,V)$.
\end{theorem}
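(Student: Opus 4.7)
The plan is to bootstrap the desired centralizer structure on $\HT_{S/I}$ from the centralizer structures on $\HT_S$ and $\HT_I$ established by Theorem \ref{thm:HT action}, exploiting the factorization $\HT_S \simeq \HT_{S/I} \star \HT_I$ together with invertibility of $\HT_I$. First I would derive the natural homotopy equivalence $\HT_{S/I} \star \iota(X) \simeq \Phi_S(\iota(\Phi_I(X))) \star \HT_{S/I}$ at the object level, for $X \in \AS = \Diag_I$. Applying Theorem \ref{thm:HT action} gives $\HT_S \star \iota(X) \simeq \Phi_S(\iota(X)) \star \HT_S$, and since $\HT_I$ lies in the image of $\iota$, the centralizer structure $\HT_I \star X \simeq \Phi_I(X) \star \HT_I$ in $\Diag_I$ lifts along $\iota$. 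Combining these via $\HT_S \simeq \HT_{S/I} \star \HT_I$ yields
\[ \HT_{S/I} \star \iota(\Phi_I(X)) \star \HT_I \simeq \Phi_S(\iota(X)) \star \HT_{S/I} \star \HT_I, \]
and cancelling $\HT_I$ on the right (via its invertibility), then substituting $X \mapsto \Phi_I(X)$ using $\Phi_I^2 = \Id$ (which follows from the generator-level description in Theorem \ref{thm:HT action}, since the Dynkin automorphism $\tau$ and the $w_0(I)$-action on $V$ are involutions), produces the claimed equivalence.

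Next I would promote this object-level equivalence to a centralizer structure $\HT_{S/I} \in \ZS(\AS, \KC^b(\MS))$ using Theorem \ref{thm:Phi and Z} applied with the twisted bimodule structure on $\MS$. Both $L_{\HT_{S/I}}$ and $R_{\HT_{S/I}}$ are fully faithful (since tensoring with the invertible complex $\HT_{S/I}$ is an equivalence, $\iota$ is fully faithful, and $\Phi_S, \Phi_I$ are autoequivalences), and their essential images coincide by the previous step. Theorem \ref{thm:Phi and Z} a priori yields a structure in $\ZS(\AS, {}^\Psi\KC^b(\MS))$ for some autoequivalence $\Psi$ of $\AS$; Remark \ref{rmk:Phiuptoisomorphism} combined with the fact that the equivalence of Step~1 already matches $L$ and $R$ with no residual twist allows one to take $\Psi = \Id$.

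Finally I would apply Theorem \ref{thm:lifting drinfeld} to lift to $\ZS(\KC^b(\AS), \KC^b(\MS))$. The required obstruction-vanishing complex
\[ \uHom\bigl(\HT_{S/I} \star \iota(X_1),\, \Phi_S(\iota(\Phi_I(X_2))) \star \HT_{S/I}\bigr) \]
is homotopy equivalent, via tensoring with $\HT_{S/I}^{-1}$, to $\uHom(\iota(X_1),\, \HT_{S/I}^{-1} \star \Phi_S(\iota(\Phi_I(X_2))) \star \HT_{S/I})$, and the natural equivalence from Step~1 identifies the inner complex with $\iota(X_2)$. The resulting hom complex $\uHom(\iota(X_1), \iota(X_2))$ is concentrated in cohomological degree zero, so Theorem \ref{thm:lifting drinfeld} applies. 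The main obstacle is the careful bookkeeping of the twists $\Phi_S$, $\iota$, $\Phi_I$ in Step~1; once the object-level equivalence is in hand, the remaining steps reduce to direct applications of machinery already developed.
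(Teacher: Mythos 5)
Your argument is essentially correct, but it takes a longer route than the paper. The paper's proof is a two-line composition argument: writing $\HT_{S/I}\simeq \HT_S\star \HT_I^{-1}$, it invokes Theorem \ref{thm:HT action} (which already supplies \emph{lifted}, $\KC^b$-level twisted centralizer structures on $\HT_S$ and on $\HT_I$, hence on $\HT_I^{-1}$) together with Lemma \ref{lem:composedrinfeld} on composing twisted central objects; the composite twist $\Phi_S\circ\iota\circ\Phi_I$ then falls out of the lemma, and no new homological input is needed. You instead rebuild the structure from scratch: first an object-level commutation $\HT_{S/I}\star\iota(X)\simeq \Phi_S(\iota(\Phi_I(X)))\star\HT_{S/I}$ by cancelling $\HT_I$, then Theorem \ref{thm:Phi and Z} to get a centralizer structure over $\AS=\Diag(W_I,I,V)$, then Theorem \ref{thm:lifting drinfeld}, with the obstruction vanishing coming from invertibility of $\HT_{S/I}$ and the identification of the conjugated object with $\iota(X_2)$. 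This works --- invertibility of $\HT_{S/I}$ supplies every hypothesis you need --- but it re-verifies naturality, multiplicativity and Ext-vanishing that Theorem \ref{thm:HT action} has already packaged once and for all; what it buys is an explicit, self-contained derivation of the twist $\Phi_S\circ\iota\circ\Phi_I$ at the object level. The one place to be careful is your Step 2: Theorem \ref{thm:Phi and Z} only hands you \emph{some} autoequivalence $\Psi$ of $\AS$, and "no residual twist" needs an actual argument. Either track naturality through the cancellation in Step 1 (each constituent equivalence from Theorem \ref{thm:HT action} is natural and multiplicative, so the composite already defines an object of $\ZS(\AS,\KC^b(\MS))$ with the stated bimodule structure, making Theorem \ref{thm:Phi and Z} unnecessary --- this is in effect Lemma \ref{lem:composedrinfeld}), or, if you do use Theorem \ref{thm:Phi and Z}, deduce $\Psi\cong\Id$ from fully faithfulness of $R_{\HT_{S/I}}$ together with the natural equivalence, or from the action on polynomials via Lemma \ref{lem:conjpoly} and the classification of autoequivalences invoked in Theorem \ref{thm:HT action}, then pass through Remark \ref{rmk:Phiuptoisomorphism}.
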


\begin{proof} The result follows from Theorem \ref{thm:HT action} and Lemma \ref{lem:composedrinfeld}.  \end{proof}

\begin{remark}
Let $\tau_S\circ \iota\circ \tau_I$ denote the map $I \to S$ induced by $w_0(S/I)$. If $\tau_S\circ \iota\circ \tau_I(s) = t$ then the composition $\Phi_S\circ \iota\circ \Phi_I(X)$ will send $B_s\mapsto B_t$, $\e_s\mapsto \e_t$, and $\eta_s\mapsto \eta_t$, $\alpha_s \mapsto \alpha_t$ (with no signs). The action on polynomials agrees with the action of $w_0(S/I)$. Thus $\Phi_S\circ \iota\circ \Phi_I(X)$ is the functor induced by the map $\tau_S\circ \iota\circ \tau_I$ of Coxeter graphs, extended in the natural way to polynomials.
\end{remark}

\begin{example}\label{ex:thick half twist 2}
Let us continue Example \ref{ex:thick half twist}.  Let $W=S_{n+m}$ and $W_I=S_n\times S_m$.  Then Theorem \ref{thm:relative HT} gives us natural homotopy equivalences
\begin{equation}
\HT_{S/I}\star (X\boxtimes Y) \simeq (Y\boxtimes X)\star \HT_{S/I},
\end{equation}
which may be viewed as an isomorphism of functors $\KC^b(\Diag_n\otimes \Diag_m)\to \KC^b(\Diag_{n+m})$.
\end{example}


\section{$A_\infty$-natural transformations}
\label{s:infinity}
In this section we show how to extend Theorem \ref{thm:lifting lemma} to a kind of \emph{homotopy coherent natural transformation}, which we refer to as an $A_\infty$-natural transformation. These are defined using two-sided bar complex of a differential graded (dg) category.  In \S \ref{ss:dg cats} we recalled the basic definitions concerning dg categories, in \S \ref{ss:bimodules} we recall the notion of bimodules over dg categories, and in \S \ref{ss:bar} we recall the notion of the bar complex over a dg category.

\subsection{Bimodules over categories}
\label{ss:bimodules}
When discussing bimodules over dg categories, it will be convenient to have a notation for hom spaces with a ``right-to-left'' convention.  To this end we may write
\begin{equation}
\Hom_\CS(X\leftarrow X') := \Hom_\CS(X',X).
\end{equation}

Now, given dg categories $\CS$, $\DS$, a $(\CS,\DS)$-bimodule $B$ is a family of complexes $B(X,Y)$ parametrized by objects $X\in \CS$, $Y\in \DS$, together with action maps
\begin{equation}
\Hom_\CS(X'\leftarrow X)\otimes_\k B(X,Y)\otimes_\k \Hom_\DS(Y\leftarrow Y')\to B(X',Y') \, , \qquad f\otimes b\otimes g\mapsto f\cdot b\cdot g
\end{equation}
satisfying the usual constraints that $\id_X \cdot b \cdot \id_Y = b$  for all $b\in B(X,Y)$ and $(f'\circ f)\cdot b\cdot (g\circ g') = f'\cdot(f\cdot b\cdot g)\cdot g'$.

The identity $(\CS,\CS)$-bimodule $\ibim{\CS}$ is defined by
\begin{equation}
\ibim{\CS}(X,Y) = \Hom_\CS(X\leftarrow Y),
\end{equation}
with action defined by usual composition of morphisms in $\CS$.

Given $(\CS,\DS)$-bimodules $B_1$, $B_2$ a degree $l$ morphism $B_1\to B_2$ is by definition a collection of degree $l$ $\k$-linear maps $\tau_{X,Y}\colon B_1(X,Y)\to B_2(X,Y)$ such that
\begin{equation} \label{itsabimodulemap}
\tau_{X',Y'}(f\cdot b\cdot g) = (-1)^{l|f|} f\cdot \tau_{X,Y}(b)\cdot g
\end{equation}
for all $f\in \Hom_\CS(X'\leftarrow X)$, $b\in B(X,Y)$, $g\in \Hom_\DS(Y\leftarrow Y')$.  The differential of a bimodule map is defined so that $d(\tau)_{X,Y} = d(\tau_{X,Y})$. This makes the collection of $(\CS,\DS)$-bimodules into a dg category, denoted $\Bim_{\CS,\DS}$.

Any $(\CS,\CS)$ bimodule morphism $\ibim{\CS} \to B$ is determined uniquely via \eqref{itsabimodulemap} by the images $\tau_X := \tau_{X,X}(\id_X)$ of the identity maps of objects $X \in \CS$. Namely, $\tau(f) = \tau(\id_X) \cdot f$ for all morphisms $f \colon X' \rightarrow X$. However, the images $\tau_X$ are constrained by the requirement that
\begin{equation} \tau_X \cdot f = (-1)^{l|f|} f \cdot \tau_{X'} \end{equation}
for all $f \colon X' \rightarrow X$, as both equal $\tau(f)$.

The following similar result is unconstrained.

\begin{lemma}\label{lemma:hom from ketbra}
Let $\CS,\DS$ be dg categories, and let $M$ be a $(\CS,\DS)$-bimodule.  Given objects $X\in \CS$, $Y\in \DS$ we can consider the $(\CS,\DS)$-bimodule $\ibim{\CS}(-,X)\otimes_\k \ibim{\DS}(Y,-)$ (homs out of $X$, tensored with homs into $Y$).  Then the complex of bimodule homs satisfies
\begin{equation}
\Hom_{\Bim_{\CS,\DS}}\Big(\ibim{\CS}(-,X)\otimes_\k \ibim{\DS}(Y,-), M\Big) \cong M(X,Y).
\end{equation}
\end{lemma}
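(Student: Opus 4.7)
This is a Yoneda-type statement, expressing the fact that the bimodule $P := \ibim{\CS}(-,X) \otimes_\k \ibim{\DS}(Y,-)$ is the ``free'' bimodule on a single generator $\id_X \otimes \id_Y$ placed in bidegree $(X,Y)$. The plan is to write down the evaluation-at-generator map and its inverse, then verify compatibility with the differential.

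First I would define the forward map $\Phi \colon \Hom_{\Bim_{\CS,\DS}}(P,M) \to M(X,Y)$ by $\Phi(\tau) := \tau_{X,Y}(\id_X \otimes \id_Y)$; this is manifestly $\k$-linear and preserves degree. For the inverse, given $m \in M(X,Y)$ of degree $l$, I would define $\Psi(m) \colon P \to M$ componentwise by
\begin{equation}
\Psi(m)_{X',Y'}(f \otimes g) := (-1)^{l|f|}\, f \cdot m \cdot g,
\end{equation}
for $f \in \Hom_{\CS}(X' \leftarrow X)$ and $g \in \Hom_{\DS}(Y \leftarrow Y')$, extended $\k$-linearly. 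The bimodule axiom \eqref{itsabimodulemap} for $\Psi(m)$ is a direct sign check using $f \otimes g = f \cdot (\id_X \otimes \id_Y) \cdot g$ together with associativity of the $(\CS,\DS)$-actions; the only subtlety is the Koszul sign from sliding $f$ past a degree $l$ element.

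Next I would check that $\Phi$ and $\Psi$ are mutually inverse. The composition $\Phi \circ \Psi$ is immediate since $\Psi(m)_{X,Y}(\id_X \otimes \id_Y) = m$ (the sign is trivial because $|\id_X| = 0$). For $\Psi \circ \Phi$, one uses that any bimodule map $\tau$ is forced by \eqref{itsabimodulemap} to satisfy $\tau_{X',Y'}(f \otimes g) = \tau_{X',Y'}(f \cdot (\id_X \otimes \id_Y) \cdot g) = (-1)^{l|f|} f \cdot \Phi(\tau) \cdot g$, so $\tau = \Psi(\Phi(\tau))$.

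Finally, I would verify that $\Phi$ is a chain map, i.e.\ $\Phi(d(\tau)) = d(\Phi(\tau))$. Since the differential on bimodule morphisms is defined by $d(\tau)_{X,Y} = d(\tau_{X,Y})$ and the $\k$-linear differential on $\tau_{X,Y}(\id_X \otimes \id_Y)$ produces $d(\tau_{X,Y}(\id_X \otimes \id_Y))$ minus terms involving $d(\id_X \otimes \id_Y) = 0$, this is essentially automatic. I do not expect any real obstacle here; the only thing to be careful about throughout is the Koszul sign convention, which is the standard source of minor headaches in any Yoneda lemma for dg bimodules.
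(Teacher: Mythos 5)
Your proof is correct and follows exactly the paper's approach: the paper also observes that a bimodule map out of $\ibim{\CS}(-,X)\otimes_\k \ibim{\DS}(Y,-)$ is determined via \eqref{itsabimodulemap} by its value on $\id_X\otimes\id_Y$, and leaves the remaining verifications (the explicit inverse, the sign check, compatibility with differentials) as an exercise, which is precisely what you carry out.
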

\begin{proof}
Any such bimodule morphism is determined uniquely via \eqref{itsabimodulemap} by the image of $\id_X \otimes \id_Y \in \ibim{\CS}(X,X)\otimes_\k \ibim{\DS}(Y,Y)$. This image is an element of $M(X,Y)$. We leave the rest as an exercise to the reader.
\end{proof}

\subsection{Representing natural transformations via bimodules}
\label{ss:representing nat}

\begin{definition}\label{def:MFG}
Suppose $\CS,\DS$ are $\k$-linear categories and $\FZ,\GZ\colon \CS\to \DS$ are functors.  Then we form the $(\CS,\CS)$-bimodule $M^{\GZ}_{\FZ}$, defined for each $X,X' \in \CS$ by
\begin{equation}\label{eq:MGF}
M^G_F(X,X'):=\Hom_\DS\Big(G(X)\leftarrow F(X')\Big).
\end{equation}
The bimodule structure is given by
\begin{equation}\label{eq:MGF bimod}
a\cdot m \cdot b = G(a)\circ m \circ F(b)
\end{equation}
for all $a\in \Hom_\CS(Y\leftarrow X)$, $m\in \Hom_\DS(G(X)\leftarrow F(X'))$, $b\in \Hom_\CS(X'\leftarrow Y')$.
\end{definition}

It is a simple exercise to verify that a natural transformation $\eta \colon F\to G$ is the same data as a bimodule morphism $\tau \colon \ibim{\CS}\to M^G_F$, where we set $\eta_X = \tau_X$ as above.

We can define similar concepts for dg categories. To this end, suppose $\CS,\DS$ are dg categories, and let $F,G\colon \CS\to \DS$ be dg functors.  We may form the dg $(\CS,\CS)$-bimodule $M^G_F$ exactly as before,   Now, a dg natural transformation to $F$ from $G$ is simply a (closed, degree zero) map from the trivial bimodule $\ibim{\CS} \to M^G_F$. For completeness, we do most of this exercise.

\begin{lemma}\label{lemma:representing nat trans}
With setup as above, the notions of a degree $l$ natural transformation $\GZ\rightarrow \FZ$ and a degree $l$ bimodule map $\ibim{\CS} \to M^{\FZ}_{\GZ}$ are equivalent.
\end{lemma}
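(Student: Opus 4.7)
My plan is to exhibit mutually inverse bijections between the two sides at each degree $l$, and then to observe that they commute with differentials, so the asserted equivalence becomes an isomorphism of hom complexes.

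In one direction, given a degree $l$ bimodule map $\tau \colon \ibim{\CS} \to M^{\FZ}_{\GZ}$, I would define $\eta_X := \tau_{X,X}(\id_X) \in M^{\FZ}_{\GZ}(X,X) = \Hom_\DS(\FZ(X) \leftarrow \GZ(X))$, which has degree $l$. In the other direction, given a degree $l$ natural transformation $\eta \colon \GZ \to \FZ$, I would define $\tau_{X,X'}(g) := \eta_X \circ \GZ(g)$ for $g \in \ibim{\CS}(X, X')$; this lands in $\Hom_\DS(\FZ(X) \leftarrow \GZ(X')) = M^{\FZ}_{\GZ}(X, X')$.

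The key step, which simultaneously forces naturality of $\eta$ and well-definedness of $\tau$, is to apply the bimodule map condition $\tau(a \cdot b \cdot c) = (-1)^{l|a|} a \cdot \tau(b) \cdot c$ to the two evident factorizations $g = \id_X \cdot g = g \cdot \id_{X'}$ inside $\ibim{\CS}$. Since the sign appears only with left action, and in view of \eqref{eq:MGF bimod}, these give
\[
\tau(g) \;=\; \eta_X \circ \GZ(g) \;=\; (-1)^{l|g|}\FZ(g) \circ \eta_{X'},
\]
and equating the two right-hand sides recovers precisely the naturality condition for $\eta$. This one calculation shows both that any bimodule map $\tau$ yields a natural transformation via $\eta_X := \tau(\id_X)$, and that the formula $\tau(g) := \eta_X \circ \GZ(g)$ extends to a well-defined bimodule map. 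Both round-trip compositions reduce immediately to these defining formulas and hence are the identity.

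Finally, I would check that the bijection is a chain map. Since $\GZ$ is a dg functor, $d(\eta_X \circ \GZ(g)) = d(\eta_X) \circ \GZ(g) + (-1)^l \eta_X \circ \GZ(d(g))$, so the bimodule-hom differential
\[
d(\tau)(g) \;=\; d(\tau(g)) - (-1)^l \tau(d(g)) \;=\; d(\eta_X) \circ \GZ(g)
\]
is exactly the bimodule map associated to $\{d(\eta_X)\}_X$. Hence closed bimodule maps correspond to closed natural transformations, and the whole structure is the same as an isomorphism of hom complexes. The only real obstacle will be keeping the Koszul signs aligned: one must check that the bimodule-action sign (which depends only on the left factor) and the naturality sign (which depends on the morphism being commuted past $\eta$) both reduce in the key computation to the common exponent $l|g|$.
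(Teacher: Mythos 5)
Your proposal is correct and takes essentially the same route as the paper: in both, one sets $\eta_X := \tau(\id_X)$ and $\tau(g) := \eta_X \circ \GZ(g)$, and the naturality relation (with its sign $(-1)^{l|g|}$) falls out of the bimodule condition applied to the two factorizations of $g$ through identity morphisms. Your added verification that the correspondence commutes with differentials goes slightly beyond what the paper records, but it is accurate and consistent with the paper's definition of the differential on natural transformations and on bimodule maps.
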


\begin{proof}
Suppose $\tau\colon \ibim{\CS} \to M^{\FZ}_{\GZ}$ is a bimodule map.  The bimodule map condition means
\begin{equation}
\FZ(f)\circ  \tau(h) \circ \GZ(g)= (-1)^{l|f|} \tau(f\circ h\circ g),
\end{equation}
for all triples of composable morphisms $(f,h,g)$ in $\CS$.  Given a morphism $f\in \Hom_\CS(X\leftarrow Y)$, we may specialize to a triple of the form $(f,\id_Y,\id_Y)$, obtaining
\[
\FZ(f)\circ \tau(\id_Y) = (-1)^{l|f|} \tau(f),
\]
or we may specialize to a triple of the form $(\id_X,\id_X,f)$, obtaining
\[
\tau(\id_X)\circ \GZ(f) = \tau(f).
\]
Thus $\FZ(f)\circ \tau(\id_Y) = (-1)^{l|f|}\tau(\id_X)\circ \GZ(f)$, which is exactly the condition for $\tau(\id_X)$ to be a natural transformation $\FZ \leftarrow \GZ$.


Conversely, given a degree $l$ natural transformation $\FZ\buildrel \phi\over \leftarrow \GZ$ we may define $\tau\colon \ibim{\CS}\to M^{\FZ}_{\GZ}$ by the formula
\begin{equation}
\tau(f):= \phi_X\circ \GZ(f)
\end{equation}
for all $f\in \Hom_{\CS}(X\leftarrow Y)$.
\end{proof}

\begin{remark}
In this language, composition of natural transformations comes from the following construction. Composition of morphisms in $\DS$ gives rise to a bimodule map $M^G_F\otimes_\CS M^F_E\to M^G_E$. Given $\phi \colon \ibim{\CS}\to M^G_F$ and $\psi \colon \ibim{\CS}\to M^F_E$, the bimodule map
\begin{equation} \ibim{\CS} \buildrel \sim \over \to \ibim{\CS} \otimes_{\CS} \ibim{\CS} \buildrel \phi \otimes \psi \over \to M^G_F\otimes_\CS M^F_E\to M^G_E\end{equation}
corresponds to the composition of the natural transformations associated with $\phi$ and $\psi$.
\end{remark}

\subsection{The bar complex}
\label{ss:bar}

One may obtain a good notion of ``up-to-homotopy'' natural transformations by replacing the trivial bimodule in Lemma \ref{lemma:representing nat trans} by a projective resolution thereof. Let $\CS$ be a dg category whose hom complexes are projective over $\k$.  The projective resolution of the trivial bimodule $\ibim{\CS}$ is provided by the two-sided bar complex of $\CS$, which we now recall.

Firstly, we will be considering sequences of composable morphisms in $\CS$. Since composition is performed from right-to-left, our sequences of composable morphisms are best visualized via leftward arrows, as in the following diagram:
\begin{equation}
\begin{tikzpicture}
\node (a) at (0,0) {$X_{-1}$};
\node (b) at (2,0) {$X_0$};
\node (c) at (4,0) {$\cdots$};
\node (d) at (6,0) {$X_{r+1}$};
\path[-stealth]
(b) edge node[above] {$f_0$} (a)
(c) edge node[above] {$f_1$} (b)
(d) edge node[above] {$f_{r+1}$} (c);
\end{tikzpicture}
\end{equation}
Such a sequence will be denoted compactly as $[f_0,f_1,\cdots, f_{r+1}]$. We will regard these expressions as being linear in each argument so that they represent elements of the tensor product $\Hom_{\CS}(X_0,X_{-1})\otimes \Hom_\CS(X_1,X_0)\otimes\cdots \otimes \Hom_\CS(X_{r+1},X_r)$. All tensor products are over $\k$ unless otherwise specified.

We fix $X:=X_{-1}$ and $X':=X_{r+1}$, and take the direct sum over all $X_1,\ldots,X_r$, obtaining
\begin{equation}
\Bar_\CS^{-r}(X,X') = \bigoplus_{X_1,\ldots,X_r\in \CS} \Hom_{\CS}(X_{-1}\leftarrow X_0) \otimes \Hom_\CS(X_0\leftarrow X_1)\otimes\cdots \otimes \Hom_\CS(X_r\leftarrow X_{r+1}).
\end{equation}
Note that each summand above is itself a complex, as is $\Bar_{\CS}^{-r}(X,X')$ (which has the untwisted, direct sum differential). We make the direct sum $\bigoplus_{r\geq 0} \Bar_\CS^{-r}(X,X')[r]$ into a complex, denoted by $\Bar_\CS(X,X')$, with differential given by the internal differential plus the usual ``bar'' differential. That is, 
\begin{equation}
\Bar_\CS(X,X') := \tw_{\bard} \left(\bigoplus_{r \geq 0} \Bar_\CS^{-r}(X,X')[r] \right)
\end{equation}
where 
\begin{equation} \label{bardformula}
\bard \colon [f_0,\ldots, f_{r+1}] \mapsto \sum_{i=0}^r (-1)^i [f_0,\ldots,(f_i\circ f_{i+1}), \ldots,f_{r+1}].
\end{equation}
The complexes $\Bar_\CS(X,X')$ assemble into a $(\CS,\CS)$-bimodule, with left and right action defined by
\begin{equation}
\label{eq:bimodule}
g\cdot [f_0,f_1,\ldots,f_{r+1}]\cdot g' = (-1)^{r|g|}[g\circ f_0, f_1,\ldots, f_r,f_{r+1}\circ g']
\end{equation}
for all sequences of composable morphisms 
\begin{equation}
\begin{tikzpicture}
\node(z) at (-2,0) {$Y$};
\node (a) at (0,0) {$X_{-1}$};
\node (b) at (2,0) {$X_0$};
\node (c) at (4,0) {$\cdots$};
\node (d) at (6,0) {$X_{r+1}$};
\node (e) at (8,0) {$Y'$};
\path[-stealth]
(a) edge node[above] {$g$} (z)
(b) edge node[above] {$f_0$} (a)
(c) edge node[above] {$f_1$} (b)
(d) edge node[above] {$f_{r+1}$} (c)
(e) edge node[above] {$g'$} (d);
\end{tikzpicture}.
\end{equation}

Note that $\Bar_{\CS}(X,X')$ is a resolution of $\Bar_{\CS}^{-1}(X,X')$, where the augmentation map $\Bar^0_{\CS} \to \Bar^{-1}_{\CS}$ (still denoted $\bard$) still obeys the formula \eqref{bardformula}. Note that
$\Bar_{\CS}^{-1}(X,X') = \Hom_{\CS}(X \leftarrow X')$, so that $\Bar_{\CS}^{-1} \cong \ibim{\CS}$ is the identity bimodule. The element $[\id_X,\id_X] \in \Bar_{\CS}^0(X,X)$ is
sent by $\bard$ to $\id_X \in \ibim{\CS}(X,X)$.

For more on the bar complex, see for example \cite{WitherspoonBook}.

%

\subsection{$A_\infty$-natural transformations and lifting}
\label{ss:inftrans}

 
\begin{definition}\label{def:infinity transformation}
Let $\FZ,\GZ\colon \CS\to \DS$ be dg functors.  A (closed) \emph{$A_\infty$-natural transformation $\GZ\rightarrow \FZ$} of degree $l$ is a (closed) degree $l$ bimodule map $\ttau\colon \Bar(\CS)\to M^{\FZ}_{\GZ}$.
\end{definition}

We begin by observing that possessing an $A_{\infty}$-natural transformation is a stronger condition than admitting a natural transformation up to homotopy.

\begin{lemma}\label{lemma:infty and homotopy transformation}
If $\ttau\colon \Bar(\CS)\to M^{\FZ}_{\GZ}$ is a closed degree zero $A_\infty$- transformation then the collection of maps $\ttau([\id_X,\id_X])\in \Hom_\DS(\FZ(X),\GZ(X))$ induces a natural transformation relating the induced functors $H^0(\CS)\to H^0(\DS)$.
\end{lemma}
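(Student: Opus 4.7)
The plan is to set $\eta_X := \ttau([\id_X,\id_X])$, an element of $M^F_G(X,X)$ in degree zero, and verify the two defining properties of a natural transformation between $H^0(F)$ and $H^0(G)$: that each $\eta_X$ is closed (so represents a morphism in $H^0(\DS)$), and that for any closed degree zero $f \in \Hom_\CS(X \leftarrow X')$, the difference $F(f) \circ \eta_{X'} - \eta_X \circ G(f)$ is exact in $M^F_G(X,X')$.

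For closedness, I will observe that $[\id_X, \id_X]$ is itself a cycle in $\Bar(\CS)$: its internal differential vanishes because $\id_X$ is closed, and $\bard[\id_X,\id_X] = [\id_X]$ lies in the augmentation $\ibim{\CS}$, which is not itself a chain object of $\Bar(\CS)$. Since $\ttau$ is a closed bimodule map of degree zero, $d \circ \ttau = \ttau \circ d$, and so $d(\eta_X) = \ttau(d[\id_X,\id_X]) = 0$.

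For naturality, I will apply the same principle to the length-three element $[\id_X, f, \id_{X'}] \in \Bar(\CS)(X,X')$, which sits in cohomological degree $-1$. Its internal differential vanishes because $f$ is closed, and its bar differential is
\begin{equation*}
\bard[\id_X, f, \id_{X'}] = [f, \id_{X'}] - [\id_X, f]
\end{equation*}
by \eqref{bardformula}. Applying $\ttau$ and using its closedness yields
\begin{equation*}
d\bigl(\ttau([\id_X, f, \id_{X'}])\bigr) = \ttau([f, \id_{X'}]) - \ttau([\id_X, f]).
\end{equation*}
Using the bimodule action \eqref{eq:bimodule} I can rewrite $[f,\id_{X'}] = f \cdot [\id_{X'},\id_{X'}]$ and $[\id_X,f] = [\id_X,\id_X] \cdot f$, and then the bimodule structure on $M^F_G$ (the analogue of \eqref{eq:MGF bimod} with $F$ and $G$ swapped) turns the two images under $\ttau$ into $F(f) \circ \eta_{X'}$ and $\eta_X \circ G(f)$ respectively. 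The displayed equation thus exhibits $F(f) \circ \eta_{X'} - \eta_X \circ G(f)$ as a boundary, which is exactly naturality up to homotopy.

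There is no real obstacle; the whole argument is an unwinding of $d \ttau = \ttau d$ on the simplest cycles and boundaries involving identities. The only mild bookkeeping point is that the sign $(-1)^{l|f|}$ coming from \eqref{itsabimodulemap} does not appear because $\ttau$ has degree $l = 0$.
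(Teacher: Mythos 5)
Your proposal is correct and follows essentially the same route as the paper: apply the chain-map property of $\ttau$ to the element $[\id_X,f,\id_{X'}]$, compute its bar differential as $[f,\id_{X'}]-[\id_X,f]$, and use the bimodule structure of $M^{\FZ}_{\GZ}$ to identify the resulting boundary with $F(f)\circ\eta_{X'}-\eta_X\circ G(f)$. Your additional explicit check that each $\eta_X=\ttau([\id_X,\id_X])$ is closed is a small (correct) verification the paper leaves implicit.
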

\begin{proof}
Let $f$ be a degree zero closed morphism $X\leftarrow Y$ in $\CS$.  Consider the element of the bar complex $[\id_X,f,\id_Y]$.  The differential of this element is $[f,\id_Y]-[\id_X,f]$.  Since $\ttau$ is assumed to be a chain map, we have
\begin{equation}
d_\DS(\ttau([\id_X,f,\id_Y]))=\ttau([f,\id_Y])-\ttau([\id_X,f]).
\end{equation}
Now, the bimodule condition gives
\begin{equation}
d_\DS(\ttau([\id_X,f,\id_Y]))=f\circ \ttau([\id_Y,\id_Y])-\ttau([\id_X,\id_X])\circ f,
\end{equation}
which implies naturality up to homotopy.
\end{proof}

As discussed in the introduction, our lifting theorem takes place in two stages. The first stage concerns lifting homotopy natural transformations to $A_{\infty}$-natural transformations.


\begin{theorem}\label{thm:infty trans 1}
Let $\AS$ be a $\k$-linear category and assume that all hom spaces in $\AS$ are projective over $\k$.  Let $F,G\colon \AS\to \Ch^b(\BS)$ be an unobstructed pair of functors (see Definition \ref{def:unobstructed}), and let $H^0(F),H^0(G) \colon \AS\to \KC^b(\BS)$ denote the induced functors.  Then any natural transformation  $H^0(F)\to H^0(G)$  extends to a unique-up-to-homotopy $A_\infty$-natural transformation $\Bar(\AS)\to M^{\GZ}_{\FZ}$.
\end{theorem}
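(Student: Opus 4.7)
The plan is to apply the filtered-complex Lemma \ref{lemma:filtered cx} to the complex $V := \Hom_{\Bim_{\AS,\AS}}(\Bar_\AS, M^G_F)$ of bimodule maps, whose $H^0$ is by construction the set of closed $A_\infty$-natural transformations modulo bimodule homotopy. The strategy is to identify $H^0(V)$ with the set of natural transformations of the induced functors $H^0(F) \to H^0(G)$, which will establish existence and uniqueness-up-to-homotopy simultaneously.

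Equip $V$ with the decreasing filtration $\FC^k := \{\phi \in V : \phi|_{\Bar_\AS^{-r}} = 0 \text{ for all } r < k\}$. Then $\FC^0 = V$, $\bigcap_k \FC^k = 0$, and the convergence condition of Lemma \ref{lemma:filtered cx0} holds since only the summands with $k \leq r$ contribute to any given $\Bar_\AS^{-r}$. Decomposing $\Bar_\AS^{-k}$ as a direct sum of free bimodules of the form $\ibim{\AS}(-, X_0) \otimes_\k K \otimes_\k \ibim{\AS}(X_k, -)$ with $K = \bigotimes_{i=1}^k \Hom_\AS(X_{i-1} \leftarrow X_i)$, Lemma \ref{lemma:hom from ketbra} identifies
\[
\FC^k / \FC^{k+1} \;\cong\; \prod_{X_0, \ldots, X_k \in \AS} \uHom_\k\!\Big(\textstyle\bigotimes_{i=1}^k \Hom_\AS(X_{i-1} \leftarrow X_i),\; \uHom_\BS(F(X_k), G(X_0))\Big)[-k]
\]
as dg $\k$-modules. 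Since each $\Hom_\AS(X_{i-1} \leftarrow X_i)$ is $\k$-projective with zero differential, the functor $\uHom_\k(\bigotimes \Hom_\AS, -)$ is exact, so unobstructedness forces $H^l(\FC^k/\FC^{k+1}) = 0$ whenever $l < k$. In particular $H^0(\FC^k/\FC^{k+1}) = 0$ for $k \geq 1$ and $H^1(\FC^k/\FC^{k+1}) = 0$ for $k \geq 2$, so Lemma \ref{lemma:filtered cx} applied with $l = 2$ yields $H^0(V) \simto H^0(V/\FC^2)$.

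What remains is to identify $H^0(V/\FC^2)$ with natural transformations $H^0(F) \to H^0(G)$. A closed degree zero element of $V/\FC^2$ is a pair $(\ttau^{(0)}_X, \ttau^{(1)}(f))$ consisting of closed maps $\ttau^{(0)}_X \in \uHom^0(F(X), G(X))$ for each $X \in \AS$ together with a $\k$-linear assignment $f \mapsto \ttau^{(1)}(f) \in \uHom^{-1}(F(X'), G(X))$ for $f \colon X \leftarrow X'$, satisfying $d(\ttau^{(1)}(f)) = G(f) \circ \ttau^{(0)}_{X'} - \ttau^{(0)}_X \circ F(f)$ (this closedness relation comes from evaluating $\bard[\id_X, f, \id_{X'}] = [f, \id_{X'}] - [\id_X, f]$ and applying the bimodule axiom). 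The rule $[\ttau^{(0)}, \ttau^{(1)}] \mapsto ([\ttau^{(0)}_X])_{X \in \AS}$ gives a well-defined map to natural transformations. Surjectivity is the usual lift: given $\eta$, pick closed representatives $\ttau^{(0)}_X$ of $\eta_X$, and use naturality-up-to-homotopy together with $\k$-projectivity of the hom spaces of $\AS$ to choose a $\k$-linear family of null-homotopies $\ttau^{(1)}(f)$. Injectivity uses unobstructedness again: if $\ttau^{(0)}_X = d(\sigma^{(0)}_X)$ for some choice of $\sigma^{(0)}$, one solves for the remaining piece $\sigma^{(1)}(f)$ of a bimodule null-homotopy by observing that the obstruction is a closed element of $\uHom^{-1}(F(X'),G(X))$, hence exact since $H^{-1}(\uHom_\BS(F,G)) = 0$.

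The hard part will be the bookkeeping: verifying the identification of $\FC^k / \FC^{k+1}$ with the claimed shift $[-k]$, and tracking how $\bard$ acts on the truncation $V/\FC^2$ so that closedness in $V/\FC^2$ precisely encodes naturality-up-to-homotopy. Once this setup is in place, all of the analytic content is packaged into Lemma \ref{lemma:filtered cx} and the unobstructedness hypothesis, and no further explicit induction on tensor length is required.
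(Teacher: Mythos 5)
Your proposal is correct and is essentially the paper's own argument: the paper proves this via Lemma \ref{lemma:infty trans 2}, using exactly the same filtration of $\Hom_{\Bim_{\AS,\AS}}(\Bar_\AS,M^{\GZ}_{\FZ})$ by vanishing on $\Bar^{-r}_\AS$, the same identification of the associated graded via Lemma \ref{lemma:hom from ketbra}, $\k$-projectivity plus unobstructedness for the vanishing, and Lemma \ref{lemma:filtered cx} applied to the truncation at $\FC^2$ (the paper phrases this as surjectivity of $H^0(V)\to H^0(V/\FC^2)$ and injectivity of the composite to $H^0(V/\FC^1)$, while you identify $H^0(V/\FC^2)$ with natural transformations directly, but the content is the same). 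Your shift bookkeeping $[-k]$ for $\FC^k/\FC^{k+1}$ is the correct one, matching \eqref{howassgrworkslemma}.
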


The proof is an immediate consequence of the following general statement.

\begin{lemma}\label{lemma:infty trans 2}
Let $\AS$ be a $\k$-linear category and assume that all hom spaces in $\AS$ are projective over $\k$.  Let $M$ be an $(\AS,\AS)$-bimodule such that $H^k(M)=0$ for $k<0$.  Then any map of bimodules $\AS\to H^0(M)$ admits a unique-up-to-homotopy lift to a closed degree zero map $\Bar_\AS\to M$.
\end{lemma}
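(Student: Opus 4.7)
My plan is to treat $\Bar_\AS$ as a projective resolution of $\ibim{\AS}$ and execute a term-by-term obstruction-theoretic lift. The projectivity assumption on $\AS$'s hom spaces enters as follows: each $\Bar^{-r}_\AS$ decomposes as a direct sum, over tuples $(X_0,\ldots,X_r)$, of bimodules of the form $\ibim{\AS}(-,X_0) \otimes_\k P_{X_0,\ldots,X_r} \otimes_\k \ibim{\AS}(X_r,-)$ with $P_{X_0,\ldots,X_r} := \bigotimes_i \Hom_\AS(X_{i-1},X_i)$ a projective $\k$-module. Generalizing Lemma \ref{lemma:hom from ketbra}, a bimodule map out of such a summand into any bimodule $N$ is the same data as a $\k$-linear map $P_{X_0,\ldots,X_r} \to N(X_0,X_r)$, so specifying $\ttau_r := \ttau|_{\Bar^{-r}_\AS}$ amounts to choosing one $\k$-linear map per tuple.

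For existence, I would first pick, for each $X \in \AS$, a cocycle $m_X \in Z^0(M(X,X))$ representing $\tau(\id_X) \in H^0(M(X,X))$, and define $\ttau_0 \colon \Bar^0_\AS \to M^0$ by bimodule extension of $[\id_X,\id_X] \mapsto m_X$; explicitly, $[f_0,f_1] \mapsto f_0 \cdot m_{X_0} \cdot f_1$. I then build $\ttau_r$ inductively: assuming $\ttau$ is a chain map on $\bigoplus_{r' < r} \Bar^{-r'}_\AS$, the composite $\ttau_{r-1} \circ \bard \colon \Bar^{-r}_\AS \to M^{-r+1}$ automatically lands in cocycles (since $d_M \ttau_{r-1} = \ttau_{r-2}\bard$ and $\bard^2 = 0$, with the case $r=1$ using $d_M \ttau_0 = 0$), and I want to realize it as $d_M \circ \ttau_r$. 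For $r \ge 2$ this is immediate from $H^{-r+1}(M) = 0$, and the projectivity of each $P_{X_0,\ldots,X_r}$ provides a $\k$-linear lift through the surjection $d_M \colon M^{-r} \twoheadrightarrow B^{-r+1}(M)$. The critical borderline case is $r = 1$: one computes $\ttau_0(\bard[f_0,f_1,f_2]) = f_0 \cdot (f_1 \cdot m_{X_1} - m_{X_0} \cdot f_1) \cdot f_2$, and the middle cocycle $f_1 \cdot m_{X_1} - m_{X_0} \cdot f_1 \in Z^0(M(X_0,X_1))$ has class $\tau(f_1) - \tau(f_1) = 0$ in $H^0$ by the bimodule property of $\tau$, so it too is a coboundary and the same projective lift proceeds.

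For uniqueness up to homotopy, I would carry out an analogous inductive construction of a degree $-1$ bimodule map $h \colon \Bar_\AS \to M$ with $d_M h + h\bard = \ttau - \ttau'$. Start by observing that $\ttau_0 - \ttau_0' \colon \Bar^0_\AS \to Z^0(M)$ is a cocycle whose class in $H^0(M)$ vanishes because both $\ttau, \ttau'$ lift $\tau$; hence it is a coboundary, and we lift it to $h_0 \colon \Bar^0_\AS \to M^{-1}$ via projectivity. Inductively, $\ttau_r - \ttau_r' - h_{r-1}\bard$ is a cycle in $M^{-r}$ by a direct calculation using the chain map property and the defining equation for $h_{r-1}$; it is a coboundary by $H^{-r}(M) = 0$, and we lift to $h_r$ by projectivity. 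The main obstacle throughout is precisely the borderline step $r = 1$ in the existence argument, where the negative-cohomology vanishing hypothesis alone is insufficient and one must invoke naturality of $\tau$ to cancel the obstruction living in the potentially nonvanishing $H^0(M)$.
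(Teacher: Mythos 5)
Your proof is correct and is essentially the paper's argument: the paper runs the same induction over bar degree, but packages it as a filtration on $\uHom_{\Bim_{\AS,\AS}}(\Bar_\AS,M)$ whose associated graded pieces are identified exactly as in your use of (the generalization of) Lemma \ref{lemma:hom from ketbra} together with projectivity of the middle tensor factors, with the abstract filtered-complex Lemma \ref{lemma:filtered cx} carrying out your inductive lifting and your hand-built homotopy for uniqueness. In particular, your critical $r=1$ step, where the bimodule-map property of $\tau$ forces the obstruction class in $H^0(M)$ to vanish, is precisely the paper's observation that $\Delta(\mathbf{m})$ is nullhomotopic in $\FC^1/\FC^2$.
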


\begin{proof}
Consider the complex of bimodule maps $V:=\uHom_{\Bim_{\AS,\AS}}(\Bar_\AS,M)$.  This complex has a filtration by subcomplexes $\cdots \supset \FC^k\supset \FC^{k+1}\supset \cdots$
where $\FC^k\subset V$ is the subcomplex consisting of those bimodule maps which are zero on $\Bar^{-j}_\AS$ for $j<k$.  We claim that this filtration satisfies \eqref{filtconditions}.  
Indeed, $\cap_{k\geq 0} \FC^k=\{0\}$ is clear, and $V = \lim V/\FC^{k+1}$ because of the universal property of direct sums (since $\Bar_\AS$ is a direct sum of its chain bimodules $\Bar^{-k}_\AS[k]$). More plainly, any sum $v_0 + v_1 + \ldots$ with $v_k \in \FC^k$ will converge because the sum is finite when restricted to any given $\Bar^{-j}_{\AS}$.

In similar fashion to the proof of Lemma \ref{lemma:filtered hom cx}, observe that 
\begin{equation} \label{assgrblah} \FC^k/\FC^{k+1} \cong \Hom_{\Bim_{\AS,\AS}}(\Bar_{\AS}^{-k}, M) [k].\end{equation}
Note that $\Bar_{\AS}^{-k}$ is a direct sum of bimodules over the form 
\begin{equation}
\Bar_{\AS}^{-k} = \bigoplus_{X_0,\ldots,X_r} \ibim{\AS}(-,X_0)\otimes \ibim{\AS}(X_0,X_1)\otimes \cdots \otimes \ibim{\AS}(X_{r-1},X_r)\otimes \ibim{\AS}(X_r,-).
\end{equation}
where $\ibim{\AS}$ denotes the identity bimodule. 
Thus, using Lemma \ref{lemma:hom from ketbra} we have
\begin{equation} \label{homfrombarminusk}
\uHom_{\Bim_{\AS,\AS}}(\Bar_{\AS}^{-k}, M) \cong \prod_{X_0,\ldots,X_r} \uHom_\k\Big(\ibim{\AS}(X_0,X_1)\otimes \cdots \otimes \ibim{\AS}(X_{r-1},X_r) \, , \, M(X_0,X_r)\Big)
\end{equation}

By hypothesis, each $\ibim{\AS}(X_0,X_1)\otimes \cdots \otimes \ibim{\AS}(X_{r-1},X_r)$ is projective over $\k$ and $M(X_0,X_r)$ has zero homology in degrees $<0$. Hence 
\begin{equation}
H^l\Big(\uHom_{\Bim_{\AS,\AS}}(\Bar_{\AS}^{-k}, M)\Big) =0
\end{equation}
for $l<0$.  It follows that $\FC^k/\FC^{k+1}$ has zero homology in degrees $<k$. Consider the sequence of maps
\begin{equation}\label{eq:H0 sequence}
H^0(V)\to H^0(V/\FC^2) \to H^0(V/\FC^1).
\end{equation}
By Lemma \ref{lemma:filtered cx}, the first arrow is surjective and the composition is injective.

We now identify the terms in \eqref{eq:H0 sequence}. By \eqref{assgrblah} and \eqref{homfrombarminusk} we have
\begin{equation}
V/\FC^1 = \FC^0/\FC^1 \cong \prod_{X} M(X,X), \qquad \FC^1/\FC^2 \cong \prod_{X,Y}\uHom_\k(\left(\ibim{\AS}(X,Y),M(X,Y))\right)[1]
\end{equation}
Meanwhile we have
\begin{equation}
V / \FC^2 \ \cong \ \tw_{\Delta} \left( \underbrace{\prod_{X} M(X,X)}_{V/\FC^1}
\ \ \oplus \ \  \underbrace{\prod_{X,Y}\uHom_\k(\left(\ibim{\AS}(X,Y),M(X,Y))\right)[1]}_{\FC^1/\FC^2} \right)
\end{equation}
where $\Delta$ is a map from the first summand to the second induced by the bar differential. More precisely, $\Delta$ sends $\mathbf{m}=(m_X)_{X\in \AS} \in \prod_X M(X,X)$ to the mapping which (for all $X, Y \in \AS$) takes $f_{X,Y} \in \ibim{\AS}(X,Y)$ to $-f_{X,Y}\cdot m_Y + m_X\cdot f_{X,Y} \in M(X,Y)$.

Now, suppose we are given a bimodule map $\phi\colon \AS\to H^0(\MS)$.   For each $X\in \AS$ let $m_X\in M(X,X)$ be a degree zero cycle which represents $\phi(\id_X)\in H^0(M(X,X))$.  Let $\mathbf{m}=(m_X)_X\in V/\FC^1$. The bimodule map condition for $\phi$ implies that $\Delta(\mathbf{m})$ is homotopic to zero in $\FC^1/\FC^2$. Thus we can choose $h \in \FC^1/\FC^2$ whose differential is $-\Delta(\mathbf{m})$. Then $\mathbf{m}+h$ is a degree zero cycle in $V/\FC^2$. This has a lift to a degree zero cycle in $V=\uHom_{\Bim_{\AS,\AS}}(\Bar_\AS,M)$ surjectivity of the first map in \eqref{eq:H0 sequence}.  This lift is uniquely determined by $\mathbf{m}$ by injectivity of the composition in \eqref{eq:H0 sequence}.  This completes the proof.
\end{proof}

\subsection{Extending $A_\infty$-transformations to complexes}
\label{ss:extending infty transformations}

In this section we show that an $A_\infty$-transformation between functors $\FZ,\GZ\colon \AS\to \Ch^b(\BS)$ can be extended to an $A_\infty$-transformation between their lifts $\Fdg,\Gdg\colon \Ch^b(\AS)\to \Ch^b(\BS)$. The difficulty is that the bar complex of $\Ch^b(\AS)$ is not obviously related to that of $\AS$. We will need an intermediary between the bar complex of $\AS$ and the bar complex of $\Ch^b(\AS)$, that we refer to as the \emph{relative bar complex} of $\Ch^b(\AS)$ relative to $\AS$, denoted $\Bar_{\Ch^b(\AS),\AS}$.

The complex $\Bar_{\Ch^b(\AS),\AS}(\XB,\XB')$ is formally spanned by sequences of composable morphisms $[g_0 , \ldots, g_{r+1}]$, as in
\[
\begin{tikzpicture}
\node (a) at (0,0) {$\mathbf{X}$};
\node (b) at (2,0) {$Y_0$};
\node (c) at (4,0) {$\cdots$};
\node (d) at (6,0) {$Y_r$};
\node (e) at (8,0) {$\mathbf{X}'$};
\path[-stealth]
(b) edge node[above] {$g_0$} (a)
(c) edge node[above] {$g_1$} (b)
(d) edge node[above] {$g_r$} (c)
(e) edge node[above] {$g_{r+1}$} (d);
\end{tikzpicture}
\]
where $Y_i$ are objects of $\AS$ (which may be regarded as complexes sitting in degree 0), and $\XB,\XB'\in \Ch^b(\AS)$.  The $(\Ch^b(\AS),\Ch^b(\AS))$-bimodule structure on $\Bar_{\Ch^b(\AS),\AS}$ is given by
\begin{equation}
f\cdot [g_0, \ldots, g_{r+1}]\cdot f' = (-1)^{r|f|}[f\circ g_0, g_1, \ldots, g_r, g_{r+1}\circ f'].
\end{equation}

The paper \cite{MattsWorkInProgress} constructs a closed degree zero bimodule map $\Matt \colon \Bar_{\Ch^b(\AS)}\to  \Bar_{\Ch^b(\AS),\AS}$.  The construction of this map was outlined in \cite{GHW}, with some key details left as exercises. We recall the relevant formulas.

Before defining $\Matt$ we will need some setup.  First, let $\XB_0,\ldots,\XB_r\in \Ch^b(\AS)$ be complexes, with differentials denoted by $\boldsymbol{\d}_u$, and suppose we have a sequence of composable morphisms
\[
\begin{tikzpicture}
\node (a) at (0,0) {$\mathbf{X}$};
\node (b) at (2,0) {$\mathbf{X}_0$};
\node (c) at (4,0) {$\cdots$};
\node (d) at (6,0) {$\mathbf{X}_r$};
\node (e) at (8,0) {$\mathbf{X}'$};
\path[-stealth]
(b) edge node[above] {$f_0$} (a)
(c) edge node[above] {${f}_1$} (b)
(d) edge node[above] {${f}_r$} (c)
(e) edge node[above] {$f_{r+1}$} (d);
\end{tikzpicture}
\]
with $f_u\in \uHom_\AS(\XB_{u-1}\leftarrow \XB_u)$. This is an element of the bar complex of $\Ch^b(\AS)$, and we will replace it with a linear combination of morphisms factoring through the chain objects of each intermediate complex $\XB^u$. For each $0\leq u\leq r$ and each $a\in \Z$, let
\[
\begin{tikzpicture}
\node (a) at (0,0) {$X_u^a$};
\node (b) at (3,0) {$\XB_u$};
\path[-stealth, thick]
([yshift=2pt]a.east) edge node[above]  {$\sigma_u^a$} ([yshift=2pt]b.west)
([yshift=-2pt]b.west) edge node[below]  {$\pi_u^a$} ([yshift=-2pt]a.east);
\end{tikzpicture}
\]
be the inclusion and projection of the $a$-th chain object. Note that $\sigma_a$ has degree $a$, while $\pi_a$ has degree $-a$.  Given a sequence $\vec{a}=(a_0,\ldots,a_r)$, let
\begin{equation}
[f_0 \, |\, \ldots \, |\, f_{r+1}]^{\vec{a}} := 
 [f_0\circ \sigma_0^{a_0} \,   | \, \pi_0^{a_0}\circ f_1\circ \sigma_1^{a_1} \, | \, \ldots \, |  \, \pi_{r-1}^{a_{r-1}}\circ f_r\circ \sigma_r^{a_r} \, | \, \pi_r^{a_r} \circ f_{r+1}].
\end{equation}


\begin{definition}
Define $\Matt\colon \Bar_{\Ch^b(\AS)}\to \Bar_{\Ch^b(\AS),\AS}$ by 
\end{definition}\begin{equation}\label{eq:PhiPretr gfg}
\Matt([f_0 \sep \ldots \sep f_{r+1}]):=\sum_{\vec{n},\vec{a}}(-1)^{sgn} 
[f_0 \sep \d_0^{\ast n_0} \sep f_1 \sep \ldots  \sep \d_r^{\ast n_r} \sep f_{r+1}]^{\vec{a}}.
\end{equation}
Here $\d_u$ is the differential on $\XB_u$, and $\d_u^{\ast n_u}$ denotes the sequence $\d_u,\ldots,\d_u$ (of length $n_u$). The sum is over sequences $\vec{n}=(n_0,\ldots,n_r)\in \Z_{\geq 0}^{r+1}$ and $\vec{a}=(a_0,\ldots,a_s)\in \Z^{s+1}$ where $s=r+n_0+\cdots+n_r$. Finally, letting $k_u$ be the degree of $f_u$, we have
\begin{equation}
sgn = \binom{s-r+1}{2}+\sum_{0\leq u\leq r} (r-u)n_u+\sum_{0\leq u\leq u'\leq r}k_u n_{u'}.
\end{equation}


\begin{example}
In case $r=0$ this specializes to 
\begin{equation}
\Matt([\id_{\XB_0},\id_{\XB_0}]):=
\sum_{n_0,a}(-1)^{\binom{n_0+1}{2}} 
[\id_{\XB_0},\d_0^{\ast n_0},\id_{\XB_0}]^{\vec{a}}
\end{equation}
where the sum is over $n_0\in \Z_{\geq 0}$ and vectors $\vec{a}$ of the form
\[
\vec{a} = (a_0,\ldots,a_{n_0}).
\]
Now, since $\d_0$ has degree $1$, the component $\pi_0^{a_{v-1}}\circ \d_0\circ \sigma_0^{a_v}$ is zero unless $a_{v-1}=a_v+1$ for all $1\leq v\leq n_0$.  Thus, the sum over $\vec{a}\in \Z^{n_0+1}$ may be reduced to a sum over $a_0\in \Z$.
\end{example}

\begin{example}
In case $r=1$ this specializes to
\begin{equation}
\Matt([\id_{\XB_0}\sep f_1\sep\id_{\XB_1}]) = \sum_{n_0,n_1,\vec{a}} (-1)^{\binom{n_0+n_1+1}{2}+n_0+k_1n_1} [\id_{\XB_0}\sep\d_0^{\ast n_0}\sep f_1 \sep \d_1^{\ast n_1}\sep\id_{\XB_1}]^{\vec{a}}.
\end{equation}
\end{example}

\begin{thm}[\cite{MattsWorkInProgress}] The map $\Matt$ is a degree zero chain map, and a homotopy equivalence. \end{thm}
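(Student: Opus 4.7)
The plan is to prove two things about $\Matt$: that it preserves the total differential (combining the internal hom-complex differentials and the bar differential $\bard$), and that it admits a homotopy inverse.

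For the chain-map property, I would expand both $d\circ \Matt$ and $\Matt\circ d$ on a generator $[f_0 \sep \ldots \sep f_{r+1}]$ and match terms. On the source side, $d$ has two components: the Leibniz-rule internal differential on each $f_u$, which produces $\boldsymbol{\d}_{u-1} \circ f_u$ and $(-1)^{|f_u|+1} f_u \circ \boldsymbol{\d}_u$, and the bar differential $\bard$, which composes adjacent morphisms. On the target side, the internal differential only acts nontrivially on the outermost entries $f_0 \circ \sigma_0^{a_0}$ and $\pi_r^{a_r} \circ f_{r+1}$, since all the intermediate morphisms $\pi_u^{a_u} \circ f_{u+1} \circ \sigma_{u+1}^{a_{u+1}}$ and the inserted $\d_u$'s lie in $\AS$ and are closed. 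The matching principle is: an internal term of the form $\boldsymbol{\d}_u \circ f_{u+1}$ on the left corresponds on the right to raising the multiplicity $n_u \to n_u+1$ (so that the unrestricted sum over $\vec{n}$ absorbs these terms in pairs), while a bar-composition term $f_u \circ f_{u+1}$ on the left corresponds to removing the intermediate position and merging the two adjacent $\d$-strings around $u$.

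For the homotopy equivalence, the cleanest approach is to exhibit both $\Bar_{\Ch^b(\AS)}$ and $\Bar_{\Ch^b(\AS),\AS}$ as projective resolutions of the identity bimodule $\ibim{\Ch^b(\AS)}$ by $\bard$-style augmentation, with $\Matt$ compatible with those augmentations. For $\Bar_{\Ch^b(\AS)}$ this is the standard fact that the two-sided bar complex of any dg category is a canonical projective resolution of the identity bimodule. For the relative version, one uses that $\AS$ generates $\Ch^b(\AS)$ as a dg category once the differentials $\boldsymbol{\d}_u$ are included, which is encoded diagrammatically by the identity decomposition $\id_{\XB_u} = \sum_a \sigma_u^a \pi_u^a$ together with the fact that $\boldsymbol{\d}_u$ acts componentwise on chain objects. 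An explicit contracting homotopy on $\Bar_{\Ch^b(\AS),\AS}/\ibim{\Ch^b(\AS)}$ (splice in $[\id]$ on the left, with appropriate sign) proves acyclicity, and then the comparison theorem for projective resolutions immediately promotes the chain map $\Matt$, which covers the identity after augmentation, to a homotopy equivalence. Alternatively, one can filter both sides by total length $r + \sum_u n_u$ and show that $\Matt$ induces an isomorphism on associated graded, the associated graded statement reducing to a combinatorial identity on sums over decompositions of the identity.

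The main obstacle will be the sign calculation in the chain-map verification. Three sources of signs must interact precisely: the Koszul rule for composing or tensoring morphisms of nonzero degree, the sign $(-1)^{r|f|}$ in the bimodule action \eqref{eq:bimodule}, and the combinatorial signs $\binom{s-r+1}{2} + \sum_u (r-u)n_u + \sum_{u\le u'} k_u n_{u'}$ hardcoded in the formula for $\Matt$. Verifying the matching is a routine but error-prone bookkeeping exercise with many nearly identical terms, which is presumably why the detailed argument is deferred to \cite{MattsWorkInProgress}.
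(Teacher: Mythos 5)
First, note that the paper itself contains no proof of this statement: it is quoted from \cite{MattsWorkInProgress} (with the construction only outlined in \cite[\S 5.3]{GHW}), so there is no internal argument to compare yours against; what follows is an assessment of your proposed route on its own terms. The chain-map half of your plan (term matching, with the three sign sources reconciled at the end) is the right shape, and deferring the bookkeeping is fair, since that bookkeeping \emph{is} the content of that half.

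The genuine gap is in the homotopy-equivalence half, specifically in your claim that acyclicity of the augmented relative bar complex follows from the usual contraction ``splice in $[\id]$ on the left.'' That contraction is unavailable here: the left endpoint of $\Bar_{\Ch^b(\AS),\AS}(\XB,\XB')$ is a genuine complex $\XB\in\Ch^b(\AS)$, while every intermediate entry is required to lie in $\AS$, so $[\id_{\XB},g_0,\ldots]$ is not an element of the relative bar complex. The natural repair you gesture at --- splicing in the decomposition $\id_{\XB}=\sum_a \sigma^a\pi^a$ --- also fails to be a contracting homotopy, because $\sigma^a$ and $\pi^a$ are not closed in $\uHom$ (their differentials involve $\boldsymbol{\d}_{\XB}$), so $dh+hd$ equals the identity only up to correction terms built from insertions of $\boldsymbol{\d}$; controlling those corrections is exactly why the formula for $\Matt$ contains the strings $\d_u^{\ast n_u}$, and it is the nontrivial step (handled in the cited work by a homological-perturbation-type argument, or equivalently by induction on the one-sided twisted-complex/cone structure of objects of $\Ch^b(\AS)$, reducing to endpoints in $\AS$ where the naive contraction does apply). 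Your fallback argument is also not correct as stated: filtering by bar length and comparing associated graded pieces cannot work, since in a fixed bar degree the source sums over tuples of objects of $\Ch^b(\AS)$ and the target over tuples of objects of $\AS$, so the associated graded map is far from an isomorphism --- the equivalence is genuinely a filtered/perturbative statement, not one visible on associated graded. Once the resolution property of the relative bar complex is actually established (and K-projectivity of both sides checked, plus the easy compatibility of $\Matt$ with the augmentations, which only sees the $\vec n=0$ terms), your appeal to the comparison theorem to upgrade $\Matt$ to a homotopy equivalence is fine.
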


\begin{thm}
Let $\FZ,\GZ\colon \AS\to \Ch^b(\BS)$ be dg functors.  Let $\Fdg,\Gdg\colon \Ch^b(\AS)\to \Ch^b(\BS)$ denote the lifts of $\FZ$, $\GZ$.  Let $\tau\colon \Bar_\AS\to M^{\FZ}_{\GZ}$ be an $A_\infty$-transformation.  Then:
\begin{enumerate}
\item We may lift $\tau$ to a closed degree zero bimodule map $\tau'\colon \Bar_{\Ch^b(\AS),\AS}\to M^{\Fdg}_{\Gdg}$ via
\begin{equation}
\tau'([f_0,\ldots,f_{r+1}])
= (-1)^{r|f_0|} \Fdg(f_0)\circ \tau\Big([\id_{X_0}, f_1, \ldots, f_r, \id_{X_r}]\Big) \circ \Gdg(f_{r+1})
\end{equation}
\item We have an induced closed degree zero bimodule map $\ttau\colon \Bar_{\Ch^b(\AS)}\to M^{\Fdg}_{\Gdg}$ defined by
\begin{equation} \ttau := \tau' \circ \Matt. \end{equation}
\end{enumerate} 
\end{thm}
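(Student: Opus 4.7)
The plan is to verify the three structural properties of $\tau'$ in part (1)---that it is a well-defined $(\Ch^b(\AS),\Ch^b(\AS))$-bimodule map, that it is degree zero, and that it is closed---and then deduce part (2) as a formal consequence of the properties of $\Matt$ established in \cite{MattsWorkInProgress}.

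First I would confirm the bimodule and degree properties of $\tau'$. The left and right actions of $\Ch^b(\AS)$ on $\Bar_{\Ch^b(\AS),\AS}$ affect only the outermost morphisms $f_0$ and $f_{r+1}$. Since the formula for $\tau'$ factors these as $\Fdg(f_0)\circ -\circ \Gdg(f_{r+1})$, and the bimodule structure on $M^{\Fdg}_{\Gdg}$ is by post- and pre-composition after applying $\Fdg$ and $\Gdg$, the bimodule axiom reduces to functoriality of $\Fdg,\Gdg$; the sign $(-1)^{r|f_0|}$ in the definition of $\tau'$ is there precisely to match the sign $(-1)^{r|g|}$ appearing in the bimodule action on $\Bar_{\Ch^b(\AS),\AS}$. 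The degree count is routine: $|\Fdg(f_0)|+|\tau([\id,f_1,\ldots,f_r,\id])|+|\Gdg(f_{r+1})|$ equals $\sum_i|f_i|-r$, which matches the degree of $[f_0,\ldots,f_{r+1}]$ in $\Bar_{\Ch^b(\AS),\AS}$.

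The heart of the argument is showing that $\tau'$ is closed. Write the total differential on both sides as $\bard + d_{\mathrm{int}}$, where $d_{\mathrm{int}}$ acts via the internal differentials on the chain complexes $\XB,\XB'$ (and so affects only $f_0$ and $f_{r+1}$, since $f_1,\ldots,f_r$ live in $\AS$ and have zero differential). Apply the graded Leibniz rule to $\tau'([f_0,\ldots,f_{r+1}]) = \pm\,\Fdg(f_0)\circ \tau(y)\circ \Gdg(f_{r+1})$, where $y = [\id_{X_0},f_1,\ldots,f_r,\id_{X_r}]$. The terms involving $d_{\mathrm{int}}$ applied to $\Fdg(f_0)$ or $\Gdg(f_{r+1})$ match exactly the contribution of $d_{\mathrm{int}}$ on $\Bar_{\Ch^b(\AS),\AS}$, using that $\Fdg, \Gdg$ are dg functors. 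Since $\tau$ is closed, the contribution $d(\tau(y)) = \tau(\bard y)$ produces three kinds of terms: internal compositions $f_i\circ f_{i+1}$ for $1\le i\le r-1$ (which match the corresponding interior terms of $\bard[f_0,\ldots,f_{r+1}]$), and two boundary terms $[f_1,\ldots,f_r,\id]$ and $\pm[\id,f_1,\ldots,f_r]$. These last boundary terms, when postcomposed with $\Fdg(f_0)$ or precomposed with $\Gdg(f_{r+1})$, convert by functoriality $\Fdg(f_0\circ f_1) = \Fdg(f_0)\circ \Fdg(f_1)$ and $\Gdg(f_r\circ f_{r+1})=\Gdg(f_r)\circ \Gdg(f_{r+1})$ into exactly the extremal terms $i=0$ and $i=r$ of $\tau'(\bard[f_0,\ldots,f_{r+1}])$. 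The only nontrivial content is bookkeeping the Koszul signs from \eqref{bardformula}, the sign $(-1)^{r|f_0|}$ in the definition of $\tau'$, and the bar sign appearing in the bimodule structure of $\tau$ (as in \eqref{eq:bimodule}); these must be compared degree-by-degree to show cancellation.

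Part (2) is then almost immediate. By the theorem of \cite{MattsWorkInProgress} cited in the excerpt, $\Matt\colon \Bar_{\Ch^b(\AS)}\to \Bar_{\Ch^b(\AS),\AS}$ is a closed degree zero bimodule map (and in fact a homotopy equivalence). Composing with the closed degree zero bimodule map $\tau'$ produced in part (1) gives $\ttau := \tau'\circ \Matt$ as a closed degree zero bimodule map $\Bar_{\Ch^b(\AS)}\to M^{\Fdg}_{\Gdg}$, which is exactly an $A_\infty$-transformation from $\Fdg$ to $\Gdg$. The main obstacle is the sign bookkeeping in the closedness verification for $\tau'$; everything else is essentially formal, provided one trusts the explicit formula for $\Matt$ and its stated properties.
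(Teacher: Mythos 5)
Your proposal is correct and follows essentially the same route as the paper: the paper simply observes that $\tau'$ is the unique bimodule-axiom extension of $\tau$ from the generators $[\id_{X_0},f_1,\ldots,f_r,\id_{X_r}]$ (which forces the stated formula and sign), with closedness a routine check, and that part (2) is the formal composition with the closed degree zero bimodule map $\Matt$. Your write-up in fact supplies more of the Leibniz/sign verification than the paper, which dismisses part (1) as straightforward and part (2) as having no content.
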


\begin{proof} The proof of part (1) is completely straightforward; the map $\tau'$ is defined on the elements $[\id_{X_0},f_1, \ldots, f_r, \id_{X_r}]$ in the obvious way, and extended from there using the bimodule axioms. Part (2) has no content, but merely states the desired conclusion of this process. \end{proof}

\section{$A_\infty$-Drinfeld centralizers}
\label{s:infty drinfeld}

In this chapter we read diagrams as morphisms from right to left (in contrast to the introduction, where we read from bottom to top).

In this section we introduce the notion of the $A_\infty$-Drinfeld center of a dg category.  A more restricted kind of dg Drinfeld center was introduced in earlier work \cite{GHW}, but with a few flaws.  We correct those flaws by bringing $A_\infty$ structures into the mix.  The central ideas are summarized below.

The most convenient language to express the $A_\infty$-Drinfeld center is the language of categories which are enriched in a (nonsymmetric) monoidal dg category $\BB$. If $A\in \BB$ is an ($A_\infty$) algebra and $\CB$ is a category enriched in $\BB$, then we may discuss ($A_\infty$) $A$-modules in $\CB$.  Since the algebra and its modules live in different categories, our treatment of such modules must deviate from the tradition.  For instance, in the usual story a morphism between $A_\infty$ $A$-modules is a family of morphisms $A^{\otimes n}\otimes M\to N$, but this becomes nonsensical as soon as $A$ lives in a different category from $M,N$.

\begin{remark} We \textbf{warn} the reader that we will use two distinct notational schemes for discussing dg monoidal categories, $(\CS,\star,\one)$ and $(\BB,\diamond,\oone)$, in order to emphasize the two very different roles played by them.  One should think of $(\CS,\star,\one)$ as being something akin to the diagrammatic Hecke category $\Diag$.  In contrast, one should think of $\BB$ is something like the dg category of dg $(\Diag,\Diag)$-bimodules but with monoidal structure $\diamond$ as defined in \S \ref{ss:diamond}.
\end{remark}

\subsection{Diagrammatics for bimodules}
\label{ss:bim diagrams}
We will adopt a graphical notation for bimodules over (dg) categories, as well as elements of those bimodules.  We will draw the strands in our diagrams for bimodules thicker than the strands in our diagrams for bimodule elements, to disambiguate.

We will denote the identity bimodule $\ibim{\AS}$ as a horizontal black line, and an element $f\in \ibim{\AS}(X,X')$ of the identity bimodule will be denoted by the usual diagrammatics for monoidal categories (with a right-to-left orientation), as in
\begin{equation}
\ibim{\AS} \ \ = \ \ 
\begin{tikzpicture}[baseline=-.1cm]
\draw[line width=2.5pt] (-.7,0) --(.7,0);
\node at (0,.3) {\scriptsize$\AS$};
\end{tikzpicture}
\ , \qquad\quad
\begin{tikzpicture}[anchorbase]
\draw[ultra thick] (.7,0) --(-.8,0);
\filldraw[fill=white]
(0,0) circle (.3cm);
\node at (0,0) {\scriptsize$f$};
\node at (-1,0) {\scriptsize$X$};
\node at (1,0) {\scriptsize$X'$};
\end{tikzpicture}
\ \ \ \in \ \ibim{\AS}(X,X').
\end{equation}

Given bimodules $M\in \Bim_{\AS,\BS}$ and $N\in \Bim_{\BS,\CS}$, their tensor product $M\otimes_\BS N$ is depicted diagrammatically as in
\begin{equation}\label{eq:MN diagram}
\begin{tikzpicture}[baseline=-.1cm]
\draw[line width = 2.5pt]
(-.7,0) -- (.7,0);
\filldraw[fill=white]
(-.3,-.25) rectangle (.3,.25);
\node at (0,0) {\scriptsize$M$};
\node at (.6,.3) {\scriptsize$\BS$};
\node at (-.6,.3) {\scriptsize$\AS$};
\end{tikzpicture}
\otimes_\BS
\begin{tikzpicture}[baseline=-.1cm]
\draw[line width = 2.5pt]
(-.7,0) -- (.7,0);
\filldraw[fill=white]
(-.3,-.25) rectangle (.3,.25);
\node at (0,0) {\scriptsize$N$};
\node at (.6,.3) {\scriptsize$\CS$};
\node at (-.6,.3) {\scriptsize$\BS$};
\end{tikzpicture}
\ \ \ := \ \ \ 
\begin{tikzpicture}[baseline=-.1cm]
\draw[line width = 2.5pt]
(-1.2,0) -- (1.2,0);
\filldraw[fill=white]
(-.9,-.25) rectangle (-.3,.25)
(.9,-.25) rectangle (.3,.25);
\node at (-.6,0) {\scriptsize$M$};
\node at (.6,0) {\scriptsize$N$};
\node at (0,.3) {\scriptsize$\BS$};
\node at (-1.1,.3) {\scriptsize$\AS$};
\node at (1.1,.3) {\scriptsize$\CS$};
\end{tikzpicture}.
\end{equation}
A simple tensor $m\otimes n\in M\otimes_\BS N$ (with $m\in M(X,Y)$ and $n\in N(Y,Z)$ will be depicted diagrammatically as in
\[
\begin{tikzpicture}[anchorbase]
\draw[very thick]
(-1.2,0) -- (1.2,0);
\filldraw[fill=white]
(-.6,0) circle (.3)
(.6,0) circle (.3);
\node at (-.6,0) {\scriptsize$m$};
\node at (.6,0) {\scriptsize$n$};
\node at (0,.3) {\scriptsize$Y$};
\node at (-1.1,.3) {\scriptsize$X$};
\node at (1.1,.3) {\scriptsize$Z$};
\end{tikzpicture}.
\]
The fact that we are tensoring over $\BS$ means that morphisms in $\BS$ may be transported from $m$ to $n$, as in
\begin{equation}\label{eq:morphism slide}
\begin{tikzpicture}[baseline=-.1cm]
\draw[very thick]
(-1.2,0) -- (1.2,0);
\filldraw[fill=white,rounded corners]
(-1,-.25) rectangle (-.2,.25)
(.9,-.25) rectangle (.3,.25);
\node at (-.6,0) {\scriptsize$m\cdot f$};
\node at (.6,0) {\scriptsize$n$};
\node at (.1,.3) {\scriptsize$Y'$};
\node at (-1.1,.3) {\scriptsize$X$};
\node at (1.1,.3) {\scriptsize$Z$};
\end{tikzpicture}
\ \ \ = \ \ 
\begin{tikzpicture}[baseline=-.1cm]
\draw[very thick]
(-1.2,0) -- (1.2,0);
\filldraw[fill=white,rounded corners]
(1,-.25) rectangle (.2,.25)
(-.9,-.25) rectangle (-.3,.25);
\node at (-.6,0) {\scriptsize$m$};
\node at (.6,0) {\scriptsize$f\cdot n$};
\node at (0,.3) {\scriptsize$Y$};
\node at (-1.1,.3) {\scriptsize$X$};
\node at (1.1,.3) {\scriptsize$Z$};
\end{tikzpicture}
\end{equation}

\subsection{The $\diamond$-product on bimodules}
\label{ss:diamond}

\begin{defn}
If $\AS$ is a dg monoidal category we will let $\mergebim{\AS}\in \Bim_{\AS,\AS\otimes \AS}$ and $\splitbim{\AS}\in \Bim_{\AS\otimes \AS,\AS}$ be the bimodules given by
\[
\mergebim{\AS}(X, X_1\otimes X_2) = \Hom_\AS(X\leftarrow X_1\star X_2) \, , \qquad \splitbim{\AS}(Y_1\otimes Y_2,Y) = \Hom_\AS(Y_1\star Y_2\leftarrow Y).
\]
If $\AS$ and $\BS$ are dg monoidal categories, then given a pair of bimodules $M,N\in \Bim_{\AS,\BS}$ we let $M\diamond N \in \Bim_{\AS,\BS}$ be the bimodule defined by
\begin{equation}
\mergebim{\AS}\otimes_{\AS\otimes \AS} (M\otimes N)\otimes_{\BS\otimes \BS}\splitbim{\BS}.
\end{equation}
\end{defn}

Diagrammatically, we depict the bimodule $\mergebim{\AS}$ (and an element $a\in \mergebim{\AS}(X,X_1\star X_2)$ therein) as in
\begin{equation}\label{eq:mergebim diagram}
\mergebim{\AS} \ \ = \ \ 
\begin{tikzpicture}[anchorbase]
\draw[line width=2.5pt]
(-.7,0)--(0,0)..controls++(.2,.2) and ++(-.2,0)..(.8,.5)
(0,0)..controls++(.2,-.2) and ++(-.2,0)..(.8,-.5);
\node at (1.1,.5) {\scriptsize $\AS$};
\node at (1.1,-.5) {\scriptsize $\AS$};
\node at (-1.1,0) {\scriptsize $\AS$};
\end{tikzpicture}
\ \qquad \left(\text{respectively } \ \ 
\begin{tikzpicture}[baseline=0cm]
\draw[ultra thick,stealth-]
(-.9,0)--(0,0)..controls++(.2,.2) and ++(-.2,0)..(.8,.5);
\draw[ultra thick,stealth-]
(0,0)..controls++(.2,-.2) and ++(-.2,0)..(.8,-.5);
\filldraw[fill=white]
(0,0) circle (.3cm);
\node at (0,0) {\scriptsize $a$};
\node at (1.1,.5) {\scriptsize $X_2$};
\node at (1.1,-.5) {\scriptsize $X_1$};
\node at (-1.1,0) {\scriptsize $X$};
\end{tikzpicture}\right).
\end{equation}
Similarly we depict the bimodule $\splitbim{\AS}$ (and an element $b\in \splitbim{\AS}(Y_1\star Y_2,Y)$ therein) diagrammatically as in
\begin{equation}\label{eq:splitbim diagram}
\splitbim{\AS} \ \ = \ \ 
\begin{tikzpicture}[anchorbase,xscale=-1]
\draw[line width=2.5pt]
(-.7,0)--(0,0)..controls++(.2,.2) and ++(-.2,0)..(.8,.5)
(0,0)..controls++(.2,-.2) and ++(-.2,0)..(.8,-.5);
\node at (1.1,.5) {\scriptsize $\AS$};
\node at (1.1,-.5) {\scriptsize $\AS$};
\node at (-1.1,0) {\scriptsize $\AS$};
\end{tikzpicture}
\qquad \left(\text{respectively } \ \ 
\begin{tikzpicture}[baseline=0cm,xscale=-1]
\draw[ultra thick,-stealth]
(-.9,0)--(0,0)..controls++(.2,.2) and ++(-.2,0)..(.8,.5);
\draw[ultra thick,-stealth]
(0,0)..controls++(.2,-.2) and ++(-.2,0)..(.8,-.5);
\filldraw[fill=white]
(0,0) circle (.3cm);
\node at (0,0) {\scriptsize $b$};
\node at (1.1,.5) {\scriptsize $Y_2$};
\node at (1.1,-.5) {\scriptsize $Y_1$};
\node at (-1.1,0) {\scriptsize $Y$};
\end{tikzpicture}\right).
\end{equation}

Accordingly, we may visualize the bimodule $M\diamond N$ (and an element $a\otimes (m\otimes n)\otimes b\in (M\diamond N)(X,Y)$ therein) diagrammatically as
\begin{equation} \label{MdiamondN}
M\diamond N \ \ = \ \ 
\begin{tikzpicture}[anchorbase]
\draw[line width = 2.5pt]
(-1.7,0)--(-.95,0)..controls++(.2,.2) and ++(-.2,0)..(-.2,.5)
(-.95,0)..controls++(.2,-.2) and ++(-.2,0)..(-.2,-.5)
(1.7,0)--(.95,0)..controls++(-.2,.2) and ++(.2,0)..(.2,.5)
(.95,0)..controls++(-.2,-.2) and ++(.2,0)..(.2,-.5);
\filldraw[fill=white]
(-.3,.25) rectangle (.3,.75)
(-.3,-.25) rectangle (.3,-.75);
\node at (0,.5) {\scriptsize $M$};
\node at (0,-.5) {\scriptsize $N$};
\node at (-.7,.6) {\scriptsize $\AS$};
\node at (-.7,-.6) {\scriptsize $\AS$};
\node at (.7,.6) {\scriptsize $\BS$};
\node at (.7,-.6) {\scriptsize $\BS$};
\node at (-2,0) {\scriptsize $\AS$};
\node at (2,0) {\scriptsize $\BS$};
\end{tikzpicture}
\qquad \left(\text{respectively } \ \ 
\begin{tikzpicture}[anchorbase]
\draw[ultra thick]
(-1.7,0)--(-.95,0)..controls++(.2,.2) and ++(-.2,0)..(-.2,.5)
(-.95,0)..controls++(.2,-.2) and ++(-.2,0)..(-.2,-.5)
(1.7,0)--(.95,0)..controls++(-.2,.2) and ++(.2,0)..(.2,.5)
(.95,0)..controls++(-.2,-.2) and ++(.2,0)..(.2,-.5);
\filldraw[fill=white]
(0,.5) circle (.3cm)
(0,-.5) circle (.3cm)
(-.95,0) circle (.3cm)
(.95,0) circle (.3cm);
\node at (0,.5) {\scriptsize $m$};
\node at (0,-.5) {\scriptsize $n$};
\node at (-.95,0) {\scriptsize $a$};
\node at (.95,0) {\scriptsize $b$};
\node at (-.7,.6) {\scriptsize $X_2$};
\node at (-.7,-.6) {\scriptsize $X_1$};
\node at (.7,.6) {\scriptsize $Y_2$};
\node at (.7,-.6) {\scriptsize $Y_1$};
\node at (-2,0) {\scriptsize $X$};
\node at (2,0) {\scriptsize $Y$};
\end{tikzpicture}\right)
\end{equation}
where
\[
\begin{tikzpicture}[baseline=-.1cm]
\node (a) at (0,0) {$X$};
\node (b) at (2,0) {$X_1\star X_2$};
\path[-stealth,very thick]
(b) edge node[above] {$a$} (a);
\end{tikzpicture}
\qquad
m\in M(X_1,Y_1)
\qquad
n\in N(X_2,Y_2)
\qquad
\begin{tikzpicture}[baseline=-.1cm]
\node (a) at (0,0) {$Y_1\star Y_2$};
\node (b) at (2,0) {$Y$};
\path[-stealth,very thick]
(b) edge node[above] {$b$} (a);
\end{tikzpicture}.
\]
General elements of $M\diamond N$ are linear combinations of the pure tensors depicted above. Note that the diagram for $a\otimes(m\otimes n)\otimes b$ should be considered as a formal picture (it is not a composition of morphisms), but the fact that we tensor over $\AS \otimes \AS$ and $\BS \otimes \BS$ implies that all relevant local relations of the form \eqref{eq:morphism slide} are satisfied.  

Corollary \ref{cor:whatdiamondrepresents} below gives another way to view the diamond product, in terms of how it affects morphism spaces.

\begin{lemma}
The bimodule $\splitbim{\AS}$ is right dual to the bimodule $\mergebim{\AS}$. That is, we have evaluation and coevaluation maps
\begin{equation}
\mergebim{\AS}\otimes_{\AS\otimes \AS}\splitbim{\AS}\to \ibim{\AS} \ , \qquad
\ibim{\AS}\otimes\ibim{\AS}\to \splitbim{\AS}\otimes_\AS \mergebim{\AS}
\end{equation}
giving rise to adjunctions 
\begin{equation} \mergebim{\AS} \otimes_{\AS \otimes \AS} (-) \vdash \splitbim{\AS} \otimes_{\AS} (-), \qquad (-) \otimes_{\AS \otimes \AS} \splitbim{\AS} \vdash (-) \otimes_{\AS} \mergebim{\AS}. \end{equation}
\end{lemma}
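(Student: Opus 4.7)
The proof is the standard duality argument: construct explicit evaluation (counit) and coevaluation (unit) bimodule maps using composition and the monoidal product of $\AS$, then verify the snake identities. Once these are in place, both adjunctions follow formally.

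First, I would define the evaluation $\eta \colon \mergebim{\AS} \otimes_{\AS \otimes \AS} \splitbim{\AS} \to \ibim{\AS}$ by composition of morphisms in $\AS$. On a pure tensor at objects $(X,Y)$, namely $a \otimes b$ with $a \in \Hom_{\AS}(X \leftarrow X_1 \star X_2) = \mergebim{\AS}(X, X_1 \otimes X_2)$ and $b \in \Hom_{\AS}(X_1 \star X_2 \leftarrow Y) = \splitbim{\AS}(X_1 \otimes X_2, Y)$, I set $\eta(a \otimes b) := a \circ b \in \ibim{\AS}(X,Y)$. Well-definedness on the tensor product over $\AS \otimes \AS$ amounts to the identity $(a \circ (f_1 \star f_2)) \circ b = a \circ ((f_1 \star f_2) \circ b)$ for $f_1 \otimes f_2 \in (\AS \otimes \AS)$-morphisms, which is just associativity combined with the functoriality of $\star$. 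That $\eta$ is closed, degree zero, and $(\AS,\AS)$-bilinear is then immediate.

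Next, I would define the coevaluation $\epsilon \colon \ibim{\AS} \otimes \ibim{\AS} \to \splitbim{\AS} \otimes_{\AS} \mergebim{\AS}$. Using Lemma \ref{lemma:hom from ketbra} (applied pointwise over $\AS \otimes \AS$, or more concretely the fact that any $(\AS \otimes \AS, \AS \otimes \AS)$-bimodule map from $\ibim{\AS} \otimes \ibim{\AS}$ is determined by its values on the identities $\id_{X_1} \otimes \id_{X_2}$), I set
\begin{equation}
\epsilon(\id_{X_1} \otimes \id_{X_2}) := \id_{X_1 \star X_2} \otimes \id_{X_1 \star X_2},
\end{equation}
where the first factor lies in $\splitbim{\AS}(X_1 \otimes X_2, X_1 \star X_2)$ and the second in $\mergebim{\AS}(X_1 \star X_2, X_1 \otimes X_2)$, and extend by the bimodule axioms. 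Explicitly, $\epsilon(f_1 \otimes f_2) = (f_1 \star f_2) \otimes \id_{Y_1 \star Y_2} = \id_{X_1 \star X_2} \otimes (f_1 \star f_2)$ for $f_i \colon Y_i \to X_i$, where the equality holds modulo the middle relation over $\AS$. Consistency with this relation is again just the functoriality of $\star$.

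Finally, I would verify the two snake identities. The first composite
\begin{equation}
\mergebim{\AS} \xrightarrow{\; \id \otimes \epsilon \;} \mergebim{\AS} \otimes_{\AS \otimes \AS} \splitbim{\AS} \otimes_{\AS} \mergebim{\AS} \xrightarrow{\; \eta \otimes \id \;} \mergebim{\AS}
\end{equation}
sends $a \colon X_1 \star X_2 \to X$ first to $a \otimes \id_{X_1 \star X_2} \otimes \id_{X_1 \star X_2}$ and then to $(a \circ \id_{X_1 \star X_2}) \otimes \id_{X_1 \star X_2} = a$ (under the canonical identification $\ibim{\AS} \otimes_{\AS} \mergebim{\AS} \cong \mergebim{\AS}$). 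The dual snake identity for $\splitbim{\AS}$ is entirely symmetric. From these snake identities, the two adjunctions in the statement follow by the standard recipe (tensoring the counit on the right or left by an arbitrary bimodule, and similarly for the unit). The only mild obstacle is the bookkeeping around tensor products of bimodules and identifying $\ibim{\AS \otimes \AS}$ with $\ibim{\AS} \otimes \ibim{\AS}$; no Koszul signs intrude because all morphisms involved (identities) are of degree zero.
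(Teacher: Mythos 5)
Your proposal is correct and follows essentially the same route as the paper: the paper defines the evaluation by $a\otimes b\mapsto a\circ b$ and the coevaluation by $f\otimes g\mapsto (f\star g)\otimes \id = \id\otimes(f\star g)$, exactly as you do, and leaves the remaining verifications to the reader. Your write-up simply supplies the snake-identity check and the passage to the adjunctions that the paper omits.
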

\begin{proof}
The evaluation map sends $a\otimes b\mapsto a\circ b$ for $a\in \Hom_\AS(X\leftarrow X_1\star X_2)$ and $b\in \Hom_\AS(X_1\star X_2\leftarrow Y)$.  The coevaluation map sends $f\otimes g\mapsto (f\star g)\otimes \id_{X'\star Y'} = \id_{X \star Y} \otimes (f \star g)$ for $f\in \ibim{\AS}(X,X')$, $g\in \ibim{\AS}(Y,Y')$.  We leave the details to the reader.
\end{proof}

\begin{corollary} \label{cor:whatdiamondrepresents}
Given bimodules $L,M,N\in \Bim_{\AS,\BS}$ we have a natural isomorphism
\begin{equation} \label{diamondadjunction}
\Hom_{\Bim_{\AS,\BS}}(M\diamond N , L) \cong \Hom_{\Bim_{\AS\otimes \AS,\BS\otimes \BS}}(M\otimes N, \splitbim{\AS}\otimes_\AS L \otimes_\BS \mergebim{\BS}).
\end{equation}
\end{corollary}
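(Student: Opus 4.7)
The plan is to deduce the corollary as a direct application of the two adjunctions established in the preceding lemma. First I would unpack the definition of $M \diamond N$ inside the hom space, writing
\[
\Hom_{\Bim_{\AS,\BS}}(M\diamond N, L) = \Hom_{\Bim_{\AS,\BS}}\Big(\mergebim{\AS}\otimes_{\AS\otimes \AS}(M\otimes N)\otimes_{\BS\otimes \BS}\splitbim{\BS},\, L\Big).
\]
From here the argument is essentially formal: apply the two adjunctions in whichever order is most convenient.

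Concretely, I would first apply the adjunction $\mergebim{\AS}\otimes_{\AS\otimes\AS}(-) \vdash \splitbim{\AS}\otimes_\AS (-)$ on the $\AS$-side to move $\mergebim{\AS}$ off of the left of $M \diamond N$ and onto the $L$ side. This transforms the above hom into
\[
\Hom_{\Bim_{\AS\otimes\AS,\BS}}\Big((M\otimes N)\otimes_{\BS\otimes\BS}\splitbim{\BS},\, \splitbim{\AS}\otimes_\AS L\Big).
\]
Next I would apply the dual adjunction $(-)\otimes_{\BS\otimes\BS}\splitbim{\BS} \vdash (-)\otimes_\BS \mergebim{\BS}$ on the $\BS$-side to peel $\splitbim{\BS}$ off the right of $M \otimes N$, producing
\[
\Hom_{\Bim_{\AS\otimes\AS,\BS\otimes\BS}}\Big(M\otimes N,\, \splitbim{\AS}\otimes_\AS L\otimes_\BS \mergebim{\BS}\Big),
\]
which is exactly the right-hand side of \eqref{diamondadjunction}. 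Naturality of the isomorphism in $L$, $M$, $N$ follows from the naturality of each adjunction isomorphism.

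The only real content is verifying that the two adjunctions from the preceding lemma can indeed be composed in this fashion (that is, that one acts on the $\AS$-side and the other on the $\BS$-side, so they commute and both apply here), but this is immediate since $\mergebim{\AS}$ and $\splitbim{\BS}$ live on disjoint sides of the tensor expression. There is no real obstacle; the hardest part is just carefully bookkeeping which bimodule lives over which pair of categories after each adjunction is applied, and confirming the correct categories appear in each intermediate Hom. Diagrammatically (in the conventions of \eqref{MdiamondN}), the two steps correspond to bending the two $\AS$-strands meeting at $\mergebim{\AS}$ over to the $L$-side, and then bending the two $\BS$-strands meeting at $\splitbim{\BS}$ over to the $L$-side, which is a useful sanity check.
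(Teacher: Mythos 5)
Your proposal is correct and is essentially identical to the paper's proof: the paper likewise rewrites $M\diamond N$ by its definition and then invokes the two adjunctions from the preceding lemma (the paper simply compresses both adjunction steps into a single displayed isomorphism, whereas you spell out the intermediate hom space). No gaps.
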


\begin{proof} Consider
\begin{align*}
\Hom_{\Bim_{\AS,\BS}}(M\diamond N , L) &= \Hom_{\Bim_{\AS,\BS}}(\mergebim{\AS}\otimes_{\AS\otimes\AS}(M\otimes N)\otimes_{\BS\otimes \BS}\splitbim{\BS} , L)\\
&\cong \Hom_{\Bim_{\AS\otimes \AS,\BS\otimes \BS}}(M\otimes N, \splitbim{\AS}\otimes_\AS L \otimes_\BS \mergebim{\BS}).
\end{align*}
The first line is tautological and the second line is an application of the adjunctions from the previous lemma. \end{proof}

\subsection{The $\diamond$-unit}
\label{ss:diamond unit}
Next we would like to discuss the unit bimodule $\oone_\diamond$ with respect to the $\diamond$ operation.  Just like the $\diamond$-product itself is built from the merge and split bimodules for $\AS$ and $\BS$, the unit for the $\diamond$-product will be built from two pieces, defined next.

\begin{definition}\label{def:unit and counit bim}
If $\AS$ is a monoidal dg category, define bimodules $\ubim{\AS}\in \Bim_{\AS,\k}$ and $\coubim{\AS}\in \Bim_{\k,\AS}$ by
\[
\ubim{\AS}(X,\cdot) = \Hom_\AS(X\leftarrow \one_\AS) \ , \qquad \coubim{\AS}(\cdot,Y) = \Hom_\AS(\one\leftarrow Y).
\]
Here, $\k$ is regarded as a dg category with one object (denoted $\cdot$), and endomorphism algebra $\End_\k(\cdot)=\k$.  If $\AS$ and $\BS$ are dg monoidal categories then we let $\oone_\diamond\in \Bim_{\AS,\BS}$ be the bimodule defined by
\begin{equation}
\oone_{\diamond} = \ubim{\AS}\otimes_\k \coubim{\BS}
\end{equation}
i.e.~ $\oone_\diamond(X,Y) = \Hom_\AS(X\leftarrow\one_\AS)\otimes_\k \Hom_\BS(\one_\BS\leftarrow Y)$.
\end{definition}

We draw $\ubim{\AS}$ (and an element $a\in \ubim{\AS}(X,\cdot)$ therein) as
\begin{equation}
\ubim{\AS} \ \ = \ \ 
\begin{tikzpicture}[anchorbase]
\draw[line width =2.5pt]
(0,0)--(-.7,0);
\filldraw[fill=black]
(0,0) circle (.1);
\node at (-1,0) {\scriptsize$\AS$};
\end{tikzpicture}
\qquad \left(\text{respectively } \ \ 
\begin{tikzpicture}[anchorbase]
\draw[very thick, -stealth]
(0,0)--(-.8,0);
\filldraw[fill=white]
(0,0) circle (.3);
\node at (-1,0) {\scriptsize$X$};
\node at (0,0) {\scriptsize$a$};
\end{tikzpicture}
\right).
\end{equation}
Similarly, we draw $\coubim{\AS}$ (and an element $b\in \coubim{\AS}(\cdot,Y)$ therein) as
\begin{equation}
\coubim{\AS} \ \ = \ \ 
\begin{tikzpicture}[anchorbase,xscale=-1]
\draw[line width =2.5pt]
(0,0)--(-.7,0);
\filldraw[fill=black]
(0,0) circle (.1);
\node at (-1,0) {\scriptsize$\AS$};
\end{tikzpicture}
\qquad \left(\text{respectively } \ \ 
\begin{tikzpicture}[anchorbase,xscale=-1]
\draw[very thick, stealth-]
(-.3,0)--(-.8,0);
\filldraw[fill=white]
(0,0) circle (.3);
\node at (-1,0) {\scriptsize$Y$};
\node at (0,0) {\scriptsize$b$};
\end{tikzpicture}
\right).
\end{equation}
Accordingly, we visualize the bimodule $\oone_{\diamond}$ (and an element $a \otimes b \in \Hom_\AS(X\leftarrow \one_\AS)\otimes_\k \Hom_\BS(\one_\BS\leftarrow Y)$ therein) diagrammatically as 
\begin{equation}
\oone_\diamond \ \ := \ \ 
\begin{tikzpicture}[anchorbase]
\draw[line width=2.5]
(-1,0)--(-.3,0)
(1,0)--(.3,0);
\filldraw[fill=black]
(-.3,0) circle (.1)
(.3,0) circle (.1);
\node at (-1.3,0) {\scriptsize$\AS$};
\node at (1.3,0) {\scriptsize$\BS$};
\end{tikzpicture}
, \qquad \left(\text{respectively } \ \  \begin{tikzpicture}[anchorbase]
\draw[dashed]
(0,-.5) -- (0,.5);
\begin{scope}[xshift=-.5cm,xscale=-1]
\draw[ultra thick,-stealth]
(0,0)--(.7,0);
\filldraw[fill=white]
(0,0) circle (.3);
\node at (0,0) {\scriptsize$a$};
\node at (1,0) {\scriptsize$X$};
\end{scope}
\begin{scope}[xshift=.5cm,xscale=1]
\draw[ultra thick,stealth-]
(.3,0)--(.7,0);
\filldraw[fill=white]
(0,0) circle (.3);
\node at (0,0) {\scriptsize$b$};
\node at (1,0) {\scriptsize$Y$};
\end{scope}
\end{tikzpicture} \right). \end{equation}
When $\AS = \BS$, the dashed line helps to distinguish $a\otimes b$ from the composition $a\circ b$; when $\AS \ne \BS$, the composition $a \circ b$ does not make sense.

\begin{prop} The operation $\diamond$ makes $\Bim_{\AS,\BS}$ into a dg monoidal category with monoidal identity $\oone_\diamond$. \end{prop}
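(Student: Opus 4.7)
The plan is to construct the associator and unitors directly from the monoidal structures of $\AS$ and $\BS$, and to reduce all coherence to coherence in those categories. Bifunctoriality (and dg-compatibility) of $\diamond$ is automatic from the formula
\[ M \diamond N = \mergebim{\AS} \otimes_{\AS \otimes \AS} (M \otimes N) \otimes_{\BS \otimes \BS} \splitbim{\BS}, \]
since each tensor product of bimodules is a dg bifunctor, so the composite $\diamond$ is a dg bifunctor on $\Bim_{\AS,\BS}$.

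For the associator, I would introduce for each $r \geq 0$ the iterated merge bimodule $\mergebim{\AS}^{(r)} \in \Bim_{\AS, \AS^{\otimes r}}$ given by $\mergebim{\AS}^{(r)}(X, X_1 \otimes \cdots \otimes X_r) = \Hom_\AS(X \leftarrow X_1 \star \cdots \star X_r)$, and the analogous $\splitbim{\BS}^{(r)}$. The associator of $\AS$ provides canonical isomorphisms
\[ \mergebim{\AS} \otimes_{\AS \otimes \AS} (\mergebim{\AS} \otimes \ibim{\AS}) \ \cong \ \mergebim{\AS}^{(3)} \ \cong \ \mergebim{\AS} \otimes_{\AS \otimes \AS} (\ibim{\AS} \otimes \mergebim{\AS}), \]
and analogously for $\splitbim{\BS}^{(3)}$. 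Inserting these into the formulas for $(M \diamond N) \diamond P$ and $M \diamond (N \diamond P)$ and reassociating the bimodule tensor products, both products are exhibited as $\mergebim{\AS}^{(3)} \otimes_{\AS^{\otimes 3}} (M \otimes N \otimes P) \otimes_{\BS^{\otimes 3}} \splitbim{\BS}^{(3)}$; the resulting isomorphism is the associator. For the unitors, the unit isomorphism $\one_\AS \star X \cong X$ induces $\mergebim{\AS} \otimes_{\AS \otimes \AS} (\ubim{\AS} \otimes \ibim{\AS}) \cong \ibim{\AS}$, with three analogues, and combining these with the definition of $\oone_\diamond$ yields $\oone_\diamond \diamond M \cong M \cong M \diamond \oone_\diamond$.

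For coherence, the pentagon and triangle axioms decouple into coherence axioms on the merge side (in $\AS$) and on the split side (in $\BS$), each of which holds by hypothesis. Concretely, both sides of the pentagon for $\diamond$, rewritten in terms of $\mergebim{\AS}^{(4)}$ and $\splitbim{\BS}^{(4)}$ via the construction above, reduce to the pentagons for $(\AS,\star)$ and $(\BS,\star)$ applied independently on the two sides of the diamond picture \eqref{MdiamondN}; the triangle is similar.

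The main obstacle is bookkeeping: one must carefully track Koszul signs arising from the graded Leibniz rule and from moving morphisms through bimodule tensor products as in \eqref{eq:morphism slide}, and verify they are compatible with the signs in the associators and unitors of $\AS$ and $\BS$. The cleanest way to handle this is via the diagrammatic picture \eqref{MdiamondN}: the two sides of every coherence axiom are string diagrams which differ only by planar isotopies together with coherence moves applied locally to the $\AS$-merge at the left end and the $\BS$-split at the right end, and the calculus of these string diagrams encodes the correct signs automatically.
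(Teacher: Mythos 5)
Your proposal is correct and follows essentially the same route as the paper: the paper also defines the associator via the canonical identification of both iterated merges (resp.\ splits) with a single ``triple merge'' bimodule $\Hom_\AS(X \leftarrow X_1 \star X_2 \star X_3)$ coming from the associator of $\AS$ (resp.\ $\BS$), and defines the unitors by capping one leg of the merge/split with $\ubim{\AS}$/$\coubim{\BS}$ and using the unitor of $\AS$/$\BS$, leaving the coherence checks as exercises. Your extra step of reducing the pentagon and triangle axioms to those of $(\AS,\star)$ and $(\BS,\star)$ via the $r$-fold merge/split bimodules just makes explicit the details the paper omits.
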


\begin{proof}
The associator isomorphism is induced from isomorphisms of bimodules $\mergebim{\AS}\otimes_{\AS\otimes \AS}(\mergebim{\AS}\otimes \ibim{\AS})\cong \mergebim{\AS}\otimes_{\AS\otimes \AS}(\ibim{\AS}\otimes \mergebim{\AS})$, i.e.
\begin{equation}
\begin{tikzpicture}[anchorbase]
\draw[line width=2.5pt]
(-.7,0)--(0,0)..controls++(.2,.2) and ++(-.2,0)..(.8,.5)
(0,0)..controls++(.2,-.2) and ++(-.4,0)..(1.6,-1);
\node at (-1.1,0) {\scriptsize $\AS$};
\begin{scope}[shift={(.8,.5)}]
\draw[line width=2.5pt]
(0,0)..controls++(.2,.2) and ++(-.2,0)..(.8,.5)
(0,0)..controls++(.2,-.2) and ++(-.2,0)..(.8,-.5);
\node at (1.1,.5) {\scriptsize $\AS$};
\node at (1.1,-.5) {\scriptsize $\AS$};
\end{scope}
\node at (1.9,-1) {\scriptsize $\AS$};
\end{tikzpicture}
\ \ \ \cong \ \ \ 
\begin{tikzpicture}[anchorbase,yscale=-1]
\draw[line width=2.5pt]
(-.7,0)--(0,0)..controls++(.2,.2) and ++(-.2,0)..(.8,.5)
(0,0)..controls++(.2,-.2) and ++(-.4,0)..(1.6,-1);
\node at (-1.1,0) {\scriptsize $\AS$};
\begin{scope}[shift={(.8,.5)}]
\draw[line width=2.5pt]
(0,0)..controls++(.2,.2) and ++(-.2,0)..(.8,.5)
(0,0)..controls++(.2,-.2) and ++(-.2,0)..(.8,-.5);
\node at (1.1,.5) {\scriptsize $\AS$};
\node at (1.1,-.5) {\scriptsize $\AS$};
\end{scope}
\node at (1.9,-1) {\scriptsize $\AS$};
\end{tikzpicture}
\end{equation}
with a similar isomorphism for $\splitbim{\BS}$.  The unitor isomorphisms are inherited from isomorphisms of bimodules
\begin{equation} \label{unitorisom}
\begin{tikzpicture}[anchorbase,yscale=-1]
\draw[line width=2.5pt]
(-.7,0)--(0,0)..controls++(.2,.2) and ++(-.2,0)..(.8,.5)
(0,0)..controls++(.2,-.2) and ++(-.2,0)..(.6,-.45);
\filldraw[fill=black]
(.6,-.45) circle (.1);
\node at (1.1,.5) {\scriptsize $\AS$};
\node at (-1.1,0) {\scriptsize $\AS$};
\end{tikzpicture}
\ \ \cong  \ \ 
\begin{tikzpicture}[baseline=-.1cm]
\draw[line width=2.5pt] (-.7,0) --(.7,0);
\node at (0,.3) {\scriptsize$\AS$};
\end{tikzpicture}
\ \ \cong  \ \ 
\begin{tikzpicture}[anchorbase]
\draw[line width=2.5pt]
(-.7,0)--(0,0)..controls++(.2,.2) and ++(-.2,0)..(.8,.5)
(0,0)..controls++(.2,-.2) and ++(-.2,0)..(.6,-.45);
\filldraw[fill=black]
(.6,-.45) circle (.1);
\node at (1.1,.5) {\scriptsize $\AS$};
\node at (-1.1,0) {\scriptsize $\AS$};
\end{tikzpicture}
\end{equation}
with similar isomorphism relating $\splitbim{\BS}$ and $\coubim{\BS}$.

Let us give some details on the unitor isomorphism \eqref{unitorisom}, by describing the map from the middle to the left-hand side. Let $f \colon X' \to X$ be viewed as an element of $\ibim{\AS}(X,X')$. Under the left isomorphism in \eqref{unitorisom}, $f$ would be sent to $f'\otimes\id_{\one_{\AS}}$, where $\id_{\one_{\AS}} \in \ubim{\AS}(\one_{\AS},\cdot)$, and $f' \in \mergebim{\AS}(X,\one_{\AS} \star X')$ is the composition of $X \leftarrow \one_{\AS} \star X$ (the unitor in $\AS$) with $\id_{\one_{\AS}} \star f$. Conversely, the map from left-hand side to the middle sends any pure tensor $g \otimes a$ to its composition $g \circ (a \otimes \id_{X'})$. 

We leave the remaining details as exercises.
\end{proof}

\subsection{The bimodule enrichment of $\MS$}
\label{ss:alg bimod bimod}
Now, let us fix a dg monoidal category $\AS$.  Then $\Bim_{\AS,\AS}$ has two monoidal structures, given by $\otimes_\AS$ and $\diamond$, with additional compatibilities making $\Bim_{\AS,\AS}$ into a \emph{duoidal category} (see \cite{BookStreet}).  

\begin{definition}\label{def:hom bimodule}
Let $\MS$ be a dg category which is an $\AS$ $\star$-bimodule category. Given a pair of objects $Z,Z'\in \MS$, define the \emph{hom bimodule from $Z'$ to $Z$} to be the bimodule $\bimhom{Z}{Z'} \in \Bim_{\AS,\AS}$ defined by $\bimhom{Z}{Z'}(X,X'):=\Hom_{\MS}(X\star Z\leftarrow Z'\star X')$, with bimodule structure illustrated in \eqref{eq:hom bimodule structure}.

We refer to $\bimhom{Z}{Z}$ as the endomorphism bimodule of $Z$, and denote it by $E_Z$.
\end{definition}

We may visualize elements $f\in \bimhom{Z}{Z'}(X,X')$ diagrammatically as
\[
\begin{tikzpicture}[baseline=0cm]
\path[-stealth,ultra thick, electricindigo]
(0,-.7) edge (0,.7);
\path[-stealth,ultra thick]
(1,0) edge (-1,0);
\filldraw[fill=white] (0,0) circle (.3cm);
\node at (0,0) {\scriptsize $f$};
\node at(0,-1) {\scriptsize $Z'$};
\node at (0,1) {\scriptsize $Z$};
\node at(-1.2,0) {\scriptsize $X$};
\node at (1.2,0) {\scriptsize $X'$};
\end{tikzpicture}
\qquad \qquad \left( \ \text{ or } \quad
\begin{tikzpicture}[baseline=0cm]
\draw[-stealth,ultra thick, electricindigo]
(-.5,-.7) ..controls ++(0,.5) and ++(0,-.5).. (.5,.7);
\draw[-stealth,ultra thick]
(.5,-.7) ..controls ++(0,.5) and ++(0,-.5).. (-.5,.7);
\filldraw[fill=white] (0,0) circle (.3cm);
\node at (0,0) {\scriptsize $f$};
\node at(-.5,-1) {\scriptsize $Z'$};
\node at (-.5,1) {\scriptsize $X$};
\node at(.5,1) {\scriptsize $Z$};
\node at (.5,-1) {\scriptsize $X'$};
\end{tikzpicture}
\right)
\]
with the bimodule structure being given by composing with morphisms on the left and right, i.e. 
\begin{equation}\label{eq:hom bimodule structure}
a\cdot f\cdot b \ \ = \ \  (a\star \id_Z)\circ f \circ (\id_{Z'}\star b) \ \ = \ \ 
\begin{tikzpicture}[baseline=0cm]
\path[-stealth,ultra thick, electricindigo]
(0,-.7) edge (0,.7);
\path[-stealth,ultra thick]
(1.7,0) edge (-1.7,0);
\filldraw[fill=white]
(0,0) circle (.3cm)
(-1,0) circle (.3cm)
(1,0) circle (.3cm);
\node at (0,0) {\scriptsize $f$};
\node at(0,-1) {\scriptsize $Z'$};
\node at (0,1) {\scriptsize $Z$};
\node at(-1,0) {\scriptsize $a$};
\node at (1,0) {\scriptsize $b$};
\end{tikzpicture}
\end{equation}
(This diagram is a genuine composition.) Note that $\Hom_\MS(Z',Z) = \bimhom{Z}{Z'}(\one,\one)$.  The hom bimodules $\bimhom{Z}{Z'}$ come with a composition law, which extends the usual composition of morphisms in $\MS$, defined as follows.

\begin{definition}\label{def:composition of B}
Given $Z_0,Z_1,Z_2\in \MS$ we let
\begin{equation}\label{eq:composition of B}
\bimhom{Z_2}{Z_1}\diamond \bimhom{Z_1}{Z_0}\to \bimhom{Z_2}{Z_0}
\end{equation}
be the map sending $a\otimes (f_2\otimes f_1)\otimes b$ to the composition
\begin{equation}\label{eq:affb composition}
(a\star \id_Z)\circ (\id_{X_1}\star f_2)\circ (f_1\star \id_{Y_2})\circ (\id_Z\star b)
\end{equation}
where
\[
X\buildrel a\over\leftarrow X_1\star X_2 \, \qquad X_i\star Z_{i-1}\buildrel f_i\over\leftarrow Z_i\star Y_i \, , \qquad Y_1\star Y_2\buildrel b \over\leftarrow Y
\]
\end{definition}

Diagrammatically, we draw $a,f_2,f_1,b$ as in
\[
\begin{tikzpicture}[baseline=0cm]
\draw[ultra thick]
(-.7,0)--(0,0)..controls++(.2,.2) and ++(-.2,0)..(.8,.5)
(0,0)..controls++(.2,-.2) and ++(-.2,0)..(.8,-.5);
\filldraw[fill=white]
(0,0) circle (.3cm);
\node at (0,0) {\scriptsize $a$};
\node at (1.1,.5) {\scriptsize $X_2$};
\node at (1.1,-.5) {\scriptsize $X_1$};
\node at (-1,0) {\scriptsize $X$};
\end{tikzpicture}
\qquad\qquad 
\begin{tikzpicture}[baseline=0cm]
\path[-stealth,ultra thick, electricindigo]
(0,-.7) edge (0,.7);
\path[-stealth,ultra thick]
(1,0) edge (-1,0);
\filldraw[fill=white] (0,0) circle (.3cm);
\node at (0,0) {\scriptsize $f_2$};
\node at(0,-1) {\scriptsize $Z_1$};
\node at (0,1) {\scriptsize $Z_2$};
\node at(-1.2,0) {\scriptsize $X_2$};
\node at (1.2,0) {\scriptsize $Y_2$};
\end{tikzpicture}
\qquad \qquad \begin{tikzpicture}[baseline=0cm]
\path[-stealth,ultra thick, electricindigo]
(0,-.7) edge (0,.7);
\path[-stealth,ultra thick]
(1,0) edge (-1,0);
\filldraw[fill=white] (0,0) circle (.3cm);
\node at (0,0) {\scriptsize $f_1$};
\node at(0,-1) {\scriptsize $Z_0$};
\node at (0,1) {\scriptsize $Z_1$};
\node at(-1.2,0) {\scriptsize $X_1$};
\node at (1.2,0) {\scriptsize $Y_1$};
\end{tikzpicture}
\qquad\qquad
\begin{tikzpicture}[baseline=0cm,xscale=-1]
\draw[ultra thick,-stealth]
(-.7,0)--(0,0)..controls++(.2,.2) and ++(-.4,0)..(.8,.5);
\draw[ultra thick,-stealth]
(0,0)..controls++(.2,-.2) and ++(-.4,0)..(.8,-.5);
\filldraw[fill=white]
(0,0) circle (.3cm);
\node at (0,0) {\scriptsize $b$};
\node at (1.1,.5) {\scriptsize $Y_2$};
\node at (1.1,-.5) {\scriptsize $Y_1$};
\node at (-1,0) {\scriptsize $Y$};
\end{tikzpicture}
\]
and the composition law sends $a\otimes (f_1\otimes f_2)\otimes b$ to the element depicted diagrammatically by
\begin{equation} \label{compositionlawpicture}
\begin{tikzpicture}[anchorbase]
\path[-stealth,ultra thick, electricindigo]
(0,-1.3) edge (0,.1)
(0,0) edge (0,1.3);
\begin{scope}[xscale=-1]
\draw[ultra thick,-stealth]
(-1.9,0)--(-1.2,0)..controls++(.2,.2) and ++(-.4,0)..(-.2,.5);
\draw[ultra thick,-stealth]
(-1.2,0)..controls++(.2,-.2) and ++(-.4,0)..(-.2,-.5);
\end{scope}
\draw[ultra thick,stealth-]
(-1.9,0)--(-1.2,0)..controls++(.2,.2) and ++(-.4,0)..(-.2,.5);
\draw[ultra thick,stealth-]
(-1.2,0)..controls++(.2,-.2) and ++(-.4,0)..(-.2,-.5);
\filldraw[fill=white]
(0,.5) circle (.3cm)
(0,-.5) circle (.3cm)
(-1.2,0) circle (.3cm)
(1.2,0) circle (.3cm);
\node at (0,.5) {\scriptsize $f_2$};
\node at (0,-.5) {\scriptsize $f_1$};
\node at (-1.2,0) {\scriptsize $a$};
\node at (1.2,0) {\scriptsize $b$};
\node at (-.8,.7) {\scriptsize $X_2$};
\node at (-.8,-.7) {\scriptsize $X_1$};
\node at (.8,.7) {\scriptsize $Y_2$};
\node at (.8,-.7) {\scriptsize $Y_1$};
\node at (-2.1,0) {\scriptsize $X$};
\node at (2.1,0) {\scriptsize $Y$};
\end{tikzpicture}.
\end{equation}

\begin{remark} The diagram in \eqref{compositionlawpicture} represents the genuine composition of morphisms \eqref{eq:affb composition}.   Before applying the composition law, the element $a\otimes (f_1\otimes f_2)\otimes b$ in $\bimhom{Z_2}{Z_1}\diamond \bimhom{Z_1}{Z_0}$ could be drawn using a similar formal picture, with the purple line between $f_1$ and $f_2$ broken, reflecting the fact that
\begin{equation}
a\otimes\Big(\, (f_2\circ(g\star \id)) \, \otimes \, f_1\Big) \otimes b \qquad \text{and}\qquad a\otimes\Big(\, f_2 \, \otimes \, ((\id\star g)\circ f_1)\Big) \otimes b
\end{equation}
are different elements of $\bimhom{Z_2}{Z_1}\diamond \bimhom{Z_1}{Z_0}$, but have the same image under the composition law.  To reiterate, one can transport morphisms across the purple strand in \eqref{compositionlawpicture}, analogously to \eqref{eq:morphism slide}, though one could not transport them before applying the composition law. \end{remark}

The following is a straightforward exercise.

\begin{lemma}\label{lemma:alg bimod}
The composition law \eqref{eq:composition of B} is a closed degree zero bimodule map for all $Z_0,Z_1,Z_2\in \MS$, and satisfies the associativity axiom.
\end{lemma}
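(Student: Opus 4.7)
The plan is to verify each of the claimed properties (well-definedness, degree zero, closed, bimodule map, associativity) in turn. First I would check that the formula in Definition \ref{def:composition of B} actually descends to a well-defined map from $\bimhom{Z_2}{Z_1}\diamond \bimhom{Z_1}{Z_0}$, i.e.\ that it factors through the tensor products over $\AS\otimes\AS$ and $\AS\otimes\AS$ built into $\diamond$. Concretely, one needs to check that a morphism $g\in\AS$ occurring between $a$ and $f_2$ (or $f_1$), respectively between $f_i$ and $b$, can be absorbed into either adjacent factor without changing the composition \eqref{eq:affb composition}. This is immediate from the bifunctoriality of $\star$ and associativity of $\circ$ in $\MS$, since $(a\star\id)\circ((g\star\id_Y)\star\id_Z) = ((a\circ(g\star\id))\star \id_Z)$ etc.

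Next, for the degree and chain map properties, the key observation is that $\star$ preserves degrees and that $\MS$ is dg, so each of the four factors in \eqref{eq:affb composition} has degree equal to that of its constituent morphism; the total degree matches the degree of $a\otimes(f_2\otimes f_1)\otimes b$ in $\diamond$. To show the map is closed, I would apply the Leibniz rule to \eqref{eq:affb composition}: the differential falls on each of the four factors with a sign dictated by the degrees of the factors sitting to its left. The point is that these signs are exactly those appearing in the differential on the diamond tensor product $\bimhom{Z_2}{Z_1}\diamond \bimhom{Z_1}{Z_0}$, whose sign rule combines the Koszul signs from $\otimes_{\AS\otimes\AS}$ with those induced by $\mergebim{\AS}$ and $\splitbim{\AS}$. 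A careful bookkeeping shows the signs match. The bimodule-map property $(c\cdot \xi)\mapsto c\cdot \mathrm{comp}(\xi)$ and $(\xi\cdot d)\mapsto \mathrm{comp}(\xi)\cdot d$ (for $c,d\in\AS$) is then a direct consequence of associativity of $\circ$ and bifunctoriality of $\star$.

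For associativity, given an element of $\bimhom{Z_3}{Z_2}\diamond\bimhom{Z_2}{Z_1}\diamond\bimhom{Z_1}{Z_0}$, the two bracketings yield compositions which, when written out via \eqref{eq:affb composition}, produce two strings of four-fold $\star$-tensored morphisms in $\MS$ that differ only in the order in which independent factors are composed. The signed interchange law \eqref{signedinterchange} together with bifunctoriality of $\star$ identifies the two strings; any signs arising from reordering cancel against the corresponding signs in the associator of $\diamond$.

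The main obstacle will be keeping the signs straight throughout. The diamond product mixes Koszul signs from tensor products of bimodule elements with the signs implicit in the action of $\mergebim{\AS}$ and $\splitbim{\AS}$, and the composition formula \eqref{eq:affb composition} requires sliding pieces past each other through $\star$. The cleanest approach is to unpack the sign conventions once and then argue by matching: both sides of each identity organize into the same sum of signed monomials. Everything else really is routine; there is no deeper homological input, only careful diagrammatic or equational reasoning using the structure of a dg $\star$-bimodule category.
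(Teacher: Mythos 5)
Your proposal is correct and coincides with what the paper intends: the paper offers no written proof of this lemma, explicitly labelling it a straightforward exercise, and your plan is precisely that routine direct verification (well-definedness over the relative tensor products built into $\diamond$, degree and closedness via the Leibniz rule and the fact that $\star$ is a dg bifunctor, the bimodule property from associativity of composition, and associativity via the signed interchange law). The only work is the sign bookkeeping you already flag, so there is nothing methodologically different to compare.
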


We claim that the composition law \eqref{eq:composition of B} is also satisfies the unit axiom, with unit maps $u_Z \colon \oone_\diamond \to E_Z$ defined as follows.  Fix $Z\in \MS$.  For each $X, Y \in \AS$, recall that 
\[ \oone_{\diamond}(X,Y) \cong \Hom(X \leftarrow \one) \ot_{\k} \Hom(\one \leftarrow Y)\]
by definition, where $\one$ is the monoidal identity of $\AS$.  Let $\phi$ temporarily denote the canonical isomorphism $\one\star Z \leftarrow Z \star \one$. Then the unit map is 
\begin{equation}\label{eq:unit of B}
u_Z \colon \oone_\diamond \to E_Z \quad , \qquad a\otimes b\mapsto (a\star \id_Z)\circ \phi\circ (\id_Z\star b).
\end{equation}


Diagrammatically, we view the unit map as sending
\begin{equation}
\begin{tikzpicture}[anchorbase]
\draw[dashed]
(0,-.5) -- (0,.5);
\begin{scope}[xshift=-.5cm,xscale=-1]
\draw[ultra thick,-stealth]
(0,0)--(.7,0);
\filldraw[fill=white]
(0,0) circle (.3);
\node at (0,0) {\scriptsize$a$};
\node at (1,0) {\scriptsize$X$};
\end{scope}
\begin{scope}[xshift=.5cm,xscale=1]
\draw[ultra thick,stealth-]
(.3,0)--(.7,0);
\filldraw[fill=white]
(0,0) circle (.3);
\node at (0,0) {\scriptsize$b$};
\node at (1,0) {\scriptsize$X'$};
\end{scope}
\end{tikzpicture}
 \ \ \ \mapsto \ \ \ 
\begin{tikzpicture}[anchorbase]
\draw[electricindigo,ultra thick,-stealth]
(0,-.5) -- (0,.5);
\node at (.2,-.35) {\scriptsize$Z$};
\begin{scope}[xshift=-.6cm,xscale=-1]
\draw[ultra thick,-stealth]
(0,0)--(.7,0);
\filldraw[fill=white]
(0,0) circle (.3);
\node at (0,0) {\scriptsize$a$};
\node at (1,0) {\scriptsize$X$};
\end{scope}
\begin{scope}[xshift=.6cm,xscale=1]
\draw[ultra thick,stealth-]
(.3,0)--(.7,0);
\filldraw[fill=white]
(0,0) circle (.3);
\node at (0,0) {\scriptsize$b$};
\node at (1,0) {\scriptsize$X'$};
\end{scope}
\end{tikzpicture}.
\end{equation}

\begin{lemma}
The map \eqref{eq:unit of B} is a closed degree zero bimodule map $\oone_\diamond\to E_Z$ for all $Z$, and satisfies the unit axiom for the given composition law \eqref{eq:composition of B}.  In particular $E_Z$ is an algebra object in $\Bim_{\AS,\AS}$ for all $Z$, and $\bimhom{Z}{Z'}$ is an $(E_Z,E_{Z'})$-bimodule object for all $Z,Z'\in \AS$.
\end{lemma}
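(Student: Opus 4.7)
The plan is to verify each claim in turn, and all three assertions should follow from direct diagrammatic calculations together with the bifunctoriality of $\star$ and the unit axioms in the dg monoidal category $\AS$ and the $\star$-bimodule category $\MS$.

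First, I would observe that $u_Z$ is manifestly degree zero and closed: the canonical unitor isomorphism $\phi \colon \one\star Z\to Z\star \one$ is a closed degree zero map by the definition of unitors in a $\star$-bimodule category, and the operations $(a\star \id_Z)$ and $(\id_Z\star b)$ preserve both degree and the closed property. To check that $u_Z$ is a bimodule map, fix $g\in\Hom_{\AS}(X_1\leftarrow X_2)$ and $h\in\Hom_{\AS}(Y_1\leftarrow Y_2)$ and compute both sides of $u_Z\big(g\cdot(a\otimes b)\cdot h\big) = g\cdot u_Z(a\otimes b)\cdot h$ using the bifunctoriality of $\star$, namely $(g\circ a)\star \id_Z = (g\star \id_Z)\circ(a\star \id_Z)$ and the analogous identity on the right. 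Both sides reduce to $(g\star\id_Z)\circ(a\star\id_Z)\circ\phi\circ(\id_Z\star b)\circ(\id_Z\star h)$, as required.

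Next, I would verify the unit axiom diagrammatically. Given a pure tensor $a\otimes (c_0\otimes c_1) \otimes f \otimes b$ in $\mergebim{\AS}\otimes_{\AS\otimes\AS}(\oone_\diamond \otimes E_Z) \otimes_{\AS\otimes\AS}\splitbim{\AS}$, applying $u_Z\diamond \id_{E_Z}$ and then the composition law \eqref{eq:composition of B} produces
\[
(a\star\id_Z)\circ (\id_{X_1}\star f)\circ \big(((c_0\star\id_Z)\circ\phi\circ(\id_Z\star c_1))\star\id_{Y_2}\big)\circ (\id_Z\star b).
\]
Using bifunctoriality of $\star$, the middle factor rewrites as $(c_0\star \id_Z\star \id_{Y_2})\circ(\phi\star \id_{Y_2})\circ(\id_Z\star c_1\star \id_{Y_2})$, which I would then absorb into the outer terms. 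The morphisms $c_0$ and $c_1$ combine with $a$ and $b$ through the left unitor of the $\diamond$-structure on $\Bim_{\AS,\AS}$, while the appearance of $\phi$ in the middle is exactly cancelled by the corresponding unitor of the $\star$-bimodule structure on $\MS$ applied to $f$. The outcome equals the image of $f$ (with $a,b$ reincorporated appropriately) under the left unitor isomorphism $\oone_\diamond\diamond E_Z\simto E_Z$ from \eqref{unitorisom}. The right unit axiom is analogous.

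Finally, combining this unit axiom with associativity from Lemma \ref{lemma:alg bimod} establishes that $(E_Z,\circ,u_Z)$ is an algebra object in $(\Bim_{\AS,\AS},\diamond,\oone_\diamond)$. For the module statement, the same composition law \eqref{eq:composition of B} specializes to closed degree zero bimodule maps
\[
E_Z\diamond \bimhom{Z}{Z'}\to \bimhom{Z}{Z'} \qquad\text{and}\qquad \bimhom{Z}{Z'}\diamond E_{Z'}\to \bimhom{Z}{Z'}
\]
(by taking $Z_2=Z_1=Z,\ Z_0=Z'$ and $Z_2=Z,\ Z_1=Z_0=Z'$ respectively), and these satisfy the unit and associativity axioms for a bimodule by verbatim repetition of the arguments above. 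The main (modest) obstacle is pedantically tracking the merge/split and unit/counit pieces through the unitor isomorphisms of the $\diamond$-structure, but once the diagrammatic conventions of \S\ref{ss:bim diagrams}--\S\ref{ss:diamond unit} are in hand everything reduces to local identities coming from bifunctoriality of $\star$ and the coherence axioms of the $\star$-bimodule structure on $\MS$.
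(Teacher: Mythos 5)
Your proposal is correct and follows essentially the same route as the paper: a direct diagrammatic check of the unit axiom, tracing an element around the square through the $\diamond$-unitor, $u_Z\diamond\id$, and the composition law \eqref{eq:composition of B}, and concluding by the coherence relations for $\AS$ and its $\star$-bimodule category $\MS$, with associativity supplied by Lemma \ref{lemma:alg bimod}. The only difference is that you spell out the (routine) verification that $u_Z$ is a closed degree zero bimodule map, which the paper leaves implicit.
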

\begin{proof}
For the unit axiom we must show that for each pair $Z,Z'\in \MS$ the following squares commute:
\begin{equation}
\begin{tikzpicture}[anchorbase]
\node (a) at (0,0) {$\bimhom{Z}{Z'}$};
\node (b) at (0,2) {$\oone_\diamond\diamond \bimhom{Z}{Z'}$};
\node (c) at (3,2) {$E_Z\diamond \bimhom{Z}{Z'}$};
\node (d) at (3,0) {$\bimhom{Z}{Z'}$};
\path[very thick,-stealth]
(a) edge node[left] {$\cong$} (b)
(b) edge node[above] {$u_Z\diamond \id$} (c)
(c) edge node[right] {\eqref{eq:composition of B}} (d)
(a) edge node[above] {$=$} (d);
\end{tikzpicture}
\qquad \quad
\begin{tikzpicture}[anchorbase]
\node (a) at (0,0) {$\bimhom{Z}{Z'}$};
\node (b) at (0,2) {$\bimhom{Z}{Z'}\diamond \oone_\diamond$};
\node (c) at (3,2) {$\bimhom{Z}{Z'}\diamond E_{Z'}$};
\node (d) at (3,0) {$\bimhom{Z}{Z'}$};
\path[very thick,-stealth]
(a) edge node[left] {$\cong$} (b)
(b) edge node[above] {$\id\diamond u_{Z'}$} (c)
(c) edge node[right] {\eqref{eq:composition of B}} (d)
(a) edge node[above] {$=$} (d);
\end{tikzpicture}
\end{equation}
We show the left square commutes; the right is similar.  The composition (up, over, down around the square) sends
\begin{equation}
\begin{tikzpicture}[baseline=0cm]
\path[-stealth,ultra thick, electricindigo]
(0,-.7) edge (0,.7);
\path[-stealth,ultra thick]
(1,0) edge (-1,0);
\filldraw[fill=white] (0,0) circle (.3cm);
\node at (0,0) {\scriptsize $f$};
\node at(0,-1) {\scriptsize $Z'$};
\node at (0,1) {\scriptsize $Z$};
\node at(-1.2,0) {\scriptsize $X$};
\node at (1.2,0) {\scriptsize $X'$};
\end{tikzpicture}
\quad \mapsto \quad 
\begin{tikzpicture}[anchorbase]
\path[-stealth,ultra thick, electricindigo]
(0,-1.3) edge (0,.1)
(0,0) edge (0,1.3);
\begin{scope}[xscale=-1]
\draw[ultra thick,-stealth]
(-1.9,0)--(-1.2,0)..controls++(.2,.2) and ++(-.4,0)..(-.2,.5);
\draw[ultra thick,-stealth]
(-1.2,0)..controls++(.2,-.2) and ++(-.4,0)..(-.2,-.5);
\end{scope}
\draw[ultra thick,stealth-]
(-1.9,0)--(-1.2,0)..controls++(.2,.2) and ++(-.4,0)..(-.2,.5);
\draw[ultra thick,stealth-]
(-1.2,0)..controls++(.2,-.2) and ++(-.4,0)..(-.2,-.5);
\filldraw[fill=white]
(0,.5) circle (.3cm)
(0,-.5) circle (.3cm)
(-1.2,0) circle (.3cm)
(1.2,0) circle (.3cm);
\node at (0,.5) {\scriptsize $\phi_Z$};
\node at (0,-.5) {\scriptsize $f$};
\node at (-1.2,0) {\scriptsize $\rho_X$};
\node at (1.2,0) {\scriptsize $\rho_{X'}\inv$};
\node at (-.8,.7) {\scriptsize $\one$};
\node at (-.8,-.7) {\scriptsize $X$};
\node at (.8,.7) {\scriptsize $\one$};
\node at (.8,-.7) {\scriptsize $X'$};
\node at (-2.1,0) {\scriptsize $X$};
\node at (2.1,0) {\scriptsize $X'$};
\end{tikzpicture}.
\end{equation}
where $\rho$ is the right unitor isomorphism in $\AS$, and $\phi$ is the natural isomorphism $-\star \one\to \one\star -$ in $\MS$.  Note that usually we do not draw occurences of the monoidal identity in our diagrams.  We have included them here, because otherwise the content of this statement is difficult to appreciate. The composition on the right above equals $f$ by the usual coherence relations for monoidal categories and their $\star$-bimodule categories.
\end{proof}

\begin{remark}
In \cite{GHW} the spaces $B_{-,-}(-,-)$ were referred to as \emph{quadmodules} over $\AS$.  In this paper we prefer slightly different point of view, which breaks the symmetry between the $X$'s and $Z$'s: for fixed $Z,Z'$ we have that $\bimhom{Z}{Z'}(-,-)$ is a bimodule over $\AS$ as well as an $(E_Z,E_{Z'})$-bimodule object in $\Bim_{\AS,\AS}$.

One might refer to $E_Z$ as \emph{algebra bimodules} since they are algebra objects in $\Bim_{\AS,\AS}$, and the $\bimhom{Z}{Z'}$ as \emph{bimodule bimodules} (or \emph{quadmodules}, keeping mind that this clashes slightly with the usage of this word in \cite{GHW}). 
\end{remark}

\begin{definition}\label{def:bimodbimodcat}
Let $\Qmod{\MS}$ be the the category enriched in $(\Bim_{\AS,\AS},\diamond,\oone_\diamond)$ whose set of objects is $\Obj(\MS)$, and whose morphisms are given by the hom bimodules
\[
\Hom_{\Qmod{\MS}}(Z',Z) := \bimhom{Z}{Z'}
\]
with composition law \eqref{eq:composition of B} and identity maps \eqref{eq:unit of B}.  We refer to $\Qmod{\MS}$ as the \emph{bimodule enrichement of $\MS$}.
\end{definition}

%
%
%

\subsection{The ordinary Drinfeld center}
\label{ss:ordinary drinfeld}

Let us express the Drinfeld centralizer $\ZS(\AS,\MS)$ using the enrichment of $\MS$ from Definition \ref{def:bimodbimodcat}.

\begin{lemma}\label{lemma:drinfeld center and bimode}
The trivial bimodule $\ibim{\AS}\in \Bim_{\AS,\AS}$ has the structure of an algebra with respect to the $\diamond$-product on $\Bim_{\AS,\AS}$, and the Drinfeld centralizer $\ZS(\AS,\MS)$ coincides with the category of $\AS$-modules in $\Qmod{\MS}$.
\end{lemma}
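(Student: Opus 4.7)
The plan is to first endow $\ibim{\AS}$ with the structure of a $\diamond$-algebra, and then systematically unpack the resulting notion of an $\ibim{\AS}$-module in the bimodule enrichment $\Qmod{\MS}$ until it matches Definition \ref{def:relative drinfeld} verbatim. For the algebra structure, the unit $u \colon \oone_\diamond \to \ibim{\AS}$ should send a pure tensor $a \otimes b$, with $a \in \Hom_\AS(X \leftarrow \one_\AS)$ and $b \in \Hom_\AS(\one_\AS \leftarrow Y)$, to the composition $a \circ b$. The multiplication $\mu \colon \ibim{\AS} \diamond \ibim{\AS} \to \ibim{\AS}$ sends a formal element of the shape displayed in \eqref{MdiamondN}, namely $a \otimes (f_1 \otimes f_2) \otimes b$ with $a \colon X \leftarrow X_1 \star X_2$, $f_i \colon X_i \leftarrow Y_i$ and $b \colon Y_1 \star Y_2 \leftarrow Y$, to the composition $a \circ (f_1 \star f_2) \circ b \in \ibim{\AS}(X, Y)$. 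That these are closed degree zero bimodule maps is immediate from the formulas, and associativity and unitality follow from the associativity and unit axioms of $\AS$ together with the interchange/naturality of $\star$.

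Next I would unpack what an $\ibim{\AS}$-module structure on an object $Z \in \MS$ amounts to. Any bimodule map $\nu \colon \ibim{\AS} \to E_Z$ is determined by the family $\tau_X := \nu(\id_X) \in E_Z(X,X) = \Hom_\MS(X \star Z \leftarrow Z \star X)$, because the bimodule property forces $\nu(f) = f \cdot \tau_X = \tau_{X'} \cdot f$ for every $f \in \Hom_\AS(X' \leftarrow X)$; equating these two expressions is exactly the naturality square \eqref{centralsquare}. The requirement that $\nu$ commute with units says that $\tau_{\one_\AS}$ is the canonical natural isomorphism $\one_\AS \star Z \cong Z \star \one_\AS$. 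The requirement that $\nu$ commute with multiplications, evaluated on the distinguished element $\id_{X_1 \star X_2} \otimes (\id_{X_1} \otimes \id_{X_2}) \otimes \id_{X_1 \star X_2}$ of $\ibim{\AS} \diamond \ibim{\AS}$ and unwound using the composition law \eqref{eq:composition of B} (whose local structure is pictured in \eqref{compositionlawpicture}), produces exactly $\tau_{X_1 \star X_2} = (\id_{X_1} \star \tau_{X_2}) \circ (\tau_{X_1} \star \id_{X_2})$, i.e.\ \eqref{eq:multiplicativity}. So the data of a $\diamond$-algebra morphism $\ibim{\AS} \to E_Z$ matches the data of a centralizer object $(Z, \tau)$ in the sense of Definition \ref{def:relative drinfeld}, with the caveat that the invertibility of each $\tau_X$ must be imposed separately (and is manifestly preserved under the bijection).

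For morphisms: a morphism of $\ibim{\AS}$-modules from $(Z', \nu')$ to $(Z, \nu)$ is an element $f \in \bimhom{Z}{Z'}(\one_\AS, \one_\AS) = \Hom_\MS(Z \leftarrow Z')$ for which the two induced bimodule maps $\ibim{\AS} \to \bimhom{Z}{Z'}$, one built from $\nu$ and postcomposition with $f$ and the other from $\nu'$ and precomposition with $f$, agree. Evaluating this equality on $\id_X$ and using the composition law of $\Qmod{\MS}$ instantiated at the relevant identities yields precisely the intertwining condition \eqref{tobeamorphism}. The only genuine hurdle in the argument is combinatorial rather than conceptual: one must carefully track the merge/split/unit/counit pieces of $\diamond$, the accompanying bimodule relations, and the sign conventions so that all axioms match on the nose; the strength of the diagrammatic calculus of \S\ref{ss:bim diagrams}--\S\ref{ss:alg bimod bimod} is that it reduces this bookkeeping to reading off a small number of pictures.
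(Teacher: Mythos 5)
Your proposal is correct and follows essentially the same route as the paper: the same unit $a\otimes b\mapsto a\circ b$ and multiplication $a\otimes(f\otimes g)\otimes b\mapsto a\circ(f\star g)\circ b$ on $\ibim{\AS}$, the same reduction of a module map $\nu\colon\ibim{\AS}\to E_Z$ to the family $\tau_X=\nu(\id_X)$ recovering \eqref{centralsquare} and \eqref{eq:multiplicativity}, and the same identification of module morphisms with \eqref{tobeamorphism} via $f=\phi(\id_\one\otimes\id_\one)$. Your remark about imposing invertibility of $\tau_X$ separately is a fair observation about a point the paper's proof passes over silently, but it does not change the argument.
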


\begin{proof}
The unit is the map $\oone_\diamond \to \ibim{\AS}$ sending $a\otimes b\mapsto a\circ b$, as in
\begin{equation}
\begin{tikzpicture}[anchorbase]
\draw[dashed]
(0,-.5) -- (0,.5);
\begin{scope}[xshift=-.5cm,xscale=-1]
\draw[ultra thick,-stealth]
(0,0)--(.7,0);
\filldraw[fill=white]
(0,0) circle (.3);
\node at (0,0) {\scriptsize$a$};
\node at (1,0) {\scriptsize$X$};
\end{scope}
\begin{scope}[xshift=.5cm,xscale=1]
\draw[ultra thick,stealth-]
(.3,0)--(.7,0);
\filldraw[fill=white]
(0,0) circle (.3);
\node at (0,0) {\scriptsize$b$};
\node at (1,0) {\scriptsize$X'$};
\end{scope}
\end{tikzpicture}
\quad \mapsto \quad 
\begin{tikzpicture}[anchorbase]
\draw[ultra thick,-stealth] (.7,0) --(-.8,0);
\filldraw[fill=white, rounded corners]
(-.4,-.3) rectangle (.4,.3);
\node at (0,0) {\scriptsize$a \circ b$};
\node at (-1,0) {\scriptsize$X$};
\node at (1,0) {\scriptsize$X'$};
\end{tikzpicture}
\end{equation}
and the multiplication is the map $\ibim{\AS}\diamond \ibim{\AS}\to \ibim{\AS}$ sending
\begin{equation}
a\otimes (f\otimes g)\otimes b \mapsto a\circ (f\star g)\circ b.
\end{equation}
That is, multiplication maps the formal diagram in \eqref{MdiamondN} to its interpretation as a composition, which makes sense in this context.

We leave it as an exercise to verify the associativity and unit axioms.

Now, an $\ibim{\AS}$-module in $\Qmod{\MS}$ is an object $Z\in \Obj(\Qmod{\MS})=\Obj(\MS)$ equipped with map $\nu\colon \ibim{\AS}\to E_Z$ in $\Bim_{\AS,\AS}$ which is compatible with the algebra structures.  Since the trivial bimodule $\ibim{\AS}$ is generated by the identity endomorphisms $\id_X\in \AS(X,X)$, we see that $\nu\colon \ibim{\AS}\to E_Z$ is determined by $\tau_X:=\nu(\id_X)$ where $X$ ranges over the objects of $\AS$.

The requirement that $\nu$ be a map of bimodules tells us that $f \cdot \tau_{X'} = \tau_X\cdot f$ for all $X\buildrel f\over \leftarrow X'$.  Diagrammatically this gives the familiar compatibility relation
\begin{equation}
\begin{tikzpicture}[baseline=0cm]
\path[-stealth,ultra thick, electricindigo]
(0,-.7) edge (0,.7);
\path[-stealth,ultra thick]
(1.7,0) edge (-1.7,0);
\filldraw[fill=white]
(0,0) circle (.3cm)
(-1,0) circle (.3cm);
\node at (0,0) {\scriptsize $\tau_{X'}$};
\node at(0,-1) {\scriptsize $Z$};
\node at (0,1) {\scriptsize $Z$};
\node at(-1,0) {\scriptsize $f$};
\node at (1.9,0) {\scriptsize $X'$};
\node at (-1.9,0) {\scriptsize $X$};
\end{tikzpicture}
\quad = \quad
\begin{tikzpicture}[baseline=0cm]
\path[-stealth,ultra thick, electricindigo]
(0,-.7) edge (0,.7);
\path[-stealth,ultra thick]
(1.7,0) edge (-1.7,0);
\filldraw[fill=white]
(0,0) circle (.3cm)
(1,0) circle (.3cm);
\node at (0,0) {\scriptsize $\tau_{X}$};
\node at(0,-1) {\scriptsize $Z$};
\node at (0,1) {\scriptsize $Z$};
\node at (1,0) {\scriptsize $f$};
\node at (1.9,0) {\scriptsize $X'$};
\node at (-1.9,0) {\scriptsize $X$};
\end{tikzpicture}
\end{equation}
This is just \eqref{centralsquare}.

The requirement that $\nu$ be an algebra map translates to the following diagrammatic relation
\begin{equation} \label{nuisanalgebramap}
\begin{tikzpicture}[baseline=0cm]
\path[-stealth,ultra thick, electricindigo]
(0,-1.4) edge (0,1.4);
\draw[ultra thick,stealth-]
(-1.7,0)--(-.95,0)..controls++(.2,.2) and ++(-.2,0)..(-.2,.5);
\draw[ultra thick]
(-.95,0)..controls++(.2,-.2) and ++(-.2,0)..(-.2,-.5)
(1.7,0)--(.95,0)..controls++(-.2,.2) and ++(.2,0)..(.2,.5)
(.95,0)..controls++(-.2,-.2) and ++(.2,0)..(.2,-.5);
\filldraw[fill=white]
(0,.5) circle (.3cm)
(0,-.5) circle (.3cm);
\node at (0,.5) {\scriptsize $\tau_{X_2}$};
\node at (0,-.5) {\scriptsize $\tau_{X_1}$};
\node at (-.7,.6) {\scriptsize $X_2$};
\node at (-.7,-.6) {\scriptsize $X_1$};
\node at (.7,.6) {\scriptsize $X_2$};
\node at (.7,-.6) {\scriptsize $X_1$};
\node at (-2.3,0) {\scriptsize $X_1\star X_2$};
\node at (2.3,0) {\scriptsize $X_1\star X_2$};
\end{tikzpicture}
\quad = \quad
\begin{tikzpicture}[baseline=0cm]
\path[-stealth,ultra thick, electricindigo]
(0,-.7) edge (0,.7);
\path[-stealth,ultra thick]
(1.3,0) edge (-1.3,0);
\filldraw[fill=white,rounded corners] (-.6,-.3) rectangle (.6,.3);
\node at (0,0) {\scriptsize $\tau_{X_1\star X_2}$};
\node at(0,-1) {\scriptsize $Z$};
\node at (0,1) {\scriptsize $Z$};
\node at(-2,0) {\scriptsize $X_1\star X_2$};
\node at (2,0) {\scriptsize $X_1\star X_2$};
\end{tikzpicture}
\end{equation}
where the forks on the left and right should be interpreted as the identity map $\id_{X_1\star X_2}$, regarded as an elements of $\mergebim{\AS}$ and $\splitbim{\AS}$ respectively. This is equivalent to the usual multiplicativity constraint on Drinfeld central structures.

Now, suppose that $Z_1,Z_2$ are $\AS$-modules in $\Qmod{\MS}$, with structure maps $\nu_i\colon \ibim{\AS}\to E_{Z_i}$.  A map of $\ibim{\AS}$-modules $Z_1\to Z_2$ is the same thing as a map $\phi\colon \oone_\diamond\to \bimhom{Z_2}{Z_1}$ (in the enriching category $\Bim_{\AS,\AS}$) making the following diagram commute:
\begin{equation}
\begin{tikzpicture}[anchorbase]
\node (z) at (-2,1) {$\AS$};
\node (a) at (0,0) {$\AS\diamond \oone_\diamond$};
\node (b) at (3,0) {$\bimhom{Z_2}{Z_1}\diamond E_{Z_1}$};
\node (e) at (5,1) {$\bimhom{Z_2}{Z_1}$};
\node (c) at (0,2) {$\oone_\diamond \diamond \AS$};
\node (d) at (3,2) {$E_{Z_2}\diamond \bimhom{Z_2}{Z_1}$};
\path[-stealth,very thick]
(z) edge node[above] {$\cong$} (a)
(z) edge node[above] {$\cong$} (c)
(a) edge node[above] {$u_{Z_2}\diamond \phi$} (b)
(c) edge node[above] {$\phi\diamond u_{Z_1}$} (d)
(b) edge node {} (e)
(d) edge node {} (e);
\end{tikzpicture},
\end{equation}
where $u_Z$ denotes the ``enriched identity'' map $\oone_\diamond\to E_Z$ \eqref{eq:unit of B} and the rightmost arrows are given by the composition law \eqref{eq:composition of B}.  If we let $f:=\phi(\id_\one\otimes \id_\one)\in \bimhom{Z}{Z'}(\one,\one)$, then commutativity of the above diagram is easily seen to be equivalent to the following diagrammatic relation.
\begin{equation}
\begin{tikzpicture}[baseline=.5cm]
\path[-stealth,ultra thick, electricindigo]
(0,-.7) edge (0,1.7);
\path[-stealth,ultra thick]
(1,0) edge (-1,0);
\filldraw[fill=white] (0,0) circle (.3cm)
(0,1) circle (.3cm);
\node at (0,0) {\scriptsize $\tau'_X$};
\node at(0,-1) {\scriptsize $Z_2$};
\node at (0,2) {\scriptsize $Z_1$};
\node at(-1.2,0) {\scriptsize $X$};
\node at (1.2,0) {\scriptsize $X'$};
\node at (0,1) {\scriptsize $f$};
\end{tikzpicture}
\quad =\quad
\begin{tikzpicture}[baseline=-.5cm]
\path[-stealth,ultra thick, electricindigo]
(0,-1.7) edge (0,.7);
\path[-stealth,ultra thick]
(1,0) edge (-1,0);
\filldraw[fill=white] (0,0) circle (.3cm)
(0,-1) circle (.3cm);
\node at (0,0) {\scriptsize $\tau_X$};
\node at(0,-2) {\scriptsize $Z_1$};
\node at (0,1) {\scriptsize $Z_2$};
\node at(-1.2,0) {\scriptsize $X$};
\node at (1.2,0) {\scriptsize $X$};
\node at (0,-1) {\scriptsize $f$};
\end{tikzpicture}.
\end{equation}
This is just a diagrammatic reformulation of what it means to be a morphism in the Drinfeld centralizer, see \eqref{tobeamorphism}.
\end{proof}

%


\subsection{Eilenberg-Zilber product}
\label{ss:EZ}
Let $\AS$ be a dg monoidal category and let $\MS$ be an $(\AS,\AS)$ $\star$-bimodule category.  The $A_\infty$-Drinfeld centralizer of $\AS$ in $\MS$ is defined in a similar spirit to the ordinary Drinfeld centralizer, but replacing the trivial bimodule $\ibim{\AS}\in \Bim_{\AS,\AS}$ by its projective resolution $\Bar_\AS$, and using $A_\infty$-modules in place of ordinary modules.  For the definition, we will need to lift the algebra structure on $\ibim{\AS}$ (see Lemma \ref{lemma:drinfeld center and bimode}) to $\Bar_\AS$.

\begin{proposition}\label{prop:bar is alg}
If $\AS$ is a dg monoidal category then $\Bar_\AS$ is an algebra in $(\Bim_{\AS,\AS},\diamond,\oone_\diamond)$.\qed
\end{proposition}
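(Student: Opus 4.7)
The plan is to equip $\Bar_\AS$ with an algebra structure via the Eilenberg-Zilber shuffle product, lifting the algebra structure on $\ibim{\AS}$ of Lemma \ref{lemma:drinfeld center and bimode} along the augmentation $\bard\colon \Bar_\AS\to \ibim{\AS}$. The unit $\eta\colon \oone_\diamond \to \Bar_\AS$ should send $a\otimes b \in \Hom_\AS(X\leftarrow \one_\AS)\otimes_\k \Hom_\AS(\one_\AS\leftarrow Y)$ to the length-zero bar element $[a, b] \in \Bar_\AS^0(X,Y)$ with intermediate object $\one_\AS$; this is manifestly a closed degree-zero bimodule map.

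For the multiplication $\mu\colon \Bar_\AS \diamond \Bar_\AS \to \Bar_\AS$, given a typical element $a\otimes (\sigma_1\otimes \sigma_2)\otimes b$ of $(\Bar_\AS\diamond\Bar_\AS)(X,Y)$ with $a\in \mergebim{\AS}(X,X_1\star X_2)$, $b\in \splitbim{\AS}(Y_1\star Y_2,Y)$, and bar sequences $\sigma_1 = [f_0|\cdots|f_{r+1}]$ through objects $X_1\leftarrow X^1_0\leftarrow\cdots\leftarrow X^1_r\leftarrow Y_1$ and $\sigma_2 = [g_0|\cdots|g_{s+1}]$ through $X_2\leftarrow X^2_0\leftarrow\cdots\leftarrow X^2_s\leftarrow Y_2$, I would set
\[
\mu(a\otimes (\sigma_1\otimes \sigma_2)\otimes b) \ = \ \sum_{\pi \in \mathrm{Sh}(r,s)} (-1)^{\epsilon(\pi)}\bigl[\,a\circ(f_0\star g_0)\,\bigl|\,h^\pi_1\,\bigr|\,\cdots\,\bigr|\,h^\pi_{r+s}\,\bigr|\,(f_{r+1}\star g_{s+1})\circ b\,\bigr],
\]
where $h^\pi_k$ equals $f_i\star \id_{X^2_j}$ or $\id_{X^1_i}\star g_j$ according to which coordinate the $(r,s)$-shuffle $\pi$ advances at step $k$, and $\epsilon(\pi)$ combines the classical shuffle sign with Koszul contributions for crossing each $f_i$ past each $g_j$ of nonzero internal degree. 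That $\mu$ is an $(\AS,\AS)$-bimodule map is immediate, since the left and right $\AS$-actions on $\Bar_\AS\diamond\Bar_\AS$ touch only $a$ and $b$, which are left unchanged by the shuffle. The unit and associativity axioms follow from standard shuffle combinatorics: iterated binary shuffles recover the sum over $(r,s,t)$-shuffles, and shuffling with a length-zero sequence $[a',b']$ amounts to composing $a'$ and $b'$ onto the outermost entries of the other sequence, matching the unitor isomorphisms in $(\Bim_{\AS,\AS},\diamond,\oone_\diamond)$. Compatibility of $\mu$ with the internal hom-complex differentials is a direct consequence of the graded Leibniz rule for $\star$ together with the Koszul signs built into $\epsilon(\pi)$.

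The main obstacle is verifying that $\mu$ commutes with the bar differential $\bard$ of \eqref{bardformula}, i.e.\ the Eilenberg-Zilber identity $\bard\circ \mu = \mu\circ(\bard\diamond \id) \pm \mu\circ(\id\diamond \bard)$. To prove this, one expands $\bard$ on each shuffled sequence. The terms that compose two adjacent entries of the same ``color'' (both coming from $\sigma_1$ or both from $\sigma_2$) reassemble, summed across shuffles, into exactly the right-hand side. The terms that compose two adjacent entries of opposite colors pair off with analogous terms in shuffles differing by an adjacent transposition: if one shuffle produces $(f_i\star \id)\circ(\id\star g_j)$, the transposed shuffle produces $(\id\star g_j)\circ(f_i\star \id)$, and by the signed interchange law \eqref{signedinterchange} these two products agree up to a Koszul sign that exactly cancels the sign difference between the two shuffles in $\epsilon(\pi)$. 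The delicate part is choosing the definition of $\epsilon(\pi)$ so that this cancellation is simultaneously consistent with \eqref{signedinterchange}, with the sign convention of \eqref{bardformula}, and with the sign of the shift $[r]$ in the definition of $\Bar_\AS$; once $\epsilon(\pi)$ is fixed as the shuffle sign plus a Koszul term for each inversion of a pair $(f_i, g_j)$ weighted by $|f_i|\cdot |g_j|$, the cancellations assemble automatically and the algebra axioms follow.
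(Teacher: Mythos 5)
Your construction is the explicit form of exactly the product the paper has in mind: the unit is the inclusion of the $X_0=\one_\AS$ summand of $\Bar^0_\AS$ (this is precisely the paper's $\eta\colon\oone_\diamond\to\Bar_\AS$), and the multiplication is the Eilenberg--Zilber shuffle, whose degree-zero component matches the formula the paper records. The paper, however, deliberately never writes the general formula: it obtains $\mu$ by transporting the universal Eilenberg--Zilber map (in the linearized simplex category $\k\Coalg$) along a monoidal functor into $\Bim_{\AS,\AS}$ and then inducing along $\star\colon\AS\otimes\AS\to\AS$ via $\mergebim{\AS}$ and $\splitbim{\AS}$, citing Eilenberg--MacLane for the axioms. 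Once you choose the explicit route, the entire content of the proof is the sign bookkeeping, and that is where your argument has a genuine gap in the stated generality, namely for $\AS$ an arbitrary dg monoidal category (which is the case actually needed, e.g.\ for $\Ch^b(\AS)$ in Theorem \ref{thm:infty drinfeld}).

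Concretely, the prescription ``classical shuffle sign plus a Koszul term $|f_i|\,|g_j|$ for each inversion'' does not make $\mu$ commute with the internal differentials of the hom complexes. Test it with $r=s=1$, $\sigma_1=[\id\,|\,f\,|\,\id]$, $\sigma_2=[\id\,|\,g\,|\,\id]$, all outer entries identities, $|f|=p$, $|g|=q$. Using the paper's conventions (Koszul differential on $\Bar^{-r}_\AS$, the sign $(-1)^r$ coming from the shift $[r]$, the bar differential \eqref{bardformula}, and the signed interchange law \eqref{signedinterchange}), compare the coefficient of the word $[\id\,|\,df\star\id\,|\,\id\star g\,|\,\id]$ on the two sides of the chain-map identity $d(\mu(\sigma_1\otimes\sigma_2))=\mu(d(\sigma_1\otimes\sigma_2))$: the left side contributes $\epsilon_{\mathrm{id}}(p,q)$ and the right side contributes $-\epsilon_{\mathrm{id}}(p+1,q)$, where $\epsilon_{\mathrm{id}}$ is the coefficient you assign to the identity shuffle. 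So a correct product must satisfy $\epsilon_{\mathrm{id}}(p+1,q)=-\epsilon_{\mathrm{id}}(p,q)$, i.e.\ the coefficient of the \emph{unpermuted} shuffle already depends on the internal degrees; your recipe gives $\epsilon_{\mathrm{id}}\equiv 1$ here (no inversions, and every crossed pair involves an identity), so the proposed $\mu$ is not closed. The missing signs are the suspension/translation signs arising from identifying elements of $\Bar^{-r}_\AS[r]\otimes\Bar^{-s}_\AS[s]$ with elements of $\Bar^{-(r+s)}_\AS[r+s]$: the shuffle must be performed on the shifted entries, and rewriting it on unshifted entries produces signs mixing internal degrees with bar positions, not only the pairwise terms $|f_i|\,|g_j|$. (When $\AS$ has trivial grading and zero differential -- the Hecke-category applications -- your formula reduces to the classical shuffle product and your cancellation argument via \eqref{signedinterchange} is fine.) To repair the proof, either redo the sign analysis systematically on suspended entries, verifying the chain-map, associativity and bimodule-map conditions with the resulting degree-dependent coefficients, or follow the paper and pull back the universal Eilenberg--Zilber map, which delivers all the axioms without ever fixing an explicit sign.
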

The above is well-known to experts (see \cite{EilenbergZilber,LodayQuillen,GHW}) and is a generalization of the familiar fact that the bar complex of a commutative algebra has the structure of a dg algebra (indeed, a commutative algebra can be regarded as a monoidal category with one object).   We will sketch some of the main ideas.  First, the unit map $\eta\colon \oone_\diamond \to \Bar_\AS$ is simply the inclusion of $\one_\diamond = \ibim{\AS}(-,\one_\AS)\otimes\ibim{\AS}(\one_\AS,-)$ in $\Bar_\AS$.

The multiplication $\mu_2\colon \Bar_\AS\diamond \Bar_\AS \to \Bar_\AS$ is given by the Eilenberg-Zilber shuffle product.  The standard reference for this shuffle product is \cite[Theorems 5.1 and 5.2]{EilMac1}), though it takes a little bit of translating to see the connection to bar complexes.  In the interest of thoroughness, let us say this a bit more carefully.

Let $\Coalg$ be the monoidal category freely generated by a comonoid object $C$ (in the topologists language, $\Coalg$ is the opposite of the augmented simplex category).  We may form a $\k$-linear category $\k\Coalg$ by linearizing all the morphism sets. Consider the complex
\begin{equation}
\PB_C:= \cdots \to C^3\to C^2\to \underline{C}  \in \Ch(\k\Coalg)
\end{equation}
in which the differential is an alternating sum of counit maps (i.e.~face maps).
One might call this the ``universal simplicial chain complex'' because if $X\colon \Coalg\to \mathbf{Set}$ is a simplicial set, then the image of $\PB_C$ under $X$ (after linearizing and extending to complexes) is precisely the simplicial chain complex $C_\bullet(X)$, with coefficient ring $\k$. Now, the Eilenberg-Zilber product is a chain map $C_\bullet(X)\otimes C_\bullet(Y)\to C_\bullet(X\times Y)$ (in fact, a homotopy equivalence) satisfying an appropriate version of unit and associativity axioms. This chain map is inherited from a ``universal'' version, which takes the form of a chain map
\begin{equation}
\PB_C\otimes \PB_C \rightarrow \PB_{C\otimes C}
\end{equation}
inside the tensor product of dg categories $\Ch(\k\Coalg)\otimes \Ch(\k\Coalg)$.  This universal Eilenberg-Zilber map then induces similar such maps in a wide variety of contexts.  
Of particular interest to us is the following.  If $\AS$ is a dg category then there is a monoidal functor $\k\Coalg\to \Bim_{\AS,\AS}$ which sends $C\mapsto \bigoplus_X \ibim{\AS}(-,X)\otimes \ibim{\AS}(X,-)$ and $\PB_C\mapsto \Bar_\AS$.  Then the Eilenberg-Zilber map in this context is a closed degree zero bimodule map
\begin{equation} \label{EZpart1}
\Bar_{\AS}\otimes \Bar_{\BS}\to \Bar_{\AS\otimes \BS}.
\end{equation}

Now take $\AS=\BS$ to be a dg monoidal category. Recalling that 
\[ \Bar_{\AS}^{\diamond 2} = \mergebim{}\otimes_{\AS\otimes \AS} (\Bar_{\AS}\otimes \Bar_{\AS})\otimes_{\AS\otimes\AS} \splitbim{}, \]
we can apply the map from \eqref{EZpart1} to the middle factor to get a morphism
\begin{equation} \label{EZpart2}
\Bar_{\AS}^{\diamond 2}\to \mergebim{}\otimes_{\AS\otimes \AS} \Bar_{\AS\otimes \AS}\otimes_{\AS\otimes\AS} \splitbim{}.
\end{equation}
This is the first step towards defining the multiplication map on $\Bar_{\AS}$.

Now suppose we have a dg functor $F \colon \AS \to \BS$. This does not induce a map $\Bar_{\AS} \to \Bar_{\BS}$, and indeed it could not, since these bar complexes live in different categories. Instead, it induces a natural map
\begin{equation} \BS \otimes_{\AS} \Bar_{\AS} \otimes_{\AS} \BS \to \Bar_{\BS}, \end{equation}
where we have abused notation and written $\BS$ for the $(\BS,\AS)$-bimodule or the $(\AS,\BS)$-bimodule where $\AS$ acts via $F$. That is, the left-hand side is the ``induction'' of $\Bar_{\AS}$ to a $(\BS, \BS)$-bimodule. We omit the details.

We apply this construction when $\AS$ is a dg monoidal category and $F$ is the monoidal structure map $\AS\otimes \AS\to \AS$. In this case, the induction from an $(\AS \otimes \AS, \AS \otimes \AS)$-bimodule to an $(\AS,\AS)$-bimodule is achieved using $\mergebim{}$ and $\splitbim{}$. Thus we obtain a morphism
\begin{equation} \label{EZpart3} \mergebim{}\otimes_{\AS\otimes \AS} \Bar_{\AS\otimes \AS}\otimes_{\AS\otimes\AS} \splitbim{} \to \Bar_{\AS}. \end{equation}

Composing the maps from \eqref{EZpart2} and \eqref{EZpart3} gives a morphism
\begin{equation} \label{EZcomposition}
\Bar_{\AS}^{\diamond 2}\to \Bar_{\AS}
\end{equation}
which is the associative multiplication structure on $\Bar_{\AS}$.

We do not need to know any specific formulas for this product, except that in degree zero the product is given by
\begin{equation}
(\Bar_\AS^0\diamond \Bar_\AS^0)(X,X')\to \Bar_\AS^0(X,X') \ , \qquad a\otimes ([f_0,f_1]\otimes [g_0,g_1])\otimes b\mapsto [a\circ(f_0\star g_0)\, ,\, (f_1\star g_1)\circ b]
\end{equation}
where:
\begin{itemize}
\item $a\in \mergebim{\AS}(X,X_0\star Y_0)$,
\item $f_0\otimes f_1\in\ibim{\AS}(X_0,X_1)\otimes \ibim{\AS}(X_1,X_2)\subset \Bar_\AS(X_0,X_2)$
\item $g_0\otimes g_1\in\ibim{\AS}(Y_0,Y_1)\otimes \ibim{\AS}(Y_1,Y_2)\subset \Bar_\AS(Y_0,Y_2)$,
\item $b\in \splitbim{\AS}(X_2\star Y_2,X')$.
\end{itemize}

In the special case where all the above maps are identity maps $a=b=\id_{X\star Y}$, $f_0=f_1=\id_X$, and $g_0=g_1=\id_Y$ we see that
\begin{equation} \label{degzeromult}
\id_{X\star Y}\otimes([\id_X,\id_X]\otimes[\id_Y,\id_Y])\otimes \id_{X\star Y} \mapsto [\id_{X\star Y},\id_{X\star Y}].
\end{equation}

\begin{remark}
The above formula essentially tells us that $\mu_2\colon \Bar_{\AS}^{\diamond 2} \to \Bar_\AS$ lifts the algebra structure on $\ibim{\AS}$ considered in Lemma \ref{lemma:drinfeld center and bimode}, in a suitable sense.
\end{remark}

\subsection{$A_\infty$-Drinfeld centralizers}
\label{ss:infty drinfeld}
Throughout this section we let $(\AS,\star,\one)$ be a dg monoidal category and $\MS$ a dg category with the structure of a $\star$-bimodule category over $\AS$.  To formulate our notion of $A_\infty$ Drinfeld centralizer, we will use the language of $A_\infty$ algebras and modules, specifically as developed in the Appendix \ref{ss:a infty}.

Recall that an honest algebra can always be regarded as an $A_\infty$ algebra with $\mu_n=0$ for $n\neq 2$.  Likewise, we may view Proposition \ref{prop:bar is alg} as giving $\Bar_\AS$ the structure of a (strictly unital) $A_\infty$ algebra object in $(\Bim_{\AS,\AS},\diamond,\oone_\diamond)$, with $\mu_n=0$ for $n\neq 2$.    Note that the vanishing of $\mu_1$ does not mean the differential on $\Bar_\AS$ is zero, but rather that the differential of $\Bar_\AS$ is purely \emph{internal}.  We can now present our $A_\infty$ generalization of the Drinfeld centralizer.

\begin{definition}\label{def:infty drinfeld}
The $A_\infty$-Drinfeld centralizer of $\AS$ in $\MS$, denoted $\ZS_\infty(\AS,\MS)$, is the category of strictly unital $A_\infty$ modules over $\Bar_\AS$ in $\Qmod{\MS}$.  Precisely, an object of $\ZS_\infty(\AS)$ is a pair $(Z,\boldsymbol{\nu})$ in with $Z\in \AS$ and $\boldsymbol{\nu}$ is a family of maps
\[
\tau_n\colon \Bar_\AS^{\diamond n}\to E_Z \qquad (n\geq 1)
\]
of degree $1-n$, satisfying the $A_\infty$ module relations
\begin{equation}
d(\nu_m) + \sum_{i+j+2=m} (-1)^{i} \nu_{i+j+1}(\id^{\diamond i}\diamond \mu_2\diamond \id^{\diamond j}) + \sum_{i+j=m}(-1)^i\mu_2\circ(\nu_i \diamond \nu_j)=0
\end{equation}
for all $n\geq 1$, and the strictly unital relation
\begin{equation}
\nu_{i+j+1}\circ(\id^{\diamond i}\diamond \eta \diamond \id^{\diamond j})=\begin{cases}
u_Z & \text{ if $i=j=0$}\\  0 & \text{ else}\end{cases}
\end{equation}
where $\eta\colon \oone_\diamond\to \Bar_{\AS}$ is the unit of the bar complex and $u_Z\colon \oone_\diamond\to E_Z$ denotes the unit map of $E_Z$ from \eqref{eq:unit of B}.

 A degree $l$ morphism $(Z',\boldsymbol{\nu}')\buildrel\mathbf{f}\over\rightarrow (Z,\boldsymbol{\nu})$ in $\ZS_\infty(\AS,\MS)$ is a family of maps
\[
f_n\colon \Bar_\AS^{\diamond n}\to \bimhom{Z}{Z'} \qquad (n\geq 1)
\]
of degree $l-n$, with composition law $\ast$ defined by
\begin{equation}
(\mathbf{f}\ast\mathbf{g})_m = \sum_{i+j=m} (-1)^{i|\mathbf{g}|}\mu_2\circ (f_i\diamond g_j)
\end{equation}
and differential defined so that the $m$-th component of $d\mathbf{f}$ is
\begin{equation}
d(f_m)+\sum_{i+j+2=m}(-1)^{l+1+i} f_{m-1}\circ (\id^{\diamond i}\diamond \mu_2\diamond \id^{\diamond j})\\
+\mu_2\circ \sum_{i+j=m}\Big((-1)^{il}\nu_i\diamond f_j + (-1)^{i+l}f_i\diamond \nu_i\Big)
\end{equation}
Here we are using $\mu_2$ to denote either the multiplication map $\Bar_\AS^{\diamond 2}\to \Bar_\AS$ or the multiplication map $E_Z^{\diamond 2}\to E_Z$.
\end{definition}
%
%

\subsection{The $A_\infty$-Drinfeld lifting lemma}
\label{ss:infty drinfeld lifting}
The purpose of this section is to prove the following, which is an $A_\infty$ version of our Drinfeld lifting lemma (Theorem \ref{thm:lifting drinfeld}).

\begin{theorem}\label{thm:infty drinfeld}
Let $\AS$ be an additive $\k$-linear monoidal category, and $\MS$ an additive $\k$-linear $\star$-bimodule category over $(\AS,\AS)$. Suppose all hom spaces in $\AS$ are projective $\k$-modules.  Let $Z\in \Ch^b(\MS)$ be a complex such that the complex of homs $\uHom_{\MS}(Z\star X',X\star Z)$ has zero homology in negative cohomological degrees for all $X,X'\in \AS$.  Then any object $(Z,\tau)\in\ZS(\AS,\KC^b(\MS))$ lifts to a unique object of $\ZS_\infty(\AS,\Ch^b(\MS))$, which lifts to a unique object of $\ZS_\infty(\Ch^b(\AS),\Ch^b(\MS))$.
\end{theorem}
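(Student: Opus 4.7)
The plan has two stages, paralleling the two-stage lifting strategy developed for $A_\infty$ natural transformations in \S\ref{s:infinity}.

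For the first stage, lifting $(Z, \tau) \in \ZS(\AS, \KC^b(\MS))$ to an object of $\ZS_\infty(\AS, \Ch^b(\MS))$, the starting observation is that by Lemma \ref{lemma:drinfeld center and bimode} the datum $\tau$ is encoded as a bimodule algebra map $\ibim{\AS} \to H^0(E_Z)$. The hypothesis that $\uHom_\MS(Z\star X', X\star Z)$ has vanishing homology in negative degrees is precisely the statement that the bimodule $E_Z$ has vanishing negative homology, so Lemma \ref{lemma:infty trans 2} produces a closed degree zero lift $\nu_1 \colon \Bar_\AS \to E_Z$, unique up to homotopy. This $\nu_1$ serves as the first component of the desired $A_\infty$ structure.

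The higher components $\nu_m$ for $m \geq 2$ are then built by obstruction theory, and this is the main technical obstacle. Given $\nu_1, \ldots, \nu_{m-1}$ satisfying the truncated $A_\infty$ module axioms, the relation to be imposed on $\nu_m$ reads $-d(\nu_m) = O_m$, where
\[
O_m = \sum_{i+j+2=m}(-1)^i \nu_{m-1}\circ (\id^{\diamond i}\diamond \mu_2\diamond \id^{\diamond j}) + \sum_{\substack{i+j=m\\ i,j\geq 1}} (-1)^i \mu_2 \circ (\nu_i \diamond \nu_j)
\]
is a specific closed, degree $2-m$ element of $W_m := \Hom_{\Bim_{\AS,\AS}}(\Bar_\AS^{\diamond m}, E_Z)$ built from lower-order data. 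The plan is to show $O_m$ is always exact, and then pick a primitive $\nu_m$. Using the duoidal adjunction of Corollary \ref{cor:whatdiamondrepresents} iterated $m$ times, together with the diamond variant of Lemma \ref{lemma:hom from ketbra}, the complex $W_m$ decomposes (filtered by the $m$ separate bar-degrees) into pieces isomorphic to homs out of projective $\k$-modules into the complexes $E_Z(X,X')$ for various $X,X' \in \AS$. The single global hypothesis on $\uHom_\MS(Z\star X',X\star Z)$ then forces the associated graded of this filtration to vanish in a suitable range of cohomological degrees; a filtered-complex argument analogous to the proofs of Lemmas \ref{lemma:filtered cx0} and \ref{lemma:infty trans 2} shows that $O_m$ is indeed exact, and furthermore that $\nu_m$ is unique up to higher homotopy. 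Applying this inductively constructs $(Z,\boldsymbol{\nu})$ and establishes its uniqueness in $\ZS_\infty(\AS, \Ch^b(\MS))$. The main technical burden is keeping track of signs and the precise degrees of the associated graded pieces, but no additional cohomological input beyond the single stated vanishing hypothesis is required.

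For the second stage, lifting further from $\ZS_\infty(\AS, \Ch^b(\MS))$ to $\ZS_\infty(\Ch^b(\AS), \Ch^b(\MS))$, the plan is to follow the template of \S\ref{ss:extending infty transformations}. First extend each action map $\nu_n$ from $\Bar_\AS^{\diamond n}$ to an action map defined on the relative bar complex $\Bar_{\Ch^b(\AS),\AS}^{\diamond n}$, by pre- and post-composing with the componentwise lifts of $L_Z$ and $R_Z$ at the extreme endpoints in the same spirit as the formula for $\tau'$ in \S\ref{ss:extending infty transformations} (carrying along the appropriate Koszul signs arising from shifts of complexes). Then pull the resulting maps back along the homotopy equivalence $\Matt \colon \Bar_{\Ch^b(\AS)} \to \Bar_{\Ch^b(\AS),\AS}$ of \cite{MattsWorkInProgress} to obtain the required maps $\tilde\nu_n \colon \Bar_{\Ch^b(\AS)}^{\diamond n} \to E_Z$ in $\Ch^b(\MS)$. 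Because $\Matt$ is a closed degree zero bimodule map that is compatible with the shuffle multiplication on bar complexes (the content of the main results of \cite{MattsWorkInProgress} outlined in \cite[\S 5.3]{GHW}), the pulled-back maps satisfy the $A_\infty$ module relations; uniqueness of the second lift follows from uniqueness of the first lift combined with $\Matt$ being a homotopy equivalence.
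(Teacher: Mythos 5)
There is a genuine gap in your first stage, at the case $m=2$. Your plan is to kill every obstruction $O_m$ by a vanishing argument driven solely by the hypothesis that $\uHom_{\MS}(Z\star X',X\star Z)$ has no homology in negative degrees; you even state that ``no additional cohomological input beyond the single stated vanishing hypothesis is required.'' But the obstruction $O_m$ is a closed element of $\Hom_{\Bim_{\AS,\AS}}(\Bar_\AS^{\diamond m},E_Z)$ of cohomological degree $2-m$, and for $m=2$ this degree is $0$. The hypothesis (via the projectivity of the terms of $\Bar_\AS^{\diamond n}$) only yields vanishing of $H^k\bigl(\Hom_{\Bim_{\AS,\AS}}(\Bar_\AS^{\diamond n},E_Z)\bigr)$ for $k<0$; the degree-zero homology of $\Hom_{\Bim_{\AS,\AS}}(\Bar_\AS^{\diamond 2},E_Z)$ is emphatically nonzero in general, being identified with $\Hom_{\Bim_{\AS,\AS}}(\ibim{\AS}^{\diamond 2},H^0(E_Z))$. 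So exactness of $O_2=\nu_1\circ\mu_2-\mu_2\circ(\nu_1\diamond\nu_1)$ cannot follow from any filtration or associated-graded vanishing argument: you must show that the specific degree-zero homology class of $O_2$ is zero, and this is exactly where the multiplicativity constraint \eqref{eq:multiplicativity} on $\tau$ (i.e.\ the fact that $(Z,\tau)$ lies in the ordinary Drinfeld centralizer, not merely that $\tau$ is a natural transformation) must enter. The paper does this by proving $H^0\bigl(\Hom(\Bar_\AS^{\diamond 2},E_Z)\bigr)\cong\Hom(\ibim{\AS}^{\diamond 2},H^0(E_Z))$ (using that $\Bar_\AS\otimes\Bar_\AS$ is a projective resolution of $\ibim{\AS}\otimes\ibim{\AS}$ together with the comparison theorem, after transposing across the $\diamond$-adjunction), and then observing that under this identification $O_2$ corresponds to the defect of multiplicativity of $\tau$, which vanishes by \eqref{nuisanalgebramap}. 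Your proposal never invokes the multiplicativity of $\tau$ in stage one, so as written the construction of $\nu_2$ is unjustified.

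For $m\geq 3$ the obstruction does lie in degree $2-m<0$ and your vanishing argument is the right one (this is the paper's route as well, via the analogue of Lemma \ref{lem:nonegsyay}); likewise the construction and uniqueness of $\nu_1$ via Lemma \ref{lemma:infty trans 2}, and the uniqueness of each $\nu_m$ from vanishing in degree $1-m<0$, match the paper. Your second stage (extending along the relative bar complex and transporting through the homotopy equivalence $\Matt$ of \cite{MattsWorkInProgress}) is consistent with the mechanism the paper sets up in \S\ref{ss:extending infty transformations}. The fix needed is thus localized but essential: isolate the $m=2$ obstruction, identify its class in $H^0$ with a map $\ibim{\AS}^{\diamond 2}\to H^0(E_Z)$, and use the algebra-map property of $\tau$ to show it vanishes before choosing $\nu_2$.
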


Throughout this section we maintain the assumptions of Theorem \ref{thm:infty drinfeld}.

The crux of the proof will be to carefully understand the complex $\Hom(\Bar_\AS^{\diamond n}, E_Z)$ for all $n \ge 1$. In particular, we need to know the cohomology of this
complex in degree $2-n$. We begin with some lemmas.

\begin{lemma}\label{lemma:bar diamonds are projective}
Let $\AS$ be an additive $\k$-linear monoidal category whose hom spaces are projective $\k$-modules.  Then $\Bar_\AS^{\diamond n}$ is a complex of projective $(\AS,\AS)$-bimodules for all $n \ge 1$.
\end{lemma}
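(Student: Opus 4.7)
The plan is to exhibit each chain bimodule of $\Bar_\AS^{\diamond n}$ as a direct sum of ``representable-with-coefficients'' bimodules of the form
\[
P(M;X,Y) := \ibim{\AS}(-,X)\otimes_\k M\otimes_\k \ibim{\AS}(Y,-),
\]
where $X,Y\in\AS$ and $M$ is a projective $\k$-module, and then to verify that every such bimodule is projective in $\Bim_{\AS,\AS}$.

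First I would treat the case $n=1$. By definition each chain bimodule $\Bar_\AS^{-r}$ is a direct sum, over tuples of intermediate objects $X_0,\ldots,X_r$, of bimodules
\[
\ibim{\AS}(-,X_0)\otimes_\k \Hom_\AS(X_0\leftarrow X_1)\otimes_\k\cdots\otimes_\k \Hom_\AS(X_{r-1}\leftarrow X_r)\otimes_\k\ibim{\AS}(X_r,-),
\]
which is $P(M;X_0,X_r)$ with $M$ the tensor product of the intermediate Hom spaces. Since hom spaces in $\AS$ are projective over $\k$ by hypothesis and tensor products of projectives over $\k$ are projective, $M$ is a projective $\k$-module. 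Projectivity of $P(M;X,Y)$ then follows from Lemma \ref{lemma:hom from ketbra}: one has
\[
\Hom_{\Bim_{\AS,\AS}}\bigl(P(M;X,Y),\,-\bigr)\;\cong\;\Hom_\k\bigl(M,\,-(X,Y)\bigr),
\]
which is the composition of the exact evaluation functor at $(X,Y)$ with the exact functor $\Hom_\k(M,-)$.

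Next I would show that the class of barlike bimodules is closed under $\diamond$-products: using the definition of $\diamond$ and the fact that $\mergebim{\AS}\otimes_{\AS\otimes\AS}\ibim{\AS\otimes\AS}(-,(X,X'))=\ibim{\AS}(-,X\star X')$, together with the analogous identity for $\splitbim{\AS}$, one computes
\[
P(M;X,Y)\diamond P(M';X',Y')\;\cong\;P(M\otimes_\k M';\,X\star X',\,Y\star Y').
\]
The right-hand side is again barlike with projective middle piece, so the class is stable under $\diamond$. Because the diamond product distributes over direct sums in each variable (it is built from tensor products), a straightforward induction on $n$ shows that every chain bimodule of $\Bar_\AS^{\diamond n}$ is a direct sum of barlike bimodules $P(M;X,Y)$ with $M$ projective over $\k$. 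Combined with step one, this gives projectivity of each chain bimodule of $\Bar_\AS^{\diamond n}$.

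The main (though very mild) obstacle is bookkeeping: one must verify carefully that the $\diamond$-product identity above is correct when $M,M'$ are arbitrary $\k$-modules placed in the ``middle'' of representables, and that the total complex $\Bar_\AS^{\diamond n}$ decomposes as a direct sum (over tuples of word lengths) of tensor products of chain bimodules of the individual $\Bar_\AS$ factors. Once these identifications are made, projectivity is immediate from Lemma \ref{lemma:hom from ketbra} and the stability of projectivity of $\k$-modules under tensor products.
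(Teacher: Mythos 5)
Your proposal is correct and is essentially the paper's argument: both identify each chain bimodule of $\Bar_\AS^{\diamond n}$ as a direct sum of bimodules $\ibim{\AS}(-,Y)\otimes_\k M\otimes_\k \ibim{\AS}(Y',-)$ with $M$ a tensor product of hom spaces (hence projective over $\k$), and deduce projectivity from Lemma \ref{lemma:hom from ketbra}. The only difference is organizational: you obtain the decomposition inductively via the closure identity $P(M;X,Y)\diamond P(M';X',Y')\cong P(M\otimes_\k M';X\star X',Y\star Y')$, whereas the paper writes down the array decomposition of $(\Bar_\AS^{\diamond n})^{-r}$ in one step.
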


\begin{proof}
In cohomological degree $-r$ the bimodule $(\Bar_\AS^{\diamond n})^{-r}$ is a direct sum over arrays of objects $X^{(i)}_j$ with $1\leq i\leq n$ and (for fixed $i$) $0\leq j\leq r_i$ for some non-negative integers $r_i$ such that $r_1+\cdots+r_n=r$.  The term associated to such an array is
\begin{equation} \label{terminarray}
\ibim{\AS}(-,Y) \otimes M \otimes \ibim{\AS}(Y',-).
\end{equation}
where $Y=X^{(1)}_0\star\cdots\star X^{(n)}_0$ and $Y'=X^{(1)}_{r_1}\star\cdots\star X^{(n)}_{r_n}$, and the multiplicity space is
\begin{equation}
M=\left(\bigotimes_{i=1}^n\bigotimes_{j=1}^{n_i} \ibim{\AS}\Big(X^{(i)}_{j-1},X^{(i)}_j\Big)\right)
\end{equation}
Recall that $\ibim{\AS}(X,X') = \Hom_{\AS}(X',X)$. Thus $M$ is the tensor product of projective $\k$-modules, so $M$ is also projective over $\k$. Since $\ibim{\AS}(-,Y)\otimes \ibim{\AS}(Y',-)$ is a projective $(\AS,\AS)$-bimodule (c.f. Lemma \ref{lemma:hom from ketbra}), each term \eqref{terminarray} is projective.
\end{proof}

\begin{lemma} \label{lem:nonegsyay} If $N \in \Bim_{\AS,\AS}$ is a complex of bimodules such that $H^k(N(X,X'))=0$ for all $X, X' \in \AS$ and all $k < 0$, then $H^k(\Hom_{\Bim_{\AS,\AS}}(\Bar_\AS^{\diamond n}, N)) = 0$ for all $n \ge 1$ and all $k < 0$. \end{lemma}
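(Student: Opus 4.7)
The plan is to follow the pattern of the proof of Lemma \ref{lemma:infty trans 2}, with $N$ in place of $M$ and $\Bar_\AS^{\diamond n}$ in place of $\Bar_\AS$. Decompose $\Bar_\AS^{\diamond n} = \tw_\bard\bigl(\bigoplus_{r \geq 0} (\Bar_\AS^{\diamond n})^{-r}[r]\bigr)$, where $(\Bar_\AS^{\diamond n})^{-r}$ is the total bar-depth $r$ summand carrying only its internal differential (inherited from the hom complexes of $\AS$). Let $\FC^k V \subset V := \Hom_{\Bim_{\AS,\AS}}(\Bar_\AS^{\diamond n}, N)$ consist of those bimodule maps that vanish on $(\Bar_\AS^{\diamond n})^{-r}[r]$ for all $r < k$. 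Since the bar differential on $\Bar_\AS^{\diamond n}$ lowers total bar-depth by one while the internal differential preserves it, each $\FC^k$ is a subcomplex of $V$. Moreover $\bigcap_k \FC^k = 0$ and $V = \lim_k V/\FC^k$, because $\Hom$ out of a direct sum is the corresponding product, so the filtration conditions of \eqref{filtconditions} hold.

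The associated graded of this filtration takes the form
\[ \FC^k/\FC^{k+1} \cong \Hom_{\Bim_{\AS,\AS}}\bigl((\Bar_\AS^{\diamond n})^{-k}, N\bigr)[-k], \]
equipped only with the internal differential (the bar differential shifts filtration level and dies in the quotient). By the analysis in the proof of Lemma \ref{lemma:bar diamonds are projective}, the bimodule $(\Bar_\AS^{\diamond n})^{-k}$ is a direct sum of pieces of the form $\ibim{\AS}(-,Y) \otimes_\k M \otimes_\k \ibim{\AS}(Y',-)$, where $M$ is a tensor product of hom spaces in $\AS$ and is therefore a projective $\k$-module concentrated in cohomological degree zero. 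By Lemma \ref{lemma:hom from ketbra}, the Hom from each such piece into $N$ is naturally identified with $\Hom_\k(M, N(Y,Y'))$. Since $M$ is projective over $\k$, the functor $\Hom_\k(M,-)$ preserves the vanishing of cohomology in negative degrees, and direct products preserve this vanishing as well. Therefore $H^i(\FC^k/\FC^{k+1}) = 0$ for all $i < k$.

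To finish, I adapt the inductive construction of Lemma \ref{lemma:filtered cx0} to an arbitrary cohomological degree $j < 0$. Given a cycle $c \in V$ of degree $j$, I inductively construct $h_k \in \FC^k$ of degree $j-1$ so that $c - d(h_0 + \cdots + h_k) \in \FC^{k+1}$, at each step using that the class of $c - d(h_0 + \cdots + h_{k-1})$ in $\FC^k/\FC^{k+1}$ is a cycle of degree $j$ and that $H^j(\FC^k/\FC^{k+1}) = 0$, which holds because $j < 0 \leq k$. The convergent infinite sum $h := \sum_k h_k$ then satisfies $d(h) = c$, so $c$ is a boundary and $H^j(V) = 0$.

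The only essential new input beyond the proof of Lemma \ref{lemma:infty trans 2} is Lemma \ref{lemma:bar diamonds are projective}, which supplies the correct structural description of the summands of $(\Bar_\AS^{\diamond n})^{-k}$; everything else is a routine iteration of the filtered-complex technique already developed in the paper, and I do not anticipate any serious obstacle.
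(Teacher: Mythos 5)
Your proposal is correct and in substance follows the same route as the paper: both arguments filter $\Hom_{\Bim_{\AS,\AS}}(\Bar_\AS^{\diamond n},N)$ by bar-depth and identify the graded pieces, via Lemma \ref{lemma:bar diamonds are projective} and Lemma \ref{lemma:hom from ketbra}, with products of complexes $\Hom_\k(M,N(Y,Y'))$ for $M$ a projective $\k$-module in degree zero, whence the negative cohomology vanishes. The paper phrases this as the spectral sequence of that filtration and leaves the convergence ``bookkeeping'' implicit, while you run the explicit successive-approximation argument of Lemma \ref{lemma:filtered cx0} extended to all negative degrees -- the same computation, presented if anything more carefully with respect to convergence and the use of projectivity of $M$ over $\k$.
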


\begin{proof} By construction $\Bar_{\AS}$ is a complex supported in purely negative homological degrees, and thus the same is true of $\Bar_{\AS}^{\diamond n}$. There is a spectral sequence
abutting to $\Hom_{\Bim_{\AS,\AS}}(\Bar_\AS^{\diamond n}, N)$, whose $E_0$-page is $\prod_{r \ge 0} \Hom_{\Bim_{\AS,\AS}}((\Bar_\AS^{\diamond n})^{-r},N)$. As noted in the
previous lemma, $(\Bar_\AS^{\diamond n})^{-r}$ is a sum of terms as in \eqref{terminarray}. As in Lemma \ref{lemma:hom from ketbra}, with $M$ as in \eqref{terminarray}, we have
\begin{equation} \Hom(\ibim{\AS}(-,Y) \otimes M \otimes \ibim{\AS}(Y',-),N) = \Hom_\k(M, N(Y,Y')). \end{equation} Note that $M$ is a $\k$-module, not a complex of $\k$-modules. In particular, $\Hom_\k(M , N(Y,Y'))$ has no cohomology in negative
degrees. With some bookkeeping, one verifies that the $E_1$-page is already zero in any place which could contribute to negative degree cohomology in
$\Hom_{\Bim_{\AS,\AS}}(\Bar_\AS^{\diamond n}, N)$. \end{proof}

\begin{lemma} \label{lem:diamondsquareh0}
Suppose $N\in \Bim_{\AS,\AS}$ satisfies $H^k(N(X,X'))\cong 0$ for all $k<0$ and all $X,X'\in \AS$.  Then
\begin{equation}
H^0(\Hom_{\Bim_{\AS,\AS}}(\Bar_{\AS}^{\diamond 2}, N)) \cong \Hom_{\Bim_{\AS,\AS}}(\ibim{\AS}^{\diamond 2}, H^0(N)).
\end{equation}
\end{lemma}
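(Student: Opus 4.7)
The plan is to compute $H^0(\Hom_{\Bim_{\AS,\AS}}(\Bar_\AS^{\diamond 2}, N))$ via a spectral sequence, along exactly the lines of the proof of Lemma \ref{lem:nonegsyay}.  I would filter $\Hom(\Bar_\AS^{\diamond 2},N)$ by the internal bar degree of $\Bar_\AS^{\diamond 2}$: let $\FC^k$ be the subcomplex of homomorphisms vanishing on $(\Bar_\AS^{\diamond 2})^{-j}$ for $j<k$.  Lemma \ref{lemma:bar diamonds are projective} implies that $\Hom((\Bar_\AS^{\diamond 2})^{-r},-)$ is exact for each $r$, so the resulting spectral sequence has
\[
E_1^{p,q} = \Hom_{\Bim_{\AS,\AS}}\Big((\Bar_\AS^{\diamond 2})^{-p},\, H^q(N)\Big),
\]
converging to $H^{p+q}(\Hom(\Bar_\AS^{\diamond 2},N))$.

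Next I would cut down to a single entry.  Because $\Bar_\AS^{\diamond 2}$ is supported in non-positive degrees and $H^q(N)=0$ for $q<0$, the $E_1$-page is concentrated in the first quadrant, and the only entry on the antidiagonal $p+q=0$ is $E_1^{0,0}$.  The differentials $d_r\colon E_r^{0,0}\to E_r^{r,1-r}$ for $r\ge 2$ target groups of $q$-degree $1-r<0$, which vanish.  Hence $E_\infty^{0,0} = E_2^{0,0} = \ker(d_1)$, where $d_1$ is pullback along the bar differential $\bard\colon (\Bar_\AS^{\diamond 2})^{-1}\to (\Bar_\AS^{\diamond 2})^0$.  By the universal property of cokernels, $\ker(d_1) = \Hom(\Coker(\bard),\, H^0(N))$, and since $\Bar_\AS^{\diamond 2}$ is concentrated in non-positive degrees, $\Coker(\bard) = H^0(\Bar_\AS^{\diamond 2})$.

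What remains is to identify $H^0(\Bar_\AS^{\diamond 2})$ with $\ibim{\AS}^{\diamond 2}$ via the augmentation $\varepsilon^{\diamond 2}$, which I would do by a direct cokernel computation.  The degree zero piece of $\Bar_\AS^{\diamond 2}$ is $\mergebim{\AS}\otimes_{\AS\otimes\AS}(\Bar_\AS^0\otimes \Bar_\AS^0)\otimes_{\AS\otimes\AS}\splitbim{\AS}$, and the image of $\bard$ imposes the bar relation $[f_0,f_1\circ f_2]\sim[f_0\circ f_1, f_2]$ separately in each of the two $\Bar_\AS^0$-factors; specializing $f_0=\id$ gives $[f_1,f_2]\sim[\id,f_1\circ f_2]$, so the quotient of each $\Bar_\AS^0$ becomes $\ibim{\AS}$ via $[f_0,f_1]\mapsto f_0\circ f_1$, producing $\mergebim{\AS}\otimes_{\AS\otimes\AS}(\ibim{\AS}\otimes \ibim{\AS})\otimes_{\AS\otimes\AS}\splitbim{\AS}=\ibim{\AS}^{\diamond 2}$ as required.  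The main things to verify are the spectral sequence collapsing (routine given the vanishing assumptions) and the fact that the factorwise cokernel in the middle commutes with the merging and splitting tensor factors on the outside, which is a straightforward check.
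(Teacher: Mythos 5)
Your argument is correct, but it follows a different route from the paper's. The paper never touches $\Bar_{\AS}^{\diamond 2}$ directly: it uses the adjunction \eqref{diamondadjunction} (Corollary \ref{cor:whatdiamondrepresents}) to transpose both sides into $\Hom_{\Bim_{\AS\otimes\AS,\AS\otimes\AS}}(\Bar_\AS\otimes\Bar_\AS,\,\splitbim{\AS}\otimes_\AS N\otimes_\AS\mergebim{\AS})$, observes that $\Bar_\AS\otimes\Bar_\AS$ is a genuine projective resolution of $\ibim{\AS}\otimes\ibim{\AS}$, and then quotes the standard truncation/comparison-theorem fact $H^0(\uHom(P,Q))\cong\Hom(I,H^0(Q))$. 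You instead filter $\Hom(\Bar_\AS^{\diamond 2},N)$ by bar degree (the same device the paper uses in Lemma \ref{lem:nonegsyay} and Lemma \ref{lemma:infty trans 2}), collapse to the corner entry, and then compute $H^0(\Bar_\AS^{\diamond 2})\cong\ibim{\AS}^{\diamond 2}$ by hand. A notable feature of your route is that you only need the top homology of $\Bar_\AS^{\diamond 2}$, not that it is a resolution of $\ibim{\AS}^{\diamond 2}$ — which is exactly the issue the paper sidesteps by moving $\mergebim{\AS}$ and $\splitbim{\AS}$ to the other side, since $\mergebim{\AS}\otimes_{\AS\otimes\AS}(-)\otimes_{\AS\otimes\AS}\splitbim{\AS}$ is only right exact. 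That same left-adjointness (from the lemma preceding Corollary \ref{cor:whatdiamondrepresents}) is what justifies your "straightforward check" that the factorwise cokernel commutes with the outer merge/split factors, and right exactness of $\otimes_\k$ gives $H^0(\Bar_\AS\otimes\Bar_\AS)\cong\ibim{\AS}\otimes\ibim{\AS}$; so the adjunction the paper leads with reappears implicitly at the end of your argument. Both proofs use Lemma \ref{lemma:bar diamonds are projective}, hence the ambient hypothesis that hom spaces of $\AS$ are projective over $\k$.

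The one point you should not wave at is convergence: your filtration is complete and exhaustive but unbounded, so "$E_\infty^{0,0}=E_2^{0,0}$ computes $H^0$" needs justification beyond first-quadrant formalities. Here it does hold (each bidegree stabilizes at a finite page), but the cleanest fix is to bypass the spectral sequence entirely and argue as in the proof of Lemma \ref{lemma:infty trans 2}: since $H^0(\FC^k/\FC^{k+1})=0$ for $k\geq 1$ and $H^1(\FC^k/\FC^{k+1})=0$ for $k\geq 2$, Lemma \ref{lemma:filtered cx} gives that $H^0(V)\to H^0(V/\FC^1)$ is injective and $H^0(V)\to H^0(V/\FC^2)$ is surjective, and identifying the image of $H^0(V/\FC^2)$ in $H^0(V/\FC^1)=\Hom((\Bar_\AS^{\diamond 2})^0,H^0(N))$ recovers exactly your $\ker(d_1)=\Hom(H^0(\Bar_\AS^{\diamond 2}),H^0(N))$. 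With that substitution your proof is complete.
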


\begin{proof} Using \eqref{diamondadjunction} we have
\begin{equation}\label{barsquared} \Hom_{\Bim_{\AS,\AS}}(\Bar_{\AS}^{\diamond 2}, N)) \cong \Hom_{\Bim_{\AS \otimes \AS,\AS \otimes \AS}}(\Bar_{\AS} \otimes \Bar_{\AS}, \splitbim{\AS} \otimes_{\AS} N \otimes_{\AS} \mergebim{\AS}) \end{equation}
and
\begin{equation}\label{ibimsquared} \Hom_{\Bim_{\AS,\AS}}(\ibim{\AS}^{\diamond 2}, N) \cong \Hom_{\Bim_{\AS \otimes \AS,\AS \otimes \AS}}(\ibim{\AS} \otimes \ibim{\AS}, \splitbim{\AS} \otimes_{\AS} N \otimes_{\AS} \mergebim{\AS}). \end{equation}
Note that $I := \ibim{\AS}\otimes \ibim{\AS}$ is supported in homological degree zero. Since $\Bar_{\AS}$ is a projective resolution of $\ibim{\AS}$, $P := \Bar_{\AS} \otimes \Bar_{\AS}$ is a projective resolution of $\ibim{\AS}\otimes \ibim{\AS}$ (in the abelian category of $(\AS \otimes \AS,\AS \otimes \AS)$-bimodules). Note that $Q := \splitbim{\AS} \otimes_{\AS} N \otimes_{\AS} \mergebim{\AS}$ has homology supported in non-negative degrees, by our hypothesis on $N$.

Now the result follows from basic arguments in homological algebra, which we state in a simpler form within an abelian category. Let $P$ be a projective resolution of an object
$I$, and let $Q$ be a complex with $H^k(Q) = 0$ for $k < 0$. Then $H^0(\uHom(P,Q)) \cong \Hom(I,H^0(Q))$. Note that we are looking for (degree zero) chain maps $P \to Q$, and
since $P$ is supported in degrees $\le 0$, we may replace $Q$ with its truncation (i.e. replace $Q^i$ with $0$ for $i > 0$, and replace $Q^0$ with the kernel of $Q^0 \to Q^1$).
Now $Q$ is a resolution (but not a projective resolution) of $H^0(Q)$. The result now follows from the comparison theorem, \cite[Theorem 2.2.6]{Wei94}.
\end{proof}


\begin{proof}[Proof of Theorem \ref{thm:infty drinfeld}]
The first step is to use our previous lifting results (not involving $A_{\infty}$-algebras) to construct $\nu_1$.
	
The hypothesis on $Z$ is equivalent to $H^k(E_Z)=0$ for $k<0$.  Let $L_Z,R_Z\colon \AS \to \Ch^b(\MS)$ be the functors given by $Z\star - $ and $-\star Z$, as usual. Observe that the bimodule which represents natural transformations from $L_Z$ to $R_Z$ satisfies $M^{R_Z}_{L_Z}\cong E_Z$.  The degree zero homology of this bimodule is
\begin{align}
H^0(E_Z(X,X'))&=H^0(\uHom_{\MS}(X\star Z\leftarrow Z'\star X'))\\ \nonumber
&\cong \Hom_{\KC^b(\MS)}(X\star Z\leftarrow Z'\star X')\\ \nonumber
&= M^{H^0(R_Z)}_{H^0(L_Z)}(X,X').
\end{align}
By hypothesis, we are given an invertible natural transformation $\tau\colon H^0(L_Z)\to H^0(R_Z)$.  From the above computation together with Lemma \ref{lemma:representing nat trans}, $\tau$ corresponds to a morphism $\tilde{\nu} \colon \ibim{\AS} \to H^0(E_Z)$, where $\tilde{\nu}(\id_X) = \tau_X$.  From the hom vanishing hypothesis, we may use Lemma \ref{lemma:infty trans 2} to lift $\tilde{\nu}$ to a closed degree zero morphism $\nu_1\colon \Bar_\AS\to E_Z$ in $\Bim_{\AS,\AS}$. Note that $\nu_1$ is unique up to homotopy by Lemma \ref{lemma:infty trans 2}.

Next, we need to construct a degree $-1$ morphism $\nu_2\colon \Bar_\AS^{\diamond 2}\to E_Z$ such that
\begin{equation} \label{nu2condition}
d(\nu_2) = -\nu_1\circ \mu + \mu\circ (\nu_1 \diamond \nu_1).
\end{equation}
Let $\Theta(\nu_1)$ denote the right-hand side of \eqref{nu2condition}. Then $\Theta(\nu_1)$ is a closed degree zero morphism $\Bar_\AS^{\diamond 2} \to E_Z$ which measures the failure of $\nu_1$ to be multiplicative. It descends to an element of $H^0(\Hom(\Bar_\AS^{\diamond 2},E_Z))$. If the homology class of $\Theta(\nu_1)$ is zero, then it is nulhomotopic, so we may choose a homotopy $\nu_2$ satisfying \eqref{nu2condition}. Note that $\nu_2$ is unique up to homotopy, since $H^{-1} \Hom_{\Bim_{\AS,\AS}}(\Bar_\AS^{\diamond 2}, E_Z) = 0$ by Lemma \ref{lem:nonegsyay}. So we need to show that $\Theta(\nu_1)$ is zero in homology.

By Lemma \ref{lem:diamondsquareh0}, $H^0(\Hom(\Bar_\AS^{\diamond 2},E_Z)) \cong \Hom(\ibim{\AS}\diamond\ibim{\AS}, H^0(E_Z))$. The morphism $\ibim{\AS}\diamond\ibim{\AS}\to H^0(E_Z)$ corresponding to $\Theta(\nu_1)$ is $\Theta(\tilde{\nu})$, defined by an analogous formula but with $\tilde{\nu}$ replacing $\nu_1$. We claim that $\Theta(\tilde{\nu})$ is zero. Note that $\ibim{\AS}\diamond\ibim{\AS}$ is generated as a bimodule by identity maps $\id_X \otimes \id_Y$ as $X, Y$ vary amongst objects of $\AS$. So we need only show that $\Theta(\tilde{\nu})$ kills identity maps. But $\Theta(\tilde{\nu})$ sends identity maps to $\Theta(\tau)$ (the analogous formula with $\tau$ replacing $\nu_1$), which is zero by the multiplicativity condition for $\tau$, see \eqref{nuisanalgebramap}. See also \eqref{degzeromult} for corroboration that this calculation does match the multiplication structure in the bar complex. 

Now, we fix an integer $m\geq 3$ and we assume by induction that we have constructed $\nu_1,\ldots,\nu_{m-1}$.  We need to construct a degree $1-m$ morphism $\nu_{m}\colon \Bar_\AS^{\diamond m}\to E_Z$ such that
\begin{equation} \label{numcondition}
d(\nu_{m}) = - \sum_{i+j+2=m} (-1)^{i} \nu_{i+j+1}(\id^{\diamond i}\diamond \mu_2\diamond \id^{\diamond j}) - \sum_{i+j=m}(-1)^i\mu_2\circ(\nu_i \diamond \nu_j)=0
\end{equation}
(using notation from Definition \ref{def:a infty module}).  One can check that the right-hand side above is a closed degree $2-m$ morphism $\Bar_\AS^{\diamond m}\to E_Z$.  By Lemma \ref{lem:nonegsyay}, we must have that the right-hand side of \eqref{numcondition} is null-homotopic.  Thus we may choose $\nu_{m}$ satisfying \eqref{numcondition}, and it is unique up to homotopy again by Lemma \ref{lem:nonegsyay}. This completes the proof of the theorem.
\end{proof}

\appendix

\section{An enriched category perspective on $A_\infty$-algebras and modules}
\label{ss:a infty}
Let $\BB$ be a dg monoidal category.  Denote the monoidal structure on $\BB$ by $\diamond$, and the monoidal identity by $\oone_\diamond$.  Let $\QS$ be a category enriched in $\BB$.  We will denote the homs from $Z'$ to $Z$ in $\QS$ using the notation $\bimhom{Z}{Z'}\in \BB$ and $E_Z=\bimhom{Z}{Z}$.

An $A_\infty$ algebra in $\BB$ is an object $A\in \BB$ equipped with maps
\[
\mu_n\in A^{\diamond n}\to A
\]
of degree $2-n$ (indexed by $n\in \Z_{\geq 0}$) satisfying
\begin{equation}
\sum_{i+j+n=m} (-1)^{i+nj} \mu_{i+j+1}\circ (\id^{\diamond i}\diamond \mu_n\diamond \id^{\diamond j}) = 0 
\end{equation}
for all $m\in \Z_{\geq 0}$.  Equivalently, consider the tensor coalgebra $T(A[1]):=\bigoplus_{n\geq 0} (A[1])^{\diamond n}\cong \bigoplus_{n\geq 0} A^{\diamond n}[n]$.  The $A_\infty$ algebra structure is equivalent to a degree 1 coderivation $\boldsymbol{\mu}\in \End_{\BB}(T(A[1]))$ satisfying the Maurer-Cartan equation $d(\boldsymbol{\mu})+\boldsymbol{\mu}\circ \boldsymbol{\mu} = 0$.

\begin{remark}
The tradition is to define a unital $A_\infty$ algebra structures in the following way.  Let $A$ be an object of $\BB$ equipped with direct sum decomposition $A=A'\oplus \one$ with respect to which the inclusion of $\one$ in $A$ is closed and degree zero, but the projection onto $\one$ need not be closed.  Then a unital $A_\infty$ algebra structure on $A$ is the same thing as an ordinary $A_\infty$ algebra structure on $A'$.
\end{remark}

\begin{remark}
The element $\mu_0\in \Hom_\BB^2(\oone,A)$ is called the curvature of $A$.
The element $\mu_1\in \End_\BB^1(A)$ satisfies the Maurer-Cartan equation $d(\mu_1)+\mu_1^2 =\mu_2\circ(\mu_0\diamond \id) - \mu_2\circ (\id\diamond \mu_0)$.  Frequently $\mu_1$ is added to the existing differential on $A$, obtaining a total differential $d_A+\mu_1$ (with curvature, if $\mu_0\neq 0$), but we will not do this.
\end{remark}

\begin{remark}
Below, for simplicity we will assume $\mu_0=0$.
\end{remark}

\newcommand{\TSA}{C}

\begin{definition}\label{def:pre ainfty}
Let $A$ be an $A_\infty$ algebra in $\BB$, and let $\TSA:=T(A[1])$ be the tensor coalgebra equipped with its differential $\boldsymbol{\mu}$. Let $\QS$ be a category enriched in $\BB$.  Define $\premod{A}{\QS}$ to be the dg category with the same objects as $\QS$, and morphism complexes given by
\[
\Hom_{\premod{A}{\QS}}(N,M):=\Hom_{\BB}(\TSA,\bimhom{M}{N})
\]
The composition of morphisms $\mathbf{f}\in \Hom_{\premod{A}{\QS}}(M,L)$, $\mathbf{g}\in \Hom_{\premod{A}{\QS}}(N,M)$ is denoted $\mathbf{f}\ast \mathbf{g}$, and is defined to be the composition
\begin{equation}\label{eq:f ast g}
\begin{tikzpicture}[anchorbase]
\node (a) at (0,0) {$C$};
\node (b) at (2,0) {$C\diamond C$};
\node (c) at (5,0) {$\bimhom{L}{M}\diamond \bimhom{M}{N}$};
\node (d) at (7,0) {$\bimhom{L}{N}$};
\path[-stealth,very thick]
(a) edge node[above] {$\Delta$} (b)
(b) edge node[above] {$\mathbf{f}\diamond \mathbf{g}$} (c)
(c) edge node[above] {} (d)
(a) edge[bend right] node[above] {$\mathbf{f}\ast \mathbf{g}$} (d);
\end{tikzpicture}
\end{equation}
where $\Delta$ is the comultiplication on $C$ and the last arrow is the composition of morphisms in $\QS$.  In terms of components, an element $\mathbf{f}\in \Hom_{\premod{A}{\QS}}^l(N,M)$ is a collection of maps
\[
f_n\colon A^{\diamond n}\to \bimhom{M}{N}
\]
for each $n \ge 0$, having degree $l-n$.  The components of $d(\mathbf{f})$ are
\begin{equation}
d(\mathbf{f})_m = d(f_m) - (-1)^l\sum_{i+j+n=m} (-1)^{i+nj} f_{i+j+1}\circ(\id^{\diamond i}\diamond \mu_n\diamond \id^{\diamond j})
\end{equation}
where $d(f_m)$ is the differential of $f_m$ computed in the hom complex $\Hom_\BB(A^{\diamond m},M)$.

The component $(\mathbf{f}\ast \mathbf{g})_m$ is the sum over pairs of indices $i,j$ with $i+j=m$ of composition of morphisms 
\begin{equation} \label{thisguyhere}
\begin{tikzpicture}
\node (a) at (0,0) {$A^{\diamond m}$};
\node (b) at (2,0) {$A^{\diamond i}\diamond A^{\diamond j}$};
\node (c) at (6,0) {$\bimhom{L}{M}\diamond \bimhom{N}{L}$};
\node (d) at (8,0) {$\bimhom{M}{L}$};
\path[-stealth,very thick]
(a) edge node[above] {$\cong$} (b)
(b) edge node[above] {$(-1)^{i|\mathbf{g}|}f_i\diamond g_j$} (c)
(c) edge node[above] {$\circ $} (d);
\end{tikzpicture}.
\end{equation}
The sign above is to account for the discrepancy in the Koszul signs involved in interpreting $f_i\diamond g_i$ as a morphism $A^{\diamond i}[i]\diamond A^{\diamond j}[j]\to \bimhom{L}{M}\diamond \bimhom{M}{N}$ versus as a morphism $A^{\diamond i}\diamond A^{\diamond j}\to \bimhom{L}{M}\diamond \bimhom{M}{N}$.  We write the composition in \eqref{thisguyhere} as
\begin{equation}\label{eq:f ast g component}
(\mathbf{f}\ast \mathbf{g})_m = \sum_{i+j=m} (-1)^{i|g|} \mu_2^B\circ (f_i\diamond g_j)
\end{equation}
where $\mu_2^B$ in the above equation refers to the composition law $\bimhom{L}{M}\diamond \bimhom{M}{N}\to \bimhom{L}{N}$.
\end{definition}

We refer to the dg category $\premod{A}{\QS}$ as the category of pre-$A_\infty$-modules over $A$.
We think of an object of $\premod{A}{\QS}$ as an $A_\infty$ module on which $A$ acts by zero (recall that $A$ is not assumed unital).  A more general $A_\infty$ module is defined to be a twist of an object $M$ inside $\premod{A}{\QS}$.

  
\begin{definition}\label{def:a infty module}
The category of $A_\infty$ modules over $A$ is defined as follows.  An object of this category is a pair $(M,\boldsymbol{\nu})$ in which $M$ is an object of $\QS$ and $\boldsymbol{\nu}\in \Hom_{\BB}(C,\bimhom{M}{M})$ is a degree 1 element satisfying the Maurer-Cartan equation $d(\boldsymbol{\nu})+\boldsymbol{\nu}\ast \boldsymbol{\nu}=0$. We denote the pair $(M,\boldsymbol{\nu})$ as $\tw_{\boldsymbol{\nu}}(M)$.

Elaborating component by component, an $A_\infty$ module is an object $M\in \QS$ equipped with a family of maps
\[
\nu_n\colon A^{\diamond n}\to \bimhom{M}{M} \qquad (n\geq 1)
\]
of degree $1-n$, satisfying
\begin{equation}
d(\nu_m) + \sum_{i+j+n=m} (-1)^{i+nj} \nu_{i+j+1}(\id^{\diamond i}\diamond \mu_n^A\diamond \id^{\diamond j}) + \sum_{i+j=m}(-1)^i\mu_2^B\circ(\nu_i \diamond \nu_j)=0
\end{equation}
for all $m\in \Z_{\geq 0}$.

The hom complex $\tw_{\boldsymbol{\nu}^M}(M)$ to $\tw_{\boldsymbol{\nu}^N}(N)$  is by definition $\Hom_\BB(C,\bimhom{N}{M})$ with differential
\begin{equation}
d(\mathbf{f}) + \boldsymbol{\nu}^N\ast \mathbf{f} + (-1)^{|\mathbf{f}|} \mathbf{f}\ast \boldsymbol{\nu}^M.
\end{equation}
Elaborating component by component, the $m$-th component of the differential of a degree $l$ morphism $\mathbf{f}$ is a sum of terms
\begin{itemize}
\item $d(f_m)$ (calculated in the hom complex $\Hom_\BB(A^{\diamond m} , \bimhom{N}{M})$)
\item $\sum_{i+j+n=m}(-1)^{l+1+i+nj} f_{i+j+1}\circ (\id^{\diamond i}\diamond \mu_n^A\diamond \id^{\diamond j})$.
\item $\sum_{i+j=m}(-1)^{il}\mu_2^B\circ (\nu_i^N\diamond f_j)$.
\item $\sum_{i+j=m}(-1)^{i+l}\mu_2^B\circ (f_i\diamond \nu_i^M)$.
\end{itemize}

If $A$ comes equipped with a strict unit $\eta\colon \oone_{\diamond}\to A$, then we say that $\tw_{\boldsymbol{\nu}}(M)$ is \emph{strictly unital} if 
\begin{equation}
\nu_{i+j+1}\circ(\id^{\diamond i}\diamond \eta \diamond \id^{\diamond j})=\begin{cases}
u_Z & \text{ if $i=j=0$}\\  0 & \text{ else}\end{cases}
\end{equation}
where $u_M\colon \oone_{\diamond}\to \bimhom{M}{M}$ is the unit map of $M$.
\end{definition}


\bibliographystyle{alpha}
\bibliography{mastercopy}

\newcommand{\etalchar}[1]{$^{#1}$}
\begin{thebibliography}{EMTW20}

\bibitem[BS11]{BookStreet}
Thomas Booker and Ross Street.
\newblock Tannaka duality and convolution for duoidal categories.
\newblock {\em Theory and Applications of Categories}, 28:166--205, 2011.

\bibitem[EGNO15]{EGNO15}
P.~Etingof, S.~Gelaki, D.~Nikshych, and V.~Ostrik.
\newblock {\em Tensor categories}, volume 205 of {\em Mathematical Surveys and
  Monographs}.
\newblock American Mathematical Society, Providence, RI, 2015.

\bibitem[EK10]{EKho}
B.~Elias and M.~Khovanov.
\newblock Diagrammatics for {S}oergel categories.
\newblock {\em Int. J. Math. Math. Sci.}, pages Art. ID 978635, 58, 2010.

\bibitem[EL53]{EilMac1}
Samuel Eilenberg and Saunders~Mac Lane.
\newblock On the groups ${H}({\Pi}, n)$, {I}.
\newblock {\em Annals of Mathematics}, 58(1):55--106, 1953.

\bibitem[Eli16]{ECathedral}
Ben Elias.
\newblock The two-color {S}oergel calculus.
\newblock {\em Compos. Math.}, 152(2):327--398, 2016.

\bibitem[Eli18]{EGaitsgory}
Ben Elias.
\newblock Gaitsgory's central sheaves via the diagrammatic {H}ecke category.
\newblock Preprint, 2018.
\newblock arXiv 1811.06188.

\bibitem[Eli23]{EBigraded}
Ben Elias.
\newblock The {H}ecke category is bigraded.
\newblock Preprint, 2023.
\newblock arXiv 2305.08278.

\bibitem[EMTW20]{EMTW}
Ben Elias, Shotaro Makisumi, Ulrich Thiel, and Geordie Williamson.
\newblock {\em Introduction to {S}oergel bimodules}, volume~5 of {\em RSME
  Springer Series}.
\newblock Springer, 2020.

\bibitem[EW16]{EWGr4sb}
Ben Elias and Geordie Williamson.
\newblock Soergel calculus.
\newblock {\em Represent. Theory}, 20:295--374, 2016.
\newblock arXiv:1309.0865.

\bibitem[EW23]{EWLocalized}
Ben Elias and Geordie Williamson.
\newblock Localized calculus for the {H}ecke category.
\newblock {\em Ann. Math. Blaise Pascal}, 30(1):1--73, 2023.

\bibitem[EZ53]{EilenbergZilber}
Samuel Eilenberg and J.~A. Zilber.
\newblock On products of complexes.
\newblock {\em Amer. J. Math.}, 75:200--204, 1953.

\bibitem[GHW22]{GHW}
Eugene Gorsky, Matthew Hogancamp, and Paul Wedrich.
\newblock Derived traces of {S}oergel categories.
\newblock {\em Int. Math. Res. Not. IMRN}, (15):11304--11400, 2022.

\bibitem[Haz24]{HaziRotatable}
Amit Hazi.
\newblock Existence and rotatability of the two-colored {J}ones-{W}enzl
  projector.
\newblock {\em Bull. Lond. Math. Soc.}, 56(3):1095--1113, 2024.

\bibitem[Hog24]{MattsWorkInProgress}
Matthew Hogancamp.
\newblock Envelopes and the bar complex.
\newblock Preprint, 2024.

\bibitem[Kho07]{Khov07}
M.~Khovanov.
\newblock Triply-graded link homology and {H}ochschild homology of {S}oergel
  bimodules.
\newblock {\em Internat. J. Math.}, 18(8):869--885, 2007.

\bibitem[KR08]{KhoRoz08}
M.~Khovanov and L.~Rozansky.
\newblock Matrix factorizations and link homology.
\newblock {\em Fund. Math.}, 199(1):1--91, 2008.

\bibitem[KS02]{KhovanovSeidel}
Mikhail Khovanov and Paul Seidel.
\newblock Quivers, {F}loer cohomology, and braid group actions.
\newblock {\em J. Amer. Math. Soc.}, 15(1):203--271, 2002.

\bibitem[LMGR{\etalchar{+}}]{LMGRSW}
Yu~Leon Liu, Aaron Mazel-Gee, David Reutter, Catharina Stroppel, and Paul
  Wedrich.
\newblock A braided monoidal $(\infty,2)$-category of {S}oergel bimodules.
\newblock Preprint arXiv:2401.02956.

\bibitem[LQ84]{LodayQuillen}
Jean-Louis Loday and Daniel Quillen.
\newblock Cyclic homology and the {L}ie algebra homology of matrices.
\newblock {\em Comment. Math. Helv.}, 59(4):569--591, 1984.

\bibitem[MMV24]{MMVFlat}
Marco Mackaay, Vanessa Miemietz, and Pedro Vaz.
\newblock Evaluation birepresentations of affine type {$A$} {S}oergel
  bimodules.
\newblock {\em Adv. Math.}, 436:Paper No. 109401, 68, 2024.

\bibitem[Rou04]{RouqBraid-pp}
R.~Rouquier.
\newblock Categorification of the braid groups.
\newblock Preprint, 2004.
\newblock arXiv:math/0409593.

\bibitem[Rou06]{Rouq06}
R.~Rouquier.
\newblock Categorification of {$\sl_2$} and braid groups.
\newblock In {\em Trends in representation theory of algebras and related
  topics}, volume 406 of {\em Contemp. Math.}, pages 137--167. Amer. Math.
  Soc., Providence, RI, 2006.

\bibitem[Str05]{StroppelTL}
Catharina Stroppel.
\newblock Categorification of the {T}emperley-{L}ieb category, tangles, and
  cobordisms via projective functors.
\newblock {\em Duke Math. J.}, 126(3):547--596, 2005.

\bibitem[SW24]{StropWed}
Catharina Stroppel and Paul Wedrich.
\newblock Braiding on type {A} {S}oergel bimodules: Semistrictness and
  naturality.
\newblock Preprint, 2024.

\bibitem[Wei94]{Wei94}
C.~A. Weibel.
\newblock {\em An {I}ntroduction to {H}omological {A}lgebra}, volume~38 of {\em
  Cambridge Studies in Advanced Mathematics}.
\newblock Cambridge University Press, Cambridge, 1994.

\bibitem[Wit19]{WitherspoonBook}
Sarah~J. Witherspoon.
\newblock {\em Hochschild cohomology for algebras}, volume 204 of {\em Graduate
  Studies in Mathematics}.
\newblock American Mathematical Society, Providence, RI, [2019] \copyright
  2019.

\end{thebibliography}

\end{document}